\def\B{\mathcal B}
\def\E{\mathcal{E}}
\def\F{\mathcal F}
\def\H{\mathcal{H}}
\def\N{\mathbb N}
\def\R{\mathbb R}
\def\C{\mathbf{C}}
\def\PHI{\mathbf{\Phi}}
\def\PSI{\mathbf{\Psi}}
\def\La{\Lambda}
\def\L{\Lambda}
\def\Om{\Omega}
\def\S{\Sigma}
\def\cl{{\rm cl}}                            
\def\X{\boldsymbol{\mathcal E}}
\def\ecc{\varepsilon_{\rm crit}}
\def\eh{\varepsilon_{\rm hb}}
\def\el{\varepsilon_{ \rm lip}}
\def\ea{\varepsilon_{ \rm har}}
\def\ec{\varepsilon_{\rm Ca}}
\def\et{\varepsilon_{\rm tilt}}
\def\er{\varepsilon_{\rm reg}}
\def\esc{\varepsilon_{\rm sc}}
\def\a{\alpha}
\def\b{\beta}
\def\g{\gamma}
\def\de{\delta}
\def\e{\varepsilon}
\def\k{\kappa}
\def\l{\lambda}
\def\s{\sigma}
\def\om{\omega}
\def\vphi{\varphi}
\def\Lip{{\rm Lip}}
\def\Sym{\mathbf{Sym}}
\def\tr{{\rm Tr}}
\def\Div{{\rm div}\,}
\def\Id{{\rm Id}\,}
\def\dist{{\rm dist}}
\def\diam{{\rm diam}}
\def\cof{{\rm cof}\,}
\def\spt{{\rm spt}}
\def\weak{\stackrel{*}{\rightharpoonup}}
\def\pa{\partial}
\def\cc{\subset\subset}
\def\p{\mathbf{p}}
\def\q{\mathbf{q}}
\def\C{\mathbf{C}}
\def\D{\mathbf{D}}
\def\K{\mathbf{K}}
\def\exc{\mathbf{exc}}
\def\fl{\mathbf{flat}}
\def\h{\mathbf{h}}
\def\h{\mathbf{h}\,}
\theoremstyle{plain}
\newtheorem{theorem}{Theorem}[section]
\newtheorem{lemma}[theorem]{Lemma}
\newtheorem{corollary}[theorem]{Corollary}
\newtheorem{proposition}[theorem]{Proposition}
\newtheorem*{theorem*}{Theorem}
\newtheorem*{corollary*}{Corollary}
\theoremstyle{definition}
\newtheorem{definition}[theorem]{Definition}
\newtheorem{remark}[theorem]{Remark}
\newtheorem*{notation*}{Notation}
\numberwithin{equation}{section}
\numberwithin{figure}{section}
\title[On the validity of Young's law]{Regularity of free boundaries \\ in anisotropic capillarity problems\\ and the validity of Young's law}
\author{G. De Philippis}
\address{Institute for Applied Mathematics, University of Bonn, Endenicher Allee 60, D-53115 Bonn, Germany}
\email{guido.de.philippis@hcm.uni-bonn.de}
\author{F. Maggi}
\address{Department of Mathematics, The University of Texas at Austin,  2515 Speedway Stop C1200, Austin, Texas 78712-1202, USA}
\email{maggi@math.utexas.edu}
\begin{document}

\begin{abstract}
Local volume-constrained minimizers in anisotropic capillarity problems develop free boundaries on the walls of their containers. We prove the regularity of the free boundary  outside a closed negligible set, showing in particular the validity of Young's law at almost every point of the free boundary. Our regularity results are not specific to capillarity problems, and actually apply to sets of finite perimeter (and thus to codimension one integer rectifiable currents) arising as minimizers in other variational problems with free boundaries.
\end{abstract}

\maketitle


\section{Introduction} \subsection{Young's law in anisotropic capillarity problems} According to the historical introduction to Finn's beautiful monograph \cite{Finn}, Young \cite{young1805} introduced in 1805 the notion of mean curvature of a surface in the study of capillarity phenomena. Mean curvature was reintroduced the following year by Laplace, together with its analytic expression and its linearization (the Laplacian), the latter being recognized as inadequate to describe real liquids in equilibrium. In the same essay \cite{young1805}, Young also formulates the equilibrium condition for the contact angle of a capillarity surface commonly known as {\it Young's law}. These ideas were later reformulated by Gauss \cite{gauss1830} through the principle of virtual work and the introduction of a suitable free energy. Gauss' free energy consists of four terms: a free surface energy, proportional to the area of the surface separating the fluid and the surrounding media (another fluid or gas) in the given solid container, a wetting energy, accounting for the adhesion between the fluid and the walls of the container,
the gravitational energy; and, finally, a Lagrange multiplier taking into account the volume constraint on the region occupied by the liquid. Since then, a huge amount of interdisciplinary literature has been devoted to the study of qualitative and quantitative properties of local minimizers and stationary surfaces of Gauss' free energy.

A modern formulation of Gauss' model, including the case of possibly anisotropic surface tension densities, as well as that of general potential energy terms, and extending the setting of the problem to (the geometrically relevant case of) arbitrary ambient space dimension, is obtained as follows. Given $n\ge 2$, an open set $\Omega$ with Lipschitz boundary in $\R^n$ (the container), and a set $E\subset \Omega$ (the region of occupied by the liquid droplet) with $\pa E\cap \Omega$ a smooth hypersurface, one considers the free energy
\begin{equation}
\label{free energy}
  \F(E)=\int_{\pa E\cap\Omega}\Phi(x,\nu_E)\,d\H^{n-1}+\int_{\pa E\cap \pa \Omega}\s(x)\,d\H^{n-1}+\int_E\,g(x)\,dx\,,
\end{equation}
where $\H^k$ is the $k$-dimensional Hausdorff measure on $\R^n$, $\nu_\Omega$ and $\nu_E$ denote the outer unit normals to $\Omega$ and $E$ respectively. Here $\Phi:\Omega\times \R^{n}\to[0,\infty)$ is  convex and positively one-homogeneous in the second variable and represents  the  (possibly anisotropic) surface tension density, $\s:\pa \Omega\to\R$ is the relative adhesion coefficient between the liquid and the boundary walls of the container and it satisfies
\begin{equation}
\label{sigma constraint}
  -\Phi(x,-\nu_\Omega(x))\le \s(x)\le \Phi(x,\nu_\Omega(x))\,,\qquad \forall x\in\pa \Omega\,,
\end{equation}
and $g:\Omega\to\R$ is a potential energy per unit mass.
The classical capillarity problem is then obtained by taking $n=3$, $\Phi=|\nu|$, and $g(x)=g_0\,\rho\,x_3$, where $\rho$ is the constant density of the fluid and $g_0$ is the gravity of Earth.

If we are interested in global volume-constrained minimizers of $\E$, then we are led to consider the variational problem
\begin{equation}
  \label{variational problem}
  \inf\Big\{\F(E):|E|=m\Big\}\,,\qquad\mbox{$m>0$ fixed}\,,
\end{equation}
where $|E|$ stands for the Lebesgue measure of $E$. Note that if  we set \(\sigma=0\) and \(g=0\) in \eqref{free energy}, then problem \eqref{variational problem} reduces to the relative isoperimetric problem in \(\Omega\) with respect to the $\Phi$-perimeter, also known as the relative Wulff problem in $\Om$. Alternatively, one may consider local volume-constrained minimizers $E$ of $\F$, or even stationary sets $E$ for $\F$ with respect to volume-preserving flows. In all these cases, provided the objects involved are smooth enough, the equilibrium conditions \eqref{euler lagrange equation anisotropic} and \eqref{youngs law anisotropic} below are satisfied. Precisely, if $\pa \Omega$ and $\Phi$ are of class $C^2$, if $g$ and $\s$ are continuous, and if (denoting by $\cl$ topological closure in $\R^n$ and by \(\partial_{\partial \Omega}\) topological boundary in the relative topology of $\pa\Om$)
\begin{figure}
  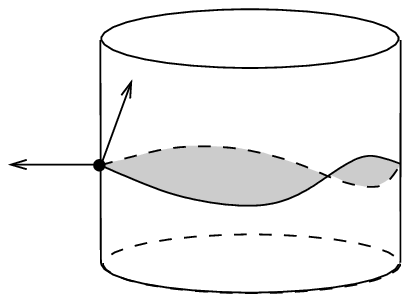\caption{{\small In this picture the region $E$ occupied by the liquid lies at the bottom of the container $\Om$. The adhesion coefficient $\s$ is integrated on the wetted surface $\pa E\cap\pa\Om$, which consists of the bottom of the cylinder plus the later cylindrical surface below the free boundary $M\cap\pa\Om$ of the surface $M=\cl(\Om\cap\pa E)$. In particular, one expects $\pa_{\pa\Om}(\pa E\cap\pa\Om)$ (the topological boundary of the wetted surface relative to the boundary of the container) to coincide with $M\cap\pa\Om$. The angle between $\nu_\Om(x)$ and $\nu_E(x)$ at $x\in M\cap\pa\Om$ is prescribed by $\Phi$ and $\s$ through Young's law \eqref{youngs law anisotropic}.}}\label{fig container}
\end{figure}
the ``capillarity surface''
\[
M=\cl(\pa E\cap \Omega)\,,
\]
is a class $C^2$ hypersurface with boundary \(M\cap \partial \Omega=\partial_{\partial \Omega}(\partial E\cap\pa  \Omega)\), then one has
\begin{align}
\label{euler lagrange equation anisotropic}
 &\Div_M\big[\nabla \Phi(x,\nu_E)\big]+\nabla_x\Phi(x,\nu_E)\cdot\nu_E(x)=-g(x)+{\rm constant}\,,\qquad&&\forall\, x\in M\cap \Omega\,,
  \\
  \label{youngs law anisotropic}
 & \nabla\Phi(x,\nu_E(x))\cdot\nu_\Omega(x)=\s(x)\,,&&\forall x\in M\cap\pa \Omega\,;
\end{align}
see Figure \ref{fig container}. Here, $\nabla_x\Phi$ and $\nabla\Phi$ denote the gradients of $\Phi(x,\nu)$ in the $x$ and $\nu$ variables respectively, while $\Div_M$ denote the tangential divergence with respect to $M$. In the isotropic case $\Phi(x,\nu)=|\nu|$, we thus find
\begin{align}\nonumber
  &\,H_M(x)=-g(x)+{\rm constant}\,,\qquad&&\forall x\in M\cap \Omega\,,
  \\\label{youngs law}
  &\nu_E(x)\cdot\nu_\Omega(x)=\s(x)\,,\qquad&&\forall x\in M\cap\pa \Omega\,,
\end{align}
where $H_M$ is the scalar mean curvature of $M$ with respect to the orientation induced by $\nu_E$. In particular, the equilibrium condition \eqref{youngs law} is Young's law.  Notice also  that by the  convexity and the one-homogeneity of \(\Phi(x,\cdot)\), \eqref{youngs law anisotropic} implies that \eqref{sigma constraint} is a necessary condition in order to have \(M\cap \partial \Omega \ne \emptyset\).

\subsection{Boundary regularity and validity of Young's law} Mathematically speaking, the most elementary setting in which one can prove the existence of such capillarity surfaces is given by the theory of sets of finite perimeter developed by Caccioppoli and De Giorgi. In this framework, one can easily prove the existence of minimizers in \eqref{variational problem} under natural assumptions on $\Omega$, $g$ and $\s$. (In particular, it is easy to see that the constraint \eqref{sigma constraint} on the adhesion coefficient is, in general, a necessary condition to ensure the existence of minimizers; see \cite[Section 19.1]{maggiBOOK} for various examples and remarks.)

When writing $\F(E)$ for $E$ a set of finite perimeter one has to replace the topological boundary $\pa E$ of $E$ (that in the case of a generic set of finite perimeter could have positive {\it volume}!) with its {\it reduced boundary} $\pa^*E$. (See section \ref{section definitions} for the definition.) It is important to take into account that $\pa^*E$ is, in general, just a generalized hypersurface in the sense of Geometric Measure Theory, that is, $\pa^*E$ is just a countable union of compact subsets of  $C^1$-hypersurfaces. Correspondingly, the actual ``capillarity surfaces'' $M$ one proves the existence of take the form
\[
M=\cl(\pa^*E\cap \Omega)\,.
\]
In other words, existence theory forces one to consider extremely rough hypersurfaces. Addressing the regularity issue is thus a fundamental task in order to understand the physical significance of the model itself and the validity of the equilibrium conditions \eqref{euler lagrange equation anisotropic} and \eqref{youngs law anisotropic}, and, indeed, the problem has been considered by several authors. We now review the known results on this problem, that are mainly concerned with the case when $E$ is a local volume-constrained minimizer of $\F$.

Interior regularity, that is, the regularity of $M\cap \Omega$, can be addressed in the framework developed by De Giorgi \cite{DeGiorgiREG}, Reifenberg \cite{reifenberg1,reifenberg2,reifenberg3}, and Almgren \cite{Almgren68}. Precisely, if we assume that $\Phi$ is a smooth, uniformly elliptic integrand (see Definition \ref{def:integrand} below), and that $g$ is a smooth function, then, by combining results from \cite{Almgren76,schoensimonalmgren,bombieri}, one can see that
\[
M\cap \Omega=M_{{\rm reg}}^{{\rm int}}\cup M_{{\rm sing}}^{{\rm int}}\,,
\]
where $M_{{\rm reg}}^{{\rm int}}$ is a smooth hypersurface, relatively open into $M\cap \Omega$, and  where the singular set $M_{{\rm sing}}^{{\rm int}}$ is  relatively closed, with $\H^{n-3}(M_{{\rm sing}}^{{\rm int}})=0$. Moreover, in the isotropic case \(\Phi=|\nu|\), $M_{{\rm sing}}^{{\rm int}}$ is discrete if $n=8$ and satisfies ${\rm dim}(M_{{\rm sing}}^{{\rm int}})\le n-8$ if $n\ge 9$, where ${\rm dim}$ stands for Hausdorff dimension.
In particular, interior regularity ensures that the Euler--Lagrange equation \eqref{euler lagrange equation anisotropic} holds true in classical sense at every $x\in M_{{\rm reg}}^{{\rm int}}$. The picture for what concerns  the regularity of the free-boundary \(M\cap \pa \Omega\), and thus validity of Young's law \eqref{youngs law anisotropic}, is however much more incomplete.

When \(\Omega\) is the half-space \(\{x_n>0\}\), \(\Phi=|\nu|\), $\s$ is constant and $g=g(x_n)$ (this is the so-called sessile liquid drop problem when $g$ is the gravity potential), then one can deduce the regularity of the free boundary by combining the interior regularity theory with the  symmetry properties of minimizers, see \cite{GonzalezREGOLARITAGOCCIA}. Although this kind of analysis was recently carried out in the anisotropic setting under suitable symmetry assumptions on $\Phi=\Phi(\nu)$, see \cite{baer}, it is clear that the approach itself is intrinsically limited to the case when $\Om$ is an half-space, \(\sigma\) is a constant, and \(g\) is a function of the vertical variable \(x_n\) only.

Again in the case of the sessile liquid drops, Caffarelli and Friedman in \cite{caffarellifriedman85} (see also \cite{caffarellimellet}) study the regularity of the free boundary regularity when $2\le n\le 7$ and $\s$ is possibly non-constant and takes values in $(-1,0)$. The non-positivity of $\s$, in combination with global minimality, implies that  $E$  is the subgraph of a function $u:\R^{n-1}\to[0,\infty)$. Since \(\sigma\ne 0\), they can show that \(u\) is globally Lipschitz, and thus exploit the regularity theory for free boundaries of uniformly elliptic problems  developed in \cite{altcaffarelli,altcaffarellifriedman}. Note that it is (the a-posteriori validity of) Young's law $-\nu_E(x)\cdot e_n=\sigma(x)$ itself to show how the assumption $\s\ne 0$ is essential to this method: indeed, at a boundary point where $\s=0$  one cannot certainly expect $u$ to be Lipschitz regular. We also point out the the proof in \cite{caffarellifriedman85} highly relies on the analyticity of the minimizers in the interior, that is actually the reason for the restriction \(2\le n\le 7\) on the ambient space dimension, and a further obstruction to the extension to anisotropic problems.

In the case of generic containers \(\Omega\) we are only aware of a sharp result by Taylor \cite{taylor77} in dimension $n=3$. Taylor fully addresses  three-dimensional isotropic case \(\Phi=|\nu|\) as a byproduct of the methods she developed in the study of Plateau's laws \cite{taylor76}. Her result is fully satisfactory for what concerns local minimizers of Gauss' energy in physical space, but it does not extend to anisotropic surface energies (as it is based on monotonicity formulas and epiperimetric inequalities). Moreover, even in the isotropic case, her arguments seem to be somehow limited to the case $n=3$ (although, of course, this is not really a limitation in the study of the capillarity problem).

The case \(\Phi=|\nu|\) and $\s\equiv0$ in arbitrary dimension is covered by the works of  Gr\"uter and Jost \cite{gruterjost} and of Gr\"uter \cite{gruter,gruter2,gruter3}.
These results apply for instance to the regularity of free boundaries of minimizers of  relative isoperimetric problems and  of mass minimizing current in relative homology classes. Part of the theory also extends to case of stationary varifolds of arbitrary codimension, \cite{gruterjost}. The key idea here is to take advantage of the condition \(\s\equiv0\), together with the isotropy of the area functional, in order to apply the interior regularity theory after a local ``reflection'' of the minimizer across \(\pa \Omega\).


\subsection{Main results} Our main result, Theorem \ref{thm main}, is a general regularity theorem for free boundaries of local minimizers of anisotropic surface energies. One can deduce from Theorem \ref{thm main} a regularity result for anisotropic capillarity surfaces, that works without artificial restrictions on the dimension or the geometry of the container, and that -- in the anisotropic case -- appears to be new even in dimension $n=3$, see Theorem \ref{thm capillari} below. Let us  premise the following two definitions to the statements of these results:

\begin{definition}\label{def:integrand}
  [Elliptic integrands] Given an open set $\Om\subset\R^n$, one says that $\Phi$ is an {\it elliptic integrand on $\Om$} if $\Phi:\cl(\Om)\times\R^n\to[0,\infty]$ is lower semicontinuous, with $\Phi(x,\cdot)$ convex and positively one-homogeneous, i.e \(\Phi(x, t\nu)=t\,\Phi(x,\nu)\) for every \(t\ge 0\). If $\Phi$ is an elliptic integrand on $\Om$ and $E$ is a set of locally finite perimeter in $\Om$, then we set
  \[
  \PHI(E;G)=\int_{G\cap\pa^*E}\Phi(x,\nu_E(x))\,d\H^{n-1}(x)\in[0,\infty]\,,
  \]
  for every Borel set $G\subset \Om$. Given $\lambda\ge 1$ and $\ell\ge0$, one says that $\Phi$ is a {\it regular elliptic integrand on $\Om$ with ellipticity constant $\l$ and Lipschitz constant $\ell$}, and write
  \[
  \Phi\in\X(\Om,\lambda,\ell)\,,
  \]
  if $\Phi$ is an elliptic integrand on $\Om$, with $\Phi(x,\cdot)\in C^{2,1}(\mathbf S^{n-1})$ for every $x\in\cl(\Om)$, and if the following properties hold true for every $x\,,y\in \cl(\Om)$, $\nu,\nu'\in \mathbf S^{n-1}$, and $e\in\R^n$:
  \begin{gather}\label{Phi 1}
  \frac1\lambda\le\Phi(x,\nu)\le\lambda\,,
  \\
  \label{Phi x ell}
  |\Phi(x,\nu)-\Phi(y,\nu)|+|\nabla\Phi(x,\nu)-\nabla\Phi(y,\nu)|\le \ell\,|x-y|\,,
  \\
  \label{Phi nabla 1}
  |\nabla\Phi(x,\nu)|+\|\nabla^2\Phi(x,\nu)\|+\frac{\|\nabla^2\Phi(x,\nu)-\nabla^2\Phi(x,\nu')\|}{|\nu-\nu'|}\le\lambda\,,
  \\
   \label{elliptic}
  \nabla^2\Phi(x,\nu)e\cdot e\ge\frac{\big|e-(e\cdot\nu )\nu\big|^2}{\lambda}\,,
  \end{gather}
 where $\nabla\Phi$ and $\nabla^2\Phi$ stand for the gradient and Hessian of $\Phi$ with respect to the $\nu$-variable. Finally, any \(\Phi\in \X_*(\l)=\X(\R^n,\l, 0)\) is said a {\it regular autonomous elliptic integrand}.
\end{definition}


We now state our main regularity result concerning capillarity problems.

\begin{theorem}\label{thm capillari}
  If $\Omega$ is an open bounded set with \(C^{1,1}\) boundary in $\R^n$, $\Phi$ is a regular elliptic integrand on \(\Omega\), $g\in L^\infty (\Omega)$, and $\s\in \Lip(\pa \Omega)$ satisfies
  \begin{equation}\label{sigma constraint stretto}
  -\Phi(x,-\nu_\Omega(x))<\s(x) <\Phi(x,\nu_\Omega(x))\,,\qquad \forall x\in\pa \Omega\,,
  \end{equation}
  then there exists a minimizer $E$ in \eqref{variational problem} such that  $E$ is equivalent to an open set and its trace \(\pa E\cap \pa \Omega\) is a set of finite perimeter in \(\pa \Omega\). Moreover if  \(M=\cl (\pa E \cap \Omega)\) then
\[
\pa_{\pa \Omega} (\pa E\cap \pa \Omega)=M\cap \pa \Omega\,,
\]
and   there exists   a closed set \(\Sigma\subset M\), with \(\H^{n-2}(\Sigma)=0\) such that  \(M\setminus \Sigma\) is a \(C^{1,1/2}\) hypersurface with boundary. In particular, Young's law \eqref{youngs law anisotropic} holds true at every \(x\in (M \cap \pa \Omega)\setminus \Sigma\).
\end{theorem}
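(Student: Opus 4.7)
My plan is to deduce Theorem \ref{thm capillari} from Theorem \ref{thm main} via three steps: existence by the direct method; reduction of the capillarity problem to a pure anisotropic perimeter problem with a modified integrand; and interpretation of the conclusions, including the derivation of Young's law from the natural free boundary condition.

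\textbf{Existence.} I apply the direct method to a minimizing sequence $E_k\subset\Om$ with $|E_k|=m$. Comparison with a fixed ball $B\cc\Om$ of volume $m$ bounds $\F(E_k)$, and the lower bound $\Phi\ge 1/\l$ from \eqref{Phi 1} controls the standard perimeter of the $E_k$. Compactness of sets of finite perimeter then yields an $L^1$-limit $E$ with $|E|=m$. Lower semicontinuity of $\PHI$ under \eqref{Phi 1}, continuity of $\int_F g$, and continuity of the wetting term (via $L^1$-continuity of the trace operator on the Lipschitz boundary $\pa\Om$ together with boundedness of $\s$) give $\F(E)\le \liminf_k\F(E_k)$.

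\textbf{Reduction to a pure perimeter problem.} Using \eqref{sigma constraint stretto} and $\s\in\Lip(\pa\Om)$, I construct a Lipschitz vector field $X:\cl(\Om)\to\R^n$ with $X\cdot\nu_\Om=\s$ on $\pa\Om$ and a uniform slack
\[
X(x)\cdot\nu \le (1-\eta)\,\Phi(x,\nu)\,,\qquad \forall\,x\in \cl(\Om),\ \nu\in\mathbf S^{n-1},
\]
for some $\eta>0$; this relies on the convexity and one-homogeneity of $\Phi(x,\cdot)$, the strict gap in \eqref{sigma constraint stretto}, a Lipschitz extension of $\s$ off $\pa\Om$ in the normal direction, and a smooth completion in the interior. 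The modified integrand $\Psi(x,\nu):=\Phi(x,\nu)-X(x)\cdot\nu$ then belongs to $\X(\Om,\tilde\l,\tilde\ell)$ for some $\tilde\l,\tilde\ell$, because the slack gives $\Psi\ge \eta\,\Phi$, the Lipschitzianity of $X$ propagates \eqref{Phi x ell}, and linearity of $X\cdot\nu$ in $\nu$ yields $\nabla^2\Psi=\nabla^2\Phi$, so ellipticity \eqref{elliptic} is preserved. A divergence-theorem computation gives the identity
\[
\F(F)=\int_{\pa^*F\cap\Om}\Psi(x,\nu_F(x))\,d\H^{n-1}+\int_F\big(\Div X+g\big)\,dx\,,\qquad\forall\, F\subset\Om\text{ of finite perimeter.}
\]
Therefore $E$ minimizes $F\mapsto \int_{\pa^*F\cap\Om}\Psi(x,\nu_F)$ among $F\subset\Om$ with $|F|=m$, up to a bulk term that is Lipschitz in $|E\Delta F|$. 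A standard volume-fixing variation then shows that $E$ is a $(\L,r_0)$-almost-minimizer of the $\Psi$-perimeter constrained to $\Om$, which is exactly the setting of Theorem \ref{thm main}.

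\textbf{Conclusions and Young's law.} Theorem \ref{thm main} directly produces a closed $\S\subset M$ with $\H^{n-2}(\S)=0$ such that $M\setminus\S$ is a $C^{1,1/2}$ hypersurface with boundary meeting $\pa\Om$. Standard density estimates for almost-minimizers then give that $E$ is equivalent to an open set, that $\pa E\cap\pa\Om$ has finite perimeter in $\pa\Om$, and that $\pa_{\pa\Om}(\pa E\cap\pa\Om)=M\cap\pa\Om$. At any regular boundary point $x\in (M\cap\pa\Om)\setminus\S$, the $C^{1,1/2}$-regularity up to the wall justifies the classical first-variation computation for $\F$ along vector fields tangent to $\pa\Om$; equivalently, the natural free boundary condition $\nabla\Psi(x,\nu_E(x))\cdot\nu_\Om(x)=0$ for the $\Psi$-minimizer, combined with $\nabla\Psi=\nabla\Phi-X$ and $X\cdot\nu_\Om=\s$, yields Young's law \eqref{youngs law anisotropic}.

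The main obstacle is the construction of the vector field $X$ in the reduction step: it must simultaneously realize the boundary trace $\s$ (requiring Lipschitz regularity of $\s$), satisfy the strict pointwise bound $X(x)\cdot\nu<\Phi(x,\nu)$ with uniform slack (requiring the strict inequality \eqref{sigma constraint stretto}), and be globally Lipschitz with $\Div X\in L^\infty$ so that $\Psi\in\X(\Om,\tilde\l,\tilde\ell)$ and the bulk error remains Lipschitz in $|E\Delta F|$. Once $X$ is in place, the rest of the proof reduces cleanly to Theorem \ref{thm main} plus standard almost-minimizer arguments.
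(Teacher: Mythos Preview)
Your overall architecture is sound and close in spirit to the paper's, but there is one genuine gap and one point where you diverge from the paper that deserves comment.

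\textbf{The gap.} Theorem~\ref{thm main} is stated and proved only for a \emph{half-space} $H$, not for a general $C^{1,1}$ domain. Your sentence ``$E$ is a $(\Lambda,r_0)$-almost-minimizer of the $\Psi$-perimeter constrained to $\Om$, which is exactly the setting of Theorem~\ref{thm main}'' is therefore not correct as written: you still need, around each $x_0\in\pa\Om$, a $C^{1,1}$ boundary-flattening diffeomorphism $f$ sending $B_{x_0,r}\cap\Om$ onto $A\cap H$, after which $f(E)$ becomes a $(\tilde\Lambda,\tilde r_0)$-minimizer of the transformed integrand $\Psi^f$ in $(A,H)$. This is precisely the step the paper carries out (and the reason why Lemma~\ref{lemma PHIL} and the $C^{1,1}$-invariance of the class $\X$ are recorded). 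Without it, Theorem~\ref{thm main} simply does not apply.

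\textbf{The divergence from the paper.} The paper does \emph{not} absorb $\sigma$ before passing to Theorem~\ref{thm main}: it flattens first, obtaining a $(\Lambda^f,r_*)$-minimizer of $(\PHI^f,\sigma^f)$ in $(A,H)$, checks that the strict inequality for $\sigma^f$ survives the change of variables, and then invokes Theorem~\ref{thm main} directly with the pair $(\Phi^f,\sigma^f)$. The absorption of $\sigma$ into the integrand via a vector field is done \emph{inside} the proof of Theorem~\ref{thm main} (Lemma~\ref{lemma da 1 a 2}), locally and after flattening, where the construction is concrete: $T$ is taken proportional to $\nabla\Phi_0(\pm e_1)$ and the positivity of the new integrand is checked by a careful case analysis. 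Your global construction of $X$ on $\cl(\Om)$ with uniform slack $X\cdot\nu\le(1-\eta)\Phi(x,\nu)$ is plausible by compactness, but it is more delicate than you indicate (for instance the naive choice $X=\sigma\,\nu_\Om$ does not in general satisfy the slack), and in any case it duplicates work already contained in Theorem~\ref{thm main}. If you add the flattening step, you can drop your reduction to $\sigma=0$ altogether and apply Theorem~\ref{thm main} with $(\Phi^f,\sigma^f)$, exactly as the paper does.
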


\begin{remark}
 As proved in \cite{schoensimonalmgren} one has a better estimate on the singular set in the interior of $\Omega$, namely $\H^{n-3}(\S\cap\Omega)=0$.
\end{remark}

\begin{corollary}[Isotropic case]\label{corollario capillari}
  Under the assumptions of Theorem \ref{thm capillari}, let $\Phi(x,\nu)=|\nu|$ for every $x\in\Omega$ and $\nu\in \R^{n}$. Then $\S\cap\pa\Om=\emptyset$ if $n=3$, $\S\cap\pa\Om$ is a discrete set if $n=4$, and $\H^s(\S\cap\pa\Om)=0$ for every $s>n-4$ if $n\ge 5$.
\end{corollary}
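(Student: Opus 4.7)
The plan is to improve the boundary bound $\H^{n-2}(\S\cap\pa\Om)=0$ from Theorem \ref{thm main} to a dimension-$(n-4)$ bound by exploiting the isotropy $\Phi=|\nu|$. The whole argument reduces to a Federer-style dimension reduction on $\S\cap\pa\Om$, combined with a classification of \emph{minimizing capillary cones} in low ambient dimensions. At a point $x_0\in \S\cap\pa\Om$, the $C^{1,1}$-regularity of $\pa\Om$ lets one flatten the wall near $x_0$, and the Lipschitz continuity of $\s$ lets one freeze it to the constant $\s_0:=\s(x_0)\in(-1,1)$. Along a subsequence $r_k\to 0$, the rescaled almost-minimizers $(E-x_0)/r_k$ converge in $L^1_{\loc}$ to a locally minimizing cone $C$ in the tangent half-space $H=\{x\cdot \nu_\Om(x_0)<0\}$ satisfying Young's law with constant angle $\theta_0=\arccos \s_0$ along $\pa H$; by the $\varepsilon$-regularity content of Theorem \ref{thm main}, if $C$ were a half-plane then $x_0$ would be a regular point, contradicting $x_0\in\S$.

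Federer's standard iteration along the translation-invariance directions of $\mathrm{sing}(C)$ then yields the following scheme: if every minimizing capillary cone in $\R^{n_0}\cap\{x_{n_0}>0\}$ with constant $\s\in(-1,1)$ is a half-plane for every $n_0\le N$, then $\S\cap\pa\Om=\emptyset$ for $n\le N$ and $\dim(\S\cap\pa\Om)\le n-N-1$ for $n>N$. The three regimes claimed in the statement correspond exactly to $N=3$, with the discreteness claim for $n=4$ coming from a secondary blow-up at accumulation points (which would produce, via dimension reduction, a non-trivial translation-invariant singular cone in $\R^3\cap\{x_3>0\}$, excluded by the classification below). Thus the problem reduces to classifying minimizing capillary cones in ambient dimensions $2$ and $3$. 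The case $n_0=2$ is trivial: the unique minimizing $1$-cone in the half-plane meeting $\pa H$ at angle $\theta_0$ is a single ray. For $n_0=3$, a $2$-dimensional minimizing cone $C\subset\R^3\cap\{x_3>0\}$ is the cone over its spherical trace $\gamma=C\cap\mathbf S^2\cap\{x_3>0\}$, and the minimality of $C$ together with Young's law translate into a $1$-dimensional capillary problem for $\gamma$ on the upper hemisphere -- the geodesic equation in the interior plus a prescribed angle $\theta_0$ at the equator -- whose first-variation analysis forces $\gamma$ to be a single great-circle arc, whence $C$ is a half-plane. In the special sub-case $\s_0=0$, this is also immediate from the Gr\"uter--Jost reflection, which doubles $C$ into an area-minimizing $2$-cone in $\R^3$, necessarily a plane.

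The step I expect to be the main obstacle is exactly the $n_0=3$ classification when $\s_0\ne 0$: reflection across $\pa H$ no longer produces a bona fide area-minimizer in $\R^3$, since the doubled surface carries a $\s_0$-dependent effective density along the reflection hyperplane. The argument must therefore be carried out intrinsically on $\mathbf S^2\cap\{x_3>0\}$, via either a calibration forcing great-circle arcs to be the unique length-minimizing capillary $1$-cycles on the hemisphere with equatorial angle $\theta_0$, or a direct convexity/monotonicity argument ruling out non-geodesic or multi-component competitor traces. Once this classification is settled, the three regimes claimed in the corollary follow at once from the Federer scheme above, together with the $\varepsilon$-regularity theorem underlying Theorem \ref{thm main}.
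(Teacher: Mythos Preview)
Your overall architecture matches the paper's: flatten $\pa\Om$ near $x_0$ by a $C^{1,1}$ diffeomorphism with $\nabla f(x_0)=\Id$, blow up to a global minimizer of $P(\cdot;H)+\s(x_0)\,P(\cdot;\pa H)$ in the half-space, pass to a tangent \emph{cone}, and then run Federer dimension reduction for $n\ge 4$. Two small points: you assert directly that blow-ups are cones, whereas the paper first proves a monotonicity formula for
\[
\alpha(r)=\frac{P(E_2;H\cap B_r)+\s(x_0)\,P(E_2;\pa H\cap B_r)}{r^{n-1}}
\]
via a cone-comparison competitor, and only then takes a second blow-up $E_3$ which is a cone.

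The genuine difference is in the $n=3$ base case. You propose to \emph{classify} capillary minimizing $2$-cones by analyzing the spherical trace as a capillary geodesic problem on the hemisphere with contact angle $\theta_0$ at the equator, and you correctly flag this as the hard step when $\s_0\ne 0$ (no reflection trick). The paper bypasses this classification entirely. Interior regularity in $\R^3$ makes $\pa E_3\cap H$ a smooth minimal cone, hence a finite union of planar pieces through the origin, meeting along a common line $\gamma\subset\pa H$; in particular $E_3$ is translation-invariant along $\gamma$. If $0\in\S(E_3;\pa H)$, translation-invariance forces $\gamma\subset\S(E_3;\pa H)$, so $\H^1(\S(E_3;\pa H))=+\infty$. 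But Theorem~\ref{thm main}, applied to the cone $E_3$ itself, gives $\H^1(\S(E_3;\pa H))=0$: contradiction. This self-referential use of the $\H^{n-2}$-negligibility of the boundary singular set is what replaces your hemisphere classification---no Young's-law verification on the cone, no $1$-dimensional capillary problem, and the argument is uniform in $\s_0\in(-1,1)$.

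Your route would also work once the $3$-dimensional classification is completed, and it is arguably more informative (it identifies the unique tangent cone rather than merely excluding singular ones). The paper's route is shorter and avoids precisely the obstacle you anticipated.
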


\begin{remark}
  By the case $n=3$ of Corollary \ref{corollario capillari} we obtain an alternative proof of Taylor's theorem \cite{taylor77}. Notice also that, under the assumptions of Corollary \ref{corollario capillari}, classical regularity for local minimizers of the perimeter gives that  $\S\cap\Om=\emptyset$ if $n\le 7$, $\S\cap\Om$ discrete if $n=8$, and $\H^s(\S\cap\Om)=0$ for every $s>n-8$ if $n\ge 9$.
\end{remark}

\begin{remark}
  Higher regularity of \(\cl(\pa  E\cap \Omega)\setminus \Sigma\) is obtained by combining Theorem \ref{thm capillari} with elliptic regularity theory for non-parametric solutions of \eqref{euler lagrange equation anisotropic} and \eqref{youngs law anisotropic}.
\end{remark}

\begin{remark}
 The
 \begin{figure}
   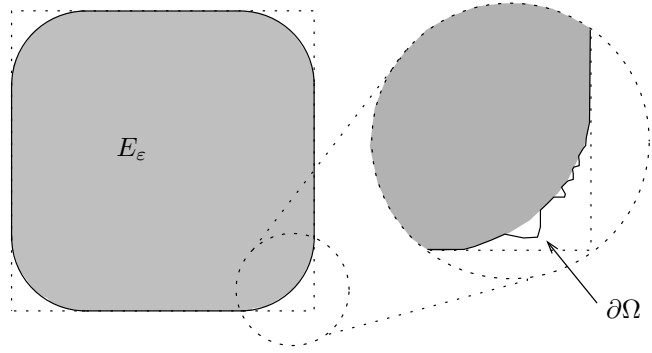\caption{{\small When the strict upper bound in \eqref{sigma constraint stretto} is an equality the conclusions of Theorem \ref{thm main} can possibly fail.}}\label{fig cheeger}
 \end{figure}
 strict inequality in   \eqref{sigma constraint stretto} is somehow necessary. Indeed, according to \eqref{youngs law anisotropic} it predicts that \(M\) will intersect \(\pa \Omega\) transversally. Moreover, if \eqref{sigma constraint stretto} fails, it is possible to construct examples of minimizers of \eqref{variational problem} which do not satisfy the conclusion of Theorem \ref{thm capillari}. For example, if $Q=(0,1)^2\subset\R^2$ is a unit square and $\e>0$ is small enough, then the open set $E_\e$ depicted in Figure \ref{fig cheeger} is a minimizer in
 \[
 \inf\Big\{P(E):E\subset Q\,,|E|=1-\e\Big\}\,.
 \]
 Let now $\Om$ be an open set with smooth boundary such that $E_\e\subset\Om\subset Q$ and $Q\cap\pa E_\e\cap\pa\Om$ is a Cantor-type set contained in the circular arc $Q\cap\pa E_\e$. Then $E_\e$ is a minimizer in
 \[
 \inf\Big\{P(E):E\subset \Om\,,|E|=1-\e\Big\}\,,
 \]
 but \(E_\e\) does not satisfy the conclusion of Theorem \ref{thm capillari}.
\end{remark}

Theorem \ref{thm capillari} can be actually obtained as corollary of Theorem \ref{thm main} below, which addresses the boundary regularity issue in the class of almost-minimizers introduced in the next definition.

\begin{definition}[Almost-minimizers]\label{definition minimizers}  Let an open set $A$ and an open half-space $H$ in $\R^n$ be given (possibly \(H=\R^n\)), together with constants $r_0\in(0,\infty]$ and $\Lambda\ge0$, a regular elliptic integrand $\Phi$ on $A\cap H$, and a function \(\sigma: A\cap \pa H\to \R\) with
\begin{equation*}
-\Phi(x,-\nu_H)\le \sigma(x)\le \Phi(x,\nu_H)\qquad \forall\, x\in A\cap \pa H\,.
\end{equation*}
A set $E\subset H$ of locally finite perimeter in \(A\) is a {\it $(\La,r_0)$-minimizer of $(\PHI,\s)$ in $(A,H)$}, if
\begin{multline}\label{minimalityintro}
\PHI(E;H\cap W)+\int_{W\cap(\pa^* E\cap \pa H)}\hspace{-0.5cm} \sigma\,\,d\H^{n-1}\le \PHI(F;H\cap W) +\int_{W\cap(\pa^* F\cap \pa H)}\hspace{-0.5cm}\sigma \,d\H^{n-1}+\Lambda\,|E\Delta F|\,,
\end{multline}
whenever $F\subset H$, $E\Delta F\cc W$, and $W\cc A$ is open, with $\diam(W)<2r_0$;
\begin{figure}
    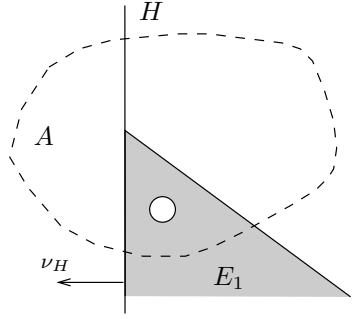\caption{{\small A $(\La,r_0)$-minimizer of $(\PHI,\s)$ in $(A,H)$. Rougly speaking, inside balls $B_{x,r}$ of radius at most $r_0$ that are compactly contained in $A$, and up to a volume-type higher order perturbation, $E$ is a minimizer of $F\mapsto\PHI(F,H)+\int_{\pa F\cap\pa H}\s$ with respect to its own boundary data on $H\cap\pa B_{x,r}$, and with free boundary on $B_{x,r}\cap \pa H$. On balls $B_{x,r}$ that do not intersect $\pa H$, we just have a local almost-minimality condition.}}\label{fig minimi}
  \end{figure}
see Figure \ref{fig minimi}. When $\s=0$, we simply say that $E$ is a {\it $(\La,r_0)$-minimizer of $\PHI$ in $(A,H)$}; when $\s=0$, $\La=0$, and $r_0=+\infty$, then we say that $E$ is a {\it minimizer of $\PHI$ in $(A,H)$}.
\end{definition}

\begin{remark}
  {\rm Note that if $\cl(A)\subset H$ (as it happens, for example, in the limit case $H=\R^n$), then Definition \ref{definition minimizers} reduces to a local almost-minimality notion analogous to the ones considered in \cite{Almgren76,bombieri,tamanini,DuzaarSteffen} and \cite[Section 21]{maggiBOOK}.}
\end{remark}

\begin{theorem}\label{thm main} If $E$ is a $(\La,r_0)$-minimizer of $(\PHI,\s)$ in $(A,H)$ for some \(\sigma\in \Lip( A\cap \pa H)\) with
\begin{equation}\label{sigmalim2}
-\Phi(x,-\nu_H)<\sigma(x)<\Phi(x,\nu_H)\,,\qquad \forall\, x\in A\cap \pa H\,.
\end{equation}
Then there is an open set \(A'\subset A\) with \(A\cap \pa H=A'\cap \pa H\) such that  \(E\) is equivalent to an open set in \(A'\) and  \(\pa E\cap \pa H\) is a set of locally finite perimeter in \(A'\cap\pa H\) (equivalently in \(A\cap \pa H)\). Moreover,    if \(M=\cl(\pa E\cap H)\) then
\[
\pa_{\pa H} (\pa E\cap \pa H)\cap A =\pa_{\pa H} (\pa E\cap \pa H)\cap A' =M\cap \pa H
\]
and there exists a relatively closed set \(\Sigma\subset M\cap \pa H\) such that    \(\H^{n-2}(\Sigma)=0\) and for every \(x\in (M\cap \pa H)\setminus \Sigma\),  \(M\) is a \(C^{1,1/2}\) manifold with boundary in a neighborhood of \(x\) for which
\[
\nabla\Phi(x,\nu_E(x))\cdot \nu_H=\sigma(x)\qquad \forall x\in (M\cap \pa H)\setminus\Sigma\,.
\]
\end{theorem}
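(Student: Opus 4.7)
The plan has four movements: derive the basic local structure from density estimates, classify tangent cones at boundary points, prove a boundary $\varepsilon$-regularity theorem, and then estimate the singular set by dimension reduction.

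\textbf{Step 1: density estimates and local topological structure.} Using variations supported in small balls $B_{x,r}$ with $r<r_0$, I would derive the standard upper and lower density estimates for $\PHI(E;\cdot)$ and $|E|$, including the boundary versions at points $x\in A\cap\pa H$ (where the lower bound has to absorb the contribution of the wetting term via the strict inequality \eqref{sigmalim2}). From these estimates one obtains, by a now classical argument, an open set $E'$ equivalent to $E$, an open $A'\subset A$ with $A\cap\pa H=A'\cap\pa H$, and the identification $\pa_{\pa H}(\pa E\cap\pa H)\cap A'=M\cap\pa H$; the lower density of the wetted region in $\pa H$ moreover implies that $\pa E\cap\pa H$ has locally finite perimeter in $A\cap\pa H$.

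\textbf{Step 2: compactness and classification of boundary tangent cones.} At a point $x_0\in M\cap\pa H$, I would blow up $E_r=(E-x_0)/r$ as $r\to 0$. The standard compactness theorem for $(\La,r_0)$-minimizers (combined with the Lipschitz dependence of $\Phi$ in $x$ and of $\s$ on $\pa H$) gives that, up to subsequences, $E_r\to E_0$ in $L^1_{\loc}$, where $E_0\subset H_0$ minimizes the frozen functional $F\mapsto\PHI_{x_0}(F;H_0)+\s(x_0)\,\H^{n-1}(\pa^*F\cap\pa H_0)$ in the tangent half-space $H_0$, with $\PHI_{x_0}$ built from the autonomous integrand $\Phi(x_0,\cdot)\in\X_*(\l)$. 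The strict inequality \eqref{sigmalim2} rules out the degenerate tangent cones $E_0=\emptyset$ and $E_0=H_0$, and a Wulff-type calibration argument (comparing $E_0$ with an affine half-space obeying Young's law at $x_0$) forces $E_0$ to be the half-space $\{y\cdot\nu_0\le 0\}\cap H_0$ for some $\nu_0$ verifying $\nabla\Phi(x_0,\nu_0)\cdot\nu_{H_0}=\s(x_0)$.

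\textbf{Step 3: boundary $\varepsilon$-regularity by improvement of flatness.} The core is to show that there exist $\e_0,\theta>0$ such that, if a suitable scale-invariant boundary excess of $E$ at $x_0\in M\cap\pa H$ at scale $r$ (measuring both the tilt of $\nu_E$ from a half-space verifying Young's law at $x_0$ and the $\La r$ and Lipschitz-$\s$ errors) is below $\e_0$, then the same quantity at scale $\theta r$ is strictly smaller by a fixed factor. I would argue by contradiction: a failing sequence, after rescaling and normalizing the excess, converges to a limit $u:\Pi^+\to\R$ on a tangent half-plane $\Pi^+$, solving the linearized obstacle-free problem given by a constant-coefficient elliptic equation $L_\Phi u=0$ on $\Pi^+$ (coming from $\nabla^2\Phi(x_0,\nu_0)$) with an oblique-derivative boundary condition on $\pa\Pi^+$ obtained by linearizing Young's law in the normal variable. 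Schauder theory for oblique-derivative problems with Lipschitz data gives $C^{1,1/2}$ estimates on $u$, contradicting the assumed bad behavior of the normalized sequence. Iterating the excess-decay yields $C^{1,1/2}$ regularity of $M$ as a manifold with boundary near every $x_0$ at which the excess is small, and the Young condition holds pointwise there.

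\textbf{Step 4: singular set.} Let $\S\subset M\cap\pa H$ be the closed set of points where the boundary excess never drops below $\e_0$. A Federer-style dimension-reduction argument, using the classification of Step 2 (every boundary tangent cone of a tangent cone of $E$ is again of the half-space type, hence has no singular boundary points), yields $\H^{n-2}(\S)=0$.

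\textbf{Main obstacle.} The delicate step is Step 3: unlike the case $\s\equiv 0$ and $\Phi=|\nu|$ treated by Gr\"uter and Jost via reflection across $\pa H$, for a genuinely anisotropic $\Phi$ and a non-trivial $\s$ no such symmetry is available, so the improvement of flatness must be performed intrinsically, and the role of \eqref{sigmalim2} is to guarantee that the linearized oblique boundary condition is non-degenerate (the coefficient of the obliqueness is bounded away from zero by the strict Young gap), without which the whole compactness-plus-linearization scheme collapses.
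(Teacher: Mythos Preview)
Your Steps 1 and 3 are broadly in line with the paper (though the paper first reduces to the case $\s\equiv 0$ via a divergence-theorem trick, replacing $\Phi$ by $\Psi(x,\nu)=\Phi(x,\nu)-T(x)\cdot\nu$ for a suitable Lipschitz field $T$ with $-T\cdot e_1=\s$ on $\pa H$; the strict inequalities \eqref{sigmalim2} are what keep $\Psi$ uniformly elliptic). The $\e$-regularity argument then runs with a Neumann condition rather than an oblique one, but your direct approach in Step 3 is a reasonable variant.

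The genuine gap is in Steps 2 and 4. Your Step 2 asserts that at every $x_0\in M\cap\pa H$ every blow-up is a half-space; this cannot be correct, since it would force $\Sigma=\emptyset$ and make Step 4 vacuous. More fundamentally, both Step 2 (to get that $E_0$ is a cone) and Step 4 (Federer dimension reduction) implicitly rely on a monotonicity formula, and no such formula is available for anisotropic integrands. There is no known ``Wulff-type calibration'' that classifies all global anisotropic minimizers in a half-space as half-spaces.

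The paper circumvents monotonicity entirely. First it proves directly (by a stretching-deformation competitor, Lemma \ref{lem:finiteperimeter}) that the trace $\pa E\cap\pa H$ has locally finite perimeter in $\pa H$, so by De Giorgi's rectifiability theorem applied \emph{inside} $\pa H$, $\H^{n-2}$-a.e.\ point of $\pa_{\pa H}(\pa E\cap\pa H)$ lies in the reduced boundary $\pa^*_{\pa H}(\pa E\cap\pa H)$. At such a point every blow-up has a half-space \emph{trace} on $\pa H$. A wedge lemma plus a Hardt-type comparison with non-parametric solutions of the anisotropic equation then shows that \emph{some} (not every) iterated blow-up is a half-space in $H$. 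This suffices to trigger the $\e$-regularity theorem at reduced boundary points, and the singular set is identified with the $\H^{n-2}$-null complement $(\pa_{\pa H}\setminus\pa^*_{\pa H})(\pa E\cap\pa H)$. No dimension reduction and no cone structure of blow-ups are needed. Your outline should replace Steps 2 and 4 by this route.
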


Being the class of regular elliptic integrands invariant under \(C^{1,1}\) diffeomorphism (see the discussion in section \ref{section examples}), Theorem \ref{thm main} applies to a wider class of variational problems than just \eqref{variational problem}. For instance, it applies to relative anisotropic isoperimetric problems  in smooth domains, or in Riemannian and Finsler manifolds. Moreover, by arguing as in \cite{gruter2}, Theorem \ref{thm main} can be used to address the regularity of \(\Phi\)-minimizing integer rectifiable   codimension  one currents in relative homology classes \(\mathbf H_{n-1}(N,B)\) where \(N\) is a smooth \(n\)-dimensional manifold and \(B\subset N\) is a smooth \((n-1)\)-dimensional submanifold, see \cite[4.4.1, 5.1.6]{FedererBOOK} for definitions and terminology. 

\subsection{Proof of Theorem \ref{thm main} and organization of the paper} We conclude this introduction with a few comments on our proofs, and with a brief description of the structure of the paper.

The core of the paper consists of sections \ref{section almost-minimizers}--\ref{section singular set}, where we prove Theorem \ref{thm main} in the $\s=0$ case. In section \ref{section almost-minimizers}, after setting our notation and terminology, we prove several basic properties of almost-minimizers to be repeatedly used in subsequent arguments. Sections  \ref{section eps regularity}-\ref{section proof of lemma eps} are devoted to the proof of an ``\(\e\)-regularity theorem'' for almost-minimizers, Theorem \ref{thm epsilon}. This theorem states the existence of an universal constant  \(\overline \e\) with the following property: if around a free boundary point \(x\),  and for some $r>0$ sufficiently small, one has
 \begin{equation}
  \label{regularity criterion}
  \inf_{\nu\in \mathbf S^{n-1}}\,\frac1{r^{n-1}}\int_{H\cap B_{x,r}\cap\pa^*E}\frac{|\nu_E-\nu|^2}2\,d\H^{n-1}\le\overline \e\,,
\end{equation}
then $\cl(\pa E\cap H)$ is a $C^{1,1/2}$-manifold with boundary in a neighborhood of  $x$. Here the consideration of the case \(\sigma=0\), together  with an appropriate choice of coordinates, allows us to ``linearize'' on a Neumann-type elliptic problem for which good estimates are known. (In other words, we develop the appropriate version of De Giorgi's harmonic approximation technique in our setting.) In section \ref{section singular set}, Theorem \ref{thm singular set}, we estimate the size of the set where the \(\e\)-regularity theorem applies by exploiting some ideas introduced by Hardt in  \cite{hardt}. Note that when $x$ is an interior point, De Giorgi's rectifiability theorem  ensures that the set where (the appropriate version) of  \eqref{regularity criterion} holds true at some scale \(r\)  is of full \(\H^{n-1}\)-measure in the boundary of $E$. However, as we expect the free boundary to be \((n-2)\)-dimensional, and thus $\H^{n-1}$-negligible, we cannot deduce the existence of boundary points at which the $\e$-regularity theorem applies by De Giorgi's theorem only. We have instead to rely on ad hoc arguments based on minimality, and
this is exactly the content of section \ref{section singular set}.

In section \ref{section examples} we begin by showing how to reduce the proof of Theorem \ref{thm main} to the case when $\s=0$. This is achieved with the aid of the divergence theorem. Precisely, we show that if $E$ is a \((\Lambda,r_0)\)-minimizer of $(\PHI,\s)$ in $(A,H)$ and $x\in A\cap\pa H$, then $E$ is actually a \((\Lambda_*,r_0)\)-minimizer of $(\PHI_*,0)$ in $(B_{x,r_*},H)$ for suitable constants $\Lambda_*$ and $r_*$, and for a suitable regular elliptic integrand $\Phi_*$. Having assumed strict inequalities in \eqref{sigmalim2} plays a crucial role in showing that the new integrand \(\Phi_*\) is still uniformly elliptic. Another interesting qualitative remark is that our method, even in the isotropic case, requires the consideration of anisotropic functionals in order to reduce to the case that $\s=0$. We finally conclude section \ref{section examples} with the proofs of Theorem \ref{thm main}, Theorem \ref{thm capillari} and Corollary \ref{corollario capillari}.

\medskip

\noindent {\bf Acknowledgement}: We thank Frank Duzaar for pointing out to us Jean Taylor's paper \cite{taylor77} and the lack of a general boundary regularity theorem in higher dimension, thus stimulating the writing of this paper. The work of FM was supported by the NSF Grant DMS-1265910.

\section{Almost-minimizers with free boundaries}\label{section almost-minimizers} In section \ref{section notation} we fix our notation for sets in $\R^n$, while in section \ref{section sofp} we gather the basic facts concerning sets of finite perimeter. In section \ref{section definitions} we discuss some properties of the almost-minimizers introduced in Definition \ref{definition minimizers}, while in section \ref{section young on half} we derive the anisotropic Young's law for half-spaces. Sections \ref{section density estimates} and \ref{section compactness} contain classical density estimates and compactness properties of almost-minimizers. In section \ref{section contact sets} we discuss some general properties of contact sets of almost-minimizers, prove a strong maximum principle, and set a useful normalization convention to be used in the rest of the paper. Finally, in section \ref{section cambio di variabili L}, we study the transformation of almost-minimizers under ``shear-strained'' deformations, a technical device that will be repeatedly applied in the proof of the $\e$-regularity theorem, Theorem \ref{thm epsilon}.

\subsection{Basic notation}\label{section notation} {\it Norms and measures}. We denote by \(v\cdot w\) the scalar product in \(\R^n\) and by   $|v|=(v\cdot v)^{1/2}$ the Euclidean norm. We  set
\[
\|L\|=\sup\{|Lx|:x\in\R^n\,,|x|<1\}\,,
\]
for the operator norm of a linear map $L:\R^n\to\R^n$. We denote by $\H^k$ the $k$-dimensional Hausdorff measure in $\R^n$ and set $\H^n(E)=|E|$ for every $E\subset\R^n$.

\noindent {\it Reference cartesian decomposition}. We denote by
 \[
 \p:\R^n\to\R^{n-1}\quad \textrm{and}\qquad  \q:\R^n\to\R
 \]
the orthogonal projections associated to the Cartesian decomposition of $\R^n$ as $\R^{n-1}\times\R$; correspondingly, $x=(\p x,\q x)$ for every $x\in\R^n$. We set
\[
B=\{x\in\R^n:|x|<1\}\,,\quad\C=\{x\in\R^n:|\p x|<1\,,|\q x|<1\}\,,\quad \D=\{z\in\R^{n-1}:|z|<1\}\,,
\]
so that $\C=\D\times(-1,1)$. Sometimes we will identify \(\D\) with the subset of \(\R^{n}\) given by \(\D\times \{0\}\). Even when doing so, \(\pa \D\) denotes the boundary of \(\D\) relative to \(\R^{n-1}\), i.e. we always have
\[
\pa \D=\Big\{z\in\R^{n-1}:|z|=1\Big\}\,.
\]
Given a vertical half-space \(H=\{x_1>b\}\subset \R^n\) (\(b\in \R\)), again with a slight abuse of notion we will set
\[
\D\cap H=\Big\{z\in\R^{n-1}:|z|<1\ z_1>b\Big\}\,,
\]
as well as
\begin{eqnarray*}
H\cap\pa \D&=&\Big\{z\in\R^{n-1}:|z|=1\ z_1>b\Big\}\,,
\\
\pa (\D\cap H)&=&\Big(H\cap\pa \D\Big)\cup\Big\{z\in\R^{n-1}:|z|\le 1\ z_1=b\Big\}\,.
\end{eqnarray*}

\noindent {\it Scaling maps}. Given $E\subset\R^n$, $x\in\R^n$ and $r>0$, we set
\[
E_{x,r}=x+r\,E\,,\qquad E^{x,r}=\frac{E-x}r\,.
\]
In this way, for every $x\in\R^n$ and $r>0$,
\begin{eqnarray*}
  &&B_{x,r}=\{y\in\R^n:|x-y|<r\}=B(x,r)\,,
  \\
  &&\C_{x,r}=\{y\in\R^n:|\p(y-x)|<r\,,|\q(y-x)|<r\}=\C(x,r)\,,
\end{eqnarray*}
and, similarly, for every $z\in\R^{n-1}$ and $r>0$
\[
\D_{z,r}=\{y\in\R^{n-1}:|y-z|<r\}=\D(z,r)\,.
\]
In case \(x\,,z=0\) we simply write \(B_r\), \(\C_r\) and \(\D_r\).


\smallskip

\noindent {\it Convergence of sets}. Let $A$ be an open set in $\R^n$. Given a sequence of Lebesgue measurable sets $\{E_h\}_{h\in\N}$ in $\R^n$, we say that
\[
\textrm{$E_h\to E$ in $L^1_{{\rm loc}}(A)$  }\qquad \textrm{if $|(E_h\Delta E)\cap K|\to 0$ as $h\to\infty$ for every $K\cc A$}\,.
\]
Given an open half-space $H\subset\R^n$ and a sequence of Borel sets $\{G_h\}_{h\in\N}\subset \pa H$, we say that
\[
\textrm{$G_h \to G$ in $L^1_{\rm loc}(A\cap\pa H)$ }\qquad \textrm{if $\H^{n-1}(K\cap(G_h\Delta G))\to 0$ as $h\to\infty$ for every $K\cc A$.}
\]

\subsection{Sets of finite perimeter}\label{section sofp} Given a Lebesgue measurable set $E\subset\R^n$ and an open set $A\subset\R^n$, we say that $E$ is of locally finite perimeter in $A$ if there exists a $\R^n$-valued Radon measure $\mu_E$ (called the Gauss-Green measure of \(E\)) on $A$  such that
\[
\int_{E}\nabla\vphi(x)\,dx=\int_A\vphi\,d\mu_E\,,\qquad\forall\vphi\in C^1_c(A)\,,
\]
and set $P(E;G)=|\mu_E|(G)$ for the perimeter of $E$ relative to $G\subset A$. (Notice that $\mu_E=-D1_E$, the distributional derivative of $1_E$.) The well-known compactness theorem for sets of finite perimeter states that if $\{E_h\}_{h\in\N}$ is a sequence of sets of locally finite perimeter in $A$ and $\{P(E_h;A_0)\}_{h\in\N}$ is bounded for every $A_0\cc A$, then there exists $E$ of locally finite perimeter in $A$ such that, up to extracting subsequences, $E_h\to E$ in $L^1_{\rm loc}(A)$; see, for instance, \cite[Corollary 12.27]{maggiBOOK}.

\medskip

\noindent {\it Minimal topological boundary.} The support of $\mu_E$ can be characterized by
\begin{equation}
  \label{sptmuE}
  \spt\mu_E=\Big\{x\in A:0<|E\cap B(x,r)|<\om_n\,r^n\,,\forall r>0\Big\}\subset A\cap\pa E\,.
\end{equation}
If $E$ is of locally finite perimeter in $A$ and $|(E\Delta F)\cap A|=0$, then $F$ is of locally finite perimeter in $A$ with $\mu_E=\mu_F$.

\smallskip

\noindent {\it Reduced and essential boundaries}. If $E\subset\R^n$, $t\in[0,1]$,we set
\[
E^{(t)}=\big\{x\in\R^n \textrm{ such that } |E\cap B_{x,r}|=t\,|B_{x,r}|+o(r^n) \textrm{ as $r\to 0^+$}\big\},
\]
The {\it essential boundary} of $E$ is defined as $\pa^{\rm e}E=\R^n\setminus(E^{(0)}\cup E^{(1)})$. If $E$ is of locally finite perimeter in the open set $A$, then the {\it reduced boundary} $\pa^*E\subset A$ of $E$ is the set of those $x\in A$ such that
\[
\nu_E(x)=\lim_{r\to 0^+}\frac{\mu_E(B_{x,r})}{|\mu_E|(B_{x,r})}\,,
\]
exists and belongs to $\mathbf S^{n-1}$. As it turns out,
\[
\pa^*E\subset A\cap\pa^{\rm e}E\subset\spt\mu_E\subset A\cap\pa E\,,\qquad A\cap\cl(\pa^*E)=\spt\mu_E\,,
\]
and each inclusion may be strict.
 Federer's criterion, see for instance  \cite[Theorem 16.2]{maggiBOOK}, ensures that
\begin{equation}
  \label{federer theorem}
  \H^{n-1}((A\cap\pa^{\rm e}E)\setminus\pa^*E)=0\,.
\end{equation}
Moreover,
\begin{equation}\label{eq:partition}
A=_{\H^{n-1}} \big(E^{(0)}\cup E^{(1)}\cup \pa^{\rm e} E\big)\cap A=_{\H^{n-1}} \big(E^{(0)}\cup E^{(1)}\cup \pa^{*} E\big)\cap A\,,
\end{equation}
where the unions are \(\H^{n-1}\) disjoints and we have introduced  the notation  \(G=_{\H^{n-1}} F\) to mean  \(\H^{n-1}(G\Delta F)=0\) (and, similarly,  \(G\subset_{\H^{n-1}} F\) means that \(\H^{n-1}(F\setminus G)=0\)). We finally recall that De Giorgi's rectifiability theorem \cite[Theorem 15.5]{maggiBOOK} asserts that, for every \(x\in \pa^* E\),
\[
E^{x,r}\to \{y\in \R^n: \nu_E(x)\cdot y\le 0\}\qquad\textrm{in \(L^1_{\rm loc} (\R^n)\)}\,,
\]
and that $\mu_E=\nu_E\,\H^{n-1}\llcorner \pa^*E$ on Borel sets compactly contained in $A$ where,  given a Radon measure \(\mu\) and a Borel set \(G\), by  \(\mu\llcorner G\) we mean the measure given by \(\mu\llcorner G(F)=\mu(G\cap F)\). In particular
\begin{equation}
  \label{gauss green on E}
  \int_{E}\nabla\vphi(x)\,dx=\int_{\pa^*E}\vphi\,\nu_E\,d\H^{n-1}\,,\qquad\forall\vphi\in C^1_c(A)\,,
\end{equation}
see for instance  \cite[Section 15]{maggiBOOK}. In particular \(\mu_E(G)=0\) if \( \H^{n-1}(G)=0\).

\medskip

\noindent {\it Gauss-Green measure  and set operations.} It is well-known that, if $E$ and $F$ are of locally finite perimeter in $A$, then  $E\cap F$, $E\cup F$, $E\setminus F$ and $E\Delta F$ are sets of locally finite perimeter in $A$. Since the construction of competitors used in testing minimality inequalities often involves a combination of these set operations, being able to describe the corresponding behavior of Gauss--Green measures turns out to be extremely convenient. Recalling that $\nu_E(x)=\pm\nu_F(x)$ at $\H^{n-1}$-a.e. $x\in\pa^*E\cap\pa^*F$, setting  $\{\nu_E=\nu_F\}$ for the sets of those $x\in\pa^*E\cap\pa^*F$ such that $\nu_E(x)=\nu_F(x)$, and defining similarly $\{\nu_E=-\nu_F\}$, one can prove that
\begin{eqnarray}\label{cap}
  \mu_{E\cap F}&=&\mu_E \llcorner (F^{(1)}\cap\pa^*E)+\mu_F \llcorner(E^{(1)}\cap\pa^*F)+\mu_E \llcorner\{\nu_E=\nu_F\}\,,
  \\\label{cup}
  \mu_{E\cup F}&=&\mu_E \llcorner (F^{(0)}\cap\pa^*E)+\mu_F \llcorner(E^{(0)}\cap\pa^*F)+\mu_E \llcorner\{\nu_E=\nu_F\}\,,
  \\\label{minus}
  \mu_{E\setminus F}&=&
  \mu_E \llcorner (F^{(0)}\cap\pa^*E)-\mu_F \llcorner(E^{(1)}\cap\pa^*F)+\mu_E \llcorner\{\nu_E=-\nu_F\}\,,
\end{eqnarray}
see \cite[Section 16.1]{maggiBOOK}.  Moreover, if $E\subset F$, then
\begin{equation}
  \label{subset}
  \mu_E=\mu_E\llcorner F^{(1)}+\mu_F\llcorner\{\nu_E=\nu_F\}=\mu_E\llcorner F^{(1)}+\mu_F\llcorner(\pa^*E\cap\pa^*F)\,.
\end{equation}

\medskip

\noindent {\it Reduced boundary and bi-Lipschitz transformations.} If $f:\R^n\to\R^n$ is a Lipschitz diffeomorphism with $\det(\nabla f)>0$ on $\R^n$, then by the area formula it follows that  $f(E)$ is a set of locally finite perimeter in $f(A)$, with $f(\pa^*E)=_{\H^{n-1}}\pa^*(f(E))$ and
\[
\nu_{f(E)}(f(x))=\frac{\cof(\nabla f(x))\nu_E(x)}{|\cof(\nabla f(x))\nu_E(x)|}\,,\qquad\mbox{for $\H^{n-1}$-a.e. $x\in\pa^*f(E)$}\,.
\]
(Recall that, if $L:\R^n\to\R^n$ is an invertible linear map, then
\[
\cof L= (\det L)\,(L^{-1})^*\,,
\]
where $L^*$ denotes the adjoint map to $L$.) Moreover, one has
\begin{equation}
\label{change of variables cofattore}
  \int_{f(G\cap\pa^*E)}\Psi(y,\nu_{f(E)}(y))\,d\H^{n-1}(y)=\int_{G\cap\pa^*E}\Psi\Big(f(x),\cof(\nabla f(x))\,\nu_{E}(x)\Big)\,d\H^{n-1}(x)\,,
\end{equation}
for every Borel measurable function $\Psi:A\times \R^{n}\to[0,\infty]$ which is one-homogeneous in the second variable and every $G\subset A$.

  \medskip

  \noindent {\it Traces of sets of finite perimeter.} Let $A$ be an open set in $\R^n$, let $H$ be an open half-space in $\R^n$, and let $E\subset H$ be a set of locally finite perimeter in $A$. Since $1_{E}\in BV(A'\cap H)$ for every open set $A'\cc A$, by \cite[Lemma 2.4, Theorem 2.10]{GiustiMinimalSurfacesBOOK} there exists a Borel set \(\tr_{\partial H}(E)\subset A\cap\partial H\) such that
   \begin{equation}\label{eq:trace}
   \int_E \Div T(x)dx =\int_{H\cap\pa^*E} T\cdot \nu_{E}\,d\mathcal H^{n-1}+\int_{\tr_{\partial H}(E) } T \cdot \nu_{H} \,d\mathcal H^{n-1}\qquad \forall\, T\in C_c^1(A,\R^n)\,,
   \end{equation}
   and with the property that, if $E_t=\{z\in\pa H:(z,t)\in E\cap A\}$ ($t>0$), then
   \begin{equation}\label{eq:traceapprox}
   \lim_{t\to 0^+}\H^{n-1}(K\cap(E_t\Delta\tr_{\pa H}(E)))=0\,,\qquad\mbox{for every $K\cc A$.}
   \end{equation}
   On taking into account that, by \eqref{subset},
   \begin{equation}
    \label{subset of H}
    \mu_E=\nu_E\,\H^{n-1}\llcorner(H\cap\pa^*E)-e_1\,\H^{n-1}\llcorner(\pa^*E\cap\pa H)\,,
   \end{equation}
   by comparing \eqref{gauss green on E}, \eqref{subset of H} and \eqref{eq:trace} we get
   \begin{equation}
   \label{traces equivalenza}
   \tr_{\partial H} (E)=_{\H^{n-1}}\partial ^* E \cap \partial H\,.
   \end{equation}
   We also notice that
   \begin{equation}\label{tracciacomplementare}
   \tr_{\pa H}(H\setminus  E)=_{\H^{n-1}}\pa H\setminus\tr_{\pa H}(E)\,.
   \end{equation}
   Finally, from  \cite[Theorem 2.11]{GiustiMinimalSurfacesBOOK}, we have that if $\{E_h\}_{h\in\N}$ and $E$ are sets of locally finite perimeter in $A$, then
   \begin{equation}
     \label{trace continuity}
     \left\{
     \begin{array}
       {l}
       \mbox{$E_h\to E$ in $L^1_{\rm loc}(A)$}\,,
       \\
       \mbox{$P(E_h;A\cap H)\to P(E;A\cap H)$ as $h\to\infty$}\,,
     \end{array}
     \right .\quad\Rightarrow\quad
     \mbox{$\tr_{\pa H}(E_h)\to\tr_{\pa H}(E)$ in $L^1_{\rm loc}(A\cap\pa H)$.}
   \end{equation}

\subsection{Basic remarks on almost-minimizers}\label{section definitions} Let us recall from Definition \ref{definition minimizers} that if $A$ and $H$ are an open set and an open half-space in $\R^n$, $\Phi$ is an elliptic integrand on $A\cap H$, $r_0\in(0,\infty]$, and $\Lambda\ge0$, then one says that $E$ is a {\it $(\La,r_0)$-minimizer of $\PHI$ in $(A,H)$} provided $E\subset H$, $E$ is a set of locally finite perimeter in $A$, and
\begin{equation}\label{inequality Lambda minimality}
\PHI(E;W\cap H)\le \PHI(F;W\cap H)+\La\,|E\Delta F|
\end{equation}
whenever $F\subset H$ and $E\Delta F\cc W$ for some open set $W\cc A$ with $\diam(W)<2r_0$. A \((0,\infty)\)-minimizer will be is simply called minimizer.

The following two simple  remarks concerning the behavior of  almost minimizers with respect to the scaling and set complement will be frequently used in the sequel:
 \begin{remark}[Minimality and set complement]\label{remark complement}
  {\rm If $E$ is a $(\La,r_0)$-minimizer of $\PHI$ in $(A,H)$, then $H\setminus E$ is a  $(\La,r_0)$-minimizer of $\widetilde{\PHI}$ in $(A,H)$, provided we set
  \[
  \widetilde{\Phi}(x,\nu)=\Phi(x,-\nu)\,.
  \]
  Of course, $\Phi\in\X(A\cap H,\lambda,\ell)$ if and only if $\widetilde\Phi\in\X(A\cap H,\lambda,\ell)$.}
\end{remark}

\begin{remark}[Minimality and scaling]\label{remark blow-up}
  {\rm Given $x\in\cl(A\cap H)$ and $r<r_0$ such that $B_{x,r}\cc A$, one notices that $E$ is a $(\La,r_0)$-minimizer of $\PHI$ in $(A,H)$ if and only if $E^{x,r}$ is a  $(\La\,r,r_0/r)$-minimizer of $\PHI^{x,r}$ in $(A^{x,r},H^{x,r})$, where we have set
  \[
  \Phi^{x,r}(y,\nu)=\Phi(x+r\,y,\nu)\,.
  \]
 Notice that $\Phi\in\X(A\cap H,\lambda,\ell)$ if and only if $\Phi^{x,r}\in\X((A\cap H)^{x,r},\lambda,r\,\ell)$.}
\end{remark}

 It is sometimes convenient to consider sets which satisfy the minimality inequality \eqref{inequality Lambda minimality} only with respect to inner or outer variations. Hence we also give the following definition, with $A$, $H$, $\Phi$, $r_0$ and $\La$ as above.

 \begin{definition}[Sub/superminimizer]
   One says that $E$ is a {\it $(\La,r_0)$-subminimizer of $\PHI$ in $(A,H)$} if $E\subset H$, $E$ is of locally finite perimeter in $A$, and inequality \eqref{inequality Lambda minimality} holds true whenever $F\subset E$ and $E\setminus F\cc W$ for some open set $W\cc A$ with $\diam(W)<2r_0$; and that $E$ is a {\it $(\La,r_0)$-superminimizer of $\PHI$ in $(A,H)$} if inequality \eqref{inequality Lambda minimality} holds true whenever $E\subset F\subset H$ and $F\setminus E\cc W$ for some open set $W\cc A$ with $\diam(W)<2r_0$. In analogy with Definition \ref{definition minimizers}, when \(E\) is a \((0,\infty)\)-sub/superminimizer one simply says that $E$ is a sub/superminimizer.
\end{definition}

\begin{remark}\label{subsupermin} It is clear that a $(\La,r_0)$-minimizer in \((A,H)\) is both a $(\La,r_0)$-superminimizer and a $(\La,r_0)$-subminimizer. The converse is also true. Indeed, using \eqref{cap} and \eqref{cup}, one easily verifies that for every sets \(E,F\subset H\) of locally finite perimeter in \(A\),
\begin{equation}\label{capcup}
\PHI(E\cap F, W\cap H)+\PHI(E\cup F, W\cap H)\le \PHI(E, W\cap H)+\PHI(F, W\cap H),
\end{equation}
wherever \(W\cc A\). Hence, if \(E\) is a both a \((\Lambda,r_0)\)-superminimizer and a \((\Lambda, r_0)\)-subminimizer and \(F\Delta E \cc W\cc A\), comparing \(E\) with \(E\cup F\) and \(E\cap F\) (which are immediately seen to be admissible) and using \eqref{capcup} we obtain
\[
\begin{split}
2\,\PHI(E, W\cap H)&\le \PHI(E\cap F, W\cap H)+\PHI(E\cup F, W\cap H)+\Lambda \big(|E\setminus F|+|F\setminus E|\big)\\
&\le \PHI(E, W\cap H)+\PHI(F, W\cap H)+\Lambda |E\Delta F|.
\end{split}
\]
\end{remark}

The following `` transfer of sub/superminimality property''   will  be useful in section \ref{section singular set}.

\begin{proposition}\label{proposition subsuper}
  Let $A$ and be $H$ be an open set and an open half-space in $\R^n$, let $\Phi\in\X(A\cap H,\l,\ell)$, and let $E_1, E_2\subset H$ be sets of locally finite perimeter in $A$ with
  \begin{equation}\label{contenimento}
  \pa^*E_2\subset_{\H^{n-1}}\pa^*E_1\,.
  \end{equation}
  If $E_1$ is a $(\La,r_0)$-superminimizer of $\PHI$ in $(A,H)$ and $E_1\subset E_2$, then $E_2$ is a $(\La,r_0)$-superminimizer of $\PHI$ in $(A,H)$; see Figure \ref{fig superminimi}.
  \begin{figure}
    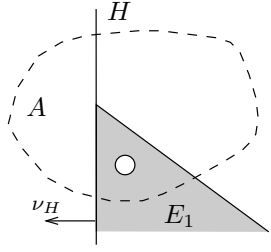\caption{{\small The situation in Proposition \ref{proposition subsuper}: if $E_1$ is a $(\La,r_0)$-superminimizer of $\PHI$ in $(A,H)$ (which is the case for the set $E_1$ in the picture if $\Lambda$ and $r_0$ are large and small enough respectively, and if the angle between the flat part of the boundary of $E_1$ and $\pa H$ is in a suitable range, cf. with Proposition \ref{proposition young}), then the set $E_2$ obtained by adding the interior of the missing disk to $E_1$ (larger set with smaller boundary) is still a superminimizer.}}\label{fig superminimi}
  \end{figure}
  Similarly, if $E_1$ is a $(\La,r_0)$-subminimizer of $\PHI$ in $(A,H)$ and $E_2\subset E_1$, then $E_2$ is a $(\La,r_0)$-subminimizer of $\PHI$ in $(A,H)$.
\end{proposition}

\begin{proof} We give details only in the case of superminimizers, the case of subminimizers being entirely analogous. Let \(F\) be such that  $E_2\subset F\subset H$ and $F\setminus E_2\cc W$ for some open set $W\cc A$ with $\diam(W)<2r_0$. Setting $G^+=G\cap H$ for every $G\subset\R^n$, we want to show that $\PHI(E_2;W^+)\le \PHI(F;W^+)+\La\,|F\setminus E_2|$. By $E_2\subset F$ and \eqref{subset}, this last inequality is equivalent to
  \begin{equation}
  \label{superH1}
  \PHI(E_2;F^{(1)}\cap W^+)\le \PHI(F;E_2^{(0)}\cap W^+)+\La\,|F\setminus E_2|\,.
  \end{equation}
 To prove \eqref{superH1}, we set
  \[
  F_*=(F\setminus E_2)\cup E_1\,,
  \]
  so that $E_1\subset F_*$ with $F_*\setminus E_1=F\setminus E_2\cc W$. By $(\La,r_0)$-superminimality of $E_1$, we have
  \begin{equation}
  \label{tardi}
  \PHI(E_1;W^+)\le \PHI(F_*;W^+)+\La\,|F\setminus E_2|\,.
  \end{equation}
  We now deduce \eqref{superH1} from \eqref{tardi} by repeatedly applying the formulas for Gauss-Green measures under set operations in conjunction with $E_1\subset E_2$ and \eqref{contenimento}. We begin by noticing that, by \eqref{federer theorem}, \eqref{subset} and \eqref{contenimento} we have
  \begin{eqnarray}\label{molto}
  \mu_{E_1}=\mu_{E_2}+\mu_{E_1}\llcorner E_2^{(1)}\,,\qquad\mu_{E_2}=\mu_{E_2}\llcorner \pa^*F+\mu_{E_2}\llcorner F^{(1)}\,.
  \end{eqnarray}
  By \eqref{eq:partition} and \eqref{molto} we find
  \begin{eqnarray}\nonumber
  \PHI(E_1;W^+)&=&\PHI(E_2;W^+)+\PHI(E_1;E_2^{(1)}\cap W^+)
  \\\label{tardi2}
  &=&\PHI(E_2;F^{(1)}\cap W^+)+\PHI(F;\pa^*E_2\cap W^+)+\PHI(E_1;E_2^{(1)}\cap W^+)\,.
  \end{eqnarray}
  Since $\nu_{E_1}=-\nu_{F\setminus E_2}$ $\H^{n-1}$-a.e. on $\pa^*E_1\cap\pa^*(F\setminus E_2)$ (due to the fact that $E_1\subset E_2\subset F$), by applying \eqref{cup} to $F_*$ we find that
  \begin{eqnarray}\label{tardi3}
  \PHI(F_*;W^+)=\PHI(F\setminus E_2;E_1^{(0)}\cap W^+)+\PHI(E_1;(F\setminus E_2)^{(0)}\cap W^+)\,.
  \end{eqnarray}
  We start noticing that
  \begin{equation}
    \label{tardi4}
      \PHI(F\setminus E_2;E_1^{(0)}\cap W^+)=\PHI(F;E_2^{(0)}\cap W^+)\,.
  \end{equation}
  Indeed,  by \eqref{molto}, \eqref{minus} gives $\mu_{F\setminus E_2}=\mu_F\llcorner E_2^{(0)}-\mu_{E_2}\llcorner F^{(1)}$, so that we just need to show that $P(E_2;E_1^{(0)})=0$: but this is obvious, since $E_1\subset E_2$ implies \( E_2^{(0)}\subset E_1^{(0)}\), and thus, by \eqref{contenimento},
  \[
  \H^{n-1}(E_1^{(0)}\cap\pa^*E_2)\le \H^{n-1}(E_1^{(0)}\cap\pa^*E_1)=0\,.
  \]
  This proves \eqref{tardi4}. Next, we notice that $(F\setminus E_2)^{(0)}=_{\H^{n-1}}F^{(0)}\cup E_2^{(1)}\cup(\pa^*F\cap\pa^*E_2)$, with $\H^{n-1}(F^{(0)}\cap\pa^*E_1)=0$ by \(F^{(0)}\subset E_1^{(0)}\) and \eqref{eq:partition}, so that
  \begin{eqnarray}\nonumber
      \PHI(E_1;(F\setminus E_2)^{(0)}\cap W^+)&=&\PHI(E_1;E_2^{(1)}\cap W^+)+\PHI(E_1;\pa^*F\cap\pa^*E_2\cap W^+)
      \\\nonumber
      &=&\PHI(E_1;E_2^{(1)}\cap W^+)+\PHI(F;\pa^*E_1\cap\pa^*E_2\cap W^+)
      \\
      &=&\PHI(E_1;E_2^{(1)}\cap W^+)+\PHI(F;\pa^*E_2\cap W^+)\,,\label{tardi5}
  \end{eqnarray}
  where in the last two identities we have first used that $\nu_F=\nu_{E_1}$ $\H^{n-1}$-a.e. on $\pa^*F\cap\pa^*E_1$, and then \eqref{contenimento}. By combining \eqref{tardi}, \eqref{tardi2}, \eqref{tardi3}, \eqref{tardi4}, and \eqref{tardi5} we thus find \eqref{superH1}.
  \end{proof}

\subsection{Anisotropic Young's law on half-spaces}\label{section young on half} It is well known that, if $A$ is an open set and $\Phi$ is an autonomous elliptic integrand, then
\begin{equation}
  \label{minimality of half-spaces}
  \PHI(\{x\cdot\nu<s\};A)\le\PHI(F;A)\,,
\end{equation}
whenever $\nu\in \mathbf S^{n-1}$, $s\in\R$, $\{x\cdot\nu<s\}=\{x\in\R^n:x\cdot\nu<s\}$ and $\{x\cdot\nu<s\}\Delta F\cc A$. If, in addition, $\Phi\in\X_*(\l)$ for some $\l>0$, then by Taylor's formula, \eqref{Phi nabla 1} and \eqref{elliptic}, one can find positive constants $\k_1$ and $\k_2$ depending on $\l$ only, such that
\begin{equation}
  \label{k1k2}
  \k_1\,\frac{|\nu_1-\nu_2|^2}2\le\Phi(\nu_2)-\Phi(\nu_1)-\nabla\Phi(\nu_1)\cdot(\nu_2-\nu_1)\le \k_2\,\frac{|\nu_1-\nu_2|^2}2\,,
\end{equation}
for every $\nu_1,\nu_2\in \mathbf S^{n-1}$. Correspondingly, one can strengthen \eqref{minimality of half-spaces} into
\begin{equation}
  \label{minimality of half-spaces lambda interna}
  \k_1\int_{A\cap\pa^*F}\frac{|\nu_F-\nu|^2}2\le\PHI(F;A)-\PHI(\{x\cdot\nu<s\};A)
\le\k_2\int_{A\cap\pa^*F}\frac{|\nu_F-\nu|^2}2\,,
\end{equation}
which holds true whenever $\nu\in \mathbf S^{n-1}$, $s\in\R$, and $\{x\cdot\nu<s\}\Delta F\cc A$. The following proposition provides similar assertions when a free boundary condition on a given hyperplane is considered.

\begin{proposition}[Anisotropic Young's law]\label{proposition young}
  Let $H=\{x_1>0\}$, $A$ be an open set, $\Phi\in\X_*(\l)$ for some $\l>0$, $\nu\in \mathbf S^{n-1}\setminus\{\pm e_1\}$ and $c\in\R$ be such that the set
  \[
      E=H\cap\{x\cdot\nu>c\}\,,
  \]
  satisfies $A\cap H\cap\pa E\ne\emptyset$, see Figure \ref{fig young}. Then, \(E\) is a superminimizer of \(\PHI\) in \((A, H)\) if and only if
  \begin{equation}
    \label{young law}
    \nabla\Phi(\nu)\cdot e_1\ge 0\,;
  \end{equation}
  similarly, \(E\) is a subminimizer of \(\PHI\) in \((A, H)\) if and only if  \(\nabla\Phi(\nu)\cdot e_1\le 0\). In particular, \(E\) is a minimizer of \(\PHI\) in \((A, H)\) if and only if \(\nabla \Phi(\nu) \cdot e_1=0\). Moreover, in this last case,
  \begin{eqnarray}\nonumber
  \k_1\int_{A\cap H\cap\pa^*F}\frac{|\nu_F-\nu|^2}2\,d\H^{n-1}&\le&\PHI(F;W\cap H)-\PHI(E;W\cap H)
  \\
  \label{minimality of half-spaces lambda neumann}
  &\le&\k_2\int_{A\cap H\cap\pa^*F}\frac{|\nu_F-\nu|^2}2\,d\H^{n-1}\,,\hspace{0.5cm}
  \end{eqnarray}
  whenever $F\subset H$ with $E\Delta F\cc W\cc  A$. Here, $\k_1$ and $\k_2$ are as in \eqref{k1k2}.
\end{proposition}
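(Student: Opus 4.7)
\textbf{Proof plan (Proposition \ref{proposition young}).} The strategy is to use the constant vector field $v:=\nabla\Phi(\nu)$ as a \emph{calibration} for the wedge $E$, thereby expressing $\PHI(E;W\cap H)$ as a boundary integral along any competitor $F$ plus a correction controlled by $\nabla\Phi(\nu)\cdot e_1$. All four conclusions of the proposition will then follow by a single comparison.

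\emph{Step 1: calibration identity.} I plan to apply the trace--divergence formula \eqref{eq:trace} to $E$ and to $F$ with test field $T(x)=\phi(x)\,v$, where $\phi\in C^1_c(A)$ equals $1$ on a neighborhood of $\cl(E\Delta F)$, and then subtract. The interior integrals $\int_E\mathrm{div}\,T-\int_F\mathrm{div}\,T$ vanish because $\nabla\phi\equiv 0$ on $E\Delta F$; the boundary and trace contributions outside $W$ cancel because $\mu_E=\mu_F$ and $\tr_{\pa H}(E)=_{\H^{n-1}}\tr_{\pa H}(F)$ there (a consequence of $E\Delta F\cc W$ via \eqref{subset of H}). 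Using $\nu_H=-e_1$, $\nu_E=\nu$ on $H\cap\pa^*E$, and Euler's identity $\nabla\Phi(\nu)\cdot\nu=\Phi(\nu)$, subtracting $\PHI(E;W\cap H)$ from $\PHI(F;W\cap H)=\int_{W\cap H\cap\pa^*F}\Phi(\nu_F)\,d\H^{n-1}$ yields the master identity
\[
\PHI(F;W\cap H)-\PHI(E;W\cap H)=\int_{W\cap H\cap\pa^*F}\!\bigl[\Phi(\nu_F)-\Phi(\nu)-\nabla\Phi(\nu)\cdot(\nu_F-\nu)\bigr]d\H^{n-1}+(\nabla\Phi(\nu)\cdot e_1)\bigl[\H^{n-1}(W\cap\tr_{\pa H}(F))-\H^{n-1}(W\cap\tr_{\pa H}(E))\bigr].
\]

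\emph{Step 2: ``if'' directions and the quantitative estimate.} The first integrand above is nonnegative by convexity of $\Phi(\cdot)$. If $F\supset E$ then $\tr_{\pa H}(E)\subset_{\H^{n-1}}\tr_{\pa H}(F)$, so the trace bracket is $\ge 0$, and $\nabla\Phi(\nu)\cdot e_1\ge0$ forces the right-hand side to be nonnegative: $E$ is a superminimizer. The subminimizer case is symmetric (the bracket becomes $\le 0$, and $\nabla\Phi(\nu)\cdot e_1\le 0$ keeps the second term $\ge 0$), and the intersection of the two conditions gives the minimizer characterisation $\nabla\Phi(\nu)\cdot e_1=0$. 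When this last equality holds the trace term disappears identically, so the master identity reduces to an equality involving only the convex-defect integrand, and \eqref{k1k2} applied pointwise to $\nu,\nu_F\in\mathbf S^{n-1}$ yields both sides of \eqref{minimality of half-spaces lambda neumann}.

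\emph{Step 3: ``only if'' directions.} I plan to test super/subminimality against the one-parameter family of tilted wedges $F_t:=H\cap\{x\cdot\nu>c-t\phi\}$, where $\phi\in C^1_c(A)$ is nonnegative and is chosen so that its trace on $\pa H$ meets the corner $A\cap\pa H\cap\{x\cdot\nu=c\}$ in a set of positive $\H^{n-2}$-measure. For $t\ge0$ one has $F_t\supset E$ and $\nu_{F_t}=\nu+O(t)$ on $\pa^*F_t\cap H$, so the first integral in the master identity is $O(t^2)$, while $\H^{n-1}(W\cap\tr_{\pa H}(F_t))-\H^{n-1}(W\cap\tr_{\pa H}(E))$ is linear in $t$ with strictly positive leading coefficient. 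Dividing the superminimality inequality by $t$ and letting $t\to 0^+$ therefore forces $\nabla\Phi(\nu)\cdot e_1\ge 0$; the shrinking family $F_t=H\cap\{x\cdot\nu>c+t\phi\}\subset E$ handles the subminimizer ``only if'' analogously.

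\emph{Main obstacle.} The only genuinely delicate point is the careful bookkeeping in Step 1: one must verify that the boundary and trace contributions outside $W$ really do cancel in the $E-F$ difference of the divergence identities, a step that relies on $E\Delta F\cc W$, the trace description \eqref{subset of H}--\eqref{eq:trace}, and the equality $\mu_E=\mu_F$ on $A\setminus W$. Once the calibration identity is in place, every remaining conclusion is an immediate consequence of the convexity of $\Phi(\cdot)$ together with the quadratic envelope \eqref{k1k2}.
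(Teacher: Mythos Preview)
Your calibration identity (Step~1) and its use for the ``if'' directions and the quantitative bound (Step~2) are essentially the paper's Step one: both apply the divergence theorem to the constant field $\nabla\Phi(\nu)$, subtract the resulting identities for $E$ and $F$, and express the energy gap as the convex defect of $\Phi$ plus a trace correction weighted by $\nabla\Phi(\nu)\cdot e_1$. The paper localises with a smooth $W'\cc W$ satisfying $\H^{n-1}(\pa W'\cap\pa^*E)=\H^{n-1}(\pa W'\cap\pa^*F)=0$ rather than with your cutoff $\phi$, but the outcome is the same identity \eqref{resto}.

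Your Step~3 is a genuinely different route from the paper's Step two. The paper tests superminimality against a \emph{tangential} diffeomorphic variation $f_t(x)=x+t\,\zeta(x)\,e$ with $e\in e_1^\perp$, $e\cdot\nu\ge0$, computes the anisotropic first variation of $\PHI$ explicitly via \eqref{change of variables cofattore}, and then reduces (by a second divergence theorem on the half-hyperplane $\{x\cdot\nu=c\}$, see \eqref{younger2}) to a boundary integral over the corner $\{x\cdot\nu=c\}\cap\pa H$, from which the sign of $\nabla\Phi(\nu)\cdot e_1$ is read off. You instead feed the \emph{normal} deformation $F_t=H\cap\{x\cdot\nu>c-t\phi\}$ back into the master identity already established in Step~1: the convex-defect integral is $O(t^2)$ since $\nu_{F_t}=\nu+O(t)$, while the trace bracket is linear in $t$ with positive coefficient (by coarea on $\pa H$, using $\nu\ne\pm e_1$ so that $z\mapsto z\cdot\nu$ has nonvanishing tangential gradient there), and dividing by $t$ forces $\nabla\Phi(\nu)\cdot e_1\ge0$. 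Your argument is more economical because it recycles the calibration identity rather than recomputing a first variation from scratch; the paper's diffeomorphic argument is closer in spirit to the first-variation derivation of Young's law \eqref{youngs law anisotropic}. Both approaches implicitly require that $A$ meets the corner $\pa H\cap\{x\cdot\nu=c\}$, which the paper arranges by normalising to $0\in A$ and $c=0$; you should make the analogous assumption explicit when choosing $\phi$.
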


\begin{proof}
 {\it Step one}: We prove
  \begin{figure}
    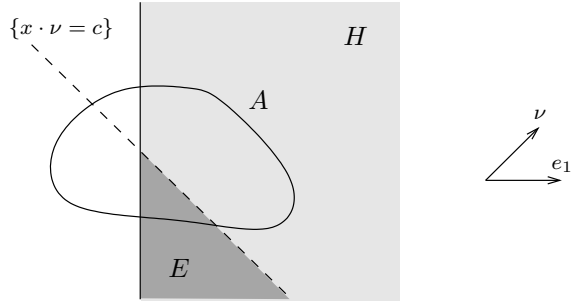\caption{{\small If $H=\{x_1>0\}$, then an half-space with outer unit normal $\nu$ is a superminimizer of $\PHI$ on $(\R^n,H)$ if and only if $\nabla\Phi(\nu)\cdot e_1=0$.}}\label{fig young}
  \end{figure}
 that  \eqref{young law} implies
 \begin{equation}
  \label{minimality of half-spaces neumann}
  \PHI(E;W\cap H)\le\PHI(F;W\cap H)\,,
  \end{equation}
  whenever $E\subset F\subset H$ with $F\setminus E \cc W\cc A$. Indeed, let \(W'\cc W\) be a set with smooth boundary such that \(F\setminus E\cc W'\) and
\begin{equation}\label{lasolita}
\H^{n-1}(\pa W'\cap \pa^* E)=\H^{n-1}(\pa W'\cap \pa^* F)=0.
\end{equation}
By applying the divergence theorem to the constant vector field $\nabla\Phi(\nu)$ on the sets of finite perimeter $E\cap W'$ and \(F\cap W'\), by  taking into account \eqref{lasolita}, and by noticing that \(\nu_E=-e_1\) \(\H^{n-1}\)-a.e. on \(\pa^* E\cap \pa H \) (and that an analogous relation holds true for $F$), we obtain
\begin{equation}\label{divteo}
\begin{split}
\int_{\pa^* E \cap W'\cap H}\nabla \Phi(\nu)\cdot \nu_E\,d\H^{n-1}-(\nabla \Phi(\nu)\cdot e_1) \H^{n-1}(\pa^* E\cap \pa H \cap W')&\\
=-\int_{E^{(1)}\cap \pa W'} \nabla \Phi (\nu)\cdot \nu_{W'}\,d \H^{n-1}&\,,
\\
\int_{\pa^* F \cap W'\cap H}\nabla \Phi(\nu)\cdot \nu_F\,d \H^{n-1}-(\nabla \Phi(\nu)\cdot e_1) \H^{n-1}(\pa^* F\cap \pa H \cap W')&\\
=-\int_{F^{(1)}\cap \pa W'} \nabla \Phi (\nu)\cdot \nu_{W'}\,d \H^{n-1}&\,.
\end{split}
\end{equation}
By \(F\setminus E\cc W'\), we have \(E^{(1)}\cap \partial W'=F^{(1)}\cap \partial W'\); moreover, the inclusions \(E\subset F\subset H\) and the definition of essential boundary imply that $\pa^* E\cap \pa H=_{\H^{n-1}}\pa^e  E\cap \pa H\subset \pa ^eF \cap \pa H=_{\H^{n-1}} \pa ^*F \cap \pa H$: thus, by subtracting the two identities in \eqref{divteo}, we find
\begin{eqnarray}\nonumber
  &&\int_{\pa^* E \cap W'\cap H}\nabla \Phi(\nu)\cdot \nu_E\,d\H^{n-1}-\int_{\pa^* F \cap W'\cap H}\nabla \Phi(\nu)\cdot \nu_F\,d\H^{n-1}
  \\\label{divteo cons}
  &&\hspace{2cm}=
  -(\nabla \Phi(\nu)\cdot e_1) \H^{n-1}\Big((\pa^* F\setminus\pa^*E)\cap \pa H \cap W'\Big)\,.
\end{eqnarray}
Since \(\nu_E=\nu\) on \(\pa^* E \cap H\) and \(\nabla \Phi(\nu)\cdot \nu=\Phi(\nu)\), the first integral on the left-hand side of \eqref{divteo cons} coincides with $\PHI(E;W'\cap H)$. Therefore, \eqref{divteo cons} gives
\begin{multline}\label{resto}
\PHI(E, W\cap H)+\int_{\pa^* F\cap H\cap W} \gamma_F\,d\H^{n-1}
\\
= \PHI(F, W\cap H)-(\nabla \Phi(\nu)\cdot e_1) \H^{n-1}\big((\pa^* F\setminus \pa^* E)\cap \pa H \cap W\big)\,,
\end{multline}
where we have defined $\g_F:\pa^*F\to\R$ by setting
\[
\gamma_F=\Phi(\nu_F)-\nabla\Phi(\nu)\cdot\nu_F=\Phi(\nu_F)-\Phi(\nu)+\nabla\Phi(\nu)\cdot(\nu_F-\nu)\,.
\]
By convexity of \(\Phi\),  \(\gamma_F\ge 0 \) on $\pa^*F$, and thus \eqref{young law} and \eqref{resto} imply \eqref{minimality of half-spaces neumann}. The case of subminimizers is treated analogously, and then the characterization of minimizers follows by Remark \ref{subsupermin}. Moreover, in this last case, by exploiting \eqref{capcup} as in Remark \ref{subsupermin}, and by using  \eqref{resto}, \eqref{cup}, and \eqref{cap}, we obtain
\begin{equation}\label{davinci}
\PHI(F, W\cap H)-\PHI(E, W\cap H)=\int_{\pa^* F\cap H\cap W} \gamma_F\,d\H^{n-1}\,.
\end{equation}
Since, by \eqref{k1k2}, $\k_1\,|\nu_F-\nu|^2\le 2\,\gamma_F(y)\le \k_2\,|\nu_F-\nu|^2$ on $\pa^*F$, we see that \eqref{davinci} implies \eqref{minimality of half-spaces lambda neumann}.

\medskip

\noindent {\it Step two}: We now prove that \eqref{minimality of half-spaces neumann} implies \eqref{young law}. Without loss of generality we shall assume that $0\in A$ and that $c=0$. In particular, by \eqref{minimality of half-spaces neumann}, there exists $r>0$ such that \eqref{minimality of half-spaces neumann} holds true for every $E\subset F\subset H$ with $F\setminus  E\cc B_r$. To exploit this property, we pick $\zeta\in C^1_c(B_r)$, \(\zeta \ge 0\),   $e\in \mathbf S^{n-1}$ with
\[
e\cdot e_1=0\,  \qquad e\cdot \nu \ge 0,
\]
  and we define the maps $f_t(x)=x+t\,T(x)$ for $T=\zeta\,e\in C^1_c(B_r;\R^n)$,  $t\ge 0 $ and $x\in\R^n$. Clearly there exists $\e_0>0$ such that $\{f_t\}_{t\in [0,\e_0)}$ is a one-parameter family of diffeomorphisms on $\R^n$ such that, if we set $F_t=f_t(E)$, then $E \subset F_t\subset H$ with $F_t\setminus  E\cc B_r$. In particular by \eqref{minimality of half-spaces neumann}
\begin{equation}
  \label{first variation}
  0\le \frac{d}{dt}\bigg|_{t=0^+}\PHI(f_t(E);B_r\cap H)\,.
\end{equation}
By \eqref{change of variables cofattore},
\begin{eqnarray*}
  &&\PHI(f_t(E);B_r\cap H)=\int_{B_r\cap\pa^*E}\Phi(\cof(\nabla f_t)\nu_E)\,d\H^{n-1}
  \\
  &=&\int_{B_r\cap\pa^*E}\Phi(\nu_E)+t\,\Big(\Phi(\nu_E)\,\Div\,T-\nabla\Phi(\nu_E)\cdot [(\nabla T)^*\nu_E]\Big)\,d\H^{n-1}+o(t)\,,
\end{eqnarray*}
where we have also used the fact that
\begin{equation}\label{sviluppini}
\begin{aligned}
 \nabla f_t&=\Id+t\,\nabla T\,, &\cof(\nabla f_t)=(J\,f_t)[(\nabla f_t)^{-1}\circ f_t]^*&\,,
 \\
 (\nabla f_t)^{-1}\circ f_t&=\Id-t\,\nabla T+O(t^2)\,,& Jf_t=1+t\,\Div T+O(t^2)&\,.
\end{aligned}
\end{equation}
By \eqref{first variation}, $\nabla T=e\otimes\nabla\zeta$, $\nu_E=\nu$, and $\Phi(\nu)=\nabla\Phi(\nu)\cdot\nu$, we thus find that
\begin{eqnarray}\nonumber
0&\le &\int_{B_r\cap\pa^*E}\Phi(\nu)\,(e\cdot\nabla\zeta)-(e\cdot\nu)\,(\nabla\Phi(\nu)\cdot\nabla\zeta)\,d\H^{n-1}
\\\label{younger}
&=&\Big((\nabla\Phi(\nu)\cdot\nu)\,e-(e\cdot\nu)\nabla\Phi(\nu)\Big)\cdot\int_{B_r\cap\pa^*E} \nabla\zeta\,d\H^{n-1}\,.
\end{eqnarray}
 We now recall that \(B_r\cap\pa^*E\)  is the intersection with $H$ of the $(n-1)$-dimensional disk in $\R^n$ of radius $r>0$, center at the origin, and perpendicular to $\nu$, and that $\zeta=0$ on $\pa B_r$. Therefore, if we denote by $\nu_*$ its unit co-normal vector along \(\{ x\cdot \nu =0\}\cap \pa H\),  then by divergence theorem
\begin{equation}
  \label{younger2}
  \int_{B_r\cap\pa^*E} \nabla\zeta\,d\H^{n-1}=\nu_*\,\int_{\{ x\cdot \nu =0\}\cap\pa H}\zeta\,d\H^{n-2}\,.
\end{equation}
By exploiting \eqref{younger} and \eqref{younger2}, and choosing $\zeta\in C_c(B_r)$ with $\int_{\{ x\cdot \nu =0\}\cap H}\zeta\,d\H^{n-2}> 0$, we find
\begin{equation}
  \label{younger3}
  \Big((\nabla\Phi(\nu)\cdot\nu)\,e-(e\cdot\nu)\nabla\Phi(\nu)\Big)\cdot\nu_*\ge 0\,,\qquad\forall\, e\in e_1^\perp\quad e\cdot \nu\ge 0\,.
\end{equation}
Since $\nu\ne\pm e_1$ we can find $\a$ and $\beta<0$ such that $\nu_*=\a\,\nu+\beta\,e_1$. If we plug this identity into \eqref{younger3}, as $\beta<0$, then we find
\begin{equation*}
 (e\cdot\nu)\,(\nabla\Phi(\nu)\cdot e_1)\ge 0\,,\qquad e\in e_1^\perp\quad e\cdot \nu\ge 0.
\end{equation*}
As \(\nu\ne \pm e_1\), there exists \(e\in e_1^\perp\) with \(\nu\cdot e>0\). Thus $\nabla\Phi(\nu)\cdot e_1\ge 0$, as desired.
\end{proof}

We conclude this section on anisotropic Young's laws with an elementary technical lemma that shall be frequently used in the sequel. Given $\nu\in \mathbf S^{n-1}$ with $|\nu\cdot e_1|<1$, we shall set
\begin{equation}
  \label{bf e1}
  {\bf e_1}(\nu)=\frac{\nu-(\nu\cdot e_1)\,e_1}{\sqrt{1-(\nu\cdot e_1)^2}}\,,
\end{equation}
for the normalized projection of $\nu$ on $e_1^\perp$. In the light of Proposition \ref{proposition young}, the following lemma says that if $\{x\cdot\nu<0\}\cap H$ is close to be a minimizer of $\PHI\in\X_*(\l)$ (in the sense that $\nabla\Phi(\nu)\cdot e_1$ is small), then there exists a minimizer of $\PHI$ of the form $\{x\cdot\nu_0<0\}\cap H$ with $\nu_0$ close to $\nu$, and with the normalized projections of $\nu_0$ and $\nu$ on $e_1^\perp$ being actually equal to each other.

\begin{lemma}\label{lemma 0}
  For every $\lambda\ge 1$, there exist positive constants $\e_0$ and $C_0$, depending on $\lambda$ only, with the following property. If  $\Phi\in\X_*(\lambda)$, $\nu\in \mathbf S^{n-1}$, and
  \begin{equation}\label{pranzozurigo}
  |\nabla\Phi(\nu)\cdot e_1|\le\e_0\,,
  \end{equation}
  then  there exists $\nu_0\in \mathbf S^{n-1}$ such that
  \begin{equation}
    \label{lemma 0 tesi}
      {\bf e_1}(\nu_0)={\bf e_1}(\nu)\,,\qquad \nabla\Phi(\nu_0)\cdot e_1=0\,,\qquad |\nu_0-\nu|\le C_0\,|\nabla\Phi(\nu)\cdot e_1|\,.
  \end{equation}
\end{lemma}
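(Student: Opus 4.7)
The natural idea is that the three conditions in \eqref{lemma 0 tesi} force $\nu_0$ to lie on the unit circle of the two-plane $\Pi$ spanned by $e_1$ and ${\bf e_1}(\nu)$; the problem then reduces to a one-variable implicit function argument. The main obstacle is to guarantee that the relevant derivative, $\frac{d}{d\theta}\bigl[\nabla\Phi(\nu(\theta))\cdot e_1\bigr]$, is bounded away from zero in a neighborhood of the starting angle, uniformly in $\Phi\in\X_*(\lambda)$.

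\smallskip
\noindent\emph{Step 1 (the angle $\nu$ makes with $e_1$ is controlled).} Using Euler's identity, $\nabla\Phi(\pm e_1)\cdot e_1=\pm\Phi(\pm e_1)$, so by \eqref{Phi 1} one has $|\nabla\Phi(\pm e_1)\cdot e_1|\ge 1/\lambda$. The Hessian bound in \eqref{Phi nabla 1} makes $\nu\mapsto\nabla\Phi(\nu)\cdot e_1$ Lipschitz on $\mathbf{S}^{n-1}$ with constant depending only on $\lambda$. Choosing $\e_0=\e_0(\lambda)$ small enough, the assumption \eqref{pranzozurigo} forces $|\nu\cdot e_1|\le 1-\delta(\lambda)$ for some $\delta(\lambda)>0$. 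In particular ${\bf e_1}(\nu)$ is well defined.

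\smallskip
\noindent\emph{Step 2 (reduction to one variable).} Set $\tau={\bf e_1}(\nu)$ and parameterize the unit circle in $\Pi$ by
\[
\nu(\theta)=\cos\theta\,\tau+\sin\theta\,e_1,\qquad \theta\in(-\tfrac\pi2,\tfrac\pi2).
\]
Then $\nu=\nu(\theta_*)$ with $\sin\theta_*=\nu\cdot e_1$, and ${\bf e_1}(\nu(\theta))=\tau$ for every $\theta\in(-\pi/2,\pi/2)$, so the first condition in \eqref{lemma 0 tesi} is automatic as long as the angle we find stays in this interval. By Step 1, $|\theta_*|\le \pi/2-\delta'(\lambda)$ for some $\delta'(\lambda)>0$.

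\smallskip
\noindent\emph{Step 3 (monotonicity).} Define $f(\theta)=\nabla\Phi(\nu(\theta))\cdot e_1$. Observe that $\nu'(\theta)\perp\nu(\theta)$ with $|\nu'(\theta)|=1$, and decompose
\[
e_1=\sin\theta\,\nu(\theta)+\cos\theta\,\nu'(\theta).
\]
Since $\nabla\Phi$ is $0$-homogeneous, Euler's identity gives $\nabla^2\Phi(\nu(\theta))\,\nu(\theta)=0$, hence, using the symmetry of $\nabla^2\Phi$,
\[
f'(\theta)=\nabla^2\Phi(\nu(\theta))\nu'(\theta)\cdot e_1=\cos\theta\,\bigl[\nabla^2\Phi(\nu(\theta))\,\nu'(\theta)\cdot\nu'(\theta)\bigr].
\]
The ellipticity condition \eqref{elliptic}, combined with $\nu'(\theta)\perp\nu(\theta)$ and $|\nu'(\theta)|=1$, yields
\[
f'(\theta)\ge \frac{\cos\theta}{\lambda}\ge \frac{c(\lambda)}{\lambda}>0
\]
on the interval $|\theta|\le \pi/2-\delta'(\lambda)/2$.

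\smallskip
\noindent\emph{Step 4 (conclusion via the inverse function theorem).} By \eqref{pranzozurigo}, $|f(\theta_*)|\le\e_0$. Taking $\e_0$ smaller if necessary (still depending only on $\lambda$), the monotonicity on the interval above and the intermediate value theorem produce a unique $\theta_0$ in that interval with $f(\theta_0)=0$; the mean value theorem then gives
\[
|\theta_0-\theta_*|\le \frac{\lambda}{c(\lambda)}\,|f(\theta_*)|=C_0(\lambda)\,|\nabla\Phi(\nu)\cdot e_1|.
\]
Setting $\nu_0=\nu(\theta_0)$, the equality ${\bf e_1}(\nu_0)={\bf e_1}(\nu)$ holds by Step 2, while $\nabla\Phi(\nu_0)\cdot e_1=f(\theta_0)=0$ and $|\nu_0-\nu|\le|\theta_0-\theta_*|\le C_0(\lambda)\,|\nabla\Phi(\nu)\cdot e_1|$, which is \eqref{lemma 0 tesi}.
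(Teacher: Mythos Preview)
Your proof is correct and follows essentially the same approach as the paper: parametrize the great circle in the plane spanned by $e_1$ and ${\bf e_1}(\nu)$, use ellipticity to get a uniform lower bound on $|f'|$, and conclude by the intermediate value and mean value theorems. The only cosmetic differences are that the paper bounds $|\nu\cdot e_1|$ via a projection identity (their inequality \eqref{elliptic boundary eps}) rather than your Lipschitz argument, and computes $f'$ through the reparametrization $f(\alpha)=\nabla\Phi({\bf e_1}(\nu)-\tan\alpha\,e_1)\cdot e_1$ and $(-1)$-homogeneity of $\nabla^2\Phi$, whereas you stay on the sphere and use the Euler relation $\nabla^2\Phi(\nu)\nu=0$; the two computations are equivalent.
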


\begin{proof}
  We begin noticing that, for every $e\in \mathbf S^{n-1}$
  \begin{equation}
    \label{elliptic boundary eps}
    |e\cdot e_1|\le \sqrt{1-\frac1{\l^4}}+|\nabla\Phi(e)\cdot e_1|\,\l^3\,.
  \end{equation}
  Indeed, if ${\bf j}$ denotes the projection of $\R^n$ onto $\nabla\Phi(e)^{\perp}$, then by \(\Phi(e)=\nabla \Phi(e)\cdot e\), \eqref{Phi 1} and \eqref{Phi nabla 1},
  \begin{eqnarray*}
    &&1=\frac{|e\cdot\nabla\Phi(e)|^2}{|\nabla\Phi(e)|^2}+|{\bf j}\,e|^2\ge\frac1{\l^4}+|{\bf j}\,e\cdot {\bf j}\,e_1|^2=
    \frac1{\l^4}+|e\cdot{\bf j}\,e_1|^2
    \\
    &&\ge\frac1{\l^4}+\Big(|e\cdot e_1|-\frac{(\nabla\Phi(e)\cdot e)\,(\nabla\Phi(e)\cdot e_1)}{|\nabla\Phi(e)|^2}\Big)^2\,,
  \end{eqnarray*}
  that leads to \eqref{elliptic boundary eps} by $\Phi(e)\le\l$ and $|\nabla\Phi(e)|\ge1/\l$ (this last property follows by one homogeneity and \eqref{Phi 1}). We now notice that $\nu_0\in \mathbf S^{n-1}$ is such that ${\bf e_1}(\nu_0)={\bf e_1}(\nu)$ if and only if $\nu_0=\cos\a_0\,{\bf e_1}(\nu)-\sin\a_0 \,e_1$ for some $|\a_0|<\pi/2$. Let us thus set
   \[
   f(\a)=\nabla\Phi(\cos\a\,{\bf e_1}(\nu)-\sin\a e_1)\cdot e_1\qquad  |\a|<\pi/2.
   \]
  By the  one-homogeneity of \(\Phi\) and by \eqref{Phi 1}, we obtain
  \begin{eqnarray*}
  f(\pi/2)=\nabla\Phi(-e_1)\cdot e_1=-\Phi(-e_1)\le-\frac1\lambda\,,\qquad
  f(-\pi/2)=\nabla\Phi(e_1)\cdot e_1=\Phi(e_1)\ge\frac1\lambda\,,
  \end{eqnarray*}
  so that there exists $\a_0\in(-\pi/2,\pi/2)$ such that $f(\a_0)=0$; correspondingly, $\nu_0$ satisfies the first two identities in \eqref{lemma 0 test}. We now notice that, by \eqref{elliptic boundary eps},  by $\nabla\Phi(\nu_0)\cdot e_1=0$ and by \eqref{pranzozurigo}
   \begin{eqnarray*}
    |\nu_0\cdot e_1|\le\sqrt{1-\frac1{\l^4}}\,,\qquad |\nu\cdot e_1|\le\sqrt{1-\frac1{\l^4}}+\e_0\,\l^3\,.
  \end{eqnarray*}
  Hence, for every $\l\ge 1$ we can find $\eta(\l)\in(0,1)$ and $\e_0=\e_0(\l)$ such that
  \[
  \max\{|\nu_0\cdot e_1|,|\nu\cdot e_1|\}\le 1-\eta.
  \]
   Correspondingly, for some $\tau(\l)<\pi/2$, we find that $|\a_0|\le\tau$ and, if  $\a_1\in(-\pi/2,\pi/2)$ is such that $\nu=\cos\a_1\,{\bf e_1}(\nu)-\sin\a_1 \,e_1$, then $|\a_1|\le\tau$ too. Since, by zero-homogeneity of $\nabla\Phi$, $f(\a)=\nabla\Phi({\bf e_1}(\nu)-\tan\a\,e_1)\cdot e_1$ for every $|\a|<\pi/2$, by \eqref{elliptic} we conclude that
  \begin{eqnarray}\label{fprimo}
    f'(\a)=-\frac{e_1\cdot\nabla^2\Phi({\bf e_1}(\nu)-\tan\a\,e_1)\,e_1}{\cos^2\a}\le-\frac{1}{\lambda\,\cos^2\a\,|{\bf e_1}(\nu)-\tan\a\,e_1|}=-\frac1{\lambda\,\cos\a}\,,
  \end{eqnarray}
  for every $|\a|<\pi/2$. In particular, there exists $\bar\a$ between $\a_0$ and $\a_1$ such that
  \[
  |\nabla\Phi(\nu)\cdot e_1|=|f(\a_1)|=|f(\a_0)-f(\a_1)|\ge \frac{|\a_0-\a_1|}{\lambda\,\cos\bar{\a}}\ge \frac{|\a_0-\a_1|}{\lambda\,\cos\tau}\ge\frac{|\a_0-\a_1|}{C(\lambda)}\,.
  \]
  Since \(|\nu-\nu_0|\le 2 |\alpha_0-\alpha_1|\), the above equation concludes the proof of  the lemma.
\end{proof}

\subsection{Density estimates}\label{section density estimates} Density estimates for almost-minimizers are proved by a classical argument. The only significant difference is that when deducing lower perimeter estimates from upper volume estimates, a whole family of relative isoperimetric inequalities has to be used in place of the sole relative isoperimetric inequality on a ball, see \eqref{gammasigma} below.

\begin{lemma}\label{lemma stime di densita}
  For every $\l\ge 1$ there exist constants \(c_1=c_1(n,\l)\in (0,1)\) and \(C_1=C_1(n,\l)\) with the following property. If $A$ is an open set, $H$ is an open half-space, $\Phi\in\X(A\cap H,\l,\ell)$, and $E$ is a $(\La,r_0)$-minimizer of $\PHI$ in $(A,H)$, then
  \begin{equation}
  \label{stime densita perimetro upper}
  P(E;B_{x,r})=P(E;B_{x,r}\cap H)+P(E;B_{x,r}\cap\pa H)\le C_1\, r^{n-1}\,,
  \end{equation}
  for every $x\in A\cap\cl(H)$ and $r<\min\big\{r_0,\dist(x,\pa A),1/2\l \L\big\}$,   and
  \begin{align}
  \label{stime densita volume lower}
  |E\cap B_{x,r}|&\ge c_1|B_{x,r}\cap H|\,,&&\forall x\in \cl(H)\cap\spt\mu_E\,,
  \\
  \label{stime densita volume upper}
  |E\cap B_{x,r}|&\le (1-c_1)|B_{x,r}\cap H|\,&&\forall x\in \cl(H)\cap\spt\mu_{H\setminus E}\,,
  \\
  \label{stime densita perimetro lower}
  P(E;B_{x,r}\cap H)&\ge c_1\, r^{n-1}\,,&&\forall x\in A\cap\cl(H\cap\spt\mu_E)\,,
  \end{align}
for every $r<\{r_0,\dist(x,\pa A),1/2\l \L\big\}$.
\end{lemma}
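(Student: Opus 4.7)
The plan is to prove the four assertions in sequence, following the standard pattern for almost-minimizers (cf.\ the interior case in \cite{maggiBOOK}). Fix $x\in A\cap\cl(H)$ and $r<r_*:=\min\{r_0,\dist(x,\pa A),1/(2\l\La)\}$, and write $m(r)=|E\cap B_{x,r}|$ and $u(r)=\H^{n-1}(\pa^* E\cap H\cap B_{x,r})$. The two new ingredients with respect to the classical interior theory are (i) a divergence-theorem control of the trace $\pa^*E\cap\pa H$ in terms of $u$ and the spherical section $\H^{n-1}(E\cap\pa B_{x,r})$, and (ii) the replacement of a single relative isoperimetric inequality on a ball by a uniform family of such inequalities on convex sets of the form $B_{x,r}\cap H$, which is the main delicate point alluded to in the sentence preceding the lemma.

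For the upper perimeter bound \eqref{stime densita perimetro upper}, I plug the inner competitor $F=E\setminus B_{x,r}$ into \eqref{inequality Lambda minimality} with $W=B_{x,r+\e}$. By \eqref{minus}, for $\H^1$-a.e.\ $r$ the reduced boundary of $F$ inside $H\cap W$ is the union of $\pa^*E\cap H\cap(B_{x,r+\e}\setminus\overline{B_{x,r}})$ with $H\cap E^{(1)}\cap\pa B_{x,r}$. Bounding $\Phi\le\l$ via \eqref{Phi 1} on the new piece and letting $\e\to 0^+$ yields $\PHI(E;H\cap B_{x,r})\le\l\,n\,\om_n\,r^{n-1}+\La\,m(r)$; combining with $\Phi\ge 1/\l$ and the estimate $\La m(r)\le\La\om_n r^n\le(\om_n/(2\l))r^{n-1}$ (from $r\le 1/(2\l\La)$) gives $P(E;H\cap B_{x,r})\le C(n,\l)\,r^{n-1}$. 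For the trace contribution, I apply the divergence theorem to the constant vector field $e_1$ on the set of finite perimeter $E\cap B_{x,r}$, identifying its boundary pieces via \eqref{subset of H}, to obtain
\[
\H^{n-1}(\pa^*E\cap B_{x,r}\cap\pa H)\le P(E;H\cap B_{x,r})+\H^{n-1}(E\cap\pa B_{x,r})\le C_1\,r^{n-1}.
\]

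For the lower volume bound \eqref{stime densita volume lower}, the same competitor and ellipticity yield, for a.e.\ $r$, $u(r)\le\l^2\,\H^{n-1}(H\cap E^{(1)}\cap\pa B_{x,r})+\l\La\,m(r)$. Applying the standard Euclidean isoperimetric inequality to $E\cap B_{x,r}$, decomposing its perimeter via \eqref{subset of H}, and using the trace bound from the previous step to control the $\pa H$-contribution produces
\[
m(r)^{(n-1)/n}\le C(n,\l)\bigl[\H^{n-1}(E\cap\pa B_{x,r})+\La\,m(r)\bigr].
\]
Recalling $m'(r)=\H^{n-1}(E\cap\pa B_{x,r})$ a.e.\ and using $m(r)\le\om_n r^n$ together with $r\le 1/(2\l\La)$ to absorb the $\La\,m(r)$ term yields $m(r)^{(n-1)/n}\le C\,m'(r)$ for a.e.\ $r\in(0,r_*)$. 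Integration from $m(0)=0$, combined with \eqref{sptmuE} at $x\in\spt\mu_E$ to guarantee $m(r)>0$ throughout, produces $m(r)\ge c(n,\l)\,r^n$ and hence \eqref{stime densita volume lower} via $|B_{x,r}\cap H|\le\om_n\,r^n$. The upper volume bound \eqref{stime densita volume upper} follows at once by applying this argument to $H\setminus E$, which is a $(\La,r_0)$-minimizer of $\widetilde\PHI\in\X(A\cap H,\l,\ell)$ by Remark \ref{remark complement}.

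Finally, for \eqref{stime densita perimetro lower}, I observe that every $x\in A\cap\cl(H\cap\spt\mu_E)$ lies in both $\spt\mu_E$ and $\spt\mu_{H\setminus E}$: indeed any $y\in H\cap\spt\mu_E$ has small balls $B_{y,\rho}\subset H$ with $0<|E\cap B_{y,\rho}|<|B_{y,\rho}|$, forcing $|(H\setminus E)\cap B_{y,\rho}|>0$, a property that passes to limit points $x$ by monotonicity of $\rho\mapsto|(H\setminus E)\cap B_{x,\rho}|$. Thus \eqref{stime densita volume lower} and \eqref{stime densita volume upper} together bracket $c_1|B_{x,r}\cap H|\le m(r)\le(1-c_1)|B_{x,r}\cap H|$. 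The conclusion follows by the relative isoperimetric inequality on the convex set $B_{x,r}\cap H$: as $\dist(x,\pa H)/r$ varies, this set interpolates by scaling between a half-ball (when $x\in\pa H$) and a full ball (when $B_{x,r}\subset H$), producing a compact one-parameter family of convex shapes whose relative isoperimetric constants admit a uniform bound depending on $n$ only. Together with $|B_{x,r}\cap H|\ge\om_n\,r^n/2$ (which holds for every $x\in\cl(H)$), this gives $P(E;B_{x,r}\cap H)\ge c_1'\,r^{n-1}$. The main obstacle in the proof is precisely the uniform control on the family of relative isoperimetric constants in step four; all other steps are direct adaptations of the interior argument, with the divergence-theorem trace control of step one as the sole new ingredient.
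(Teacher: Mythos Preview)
The proposal is correct and follows essentially the same approach as the paper: compare with $E\setminus B_{x,r}$ to get the upper perimeter bound and the differential inequality for $m(r)$, control the trace on $\pa H$ via the divergence theorem (the paper cites this as \cite[Proposition 19.22]{maggiBOOK}, your vector-field $e_1$ argument is the same computation), pass to the complement for the upper volume bound, and finish via a uniform relative isoperimetric inequality on the one-parameter family of truncated balls $B_{x,r}\cap H$ (which the paper encodes as the positivity of the quantity $\gamma(\sigma)$ introduced in its Step one).
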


Recall that, in our notation, if $E\subset\R^n$ is of locally finite perimeter in the open set $A$, then $\spt\mu_E$ and $\pa^*E$ are automatically defined as subsets of $A$.

\begin{proof} {\it Step one}: Without loss of generality, we shall assume that $H=\{x_1>0\}$, and set $G^+=G\cap H$ for every $G\subset\R^n$. For $\s\in(1/2,1)$, we   consider the relative isoperimetric problem in the truncated ball $B_{t\,e_1}^+$ ($t\ge 0$) with volume fraction $\s$, and set
\begin{equation}\label{gammasigma}
\g(\s)=\inf_{t\ge0}\inf\Big\{\frac{P(F;B_{t\,e_1}^+)}{|F|^{(n-1)/n}}:|F|\le \s|B_{t\,e_1}^+|\Big\}\,.
\end{equation}
Then  $\g(\s)>0$ for every $\s\in(1/2,1)$

\medskip

\noindent {\it Step two}: Given $x\in A$ we set $r_x=\min\{r_0,\dist(x,\pa A),1/2\l \L\}$, and define
 \[
 m_x(r)=|E\cap B_{x,r}|=|E\cap B_{x,r}\cap H|
 \]
  for every $r\in (0,r_x)$, so that $m_x$ is absolutely continuous on $(0,r_x)$ (and strictly positive  if we also have $x\in\spt\,\mu_E$), with $m_x'(r)=\H^{n-1}(E\cap\pa B_{x,r})$ for a.e. $r\in(0,r_x)$. If $x\in A\cap\cl(H)$ and $r\in(0,r_x)$, then the set $F_r=E\setminus B_{x,r}\subset H$ satisfies $E\Delta F_r\cc B_{x,r_*}\cc A$ for some $r_*\in(r,r_x)$.  Since  $r_*<r_0$, the set $F_r$ is admissible in \eqref{inequality Lambda minimality}, which gives
  \[
  \PHI(E;B_{x,r_*}^+)\le\PHI(F_r;B_{x,r_*}^+)+\La\,m_x(r)\,.
  \]
  We combine this last inequality and \eqref{Phi 1} with the remark that, by \eqref{minus},
  \[
  \PHI(F_r;B_{x,r_*}^+)=\PHI(F_r;B_{x,r_*}^+\setminus B_{x,r}^+)+\int_{E\cap\pa B_{x,r}}\Phi(y,\nu_{B_{x,r}}(y))\,d\H^{n-1}(y)\,,\quad\mbox{for a.e. $r>0$}\,,
  \]
  in order to get
  \begin{equation}
  \label{density estimates proof}
  P(E;B_{x,r}^+)\le \l \PHI(E;B_{x,r}^+)\le \l^2\,m_x'(r)+\l \L m_x(r)\,,\qquad\forall x\in A\cap\cl(H)\,,r<r_x\,.
  \end{equation}
  Since $m_x'(r)\le n\om_nr^{n-1}$, $m_x(r)\le \om_n\,r_x\,r^{n-1}$, and $2\l \La\,r_x\le 1/\l$, this proves that
  \begin{equation}
  \label{stime densita perimetro upperx}
  P(E;B_{x,r}^+)\le C\,r^{n-1}\,,\qquad\forall x\in A\cap\cl(H)\,,r<r_x\,,
  \end{equation}
  where $C=C(n,\l)$. Now, by the divergence theorem (see \cite[Proposition 19.22]{maggiBOOK}), we have
  \begin{equation}
  \label{caviglia}
  P(E\cap B_{x,r};\pa H)\le P(E\cap B_{x,r};H)\,,\qquad\forall x\in\R^n\,,r>0\,.
  \end{equation}
  At the same time, by \eqref{cap} one has
  \[
  P(E\cap B_{x,r};H)=P(E;B_{x,r}^+)+m_x'(r) \qquad \textrm{for a.e. $r>0$,}
  \]
  while $P(B_{x,r};\pa H)=0$ gives $P(E\cap B_{x,r};\pa H)=P(E;B_{x,r}\cap\pa H)$. By combining these facts with \eqref{stime densita perimetro upperx} and \eqref{caviglia} we obtain  \eqref{stime densita perimetro upper}. Moreover, for every $x\in \R^n$ and for a.e. $r>0$,
\begin{eqnarray*}
  P(E\cap B_{x,r})&=&P(E\cap B_{x,r};H)+P(E\cap B_{x,r};\pa H)\le 2\,P(E\cap B_{x,r};H)
  \\
  &=&2\,\Big\{P(E; B_{x,r}^+)+\H^{n-1}(E\cap\pa B_{x,r})\Big\}\,.
\end{eqnarray*}
  This last inequality, together with \eqref{density estimates proof} and the isoperimetric inequality, gives
\begin{equation}
  \label{precisi}
  n\om_n^{1/n}m_x(r)^{(n-1)/n}\le  2\,(1+\l^2)\,m_x'(r)+2\,\l\,\L\,m_x(r)\,,
\end{equation}
for every $x\in A\cap\cl(H)$ and for a.e. $r<r_x$. Since  $2\l \La\,r_x\le1$, for every $r<r_x$ we get
\[
2\,\l\,\L\,m_x(r)\le 2\,\l\,\L\, m_x(r_x)^{1/n}\,m_x(r)^{(n-1)/n}\le 2\,\l\,\L\,\omega_n^{1/n} r_x \,m_x(r)^{(n-1)/n}\le \om_n^{1/n}m_x(r)^{(n-1)/n}\,,
\]
so that \eqref{precisi} gives, for every $x\in A\cap\cl(H)$ and for a.e. $r<r_x$,
\begin{equation}
  \label{precisi2}
  (n-1)\om_n^{1/n}\,m_x(r)^{(n-1)/n}\le  2\,(1+\l^2)\,m_x'(r)\,.
\end{equation}
If we now assume that $x\in\spt\,\mu_E$, then $m_x(r)>0$ for every $r>0$, and thus we can divide by $m_x(r)^{(n-1)/n}$ in \eqref{precisi2}. By integrating the resulting differential inequality, we get
\[
|E\cap B_{x,r}|\ge \om_n\,\Big(\frac{n-1}{2n(1+\l^2)}\Big)^n\,r^n\ge c(n,\lambda)|B_{x,r}\cap H| \,,\qquad\forall x\in \cl(H)\cap\spt\mu_E\,,r<r_x\,.
\]
We have thus found a constant $c=c(n,\l)\in(0,1)$ such that \eqref{stime densita volume lower} holds true with $c$ in place of $c_1$ (the final value of $c_1$ will be smaller than this). We prove \eqref{stime densita volume upper} (again, with $c$ in place of $c_1$) by repeating the above argument with $H\setminus E$ in place of $E$; see Remark \ref{remark complement}. Since $H\cap\spt\mu_E=H\cap\spt\mu_{H\setminus E}$, we notice that both \eqref{stime densita volume lower} and \eqref{stime densita volume upper} hold true for every $x\in A\cap\cl( H\cap\spt\mu_E)$ and $r<r_x$: correspondingly, by definition of $\gamma(\s)$, see \eqref{gammasigma}, we find
\begin{eqnarray*}
  P(E;B_{x,r}^+)&=&P(E\cap B_{x,r}^+;B_{x,r}^+)\ge \g(1-c)|E\cap B_{x,r}^+|^{(n-1)/n}
  \\
  &\ge&\g(1-c)\big(|B_{x/r,1}^+|\,c\big)^{(n-1)/n}\,r^{n-1}\,,
\end{eqnarray*}
for every $x\in A\cap\cl( H\cap\spt\mu_E)$ and every $r<r_x$. Since $|B_{x/r,1}^+|\ge |B_{0,1}^+|=\om_n/2$, this proves \eqref{stime densita perimetro lower} with $c_1=\g(1-c)(c\,\om_n/2)^{(n-1)/n}$.
\end{proof}

\subsection{Compactness theorem}\label{section compactness} The density estimates of Lemma \ref{lemma stime di densita} lead to the following  compactness theorem for almost-minimizers.

\begin{theorem}\label{thm compactness minimizers}
  Let $\l\ge1$, $A$ an open set in $\R^n$ and $H$ an open half-space in $\R^n$. For every $h\in\N$, let $\ell_h\ge0$, $\La_h\ge0$, $r_h\in(0,\infty]$ be such that
  \[
      \lim_{h\to\infty}\ell_h=\ell<\infty\,,\qquad\lim_{h\to\infty}r_h=r_0>0\,,\qquad \lim_{h\to\infty}\La_h=\La_0<\infty\,.
  \]
  If, for every $h\in\N$, $\Phi_h\in\X(A\cap H,\l,\ell_h)$ and $E_h$ is a $(\La_h,r_h)$-minimizer of $\PHI_h$ in $(A,H)$, then there exist $\Phi\in\X(A\cap H,\l,\ell)$ and a set $E$ of locally finite perimeter in $A$ such that, up to extracting a subsequence,
  \begin{eqnarray}\label{compactness 1}
   \mbox{$E_h\to E$ in $L^1_{\rm loc}(A)$}\,,
  \end{eqnarray}
  where $E$ is a $(\La,r_0)$-minimizer of $\PHI$ in $(A,H)$, and where
  \begin{equation}
    \label{compactness 0}
    \mu_{E_h}\weak\mu_E\,,\quad|\mu_{E_h}|\llcorner H\weak|\mu_E|\llcorner H\,,\quad |\mu_{E_h}|\weak|\mu_E|\,,\quad\mbox{as Radon measures in $A$}\,.
  \end{equation}
  Moreover,
  \begin{eqnarray}
    \label{compactness 4}
    &&\mbox{$\tr_{\pa H}(E_h)\to \tr_{\pa H}(E)$ in $L^1_{\rm loc}(A\cap\pa H)$}\,,
  \end{eqnarray}
  while
  \begin{equation}
    \label{kuratowski2}
    \mbox{for every $x\in\spt\mu_E$ there exists $x_h\in \spt\,\mu_{E_h}$ such that}\,\,\lim_{h\to\infty}x_h=x\,,
  \end{equation}
  and
  \begin{equation}
    \label{kuratowski1}
    \left\{\begin{array}{l l}
    x_h\in\cl(H\cap\spt\,\mu_{E_h})\,,&\forall h\in\N\,,
    \\
    \lim_{h\to\infty}x_h=x\,,&
    \end{array}\right .\quad\Rightarrow\quad x\in \cl(H\cap\spt\,\mu_E)\,.
  \end{equation}
\end{theorem}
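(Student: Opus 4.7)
The plan is to combine uniform perimeter bounds from Lemma~\ref{lemma stime di densita} with an Arzel\`a--Ascoli argument for the integrands $\Phi_h$, and then deduce minimality of the limit (together with the remaining conclusions) via a ball-slicing competitor construction. The central obstacle is the minimality step: one must pick a competitor that lies inside $H$ and for which the gluing error on the slicing sphere is negligible in the limit.

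For the extraction of subsequences, since $\l$ is fixed and $\ell_h$, $\La_h$ are bounded while $r_h \to r_0>0$, the upper bound \eqref{stime densita perimetro upper} yields $\sup_h P(E_h; K)<\infty$ for every $K\cc A$. The standard compactness theorem for sets of finite perimeter then produces $E\subset H$ (along a subsequence) with $E_h\to E$ in $L^1_{\rm loc}(A)$ and $\mu_{E_h}\weak\mu_E$ as $\R^n$-valued Radon measures in $A$. Simultaneously, \eqref{Phi 1}--\eqref{elliptic} together with $\ell_h\to\ell$ give equicontinuity of $\Phi_h,\nabla\Phi_h,\nabla^2\Phi_h$ on $\cl(A\cap H)\times\mathbf{S}^{n-1}$ (locally in $x$), so a diagonal Arzel\`a--Ascoli extraction provides locally uniform convergence $\Phi_h\to\Phi$ in the $\nu$-variable up to second derivatives. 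The defining inequalities of $\X(A\cap H,\l,\ell)$ are stable under such limits, so $\Phi\in\X(A\cap H,\l,\ell)$.

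To show $E$ is a $(\La_0,r_0)$-minimizer of $\PHI$ in $(A,H)$, fix $F\subset H$ with $E\Delta F\cc W\cc A$ open and $\diam(W)<2r_0$; since $r_h\to r_0$, also $\diam(W)<2r_h$ eventually. Choose $x_0$ and $s_0<s_1$ with $W\cc B_{x_0,s_0}$, $B_{x_0,s_1}\cc A$ and $2 s_1<2r_h$ eventually. By Fubini, for a.e.\ $s\in(s_0,s_1)$ one has $|\mu_E|(\pa B_{x_0,s})=|\mu_F|(\pa B_{x_0,s})=0$, and by $L^1_{\rm loc}$-convergence plus a diagonal extraction over a countable dense set of such $s$, also $\H^{n-1}(\pa B_{x_0,s}\cap(E_h\Delta E))\to 0$. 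Picking one such $s$, set
\[
F_h=(F\cap B_{x_0,s})\cup(E_h\setminus B_{x_0,s}).
\]
Then $F_h\subset H$ and $E_h\Delta F_h\subset B_{x_0,s}$; the Gauss--Green formulas \eqref{cap}--\eqref{minus} describe $\mu_{F_h}$ in terms of $\mu_F$ and $\mu_{E_h}$, and minimality of $E_h$ on $B_{x_0,s_1}$ followed by cancellation of the common annular contribution on $B_{x_0,s_1}\setminus\cl(B_{x_0,s})$ gives
\[
\PHI_h(E_h;H\cap B_{x_0,s})\le \PHI_h(F;H\cap B_{x_0,s})+\La_h|E_h\Delta F_h|+\eta_h(s),
\]
with a gluing error $\eta_h(s)\le\l\,\H^{n-1}(\pa B_{x_0,s}\cap H\cap(E_h\Delta F))$ that tends to zero since $E\Delta F\cc B_{x_0,s_0}\subset B_{x_0,s}$ and $E_h\to E$ on $\pa B_{x_0,s}$. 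Letting $h\to\infty$ via locally uniform convergence $\Phi_h\to\Phi$ for the fixed competitor, Reshetnyak-type lower semicontinuity of $\PHI$ (convexity in $\nu$ plus uniform convergence of $\Phi_h$) for the left-hand side, and $|E_h\Delta F_h|\to|E\Delta F|$, one obtains $\PHI(E;H\cap B_{x_0,s})\le \PHI(F;H\cap B_{x_0,s})+\La_0|E\Delta F|$. Since $E=F$ a.e.\ outside $W$, this reduces to the desired inequality on $H\cap W$.

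Once minimality of $E$ is known, I would run the same ball-slicing with the roles of $E$ and $F$ interchanged, testing the minimality of $E_h$ against $E$, to obtain $\limsup_h \PHI_h(E_h;H\cap B_{x_0,s})\le \PHI(E;H\cap B_{x_0,s})$; combined with lower semicontinuity, this gives $\PHI_h(E_h;H\cap B_{x_0,s})\to \PHI(E;H\cap B_{x_0,s})$. Because $\Phi\ge 1/\l>0$ and $\Phi$ is continuous in $\nu$, Reshetnyak's converse continuity theorem then upgrades $\mu_{E_h}\weak\mu_E$ with convergent anisotropic masses to $|\mu_{E_h}|\llcorner H\weak|\mu_E|\llcorner H$ in $A$; the analogous ball-slicing argument without restricting to $H$ gives $|\mu_{E_h}|\weak|\mu_E|$, and \eqref{compactness 4} then follows directly from \eqref{trace continuity}. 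For \eqref{kuratowski2}, any $x\in\spt\mu_E$ satisfies $0<|E\cap B_{x,r}|<|B_{x,r}|$ for all $r>0$, so the same holds for $E_h$ eventually, placing some $x_h\in\spt\mu_{E_h}\cap B_{x,r}$; a diagonal argument yields $x_h\to x$. For \eqref{kuratowski1}, the density estimate \eqref{stime densita volume lower} for $E_h$ at $x_h\in\cl(H\cap\spt\mu_{E_h})$ passes to the limit by $L^1_{\rm loc}$-convergence, placing $x\in\cl(H\cap\spt\mu_E)$.
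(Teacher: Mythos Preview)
Your approach is essentially the paper's: uniform perimeter bounds from Lemma~\ref{lemma stime di densita}, Arzel\`a--Ascoli for the integrands, a cut-and-paste competitor $F_h=(F\cap B_{x_0,s})\cup(E_h\setminus B_{x_0,s})$ with negligible gluing error, Reshetnyak lower semicontinuity for the limit minimality, and then energy convergence by testing with $F=E$ followed by Reshetnyak continuity.

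The one step that needs correction is your ordering at the end. The phrase ``the analogous ball-slicing argument without restricting to $H$ gives $|\mu_{E_h}|\weak|\mu_E|$'' does not work as written: the minimality inequality \eqref{inequality Lambda minimality} only controls $\PHI_h(\,\cdot\,;W\cap H)$, so no energy- or perimeter-convergence on $\pa H$ comes out of the ball-slicing. The correct order (as in the paper) is: from $|\mu_{E_h}|\llcorner H\weak|\mu_E|\llcorner H$ deduce $P(E_h;B_{x,r}\cap H)\to P(E;B_{x,r}\cap H)$ for a.e.\ $r$, apply \eqref{trace continuity} to get \eqref{compactness 4} \emph{first}, and \emph{then} combine the decomposition $|\mu_{E_h}|=|\mu_{E_h}|\llcorner H+\H^{n-1}\llcorner(\pa^*E_h\cap\pa H)$ with trace convergence to conclude $|\mu_{E_h}|\weak|\mu_E|$. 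With that reordering, your argument is complete.
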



\begin{proof}  {\it Step one}: We assume without loss of generality that $H=\{x_1>0\}$, and set $G^+=G\cap H$ for every $G\subset\R^n$. Up to extracting subsequence, we may assume that
\[
  \frac{r_0}2< r_h<2\,r_0\,,\qquad \La_h<2\L_0\,,\qquad\forall h\in\N\,.
\]
In particular, if $x\in A\cap\cl(H)$ and $s_x=\min\big\{r_0/2,\dist(x,\pa A),1/4\l \La_0\big\}$, then we can apply the upper density estimate \eqref{stime densita perimetro upper} to each set $E_h$ at the point $x$ and at any scale $r<s_x$, to find that
\begin{equation*}
  \label{nothingiseasy 2}
P(E_h;B_{x,r})\le C_1\,r^{n-1}\,,\qquad\forall x\in A\cap\cl(H)\,,r<s_x\,,
\end{equation*}
where $C_1=C_1(n,\l)$. Since $E\subset H$, a simple covering argument gives
\begin{equation*}
  \label{nothingiseasy 3}
  \sup_{h\in\N}P(E_h;A_0)<\infty\,,\qquad\mbox{for every open set $A_0\cc A$.}
\end{equation*}
Hence there exists a set $E$ of locally finite perimeter in $A$ such that, up to extracting subsequences, $E_h\to E$ in $L^1_{\rm loc}(A)$, and
\begin{equation}
  \label{nothingiseasy 4}
  \mu_{E_h}\weak  \mu_E\,,\qquad\mbox{as Radon measures in $A$}\,.
\end{equation}
This proves \eqref{compactness 1} and the first part of \eqref{compactness 0},

\medskip


\noindent {\it Step two}: By the Ascoli-Arzel\'a theorem, there exists $\Phi\in\X(A^+,\l,\ell)$ such that, up to extracting a subsequence, $\Phi_h\to\Phi$ in $C^0(\cl(A^+)\times \mathbf S^{n-1})$ with $\Phi_h(x,\cdot)\to\Phi(x,\cdot)$ in $C^2(\mathbf S^{n-1})$ uniformly on $x\in\cl(A^+)$.
By exploiting the uniform convergence of $\Phi_h$ to $\Phi$ on $\cl(A^+)\times \mathbf S^{n-1}$, together with the lower bound in \eqref{Phi 1}, we see that for every $\e>0$ there exists $h_\e$ such that if $h\ge h_\e$, then
\begin{equation}
  \label{Phiepsh}
(1+\e)\Phi(x,\nu)\ge \Phi_h(x,\nu)\ge(1-\e)\Phi(x,\nu)\,,\qquad\forall (x,\nu)\in \cl(A^+)\times \mathbf S^{n-1}\,.
\end{equation}
By \eqref{nothingiseasy 4} and by Reshetnyak lower semicontinuity theorem \cite[Theorem 2.38]{AFP}, we thus find that, for every open set $U\subset A$,
\begin{equation}\label{caviglia 3}
\liminf_{h\to\infty}\PHI_h(E_h;U)\ge\PHI(E;U)\,.
\end{equation}

\medskip

\noindent {\it Step three}: In order to prove that $E$ is a $(\L,r_0)$-minimizer of $\PHI$ in $(A,H)$, we need to show that
\begin{equation}
  \label{compactness theorem proof 1}
  \PHI(E;W^+)\le \PHI(F;W^+)+\La\,|E\Delta F|\,,
\end{equation}
whenever $W$ is open and $F\subset H$ is such that
\begin{equation}
  \label{comp condo}
  E\Delta F\cc W\cc A\,,\qquad \diam(W)<2r_0\,.
\end{equation}
Indeed, let $F$ and $W$ satisfy \eqref{comp condo}. Clearly we can find an open set $W'$ with
\begin{gather}
  \label{Rstar1}
  E\Delta F\cc W\cc  W'\,,\qquad  \H^{n-1}(\pa W'\cap(\pa^* E\cup\pa^* F))=0\,,
  \\
  \label{Rstar2}
  \lim_{h\to\infty}\H^{n-1}\Big((E\Delta E_h)\cap\pa W'\Big)=0\,,
\end{gather}
and $\diam(W')<2r_0$. Hence, there exists $h_*\in\N$ such that $\diam(W')<2\,r_h$ for every $h\ge h_*$; in particular, we can find an open set $W''\cc A$, such that, if we set
\begin{equation}
  \label{caviglia 1}
  F_h=(F\cap W')\cup (E_h\setminus W')\,,
\end{equation}
then $F_h\subset H$, $E_h\Delta F_h\cc W''\cc  A$, and $\diam(W'')<2r_h$ for every $h\ge h_*$. We can exploit the fact that $E_h$ is a $(\La_h,r_h)$-minimizer of $\PHI$ in $(A,H)$ to find
\[
  \PHI_h(E_h;(W'')^+)\le \PHI_h(F_h;(W'')^+)+\La_h\,|E_h\Delta F_h|\,,
\]
for every $h\ge h_*$. By taking into account \eqref{Rstar1}, \eqref{cap}, \eqref{cup} and \eqref{minus} we find that
\begin{equation}
  \label{compactness theorem proof 2}
  \PHI_h(E_h;(W')^+)\le \PHI_h(F;(W')^+)+\e_h+\La_h\,\Big|(E_h\Delta F)\cap W'\Big|\,,
\end{equation}
for every $h\ge h_*$, where we have applied \eqref{Phi 1} and set
\[
\e_h=\l\,\H^{n-1}\Big((E\Delta E_h)\cap\pa W'\Big)\,,\qquad h\in\N\,.
\]
By letting $h\to\infty$ in \eqref{compactness theorem proof 2}, by \eqref{Rstar2}, and since $E_h\to E$ in $L^1_{\rm loc}(A)$, we conclude
\begin{equation}
  \label{nothingiseasy 5}
  \limsup_{h\to\infty}\PHI_h(E_h;(W')^+)\le \PHI(F;(W')^+)+\La\,|E\Delta F|\,.
\end{equation}
Since $(W')^+\cc A$, by \eqref{caviglia 3} we get $\PHI(E;(W')^+)\le \PHI(F;(W')^+)+\La\,|E\Delta F|$, that is exactly \eqref{compactness theorem proof 1}.

\medskip

\noindent {\it Step four}: If we choose $F=E$ in the argument of step three, then the combination of \eqref{caviglia 3} and \eqref{nothingiseasy 5} gives
\[  \lim_{h\to\infty}\PHI_h(E_h;B^+_{x,r})=\PHI(E;B^+_{x,r})\,,
\]
for every $x\in A\cap\cl(H)$ and for a.e. $r<r_x=\min\{r_0,\dist(x,\pa A)\}$. By taking \eqref{Phiepsh} into account, we thus find that
\[
  \lim_{h\to\infty}\PHI(E_h;B^+_{x,r})=\PHI(E;B^+_{x,r})\,,
\]
for every $x\in A\cap\cl(H)$ and for a.e. $r<r_x$. By \eqref{nothingiseasy 4} and by the strict convexity of $\Phi(x,\cdot)$ (in the sense of \eqref{elliptic}), we can apply a classical result of Reshetnyak, see, e.g. \cite[Theorem 1, section 3.4]{GMSbook2}, to find that
\begin{equation}
  \label{nothingiseasy 6tris}
  \lim_{h\to\infty}P(E_h;B^+_{x,r})=P(E;B^+_{x,r})\,,
\end{equation}
for every $x\in A\cap\cl(H)$ and for a.e. $r<r_x$. By \eqref{nothingiseasy 6tris} , \eqref{trace continuity} and a covering argument we deduce the validity of \eqref{compactness 4}. Let now \(\mu\) be a weak*-cluster point of the family of measures \(|\mu_{E_h}|\llcorner H\). By  \eqref{nothingiseasy 6tris} and by the Lebesgue-Besicovitch differentiation theorem, we find \(\mu=|\mu_{E}|\llcorner H\), hence
\begin{equation}
\label{nothingiseasy 6quater}
|\mu_{E_h}|\llcorner H\weak|\mu_E|\llcorner H\,,\qquad\mbox{as Radon measures in $A$}\,,
\end{equation}
 which proves  the second statement in \eqref{compactness 0}. We finally complete the proof of \eqref{compactness 0}: given a compact set $K\subset A$, by $E\subset H$ we have $|\mu_E|(K)=|\mu_E|\llcorner H(K)+\H^{n-1}(K\cap\pa H\cap\pa^*E)$, where
\[
|\mu_E|\llcorner H(K)\ge\limsup_{h\to\infty}|\mu_{E_h}|\llcorner H(K)\,,\quad \H^{n-1}(K\cap\pa H\cap\pa^*E)=
\lim_{h\to\infty}\H^{n-1}(K\cap\pa H\cap\pa^*E_h)\,,
\]
by \eqref{nothingiseasy 6quater} and  by \eqref{compactness 4} respectively. This shows that $|\mu_E|(K)\ge\limsup_{h\to\infty}|\mu_{E_h}|(K)$ for every compact set $K\subset A$. This last fact, combined with \eqref{nothingiseasy 4}, implies the last statement in  \eqref{compactness 0}, see for instance \cite[Proposition 4.26]{maggiBOOK}.

\medskip

\noindent {\it Step four}: We finally prove \eqref{kuratowski2} and \eqref{kuratowski1}.
The validity of \eqref{kuratowski2} is a standard consequence of \eqref{nothingiseasy 4}: indeed, if \eqref{kuratowski2} fails, then there exists $\e>0$ such that, up to extracting subsequences, $B(x,\e)\cap\spt\mu_{E_h}=\emptyset$ for every $h\in\N$; but then, by \eqref{nothingiseasy 4} we get
 \[
 |\mu_E|(B(x,\e))\le\liminf_{h\to\infty}|\mu_{E_h}|(B(x,\e))=0,
 \]
against $x\in \spt\mu_E$. The validity of \eqref{kuratowski1} follows, again via a standard argument, by the lower density estimate \eqref{stime densita perimetro lower}: indeed, if $x_h\in K\cap\cl(H\cap\spt\mu_{E_h})$, then by \eqref{stime densita perimetro lower} we can find $r>0$ such that $|\mu_{E_h}|(B_{x_h,s})\ge c_1\, s^{n-1}$ for every $s<r$. In particular, since $B(x_h,r/2)\subset B(x,r)$ for $h$ large enough, by \eqref{compactness 0} one gets
\[
|\mu_E|(\cl(B_{x,r}))\ge\limsup_{h\to\infty}|\mu_{E_h}|(\cl(B_{x,r}))\ge c_1\, s^{n-1}\,,\qquad\forall s<\frac{r}2\,,
\]
so that, necessarily, $x\in \spt\mu_E$, and \eqref{kuratowski1} is proved.
\end{proof}

\subsection{Contact sets of almost-minimizers}\label{section contact sets} In this section we establish some ``weak'' regularity properties of the contact set \(\tr_{\pa H}(E)\) of a \((\Lambda, r_0)\)-minimizer $E$. In Lemma \ref{lem:finiteperimeter} we show that $\tr_{\pa H}(E)$ is of locally finite perimeter in $A\cap\pa H$. In Lemma \ref{lem:densitycontact} we prove lower density estimates for the perimeter of $\tr_{\pa H}(E)$ in $\pa H$ by means of a strong maximum principle discussed in Lemma \ref{lemma strong max}. Finally, in Lemma \ref{lem:normalization}, we set some normalization conventions on almost-minimizers to be used in the rest of the paper.

The following notation shall be used thorough this section. We decompose $\R^n$ as $\R\times\R^{n-1}$, denote by $\h:\R^n\to\R^{n-1}$ the corresponding projection of $\R^n$ onto $\R^{n-1}$, so that \(x=(x_1,\h x)\) for every \(x\in \R^n\), and define the \emph{vertical} disk and the \emph{vertical} cylinder centered at \(0\) as
\begin{equation}\label{disco verticale}
   \D^{\rm v}_r=\Big\{z\in \partial H: |z|<r\Big\}\,, \qquad \C^{\rm v}_r=\Big\{x\in\R^n:x_1\in(0,r)\,,|\h x|<r\Big\}=(0,r)\times \D^{\rm v}_r\,.
\end{equation}
With the usual abuse of notation (see section \ref{section notation}), we denote by \(\pa \D^{\rm v}_r\) the boundary of \(\D^{\rm v}_r\) inside \(\pa H\), i.e. we set \(\pa \D^{\rm v}_r=\big\{z\in \partial H: |z|=r\big\}\).

\begin{lemma}[Contact sets are of locally finite perimeter]\label{lem:finiteperimeter}
  For every \(\l \ge1\) there exists a constant  \(C=C(n,\l)\) with the following property. If $A$ is an open set, $H=\{x_1>0\}$, $\Phi\in\X(A\cap H,\l,\ell)$, and $E$ is a $(\La,r_0)$-minimizer of $\PHI$ in $(A,H)$, then
\[   P(\tr_{\partial H}(E); B_{x,r}\cap\partial H)\le C\, r^{n-2}\,,
  \]for every $x\in \partial H$ and $r<\min\big\{r_0,\dist(x,\pa A),1/2\l \L,1/\ell\big\}\big /4$. In particular \(\tr_{\partial H}(E)\) is a set of locally finite perimeter in \(A\cap\partial H\).
\end{lemma}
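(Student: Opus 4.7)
The plan is to bound $P(\tr_{\pa H}(E); B_{x,r}\cap\pa H)$ from above by exhibiting a sequence $t_k\to 0^+$ along which the horizontal slices $E_{t_k}=\{z\in\pa H:(t_k,z)\in E\}$ have uniformly bounded perimeter $P(E_{t_k};\D^{\rm v}_r)\le C\,r^{n-2}$, and then concluding by lower semicontinuity of perimeter on $\pa H\simeq\R^{n-1}$ together with the trace-approximation property \eqref{eq:traceapprox}. Without loss of generality take $H=\{x_1>0\}$ and $x=0$; the hypothesis on $r$ ensures that Lemma~\ref{lemma stime di densita} applies on $B_{0,2r}$, and gives $P(E;B_{0,2r}\cap H)\le C_1(n,\l)\,r^{n-1}$.

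First I apply the coarea formula on the $(n-1)$-rectifiable set $\pa^*E\cap H$ to the $1$-Lipschitz function $u(y)=y_1$ (whose tangential gradient on $\pa^*E$ is $\le 1$) to obtain
\[
\int_0^{2r}\H^{n-2}\bigl(\pa^*E\cap H\cap B_{0,2r}\cap\{y_1=t\}\bigr)\,dt\;\le\; P(E;B_{0,2r}\cap H)\;\le\; C_1\,r^{n-1}.
\]
By the slicing theorem for sets of finite perimeter, the integrand equals $P(E_t;\D^{\rm v}_r)$ up to $\H^{n-2}$-negligible sets for $\mathcal L^1$-a.e.\ $t\in(0,\sqrt 3\,r)$, since in that range $\{t\}\times\D^{\rm v}_r\subset B_{0,2r}$. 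In particular the averaged bound yields at least one macroscopic ``good'' height $T^*\in(r,2r)$ with $P(E_{T^*};\D^{\rm v}_r)\le 2C_1\,r^{n-2}$.

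To transfer this macroscopic bound down to arbitrarily small heights I use minimality with the cylindrical competitor
\[
F\;=\;\bigl[(0,T^*)\times E_{T^*}\bigr]\cap\C^{\rm v}_r\;\cup\;\bigl(E\setminus(\{0<y_1<T^*\}\cap\C^{\rm v}_r)\bigr),
\]
which is admissible in~\eqref{inequality Lambda minimality} relative to a slight enlargement of $\C^{\rm v}_r\cap\{0<y_1<T^*\}$. Since $F$ is cylindrical below $T^*$ and matches $E$ both above $T^*$ and outside $\C^{\rm v}_r$, its anisotropic perimeter in the slab is bounded by $\l\,T^*\bigl[P(E_{T^*};\D^{\rm v}_r)+(n-1)\om_{n-1}r^{n-2}\bigr]$, while $|E\Delta F|\le T^*\om_{n-1}r^{n-1}$. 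Combining these with $P\le \l\PHI$ and the volume-error term in \eqref{inequality Lambda minimality} gives a linear-in-height bound $P(E;\C^{\rm v}_r\cap\{0<y_1<T^*\})\le C_2(n,\l,\L,r)\,T^*\,r^{n-2}$ (for $r$ small enough the contribution $\l\L T^* r^{n-1}$ is subleading). By coarea restricted to this slab, the average of $P(E_t;\D^{\rm v}_r)$ over $(0,T^*)$ is then $\le C_2\,r^{n-2}$, so Chebyshev produces a new good height inside any prescribed subinterval of $(0,T^*)$, and iterating the construction on dyadically shrinking slabs yields the required sequence $t_k\to 0^+$.

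Once the sequence is produced, \eqref{eq:traceapprox} gives $E_{t_k}\to\tr_{\pa H}(E)$ in $L^1_{\loc}(A\cap\pa H)$, and lower semicontinuity of perimeter on $\pa H$ concludes with $P(\tr_{\pa H}(E);B_{x,r}\cap\pa H)\le\liminf_k P(E_{t_k};\D^{\rm v}_r)\le C\,r^{n-2}$. The main obstacle is the iterative propagation in the previous paragraph: the averaged coarea bound by itself does not rule out $\liminf_{t\to 0^+}P(E_t;\D^{\rm v}_r)=+\infty$, and minimality (via the cylindrical-extrusion competitor) is essential to carry the perimeter control from a macroscopic good slice down to heights converging to $\pa H$ without constants blowing up along the iterations.
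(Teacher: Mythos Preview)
Your endgame---finding slices $E_{t_k}$ with $P(E_{t_k};\D^{\rm v}_r)\le C\,r^{n-2}$ and $t_k\to 0^+$, then invoking \eqref{eq:traceapprox} and lower semicontinuity of perimeter on $\pa H$---is exactly the paper's. The gap is in the iterative propagation: the constants blow up.

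Track them. If $P(E_{s_k};\D^{\rm v}_r)\le M_k\,r^{n-2}$, your cylindrical competitor yields
\[
\int_0^{s_k}P(E_t;\D^{\rm v}_r)\,dt\;\le\;P\bigl(E;(0,s_k)\times\D^{\rm v}_r\bigr)\;\le\;\l^2\,s_k\,(M_k+C_n)\,r^{n-2}
\]
(absorbing the $\La$-term via the smallness of $r$). To land in $(0,s_k/2)$ you must average over that half interval, and the best you obtain is $M_{k+1}=2\l^2(M_k+C_n)$; since $2\l^2\ge 2$, this diverges. Equivalently, with $I(s)=\int_0^s P(E_t;\D^{\rm v}_r)\,dt$, the only information your competitor extracts from minimality is the differential inequality $I(s)\le \l^2 s\,I'(s)+C's\,r^{n-2}$, and the function $I(s)=A\,s^{1/\l^2}$ satisfies it identically---giving $P(E_s)=I'(s)\to\infty$ as $s\to 0^+$ when $\l>1$. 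The inequality simply does not force $\liminf_{s\to 0}P(E_s;\D^{\rm v}_r)<\infty$.

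The paper avoids iteration by obtaining $P\bigl(E;(0,s)\times\D^{\rm v}_1\bigr)\le C(n,\l)\,s$ in a \emph{single} step, for every $s\in(0,1/2)$. The competitor is $f_s(E\setminus G_s)$, where $f_s$ is a bi-Lipschitz map that stretches each fiber $[\vphi_s(|z|),1]\times\{z\}$ onto $[0,1]\times\{z\}$ (deleting a thin region $G_s\supset(0,s)\times\D^{\rm v}_1$) and satisfies $\|\nabla f_s-\Id\|\le C(n)\,s$. Its energy differs from $\PHI(E;\C^{\rm v}_2\setminus G_s)$ by $O(s)\cdot P(E;\C^{\rm v}_2)$, \emph{independently of any slice perimeter}, so minimality gives $\PHI(E;G_s)\le C\,s$ directly; coarea then supplies a good slice in $(0,s)$ for every $s$. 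Your extrusion competitor's cost, by contrast, is proportional to $P(E_s)$, the very quantity you are trying to control---that is why the recursion cannot close.
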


\begin{proof}
Up to replace \(E\) and $\Phi$ with \(E^{x,r}\) and \(\Phi^{x,r}\) respectively, we can directly assume that $\Phi\in\X(B_4\cap H,\l,\ell  )$, \(\ell \le 1\),  $E$ is a $(1/8\l ,4)$-minimizer of $\PHI$ in $(B_4,H)$, and prove that
\begin{equation}\label{eq:perimetertrace3}
P(\tr_{\partial H}(E); \D_{1}^{\rm v})\le C\,,
\end{equation}
for a constant $C=C(n,\l)$. Given $s\in(0,1/2)$, let $\vphi_s\in C_c^1((0,2);[0,s])$ be such that $\vphi_s=s$ on $(0,1)$ and $|\vphi_s'|\le 3\,s$ on $(0,2)$, set
\[
G_s=\Big\{x\in\C^{\rm v}_2:x_1\le\vphi_s(|\h x|)\Big\}\,,
\]
and consider the bi-Lipschitz map $f_s:H\setminus G_s\to H$ defined as
\begin{eqnarray*}
&&f_s(x)=\bigg( 1-\Big(\frac{1-x_1}{1-\vphi_s(|\h x|)}\Big),\h x\bigg)\,,\qquad x\in[\D^{\rm v}_2\times(0,1)]\setminus G_s\,,
\\
&&f_s(x)=x\,,\qquad\hspace{4.4cm} x\in H\setminus[\D^{\rm v}_2\times(0,1)]\,,
\end{eqnarray*}
see Figure \ref{fig Gh}. Notice that $f_s(\C^{\rm v}_2\setminus G_s)=\C^{\rm v}_2$, with
\begin{figure}
  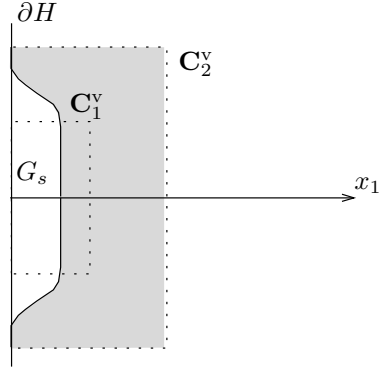\caption{{\small Given $z\in\D_2^{{\rm v}}$, the map $f_s$ stretches each segment $[\vphi_s(|z|),1]\times\{z\}$ into $[0,1]\times\{z\}$, while keeping the point $(1,z)$ fixed.}}\label{fig Gh}
\end{figure}
\begin{equation}\label{eq:gradf}
\sup_{x\in H\setminus G_s}|f_s(x)-x|+\|\nabla f_s(x)-\Id\|\le C\,s\,,
\end{equation}
for a constant $C=C(n)$. If we set \(E_s=f_s(E\setminus G_s)\), then $E_s$ is a set of locally finite perimeter in $B_4$ (as $E\setminus G_s$ is), with $E_s\subset H$ and $E_s\Delta E \subset \C^{\rm v}_2$ with $\diam(\C^{{\rm v}}_2)=\sqrt{20}<8$. We may thus exploit the fact that $E$ is a $(1/8\l ,4)$-minimizer of $\PHI$ in $(B_4,H)$ to deduce that
\begin{equation}\label{comparison}
\PHI(E;\C_2^{\rm v})\le \PHI(f_s(E\setminus G_s);\C_2^{\rm v})+\Lambda|(E\Delta f_s(E))\cap\C^{\rm v}_2|.
\end{equation}
By \cite[Lemma 17.9]{maggiBOOK} and \eqref{eq:gradf}, we have
\begin{equation}
  \label{ins1}
|(E\Delta f_s(E))\cap\C^{\rm v}_2|\le C\,P(E;B_{3})\,s\,,
\end{equation}
for some $C=C(n)$. Moreover, by \eqref{change of variables cofattore} and $f_s(\C_2^{\rm v})=\C_2^{\rm v}$ we find that
\begin{eqnarray}\nonumber
  \PHI(f_s(E\setminus G_s);H)&=&
  \int_{\C_2^{\rm v}\cap\pa^*(E\setminus G_s)}\Phi\Big(f_s(x),\cof(\nabla f_s(x))\nu_{E\setminus G_s}(x)\Big)\,d\H^{n-1}(x)
  \\\label{ins2}
  &\le&\PHI(E; \C_2^{\rm v} \setminus G_s)+C(n,\l)\,s\,P(E;\C_2^{\rm v} \setminus G_s)\,,
\end{eqnarray}
where we have used \eqref{eq:gradf}, \eqref{Phi x ell}, \eqref{Phi nabla 1} and the fact that $\ell \le 1$ (so that  $C$ depend on $n$ and $\l$ only). By \eqref{comparison}, \eqref{ins1}, \eqref{ins2} and  \eqref{stime densita perimetro upper} we thus find
 \[
 \PHI(E;G_s)\le C  P(E, B_3) s\le C\, s
 \]
with $C=C(n,\l)$. Since $(0,s)\times \D_{1}^{\rm v} \subset G_s$, we conclude by \eqref{Phi 1} that
\begin{equation*}
P(E;(0,s)\times \D_{1}^{\rm v})\le C(n,\lambda)\,s\qquad \forall\, s\in (0,1/2)\,.
\end{equation*}
By the coarea formula for rectifiable set, see, e.g. \cite[Equation (18.25)]{maggiBOOK}, we find
\[
\int_0^s P(E_t; \D_{1}^{\rm v})\,dt\le P(E;(0,s)\times \D_{1}^{\rm v})\le C(n,\lambda)\,s\,.
\]
Hence, for every \(s\in(0,1/2)\) we can find \(t_s\in (0,s)\) such that $P(E_{t_s}; \D_{1}^{\rm v})\le C(n,\lambda)$. By taking the limit as \(s\to 0^+\), thanks to \eqref{eq:traceapprox} and by lower semicontinuity of perimeter,
\[
P(\tr_{\partial H}(E);\D_{1}^{\rm v})\le \liminf_{s \to 0^+}P(E_{t_s}; \D_{1}^{\rm v})\le C(n,\lambda)\,,
\]
which is \eqref{eq:perimetertrace3}.
\end{proof}

Our next goal is proving lower density estimates for the perimeter of the contact set. To this end, we shall need a strong maximum principle for local minimizers of regular autonomous elliptic integrands. (This should be compared with \cite{solomonwhite}, where, however, even integrands are considered in order to deal with non-orientable surfaces.) We prove the strong maximum principle in Lemma \ref{lemma strong max}, as a corollary of a comparison lemma, Lemma \ref{lemma comparison}, illustrated in Figure \ref{fig compa}. We premise to these results the following lemma, about the existence of Lipschitz solution of non-parametric problems. (Part two of the statement will be used in section \ref{section singular set}.)

\begin{lemma}\label{nonpar}
Let \(\Phi \in \X_*(\lambda)\), $\l>0$, and set \(\Phi^\#(\xi)=\Phi(\xi,-1)\) for \(\xi\in \R^{n-1}\).

\medskip

\noindent {\em Part one:} If \(\vphi\in C^{1,1}(\cl (\D_r))\), then there exists a unique \(u\in C^2(\D_r)\cap {\rm Lip} (\cl (\D_r))\) such that
\begin{equation}\label{eq:nonparint}
\begin{cases}
\Div (\nabla_{\xi} \Phi^\#(\nabla u))=0\,,\qquad&\textrm{in \(\D_r\)\,,}\\
u=\varphi\,, &\textrm{on \(\pa \D_r\)}\,.
\end{cases}
\end{equation}
In addition, if  \(\varphi\ge 0\), \(\vphi\not\equiv 0\), then \(u(0)>0\).

\medskip

\noindent {\em Part two:} Given $e\in\R^{n-1}$ with $|e|=1$, let $H=\{z\in\R^{n-1}:z\cdot e\ge 0\}$. If \(\vphi\in C^{1,1}(\cl (\D_r\cap H))\) with  \(\vphi=0\) on \(\D_r\cap \pa H\), then there exists a unique \(u\in C^2(\D_r\cap \cl (H))\cap {\rm Lip} (\cl (\D_r\cap H))\) with
\begin{equation}\label{eq:nonparbound}
\begin{cases}
\Div (\nabla_{\xi} \Phi^\#(\nabla u))=0\,, \qquad&\textrm{in \(\D_r\cap H\,,\)}\\
u=\varphi\,, &\textrm{on \(\pa (\D_r\cap H)\)}\,.
\end{cases}
\end{equation}
In addition, if \(\varphi\le M\, (e\cdot z)\) on \(\pa (\D_r\cap H)\) for some \(M\in \R\), then \(|\nabla \vphi(0)|=|\pa_e \vphi(0)|<M\).
\end{lemma}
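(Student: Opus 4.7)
The plan is to treat \eqref{eq:nonparint} and \eqref{eq:nonparbound} as two classical Dirichlet problems for a quasilinear elliptic equation in divergence form, and then to apply the strong maximum principle and Hopf's lemma for the two quantitative add-ons.

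First I would verify that $\Phi^\#(\xi)=\Phi(\xi,-1)$ is a convex $C^{2,1}$ function on $\R^{n-1}$ (inherited from $\Phi$) whose Hessian satisfies a quantitative lower bound: applying \eqref{elliptic} to $\nu=(\xi,-1)/\sqrt{1+|\xi|^2}$ and $e=(\eta,0)$, a direct computation gives
\[
\nabla^2\Phi^\#(\xi)\,\eta\cdot\eta=\nabla^2\Phi(\xi,-1)(\eta,0)\cdot(\eta,0)\ge\frac1\lambda\Big(|\eta|^2-\frac{(\eta\cdot\xi)^2}{1+|\xi|^2}\Big)\ge\frac{|\eta|^2}{\lambda\,(1+|\xi|^2)},
\]
so $\Div(\nabla_\xi\Phi^\#(\nabla u))=0$ is quasilinear elliptic, uniformly elliptic on bounded-gradient sets. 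Existence and uniqueness of $u\in C^2(\D_r)\cap \Lip(\cl(\D_r))$ solving \eqref{eq:nonparint} then fits directly into the classical theory for the $C^{1,1}$ Dirichlet problem on a $C^2$ domain (Gilbarg--Trudinger, Chapters~10--15): the crucial a priori bound is Lipschitz regularity up to $\pa\D_r$, obtained by constructing affine barriers $\ell_{z_0}^\pm(z)=\vphi(z_0)+a_{z_0}^\pm\cdot(z-z_0)$ at every $z_0\in\pa\D_r$ with $|a_{z_0}^\pm|\le C(\|\vphi\|_{C^{1,1}},r)$, using the $C^{1,1}$ bound on $\vphi$ and the $C^2$ geometry of $\pa\D_r$; since affine functions are solutions, comparison transfers the bound to $u$. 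Once $|\nabla u|$ is globally bounded the equation is uniformly elliptic, Schauder and De Giorgi--Nash--Moser give $u\in C^2(\D_r)$, and uniqueness follows from strict convexity of $\Phi^\#$ via the comparison principle.

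For the strict positivity claim, I would note that $\vphi\ge 0$ and comparison with the zero solution force $u\ge 0$ on $\D_r$, then write
\[
0=\Div\big(\nabla_\xi\Phi^\#(\nabla u)-\nabla_\xi\Phi^\#(0)\big)=\Div\big(A(z)\,\nabla u\big),\qquad A(z)=\int_0^1\nabla^2\Phi^\#(t\,\nabla u(z))\,dt,
\]
which exhibits $u$ as a non-negative solution of a linear uniformly elliptic equation (with ellipticity bounds from the previous display and \eqref{Phi nabla 1}). The strong maximum principle forces $u\equiv 0$ or $u>0$ in $\D_r$; the first case is excluded by $\vphi\not\equiv 0$ on $\pa\D_r$, so $u(0)>0$.

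Part two proceeds in the same way on the half-disk $\D_r\cap H$. The one new subtlety is the right-angle corner along $\pa\D_r\cap\pa H$, but the vanishing of $\vphi$ on the flat face $\D_r\cap\pa H$ makes the barrier construction harmless: the zero function is a joint barrier on the flat face, the Part-one construction works on the curved face, and compatibility at the edge is automatic since $\vphi=0$ there; combined with Schauder regularity up to the flat face (after straightening, using $\vphi\equiv 0$ there), this yields $u\in C^2(\D_r\cap\cl(H))\cap\Lip(\cl(\D_r\cap H))$. Since $u\equiv 0$ on $\D_r\cap\pa H$, all tangential derivatives of $u$ at $0$ vanish, so $\nabla u(0)=(\pa_e u(0))\,e$ and $|\nabla u(0)|=|\pa_e u(0)|$. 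For the strict bound $\pa_e u(0)<M$ (and the symmetric lower bound, from which the absolute-value version follows), I would compare $u$ with the affine solution $v(z)=M(e\cdot z)$: both solve the PDE (constant gradient), and $u\le v$ on $\pa(\D_r\cap H)$ (trivially on the flat face, by hypothesis on the curved face), so comparison gives $u\le v$ on $\D_r\cap H$. Linearization as above exhibits $w=v-u\ge 0$ as a solution of a linear uniformly elliptic equation; since $w(0)=0$ and (excluding the degenerate case $u\equiv v$) $w\not\equiv 0$, Hopf's lemma applied at $0$, where the outward normal to $\D_r\cap H$ is $-e$, gives $-\pa_e w(0)<0$, i.e.\ $\pa_e u(0)<M$. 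The main technical step in the whole proof is the joint Lipschitz bound at the corner edge in Part two; everything else is a routine application of the classical quasilinear elliptic theory and the comparison/Hopf toolbox.
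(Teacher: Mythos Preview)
Your approach is correct and matches the paper's line by line in Part one (the paper just cites the Bounded Slope Condition from Giusti in place of your explicit barrier description, and invokes the strong maximum principle exactly as you do after linearizing) and in the Hopf argument of Part two (the paper sets $w=M(e\cdot z)-u$, observes it is nonnegative and solves a linear uniformly elliptic equation, and applies Hopf's lemma at $0$---precisely your argument).

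The only genuine difference is how existence in Part two is obtained. The paper disposes of it in one sentence: ``make an odd reflection around $\partial H$ and then use part one.'' Read literally, this means extending $\varphi$ oddly to $\tilde\varphi\in C^{1,1}(\cl(\D_r))$, solving on the full disk, and restricting; but the restriction equals $0$ on $\D_r\cap\partial H$ only if the full-disk solution is odd, which in turn requires the equation $\Div(\nabla_\xi\Phi^\#(\nabla u))=0$ to be invariant under $u\mapsto -u(R\cdot)$---a symmetry of $\Phi^\#$ that is not assumed for anisotropic $\Phi$. Your direct barrier argument on the half-disk avoids this issue and is more robust. It also goes through cleanly: since $\varphi\in C^{1,1}(\cl(\D_r\cap H))$ (not merely on the boundary), for $z_0$ on the curved part the standard affine barrier $\varphi(z_0)+\nabla\varphi(z_0)\cdot(z-z_0)-K\,\nu(z_0)\cdot(z-z_0)$ with $K=r\,\|\varphi\|_{C^{1,1}}$ dominates $\varphi$ on the \emph{whole} closed half-disk (using $|z-z_0|^2\le 2r\,(-\nu(z_0)\cdot(z-z_0))$ for $z\in\cl(\D_r)$), hence in particular on the flat face; and for $z_0$ on the flat face, $\pm C(e\cdot z)$ with $C=\Lip(\varphi)$ works since $|\varphi(z)|\le\Lip(\varphi)\,\dist(z,\partial H)$. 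So the Bounded Slope Condition holds on $\D_r\cap H$ with uniform constants, and the corner causes no trouble.

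One minor point: from the one-sided hypothesis $\varphi\le M(e\cdot z)$ alone, Hopf yields $\partial_e u(0)<M$ but not the two-sided $|\partial_e u(0)|<M$; your ``symmetric lower bound'' would need an unassumed lower bound on $\varphi$. The paper's proof has the same feature, and in the sole application (Lemma~\ref{lem:blowup2}) only the one-sided inequality $\partial_e u(0)<M$ is actually used, with the degenerate case $u\equiv M(e\cdot z)$ excluded there by a strict inequality on $\varphi$.
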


\begin{proof} The couple  \((\varphi,\D_r)\) satisfies the so called Bounded Slope Condition, see \cite[Theorem 1.1]{Gi}, hence existence of a solution to \eqref{eq:nonparint} follows by \cite[Theorem 1.2]{Gi}. Uniqueness follows by noticing that for every two Lipschitz solutions to \eqref{eq:nonparint} the difference solves a uniformly linear elliptic equation, see for instance \cite[Chapter 10]{gt}. The last statement in part one  is just the  strict maximum principle, \cite[Theorem 3.5]{gt}. To prove the existence and uniqueness of solutions to \eqref{eq:nonparbound}, one can  make an odd reflection around \(\pa H\) and then use part one. For what concerns the last statement of part 2, note that \(w=M z\cdot e-u(z)\ge 0\) on \(\pa (\D_r\cap H)\), \(w(0)=0\) and that \(w\) solves the uniformly elliptic (linear) equation,
\[
0=\Div (\nabla_{\xi} \Phi^\#(\nabla u ))=\sum_{i,j}^{n-1} \nabla^2_{\xi_{i}\xi_{j} }\Phi^\#(\nabla u) \partial_{ij} u=\sum_{i,j}^{n-1} A_{ij}\, \partial_{ij} w\,.
\]
We conclude by Hopf's boundary lemma, \cite[Lemma 3.4]{gt}.
\end{proof}

\begin{lemma}
  [Comparison lemma]\label{lemma comparison}
  If $\l\ge1$,
  \begin{figure}
  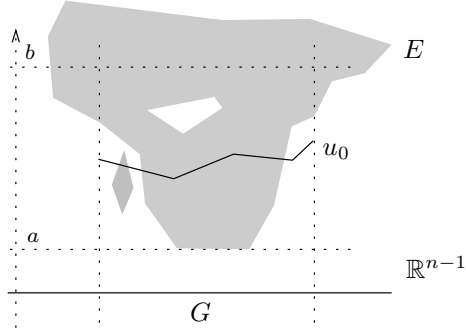\caption{{\small The situation in Lemma \ref{lemma comparison}: the role of \eqref{compa 2} is to ensure that, over the boundary of $G$, $E$ lies above the graph of $u_0$. The figure tries to stress the fact that $E$ does not need to be the epigraph of a function.}}\label{fig compa}
  \end{figure}
  $\Phi\in\X_*(\l)$, $G\subset\R^{n-1}$ is a bounded open set with Lipschitz boundary, $a<b$, $J=G\times(a,b)$, $E\subset\{x_n>a\}$ is a set of finite perimeter in an open neighborhood of $J$ such that
  \begin{eqnarray}
    \label{compa 1}
      \PHI(E;\cl(J))\le\PHI(F;\cl(J))\,,\qquad\mbox{whenever $F\subset E$, $E\setminus F\subset J$}\,,
  \end{eqnarray}
  and $u_0\in C^2(G)\cap\Lip(\cl(G))$ with $a<u_0<b$ on $\cl(G)$ and
  \begin{eqnarray}
    \label{compa 2}
    &&E^{(1)}\cap[(\pa G)\times(a,b)]\subset\Big\{(z,t)\in(\pa G)\times(a,b):t\ge u_0(z)\Big\}\,,
    \\
    \label{compa 3}
    &&\Div(\nabla_\xi\Phi^\#(\nabla u_0(z)))=0\,,\qquad\forall z\in G\,,
  \end{eqnarray}
  where \(\Phi^\#(\xi)=\Phi(\xi,-1)\) for \(\xi\in \R^{n-1}\), then
  \begin{equation}
    \label{compa tesi}
    E\cap J\subset_{\H^n}\Big\{(z,t)\in J:t\ge u_0(z)\Big\}\,.
  \end{equation}
\end{lemma}

\begin{proof}
  Let us begin noticing that the lower bound in \eqref{k1k2} can be written in the form
  \begin{eqnarray}\label{compa 0}
  \k_1\,\frac{|\nu_1-\nu_2|^2}2\le\Big(\nabla\Phi(\nu_2)-\nabla\Phi(\nu_1)\Big)\cdot\nu_2\,,\qquad\forall\nu_1,\nu_2\in \mathbf S^{n-1}\,,
  \end{eqnarray}
  We now consider the open sets
  \[
  F_+=\Big\{(z,t)\in J:t> u_0(z)\Big\}\,,\qquad F_-=\Big\{(z,t)\in J:t<u_0(z)\Big\}\,.
  \]
  By \eqref{compa 2} and since $a<u_0<b$ on $\cl(G)$, we can exploit \eqref{compa 1} with $F=(E\setminus J)\cup(E\cap F_+)$ to find that
  \begin{eqnarray}
    \int_{\pa^*E\cap[F_-\cup(G\times\{a\})]}\Phi(\nu_E)\,d\H^{n-1}
    \le\int_{\pa^*F_+\cap(E^{(1)}\cap J)}\Phi(\nu_{F_+})\,d\H^{n-1}\,.
    \label{compa 4}
  \end{eqnarray}
  Let us now set $T(x)=\nabla\Phi(\nabla u_0(\p x),-1)$ for $x\in G\times\R$, so that $T\in C^1(G\times\R;\R^n)$ with $\Div T(x)=0$ for every $x\in G\times\R$ thanks to \eqref{compa 3}. If we set $\widetilde{E}=(E\cap J)\setminus F_+$, then $\widetilde{E}\subset J$ and $\pa^*\widetilde{E}\cap[(\pa G)\times(a,b)]$ is $\H^{n-1}$-negligible thanks to \eqref{compa 2}. Hence we can apply the divergence theorem to $T$ on $\widetilde{E}$ to find that
  \begin{equation}
    \label{compa5}
      \int_{\pa^*F_+\cap(E^{(1)}\cap J)}T\cdot\nu_{F_+}\,d\H^{n-1}=\int_{\pa^*E\cap(F_-\cup(G\times\{a\}))}T\cdot\nu_E\,d\H^{n-1}\,.
  \end{equation}
  Now, by zero-homogeneity of $\nabla\Phi$ and by
  \begin{equation}
    \label{normale F+}
      \nu_{F_+}(z,u_0(z))=\frac{(\nabla u_0(\p x),-1)}{\sqrt{1+|\nabla u_0(\p x)|^2}}\,,\qquad\forall x\in J\cap\pa F_+\,,
  \end{equation}
  we find that, for $\H^{n-1}$-a.e. on $J\cap\pa F_+$,
  \begin{equation}
    \label{compa6}
      T\cdot\nu_{F_+}=\nabla\Phi(\nu_{F_+})\cdot\nu_{F_+}=\Phi(\nu_{F_+})\,,\qquad\mbox{on $J\cap\pa F_+$}\,,
  \end{equation}
  while, by \eqref{compa 0},
  \begin{equation}
    \label{compa7}
      T\cdot\nu_E=\Phi(\nu_E)-\Big(\nabla\Phi(\nu_E)
      -\nabla\Phi(\nabla u_0,-1)\Big)\cdot\nu_E\le\Phi(\nu_E)-\frac{\k_1}2\,\bigg|\nu_E-\frac{(\nabla u_0,-1)}{\sqrt{1+|\nabla u_0|^2}}\bigg|^2\,.
  \end{equation}
  We may thus combine \eqref{compa 4}, \eqref{compa5}, \eqref{compa6} and \eqref{compa7} to deduce
  \[
  \int_{\pa^*E\cap(F_-\cup(G\times\{a\}))}\bigg|\nu_E-\frac{(\nabla u_0(\p(x)),-1)}{\sqrt{1+|\nabla u_0(\p x)|^2}}\bigg|^2\, d\, \H^{n-1}(x)=0,
  \]
 so that
 \begin{equation}\label{forse?}
 \nu_E=\frac{(\nabla u_0(\p(x)),-1)}{\sqrt{1+|\nabla u_0(\p x)|^2}}\qquad \textrm{\(\H^{n-1}\)-a.e. on \(\pa^*E\cap(F_-\cup(G\times\{a\}))\).}
 \end{equation}
 Folllowing \cite{duzaarsteffencomparison}, we apply  the divergence theorem on the set $\widetilde{E}$ to the vector field $S\in C^1(G\times\R;\R^n)$ defined by $S(x)=\big(\p x, \p x \cdot \nabla u_0 (\p x)\big)$ for every  $x\in G\times\R$.  Since  $S\cdot\nu_{\widetilde{E}}=0$ $\H^{n-1}-a.e.$ on $\pa^*\widetilde{E}$ thanks to \eqref{cap}, \eqref{minus}, \eqref{compa 2}, \eqref{normale F+} and \eqref{forse?}, we conclude that
 \[
 (n-1)|\widetilde{E}|=\int_{\widetilde{E}} \Div S=\int_{\pa^* \widetilde{E}} S\cdot \nu _{\widetilde{E}}\,d\H^{n-1}=0\,,
 \]
 that is, $|(E\cap J)\setminus F_+|=0$. This proves the lemma.
\end{proof}

\begin{lemma}[Strong maximum principle]\label{lemma strong max}
  If $\l\ge1$, $H=\{x_1>0\}$, $\Phi\in\X_*(\l)$, and $E\subset H$ is a set of locally finite perimeter in $B$ such that
  \begin{eqnarray}
  \label{minimissimo}
  &&\PHI(E;W)\le\PHI(F;W)\,,\qquad\quad\mbox{whenever $E\Delta F\cc W\cc B$}\,,
  \end{eqnarray}
  then either $\H^{n-1}(B\cap\tr_{\pa H}(E))=\H^{n-1}(B\cap\pa H)$ or $0\not\in\spt\mu_E$.
\end{lemma}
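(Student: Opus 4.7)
The plan is to argue by contradiction: suppose both $0\in\spt\mu_E$ and $\H^{n-1}(B\cap\tr_{\pa H}(E))<\H^{n-1}(B\cap\pa H)$ hold, and derive a contradiction by using the comparison lemma \ref{lemma comparison} together with the non-parametric existence theory of Lemma \ref{nonpar}.

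The first step is to produce a point $(z^\star,0)\in\pa H\cap B$ which is not in $\spt\mu_E$. By the complement identity \eqref{tracciacomplementare} we have $\H^{n-1}(B\cap\tr_{\pa H}(H\setminus E))>0$, so Lebesgue differentiation supplies $z^\star\in\pa H$ with $|z^\star|<1$ which is a density-one point in $\pa H$ of $\tr_{\pa H}(H\setminus E)$. Combining this with the trace approximation property \eqref{eq:traceapprox}, for every $\delta>0$ one obtains small $\rho,t_0>0$ such that $\H^{n-1}(E_t\cap\D^{\rm v}_{z^\star,\rho})<\delta\,\omega_{n-1}\rho^{n-1}$ for every $t<t_0$; Fubini then yields $|E\cap(\D^{\rm v}_{z^\star,\rho}\times(0,t_0))|<\delta\,\omega_{n-1}\rho^{n-1}t_0$. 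On the other hand, were $(z^\star,0)\in\spt\mu_E$, the lower volume estimate \eqref{stime densita volume lower} of Lemma \ref{lemma stime di densita} (applicable since \eqref{minimissimo} makes $E$ a $(0,\infty)$-minimizer in $(B,H)$) would force $|E\cap B_{(z^\star,0),r}|\ge c_1\,|B_{(z^\star,0),r}\cap H|$ for small $r$, which is incompatible with the preceding bound once $\delta$ is chosen small. Consequently $(z^\star,0)\notin\spt\mu_E$, and there is $\rho_0>0$ with $|E\cap B_{(z^\star,0),\rho_0}|=0$.

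The second step will propagate this emptiness to the point $0$ via the comparison lemma. Choose $R\in(|z^\star|,1)$ with $R-|z^\star|<\rho_0/2$, take $h\in(0,\rho_0/4)$, and set $J=\D^{\rm v}_R\times(0,h)$; the strong minimality \eqref{minimissimo} (restricted to competitors $F\subset E$ with $E\setminus F\subset J$) immediately delivers condition \eqref{compa 1}. The barrier $u_0:\cl\D^{\rm v}_R\to(0,h)$ is built in two substeps: first, via a coarea-type selection applied to the radial function $z\mapsto|z-z^\star|$ on $\pa H$, adjust $R$ so that $\pa^*E\cap(\pa\D^{\rm v}_R\times(0,h))$ has finite and controlled $\H^{n-2}$-measure; second, choose a nonnegative nonconstant $C^{1,1}$ boundary datum $\vphi:\pa\D^{\rm v}_R\to(0,h)$ concentrated in a small neighborhood of the radial projection of $z^\star$ onto $\pa\D^{\rm v}_R$ (where the emptiness of $E$ in $B_{(z^\star,0),\rho_0}$ from the previous step forces $E^{(1)}$ to lie above the graph of $\vphi$), and equal to a small positive constant elsewhere. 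Solving the Dirichlet problem \eqref{eq:nonparint} via Lemma \ref{nonpar}, part one, produces a solution $u_0$ of \eqref{compa 3} with $u_0(0)>0$ by the strict maximum principle. The comparison lemma then yields $E\cap J\subset_{\H^n}\{(z,t)\in J:t\ge u_0(z)\}$, and in particular $|E\cap B_{u_0(0)/2}|=0$, contradicting $0\in\spt\mu_E$.

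The principal obstacle I anticipate is the simultaneous fulfilment of three requirements on $\vphi$: it must dominate the height of $E^{(1)}$ over the full lateral side $\pa\D^{\rm v}_R\times(0,h)$ so that hypothesis \eqref{compa 2} of Lemma \ref{lemma comparison} holds; it must stay strictly positive on $\pa\D^{\rm v}_R$ so that $a<u_0<b$ on $\cl\D^{\rm v}_R$ as required by the lemma; and it must be nonconstant so that the strict maximum principle of Lemma \ref{nonpar} forces $u_0(0)>0$. Reconciling these demands is the technical crux of the argument, to be handled by the coarea choice of $R$ together with the quantitative emptiness of $E$ near $(z^\star,0)$ secured in the first step.
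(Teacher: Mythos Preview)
Your overall strategy coincides with the paper's: build a barrier via the Dirichlet problem of Lemma~\ref{nonpar} and push $E$ above it with the comparison Lemma~\ref{lemma comparison}. The obstacle you flag, however, is self-inflicted. You insist that $\vphi$ be strictly positive on all of $\pa\D^{\rm v}_R$ because Lemma~\ref{lemma comparison} literally asks for $a<u_0$ on $\cl(G)$; but forcing $\vphi>0$ away from the projection $\bar z$ of $z^\star$ is precisely what cannot be reconciled with \eqref{compa 2} if $E$ happens to touch $\pa H$ there, and neither a coarea choice of $R$ nor the emptiness near $z^\star$ helps with that. The correct move is the opposite: take $\vphi=0$ away from $\bar z$ (then \eqref{compa 2} is trivial there, since $E\subset\{x_1>0\}$), keep $\vphi>0$ only on the arc near $\bar z$ where Step~1 guarantees $E$ is empty, and absorb the strict inequality $a<u_0$ on $\cl(G)$ by an $\e$-shift: apply Lemma~\ref{lemma comparison} with $u$ replaced by $u-\e$ and $a=-2\e$ (the equation is invariant under additive constants, $E\subset H\subset\{x_1>-2\e\}$, and \eqref{compa 2} only improves), then let $\e\to0$. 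With that correction your argument goes through.

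The paper organizes the same idea more cleanly by avoiding the detour through a single point $z^\star$. It introduces the lower-profile function
\[
w_E(z)=\inf\big\{t:(t,z)\in B\cap\spt\mu_E\big\}\,,
\]
which is lower semicontinuous, nonnegative, and satisfies $E\cap B\subset_{\H^n}\{x_1\ge w_E(\h x)\}$. From $\H^{n-1}(B\cap\spt\mu_E\cap\pa H)<\H^{n-1}(B\cap\pa H)$ (a consequence of the density estimates and \eqref{traces equivalenza}) one obtains, via coarea, a radius $r_*$ on whose sphere $w_E>0$ on a set of positive $\H^{n-2}$-measure. The boundary datum is then any smooth $\vphi$ with $0\le\vphi\le\min\{w_E,r_*/2\}$ and $\max\vphi>0$: the lateral condition \eqref{compa 2} comes for free from $\vphi\le w_E$, and no special geometry near a distinguished point is needed.
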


\begin{proof} Let us assume that
\begin{equation}
\label{cazzata2}
  \H^{n-1}(B\cap\tr_{\pa H}(E))<\H^{n-1}(B\cap\pa H)\,.
\end{equation}
Since \eqref{minimissimo} means that $E$ is a minimizer of $\PHI$ in $(B,\R^n)$, by Lemma \ref{lemma stime di densita} we find
\[
c_1\,|B_{x,r}\cap H|<|E\cap B_{x,r}|<(1-c_1)\,|B_{x,r}\cap H|
\]
for every $x\in B\cap\spt\mu_E$ and $r<\dist(x,\pa B)$. In particular, $B\cap\spt\mu_E\subset B\cap\pa^{\rm e} E$, so that, by \eqref{federer theorem}, $B\cap\spt\mu_E\subset_{\H^{n-1}}B\cap\pa^*E$. Thus, by \eqref{traces equivalenza} and \eqref{cazzata2} we find that
\begin{equation}
  \label{cazzata9}
  \H^{n-1}(B\cap\spt\mu_E)<\H^{n-1}(B\cap\pa H)\,.
\end{equation}
We now define a function $w_E:\D_1^{\rm v}\to[0,\infty]$ by setting
\[
w_E(z)=\inf\Big\{t\in\R:(t,z)\in B\cap\spt\mu_E\Big\}\,,\qquad z\in \D_1^{{\rm v}}\,.
\]
Since \(\spt \mu_E\) is a closed subset of $H$, it turns out that $w_E$ is non-negative and lower semicontinuous on $\D_1^{{\rm v}}$, with the property that
\begin{equation}
  \label{wE contenimento}
  E\cap B\subset_{\H^n}\Big\{x\in \R^n:x_1\ge w_E(\h x)\Big\}\,,
\end{equation}
see Figure \ref{fig wE2}.
\begin{figure}
  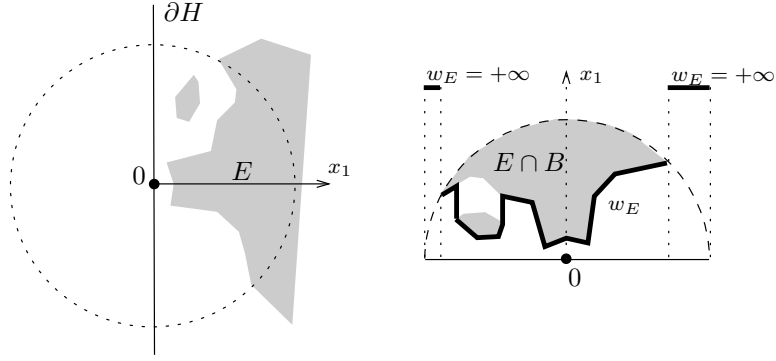\caption{{\small Inclusion \eqref{wE contenimento}. Notice that $w_E$ may take the value $+\infty$.}}\label{fig wE2}
\end{figure}
By the coarea formula, \eqref{cazzata9} and \eqref{wE contenimento}, there exists \(r_*\in (0,1/2\sqrt2)\) such that
\begin{equation}\label{momangio}
 \H^{n-2}( \spt\mu_E \cap \pa \D^{\rm v}_{r_*})<\H^{n-2}(B \cap \pa \D^{\rm v}_{r_*})\,,\qquad\H^{n-1}(\pa^*E\cap\pa\C_{r_*}^{\rm v})=0\,,
\end{equation}
and
\begin{equation}
  \label{momangio2}
  x_1\ge w_E(\h x)\,,\qquad\mbox{for $\H^{n-1}$-a.e. $x\in E^{(1)}\cap B\cap[\R\times\pa\D_{r_*}^{\rm v}]$}\,.
\end{equation}
By definition of $w_E$ and by \eqref{momangio}, $w_E(z)>0$ on a subset of $\pa\D^{\rm v}_{r_*}$ with positive $\H^{n-2}$-measure. Therefore, there exists \(\vphi\in C^\infty(\pa\D^{\rm v}_{r_*})\) such that
\begin{equation}\label{pappero}
\max_{\pa\D_{r_*}^{\rm v}}\vphi>0\,,\qquad 0\le \vphi(z)\le \min\Big\{w_E(z),\frac{r_*}2\Big\}\,,\qquad \forall\, z\in \pa \D_{r_*}^{\rm v}\,.
\end{equation}
By Lemma \ref{nonpar}, part one, there exists \(u\in C^2(\D^{\rm v}_{r_*})\cap\Lip(\cl(\D^{\rm v}_{r_*}))\) such that
\begin{equation}\label{tuttovero}
\begin{cases}
\Div (\nabla_{\xi} \Phi^\#(\nabla u))=0\,,\qquad&\textrm{in \(\D^{\rm v}_{r_*}\)}\,,
\\
u=\varphi\,,&\textrm{on \(\pa \D^{\rm v}_{r_*}\)}\,,
\end{cases}
\end{equation}
where $\Phi^\#(\xi)=\Phi(-1,\xi)$ for $\xi\in\R^{n-1}$; moreover, \(u(0)>0\) by \eqref{pappero}. By \eqref{minimissimo}, \eqref{momangio2}, \eqref{pappero}, and \eqref{tuttovero}, we can apply Lemma \ref{lemma comparison} to infer that
\[
E\cap\C_{r_*}^{\rm v}\subset_{\H^n} \Big\{(z,t)\in\C_{r_*}^{\rm v}: t\ge u(z)\Big\}\,.
\]
Since \(u(0)>0\), this last inclusion implies that $0\not\in\spt\mu_E$, and the lemma is proved.
\end{proof}

\begin{lemma}[Lower perimeter estimate for contact sets]\label{lem:densitycontact} For every \(\l\ge 1\) there exist two positive constants \(\e= \e (n,\l)\) and \(c=c(n,\l)\) with the following property. If $A$ is an open set, $H=\{x_1>0\}$, $\Phi\in\X(A\cap H,\l,\ell)$, $E$ is a $(\La,r_0)$-minimizer of $\PHI$ in $(A,H)$, and $(\Lambda +\ell )r \le \e$, then, for every $x\in \pa H\cap \spt\mu_E$ and $r<\min\{r_0,\dist (x,\partial A)\}/4$, we have
\begin{equation}
  \label{stime densita perimetro lower traccia}
  \mathcal H ^{n-1} (B_{x,r}\cap \tr_{\partial H}(E))\ge c\, r^{n-1}\,.
\end{equation}
In particular, $\H^{n-1}(\spt\mu_E\setminus\pa^*E)=0$.
\end{lemma}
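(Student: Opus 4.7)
The plan is to argue by contradiction with a rescaling-and-compactness scheme that reduces matters to the strong maximum principle of Lemma~\ref{lemma strong max}. Suppose the first assertion fails; then for every $h\in\N$ one can find $\Phi_h\in\X(A_h\cap H,\l,\ell_h)$, a $(\La_h,r_{0,h})$-minimizer $E_h$ of $\PHI_h$ in $(A_h,H)$, a point $x_h\in\pa H\cap\spt\mu_{E_h}$, and a radius $s_h<\min\{r_{0,h},\dist(x_h,\pa A_h)\}/4$ with $(\La_h+\ell_h)\,s_h\le 1/h$ yet
\[
\H^{n-1}\bigl(B_{x_h,s_h}\cap\tr_{\pa H}(E_h)\bigr)\le \frac{s_h^{n-1}}{h}\,.
\]
Translating $x_h$ to the origin and rescaling by $s_h$, Remark~\ref{remark blow-up} says $F_h=E_h^{x_h,s_h}$ is a $(\La_h s_h,r_{0,h}/s_h)$-minimizer of $\PHI_h^{x_h,s_h}$ in $(A_h^{x_h,s_h},H)$, with vanishing Lipschitz constant $\ell_h s_h$ and vanishing volume term $\La_h s_h$; moreover $B_4\subset A_h^{x_h,s_h}$, $r_{0,h}/s_h>4$, $0\in\pa H\cap\spt\mu_{F_h}$, and $\H^{n-1}(B\cap\tr_{\pa H}(F_h))\to 0$.

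Applying the compactness theorem (Theorem~\ref{thm compactness minimizers}) on $B_4$, a subsequence of $\{F_h\}$ converges in $L^1_{\rm loc}(B_4)$ to a set $F\subset H$ which is a minimizer of $\PHI$ in $(B_4,H)$ for some autonomous integrand $\Phi\in\X_*(\l)$ (autonomous since $\ell_h s_h\to 0$). By \eqref{kuratowski1} one has $0\in\spt\mu_F$, while \eqref{compactness 4} together with \eqref{traces equivalenza} gives $\H^{n-1}(B\cap\pa^*F\cap\pa H)=0$. The key step is now to upgrade the $(B_4,H)$-minimality of $F$ to the unrestricted minimality condition \eqref{minimissimo} required by Lemma~\ref{lemma strong max}. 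Given any $F'$ with $F\Delta F'\cc W\cc B$, the truncation $F'\cap H\subset H$ is an admissible competitor in the $(B_4,H)$-minimality of $F$ (as $F\Delta(F'\cap H)\subset F\Delta F'$), which yields $\PHI(F;W\cap H)\le \PHI(F'\cap H;W\cap H)$; by \eqref{cap}, $\mu_{F'\cap H}$ and $\mu_{F'}$ coincide on the open set $W\cap H$, so the last quantity equals $\PHI(F';W\cap H)\le \PHI(F';W)$. Meanwhile $\H^{n-1}(\pa^*F\cap\pa H\cap W)=0$ forces $\PHI(F;W)=\PHI(F;W\cap H)$, so $\PHI(F;W)\le \PHI(F';W)$. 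Invoking Lemma~\ref{lemma strong max} on $F$ then rules out both alternatives: $\H^{n-1}(B\cap\tr_{\pa H}(F))=\H^{n-1}(B\cap\pa H)$ is impossible since the left-hand side is $0$, and $0\notin\spt\mu_F$ contradicts what we already have. This contradiction proves \eqref{stime densita perimetro lower traccia}.

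For the last assertion, $|\mu_E|=\H^{n-1}\llcorner\pa^*E$ has positive upper $(n-1)$-density at every point of $\spt\mu_E$: for $x\in\spt\mu_E\cap H$ this is immediate from \eqref{stime densita perimetro lower}; for $x\in\spt\mu_E\cap\pa H$ one uses again \eqref{stime densita perimetro lower} when $x\in\cl(H\cap\spt\mu_E)$, and otherwise the just-proved \eqref{stime densita perimetro lower traccia} combined with \eqref{subset of H} and \eqref{traces equivalenza}. Since $\pa^*E$ is $\H^{n-1}$-rectifiable, the upper $(n-1)$-density of $\H^{n-1}\llcorner\pa^*E$ vanishes $\H^{n-1}$-almost everywhere off $\pa^*E$, so a standard density argument gives $\H^{n-1}(\spt\mu_E\setminus\pa^*E)=0$.

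The hardest part of the scheme is the minimality upgrade performed in the second paragraph: Lemma~\ref{lemma strong max} requires a minimizer against \emph{unrestricted} competitors, whereas compactness only produces minimality against competitors contained in $H$. The vanishing of the contact-set trace of the limit is precisely what permits this upgrade through the Gauss--Green calculus of section~\ref{section sofp}; without it, one could not bridge the gap between the free-boundary minimality we have in the limit and the two-sided minimality needed to deploy the maximum principle.
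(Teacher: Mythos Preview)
Your strategy is exactly the paper's, and the minimality upgrade and the deduction of the final statement are handled correctly. There is, however, a genuine gap at the sentence ``By \eqref{kuratowski1} one has $0\in\spt\mu_F$.'' The hypothesis of \eqref{kuratowski1} is that the approximating points lie in $\cl(H\cap\spt\mu_{F_h})$, not merely in $\spt\mu_{F_h}$. You only know $0\in\pa H\cap\spt\mu_{F_h}$, and these two sets can differ: if, say, $E=\{0<x_1<1\}$ near the origin, then $0\in\pa H\cap\spt\mu_E$ while $H\cap\spt\mu_E=\{x_1=1\}$ stays away from $0$. So \eqref{kuratowski1} does not apply as stated, and this is precisely the point the paper treats with extra care.

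The paper fixes it as follows: using that $\H^{n-1}(B\cap\tr_{\pa H}(F_h))\to 0$, one chooses $\varrho_h\to 0$ with $\om_{n-1}\varrho_h^{n-1}$ just larger than $\H^{n-1}(B\cap\tr_{\pa H}(F_h))$; then $0\in\spt\mu_{F_h}$ forces $|F_h\cap B_{\varrho_h}|>0$, while the smallness of the trace forces $|(H\cap B_{\varrho_h})\setminus F_h|>0$ (otherwise $B_{\varrho_h}\cap\pa H$ would be entirely wetted, contradicting the choice of $\varrho_h$). Hence there is a point $y_h\in\cl(H\cap\spt\mu_{F_h})\cap B_{\varrho_h}$, and now \eqref{kuratowski1} legitimately gives $0\in\spt\mu_F$. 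A shorter patch, if you prefer, is to bypass \eqref{kuratowski1} altogether: since $0\in\cl(H)\cap\spt\mu_{F_h}$, the volume lower bound \eqref{stime densita volume lower} gives $|F_h\cap B_r|\ge c_1|B_r\cap H|$ for all small $r$, which passes to the $L^1_{\rm loc}$ limit; combined with $F\subset H$ (so $|F\cap B_r|\le|B_r|/2<|B_r|$), this yields $0\in\spt\mu_F$ directly.
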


\begin{proof} We start  showing as  \eqref{stime densita perimetro lower traccia} implies the last part of the statement, indeed  \eqref{traces equivalenza} and \eqref{stime densita perimetro lower traccia} imply
\begin{equation}
  \label{powerpuff}
  \liminf_{r\to 0^+}\frac{\H^{n-1}(B_{x,r}\cap\pa^*E)}{r^{n-1}}>0\,,
\end{equation}
for every $x\in A\cap\pa H\cap\spt\mu_E$. Since \eqref{powerpuff} holds true at every $x\in A\cap\cl(H\cap\spt\mu_E)$ by \eqref{stime densita perimetro lower}, we conclude that \eqref{powerpuff} holds true at every $x\in\spt\mu_E$. By differentiation of Hausdorff measures (see, e.g., \cite[Corollary 6.5]{maggiBOOK}), we conclude that \(\H^{n-1}(\spt \mu_E\setminus \pa^*E)=0\).

We now prove \eqref{stime densita perimetro lower traccia}. Up to consider \(E^{x,r}\) and \(\Phi^{x,r}\) in place of $E$ and $\Phi$ we may reduce to prove the following statement: if \(\Phi \in \X(B_{2}\cap H, \l, \ell)\), and \(E\) is a \((\Lambda, 4)\) minimizer of \(\PHI\)  in \((B_{2}, H)\) with \(0\in \spt \mu_E\) and $(\Lambda +\ell) \le \e$, then
\[
\mathcal H ^{n-1} (B\cap\tr_{\partial H} (E))\ge c\,.
\]
We argue by contradiction, and assume that for every $h\in\N$ there exist $\Phi_h\in \X(B_2\cap H,\l,\ell_h r_h )$ and $E_h$ a $(\Lambda_h r_h ,4)$ minimizer of $\PHI_h$ in $(B_{2},H)$, satisfying
\[
  0\in \bigcap_{h\in\N}\spt \mu_{E_h}\,, \qquad \lim_{h\to\infty}(\Lambda_h +\ell_h) r_h=0\,,\qquad\lim_{h\to\infty}\H^{n-1}(B\cap \tr_{\partial H}(E_h))=0\,.
\]
By Theorem \ref{thm compactness minimizers}, there exist $\Phi_\infty \in \X_*(\l)$ and a $(0,4)$-minimizer $E_\infty$ of $\PHI_\infty$ in $(B_2,H)$ such that, up to extracting a not relabeled subsequence, \(E_h\to E_\infty\) in \(L^1_{\rm loc}(B_2)\), and
\begin{equation}\label{cazzata}
\H^{n-1}(B\cap\tr_{\partial H}(E_\infty))=0\,.
 \end{equation}
In fact, $E_\infty$ is a minimizer of $\PHI_\infty$ in $(B,\R^n)$, i.e.
\begin{equation}
  \label{strong minimo infinito}
  \PHI_\infty(E_\infty;B)\le\PHI_\infty(F;B)\,,
\end{equation}
whenever $E_\infty\Delta F\cc B$ (note that this is a  a stronger property than being a $(0,4)$-minimizer of $\PHI_\infty$ in $(B_2,H)$).

To prove \eqref{strong minimo infinito}, pick $F$ such that $E_\infty\Delta F\cc B$: since $E_\infty\Delta (F\cap H)\cc B$ and $E_\infty$ is a $(0,4)$-minimizer $\PHI_\infty$  in $(B_2,H)$ we find
\begin{equation}
  \label{strong minimo infinito2}
  \PHI_\infty(E_\infty;B\cap H)\le  \PHI_\infty(F\cap H;B\cap H)\,.
\end{equation}
The left-hand sides of \eqref{strong minimo infinito} and \eqref{strong minimo infinito2} coincide  by \eqref{subset of H}, \eqref{traces equivalenza} and \eqref{cazzata}; the right-hand side of \eqref{strong minimo infinito2} is instead smaller than the right-hand side of \eqref{strong minimo infinito} since $H\cap B\cap\pa^*(F\cap H)\subset B\cap \pa^*F$ with $\nu_{F\cap H}=\nu_F$ $\H^{n-1}$-a.e. on $H\cap B\cap\pa^*(F\cap H)$ by \eqref{cap}. This proves \eqref{strong minimo infinito}. Since \(E_\infty \subset H\) and \(E_\infty\) satisfies   \eqref{strong minimo infinito} we can apply  Lemma \ref{lemma strong max} to deduce that \eqref{cazzata} implies  $0\not\in\spt \mu_{E_\infty}$.

We now achieve a contradiction, and thus prove the lemma, by showing that $0\in \spt \mu_{E_\infty}$. Indeed, let us set
\[
\delta_h=\max\left\{\H^{n-1}(B\cap\tr_{\partial H} (E_h))\,,\frac 1 h \right\}>0\,,\qquad \varrho_h=\Big(\frac{2\delta_h}{\omega_{n-1}}\Big)^{1/(n-1)}\,,\qquad h\in\N\,.
\]
(Notice that, up to take $h$ large enough, we can assume $\varrho_h<1$ for every $h\in\N$.) Since $\de_h>0$ and $0\in\spt\mu_{E_h}$, we find $|E_h\cap B_{\varrho_h}|>0$ for every $h\in\N$. Similarly, it must be $|(H\cap B_{\varrho_h})\setminus E_h|>0$ for every $h\in\N$, for otherwise, by the locality of the trace, we would have $B_{\varrho_h}\cap\pa H=_{\H^{n-1}}B_{\varrho_h}\cap\tr_{\pa H}(E_h)$ for some $h\in\N$, and correspondingly
\[
\de_h\ge \H^{n-1}(B\cap\tr_{\pa H}(E_h))\ge\H^{n-1}(B_{\varrho_h}\cap\tr_{\pa H}(E_h))=\H^{n-1}(B_{\varrho_h}\cap\pa H)=\om_{n-1}\varrho_h^{n-1}=2\,\de_h\,,
\]
a contradiction to $\de_h>0$. This shows that $|E_h\cap B_{\varrho_h}|\,|(H\setminus E_h)\cap B_{\varrho_h}|>0$ for every $h\in\N$. In particular, for every $h\in\N$ there exists $x_h\in\cl(H\cap\spt\mu_{E_h})\cap B_{\varrho_h}$, and since $\varrho_h\to 0$ as $h\to\infty$, we conclude by  \eqref{kuratowski1} that $0\in\spt\mu_{E_\infty}$. As already noticed, this completes the proof.
\end{proof}

We finally prove the following normalization lemma. Recall that, if $E$ is a set of locally finite perimeter in $A$ with $E\subset H$, then $\spt\mu_E$, $\pa^*E$ and $\tr_{\pa H}(E)$ are defined as subsets of $A$. Moreover we are going to denote by \(\pa_{\pa H}\) the topological boundary of subsets of \(\pa H\), and by \(\pa^*_{\partial H}\) the reduced boundary of sets of locally finite perimeter in \(\pa H\).

\begin{lemma}[Normalization]\label{lem:normalization} If $A$ is an open set, $H=\{x_1>0\}$, $\Phi\in\X(A\cap H,\l,\ell)$, and $E$ is a $(\La,r_0)$-minimizer of $\PHI$ in $(A,H)$, then, up to modify \(E\) on a set of measure zero,
\begin{enumerate}
\item[(i)] \(E\cap A\) is open  and \(A\cap\pa E=\spt \mu_E\);
\item[(ii)] \(\H^{n-1}((A\cap\pa E)\setminus \pa^* E)=0\) and  \(\H^{n-1}\big((A\cap\pa E\cap \pa H)\Delta \tr_{\pa H}(E)\big) =0\);
\item[(iii)] \(\pa E\cap \pa H\) is a set of locally finite perimeter in \(A\cap\pa H\), and
\begin{equation}\label{pizza}
\H^{n-2}\Big(\big[\partial_{\pa H}(\pa E\cap \pa H)\setminus \partial_{\pa H}^*(\pa E\cap \pa H)\big]\cap A\Big)=0\,.
\end{equation}
Moreover,
\begin{equation}\label{pizza2}
\partial_{\pa H}\big(\pa E\cap \pa H\big)\cap A=\cl\big(\pa E \cap H)\cap \pa H\cap A\,.
\end{equation}
\end{enumerate}
\end{lemma}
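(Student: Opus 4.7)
The plan is to first choose a canonical open representative of $E$ in $A$ dictated by the density estimates of Lemma~\ref{lemma stime di densita}, which will immediately give (i) and (ii), and then derive (iii) by combining (ii) with Federer's theorem \eqref{federer theorem} inside $\partial H$ and with Lemma~\ref{lem:densitycontact} used symmetrically on $E$ and on $H\setminus E$ (which by Remark~\ref{remark complement} is a $(\Lambda,r_0)$-minimizer of $\widetilde{\PHI}$ and so enjoys the same density estimates). The main obstacle is the proof of \eqref{pizza} and \eqref{pizza2}: both require ruling out pathological tangential situations in which the free boundary touches $\partial H$ on a positive $\mathcal H^{n-1}$-measure set without detaching, and both rely crucially on applying Lemma~\ref{lem:densitycontact} in parallel to $E$ and $H\setminus E$.

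For (i) I introduce
\[
U=\{x\in A:|B_{x,r}\setminus E|=0\text{ for some }r>0\}\,,\qquad V=\{x\in A:|B_{x,r}\cap E|=0\text{ for some }r>0\}\,,
\]
which are open, pairwise disjoint, and by \eqref{sptmuE} satisfy $A=U\sqcup V\sqcup\spt\mu_E$. The upper perimeter bound \eqref{stime densita perimetro upper} together with Lemma~\ref{lem:densitycontact} shows $\spt\mu_E$ has locally finite $\mathcal H^{n-1}$-measure and thus vanishing Lebesgue measure, whence $|E\Delta U|=0$ inside $A$; replacing $E$ by $U\cup(E\setminus A)$ is the required null-set modification making $E\cap A=U$ open. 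The lower density \eqref{stime densita volume lower} yields $|U\cap B_{x,r}|>0$ at every $x\in\spt\mu_E$, hence $\spt\mu_E\subset\cl U$; and since $V$ is open with $V\cap U=\emptyset$ it follows that $V\cap\cl U=\emptyset$, so $A\cap\partial E=A\cap\partial U=\spt\mu_E$, giving (i). For (ii), the first identity is the concluding statement of Lemma~\ref{lem:densitycontact}, and combined with (i) and \eqref{traces equivalenza} it delivers the second.

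For (iii), set $T:=\partial E\cap\partial H$; local finite perimeter of $T$ in $A\cap\partial H$ follows from (ii) and Lemma~\ref{lem:finiteperimeter}. To prove \eqref{pizza} I apply Federer's theorem inside $\partial H$, which reduces the claim to showing that no $x\in\partial_{\partial H}T\cap A$ can have $\partial H$-density in $T$ equal to $0$ or $1$. Density zero contradicts Lemma~\ref{lem:densitycontact}, which via (ii) gives $\mathcal H^{n-1}(B_{x,r}\cap T)\ge c\,r^{n-1}$. Density one forces, via (ii) and \eqref{tracciacomplementare}, zero $\partial H$-density of $\tr_{\partial H}(H\setminus E)$ at $x$; applying Lemma~\ref{lem:densitycontact} to $H\setminus E$ then yields $x\notin\spt\mu_{H\setminus E}$, and \eqref{sptmuE} (the alternative $|B_{x,r}\cap(H\setminus E)|=|B_{x,r}|$ being impossible since $x\in\partial H$) provides $r>0$ with $|B_{x,r}\cap(H\setminus E)|=0$, i.e.\ $B_{x,r}\cap H\subset U$; openness of $U$ then gives $B_{x,r}\cap\partial H\subset\partial U\cap\partial H\subset T$, contradicting $x\in\partial_{\partial H}T$.

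For \eqref{pizza2}, I argue both inclusions. The $\supset$ direction: if $x\in\cl(\partial E\cap H)\cap\partial H\cap A$, then $x\in T$ (as $\partial E$ is closed), and a sequence $x_k\in\partial E\cap H$ converging to $x$ lies in $\spt\mu_{H\setminus E}$ (since $\spt\mu_E=\spt\mu_{H\setminus E}$ inside the open half-space $H$); by closedness of the support $x\in\spt\mu_{H\setminus E}$, and Lemma~\ref{lem:densitycontact} for $H\setminus E$ together with \eqref{tracciacomplementare} and (ii) produce $\mathcal H^{n-1}(B_{x,r}\cap(\partial H\setminus T))>0$, excluding $x\in\mathrm{int}_{\partial H}T$. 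The $\subset$ direction: if $x\in\partial_{\partial H}T\cap A$ then $x\in T\subset\spt\mu_E$, and for small $r$ the connected half-ball $B_{x,r}\cap H$ cannot lie entirely in $U$ (openness would force $B_{x,r}\cap\partial H\subset T$, against $x\in\partial_{\partial H}T$) nor entirely in $V$ (which would give $|B_{x,r}\cap E|=0$, against $x\in\spt\mu_E$); the trichotomy $A=U\sqcup V\sqcup\spt\mu_E$ then forces $B_{x,r}\cap H\cap\partial E\ne\emptyset$, and letting $r\to 0$ yields $x\in\cl(\partial E\cap H)$.
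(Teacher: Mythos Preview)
Your proof is correct and follows essentially the same approach as the paper: the same canonical open representative, the same appeal to Lemma~\ref{lem:densitycontact} for both $E$ and $H\setminus E$ via Remark~\ref{remark complement}, and the same connectedness argument for \eqref{pizza2}. The one noteworthy variation is your derivation of \eqref{pizza}: you show directly that every point of $\partial_{\partial H}T\cap A$ lies in the essential boundary of $T$ (ruling out density $0$ via Lemma~\ref{lem:densitycontact} for $E$, and density $1$ via Lemma~\ref{lem:densitycontact} for $H\setminus E$) and then invoke Federer's theorem \eqref{federer theorem} in $\partial H\cong\R^{n-1}$, whereas the paper first proves \eqref{pizza2}, uses it to obtain the two-sided density bound $c\le \H^{n-1}(B_{x,r}\cap T)/\H^{n-1}(B_{x,r}\cap\partial H)\le 1-c$, and then combines the relative isoperimetric inequality with differentiation of Hausdorff measures. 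Both routes rest on the same two-sided trace density estimate; yours is slightly more direct since it bypasses the isoperimetric step.
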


\begin{remark} In the sequel we will \emph{always} assume that \(E\) is normalized in order to satisfy the conclusions of Lemma \ref{lem:normalization}. In particular, $\pa E$ shall be used in place of $\spt\mu_E$.
\end{remark}

\begin{proof}[Proof of Lemma \ref{lem:normalization}] Let us consider the set
\begin{equation*}
\widetilde E=\big\{x\in A\cap H:  |E\cap B_{x,r}|=|B_{x,r}| \textrm{ for some  \(r>0\)}  \big\}\cup(E\setminus A)\,.
\end{equation*}
Obviously, $\widetilde E\cap A$ is open and, by \eqref{sptmuE},
\begin{equation}
  \label{bartali2}
  A\cap\pa\widetilde{E}=\Big\{x\in A:0<|E\cap B_{x,r}|<|B_{x,r}|\quad\forall r>0\Big\}=\spt\mu_E\,.
\end{equation}
We now claim that $\widetilde{E}$ is equivalent to $E$. Clearly, $\widetilde E \cap A\subset E^{(1)}\cap A$. At the same time, if $x\in (E^{(1)}\cap A\cap H)\setminus \widetilde E$, then there exists $r_*>0$ such that $0<|E\cap B_{x,r}|<|B_{x,r}|$ for every $r<r_*$, that is, $x\in H\cap \spt\,\mu_E$: but then we cannot have $x\in E^{(1)}$ because of the density estimate \eqref{stime densita volume upper}. We have thus proved $H\cap A\cap (E^{(1)}\Delta \widetilde E)=\emptyset$, so that, by Lebesgue's density points theorem, $\widetilde E$ is equivalent to $E$. In particular, $\mu_E=\mu_{\widetilde{E}}$, and thus, by \eqref{bartali2}, we have
\begin{equation}\label{bartali}
  A\cap\pa\widetilde{E}=\spt \mu_{\widetilde E}\subset_{\H^{n-1}}\pa^*\widetilde{E}\,,
\end{equation}
where the last inclusion follows by Lemma \ref{lem:densitycontact} (clearly,  \(\widetilde E\) is a $(\La,r_0)$-minimizer of $\PHI$ in $(A,H)$). By \eqref{bartali},  $\H^{n-1}((A\cap\pa\widetilde{E})\Delta \pa^*\widetilde{E})=0$, and since $\H^{n-1}(\pa^*\widetilde{E}\Delta\tr_{\pa H}(\widetilde{E}))=0$ by \eqref{traces equivalenza}, we have completed the proof of (ii). By (ii) and by Lemma \ref{lem:finiteperimeter}, \(\pa \widetilde E\cap \pa H\) is a set of locally finite perimeter in \(\pa H\cap A\). Let us now prove  \eqref{pizza2}. To this end we notice that, clearly,
\begin{eqnarray}\nonumber
A\cap\partial_{\pa H}\big(\pa \widetilde E\cap \pa H\big)&\subset&\Big\{x\in A\cap\pa H: |E\cap B_{x,r}|\,|(H\setminus E)\cap B_{x,r}|>0
\quad\forall r>0\Big\}
\\\label{nonsopiuchenome}
&\subset& A\cap\pa H\cap \cl\big(H\cap \pa  \widetilde  E\big)\,,
\end{eqnarray}
where the second inclusion follows as $A\cap \pa \widetilde  E=\spt\mu_{\widetilde{E}}$. At the same time, since \(H\cap\spt \mu_{\widetilde E}=H\cap\spt \mu_{H\setminus \widetilde E}\) and $A\cap\pa\widetilde E=\spt\mu_{\widetilde E}$, we have
\[
A\cap\pa H\cap \cl\big(H\cap \pa  \widetilde  E\big)\subset \spt \mu_{\widetilde E}\cap \spt \mu_{H\setminus\widetilde E}\cap \pa H\,,
\]
so that, by Remark \ref{remark complement}, Lemma \ref{lem:densitycontact} (applied to both \(\widetilde E\) and \(H\setminus\widetilde E\)) and \eqref{tracciacomplementare}, one has, for every $x\in A\cap\pa H\cap \cl(H\cap \pa  \widetilde  E)$ and $r>0$ sufficiently small,
\begin{equation}\label{nonsopiuchenome2}
c\, \H^{n-1}(B_{x,r}\cap\pa H)\le \H^{n-1}(B_{x,r}\cap\pa \widetilde E\cap\pa H)\le (1-c) \H^{n-1}( B_{x,r}\cap\pa H)\,,
\end{equation}
where $c=c(n,\l)$. This implies of course $A\cap\pa H\cap \cl(H\cap \pa  \widetilde  E)\subset A\cap\partial_{\pa H}(\pa \widetilde E\cap \pa H)$,
that, together with \eqref{nonsopiuchenome}, implies \eqref{pizza2}. Finally, we  notice that, by \eqref{pizza2},  the relative isoperimetric inequality in \(\pa H \simeq\R^{n-1}\) and \eqref{nonsopiuchenome2} give
\[
\H^{n-2}\Big(\pa^*_{\pa H} (\pa \widetilde E\cap \pa H)\cap B_{x,r} \Big)\ge c(n,\lambda)\, r^{n-2}\,,\qquad \forall\, x\in \pa_{\pa H} (\pa \widetilde E\cap \pa H)\cap A\,,
\]
which implies \eqref{pizza} by differentiation of Hausdorff measures, see \cite[Corollary 6.5]{maggiBOOK}.
\end{proof}

\subsection{Ellipticity, minimality, and affine transformations}\label{section cambio di variabili L} In the proof of the $\e$-regularity theorem will need to look at a given almost-minimizers from different directions at different scales. A very useful trick is then that of using affine transformations in order to always write things in the same system of coordinates. It is thus convenient, for the sake of clarity, to state separately how the considered class of elliptic functionals and almost-minimizers behave under these transformations.

\begin{lemma}\label{lemma PHIL}
  If $A$ is an open set in $\R^n$, $H$ an open half-space, $\Phi\in\X(A\cap H,\l,\ell)$, $E$ is a $(\La,r_0)$-minimizer of $\PHI$ in $(A,H)$, $L$ is an invertible affine map on $\R^n$, and
  \begin{equation}
    \label{PhiL}
      \Phi^L(x,\nu)=\Phi(L^{-1}x,(\cof \nabla L)^{-1}\nu)\,,\qquad (x,\nu)\in\cl(L(A\cap H))\times \R^{n}\,,
  \end{equation}
  then $\Phi^L\in\X(L(A\cap H),\widetilde{\l},\widetilde{\ell})$ and $L(E)$ is a $(\widetilde{\La},\widetilde{r_0})$-minimizer of $\PHI^L$ on $(L(A),L(H))$, where
  \begin{eqnarray*}\label{PHIL tilde1}
  &&\widetilde{\l}=\l\,\max\Big\{\|\nabla L\|\|(\nabla L)^{-1}\|^2,\|\nabla L\|^2\|(\nabla L)^{-1}\|\Big\}^{n-1}\,,
  \\\label{PHIL tilde2}
  &&\widetilde{\ell}= \ell\,\|(\nabla L)^{-1}\|^n\,,\qquad
  \widetilde{\La}=\frac{\La}{|\det\nabla L|}\,,\qquad \widetilde{r_0}=\frac{r_0}{\|(\nabla L)^{-1}\|}\,.
  \end{eqnarray*}
  In particular, there exist positive constants $\e_*=\e_*(n)$ and $C_*=C_*(n)$, such that, if $\|\nabla L-\Id\|<\e_*$, then
  \begin{equation}\label{PHIL continuita parametri}
  \max\Big\{\frac{|\widetilde{\l}-\l|}\l,\frac{|\widetilde{\ell}-\ell|}\ell,\frac{|\widetilde{\La}-\La|}{\La},\frac{|\widetilde{r_0}-r_0|}{r_0}\Big\}
  \le C_*\,\|\nabla L-\Id\|\,.
  \end{equation}
\end{lemma}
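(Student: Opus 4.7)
The plan is to verify separately that $\Phi^L\in\X(L(A\cap H),\widetilde{\l},\widetilde{\ell})$ and that $L(E)$ is a $(\widetilde{\La},\widetilde{r_0})$-minimizer of $\PHI^L$ in $(L(A),L(H))$, and then deduce \eqref{PHIL continuita parametri} by Taylor expansion around $\nabla L=\Id$. Throughout, set $M:=(\cof\nabla L)^{-1}$, so that $\Phi^L(y,\nu)=\Phi(L^{-1}y,M\nu)$; since $L$ is affine, $\nabla L$ and $M$ are constant, and $\cof A=(\det A)(A^{-1})^T$ together with $\cof(A^{-1})=(\cof A)^{-1}$ yield $\|M\|\le\|(\nabla L)^{-1}\|^{n-1}$ and $\|M^{-1}\|=\|\cof\nabla L\|\le\|\nabla L\|^{n-1}$.

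For the minimality transfer, given any competitor $F'\subset L(H)$ with $L(E)\Delta F'\cc W'\cc L(A)$ and $\diam(W')<2\widetilde{r_0}$, I pull it back to $F:=L^{-1}(F')\subset H$ inside $W:=L^{-1}(W')\cc A$; the estimate $\diam(W)\le\|(\nabla L)^{-1}\|\diam(W')<2r_0$ makes $F$ admissible in the $(\La,r_0)$-minimality inequality for $E$. The change-of-variables formula \eqref{change of variables cofattore} applied with $f=L$, combined with the constancy of $\cof\nabla L$ and the $1$-homogeneity of $\Phi$ in $\nu$, yields $\PHI^L(L(E);W'\cap L(H))=\PHI(E;W\cap H)$ and the analogous identity for $F$. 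Together with $|L(E)\Delta F'|=|\det\nabla L|\,|E\Delta F|$, this rewrites the $(\La,r_0)$-minimality of $E$ as the desired $(\widetilde{\La},\widetilde{r_0})$-minimality of $L(E)$.

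For the class membership, convexity, lower semicontinuity, $1$-homogeneity, and $C^{2,1}$-regularity on $\mathbf S^{n-1}$ of $\Phi^L(y,\cdot)$ all pass through the constant linear change $\nu\mapsto M\nu$. The chain rule gives $\nabla\Phi^L(y,\nu)=M^T\nabla\Phi(L^{-1}y,M\nu)$ and $\nabla^2\Phi^L(y,\nu)=M^T\nabla^2\Phi(L^{-1}y,M\nu)M$; the homogeneity in $\nu$ of $\Phi,\nabla\Phi,\nabla^2\Phi$ (of degrees $1,0,-1$) lets me reduce all evaluations at $M\nu$ to evaluations at $M\nu/|M\nu|\in\mathbf S^{n-1}$ at the cost of explicit factors $|M\nu|^{\pm1}$, with $|M\nu|\in[1/\|M^{-1}\|,\|M\|]$ for $|\nu|=1$. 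Substituting into \eqref{Phi 1}, \eqref{Phi x ell}, \eqref{Phi nabla 1} produces upper bounds for $\Phi^L$, $|\nabla\Phi^L|$, $\|\nabla^2\Phi^L\|$, the Hessian-Lipschitz quotient, and the $y$-Lipschitz terms, each a polynomial in $\|M\|,\|M^{-1}\|$ which, via $\|M\|\le\|(\nabla L)^{-1}\|^{n-1}$ and $\|M^{-1}\|\le\|\nabla L\|^{n-1}$, is absorbed into the stated formulas for $\widetilde{\l}$ and $\widetilde{\ell}$.

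The main bookkeeping obstacle is the ellipticity \eqref{elliptic}: from $\nabla^2\Phi^L(y,\nu)e\cdot e=\nabla^2\Phi(L^{-1}y,M\nu)(Me)\cdot(Me)\ge|P_{M\nu}Me|^2/(\l|M\nu|)$ one must lower-bound the projection $P_{M\nu}Me$ of $Me$ onto $(M\nu)^\perp$ in terms of $|e-(e\cdot\nu)\nu|$. Writing $P_{M\nu}Me=Me-\alpha M\nu$ for the minimizing scalar $\alpha$ and applying $M^{-1}$ gives $|e-\alpha\nu|\le\|M^{-1}\|\,|P_{M\nu}Me|$, while the minimizing property on the pulled-back side yields $|e-\alpha\nu|\ge|e-(e\cdot\nu)\nu|$; hence $|P_{M\nu}Me|\ge|e-(e\cdot\nu)\nu|/\|M^{-1}\|$, and together with $|M\nu|\le\|M\|$ this yields $\nabla^2\Phi^L(y,\nu)e\cdot e\ge|e-(e\cdot\nu)\nu|^2/(\l\|M\|\|M^{-1}\|^2)$, with $\l\|M\|\|M^{-1}\|^2\le\l(\|\nabla L\|^2\|(\nabla L)^{-1}\|)^{n-1}$ matching the second term in the maximum defining $\widetilde{\l}$. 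Finally, when $\|\nabla L-\Id\|<\e_*(n)$ the Neumann series gives $\|\nabla L\|,\|(\nabla L)^{-1}\|,|\det\nabla L|=1+O(\|\nabla L-\Id\|)$, and Taylor-expanding the explicit expressions for $\widetilde{\l},\widetilde{\ell},\widetilde{\La},\widetilde{r_0}$ yields \eqref{PHIL continuita parametri}.
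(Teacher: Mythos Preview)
Your proposal is correct and follows essentially the same approach as the paper's proof: both verify the class membership of $\Phi^L$ by chain-rule computations combined with the homogeneity of $\Phi,\nabla\Phi,\nabla^2\Phi$, treat the ellipticity lower bound via the observation that $t\mapsto|e-t\nu|^2$ is minimized at $t=e\cdot\nu$ (the paper writes this out with $\sigma_{\min},\sigma_{\max}$ in place of your $\|M\|,\|M^{-1}\|$, but the algebra is identical), and transfer the minimality by pulling back competitors through $L^{-1}$ and invoking \eqref{change of variables cofattore}.
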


\begin{remark}
  {\rm  Definition \eqref{PhiL} is conceived to give the identity
  \begin{equation}\label{change of variable linear}
    \PHI^L(L(E);L(K))=\PHI(E;K)\,,
  \end{equation}
  for every $E$ with locally finite perimeter in $A$ and $K\cc A$. Also, \eqref{PHIL continuita parametri} has to be understood in the sense that if, say, $\Lambda=0$, then $\widetilde{\La}=\La$, and so on.}
\end{remark}

\begin{proof}[Proof of Lemma \ref{lemma PHIL}] Without loss of generality, in order to simplify the notation, we directly assume that $L$ is linear, and write $L$ in place of $\nabla L$ and $L^{-1}$ in place of $(\nabla L)^{-1}$.

\medskip

\noindent {\it Step one}: If $\s_{min}=\s_{min}(L)$ and $\s_{max}=\s_{max}(L)$ denote the square roots of the maximum and minimum eigenvalues of $L^*L$, then we have
  \begin{eqnarray}\label{l-l+}
    &&\s_{min}|z|\le \min\{|Lz|,|L^*z|\}\le\max\{|Lz|,|L^*z|\}\le\s_{max}\,|z|\,,\qquad\forall z\in\R^n\,,
    \\
    \label{l-l+3}
    &&\|L\|=\s_{max}\,,\qquad \|L^{-1}\|=\s_{min}^{-1}\,,\qquad \s_{min}^n\le \det\,L\le\s_{max}^n\,.
  \end{eqnarray}
  On taking into account that $(\det\,L)\,L^{-1}=(\cof L)^*$, we find
  \begin{equation}
    \label{mortacci}
  (\cof L)^{-1}=(\det\,L)^{-1}\,L^*\,,
  \end{equation}
  and thus, by \eqref{l-l+} and \eqref{l-l+3},
  \begin{equation}\label{l-l+2}
    \frac{|z|}{\s_{max}^{n-1}}\le |(\cof L)^{-1}z|\le\frac{|z|}{\s_{min}^{n-1}}\,,\qquad\forall z\in\R^n\,.
  \end{equation}
  By \eqref{Phi 1}, \eqref{PhiL}, and \eqref{l-l+2} we thus find
  \begin{equation}\nonumber
  \frac{1}{\l\,\s_{max}^{n-1}}\le\Phi^L(x,\nu)\le\frac{\lambda}{\s_{min}^{n-1}}\,,\qquad\forall x\in\cl(L(A\cap H))\,,\nu\in \mathbf S^{n-1}\,.
  \end{equation}
  Similarly, setting for the sake of brevity $M=(\cof L)^{-1}$, and by taking into account that
  for every $x\in\cl(L(A\cap H))$, $\nu\in \mathbf S^{n-1}$, and $z,w\in\R^n$, one has
  \begin{eqnarray*}
    \nabla\Phi^L(x,\nu)\cdot z=\nabla\Phi(L^{-1}x,M\nu)\cdot (Mz)\,,
    \\
   \nabla^2\Phi^L(x,\nu)z\cdot w=\nabla^2\Phi(L^{-1}x,M\nu)(Mz)\cdot (M w)\,,
  \end{eqnarray*}
  we find that,
  for every $x,y\in \cl(L(A\cap H))$ and $\nu,\nu'\in \mathbf S^{n-1}$,
  \begin{eqnarray*}
  |\nabla\Phi^L(x,\nu)|&\le&\frac{\lambda}{\s_{min}^{n-1}}\,,
  \\
  \|\nabla^2\Phi^L(x,\nu)\|&\le&\lambda\, \Big(\frac{\s_{max}}{\s_{min}^2}\Big)^{n-1}\,,
  \\
  \|\nabla^2\Phi^L(x,\nu)-\nabla^2\Phi^L(x,\nu')\|&\le& \l\,\Big(\frac{\s_{max}}{\s_{min}^2}\Big)^{n-1}\,|\nu-\nu'|\,,
  \\
  |\Phi^L(x,\nu)-\Phi^L(y,\nu)|&\le& \frac{\ell}{\s_{min}^n}\,|x-y|\,,
  \\
  |\nabla\Phi^L(x,\nu)-\nabla\Phi^L(y,\nu)|&\le&\frac{\ell}{\s_{min}^n}\,|x-y|\,.
  \end{eqnarray*}
  Finally if $\nu\in \mathbf S^{n-1}$ and $e\in\R^n$ then, by  by \eqref{elliptic} and the \((-1)\) homogeneity of \(\nabla^{2}\Phi\),
  \begin{equation*}
  \begin{split}
    \nabla^2\Phi^L(x,\nu)e\cdot e&\ge\frac{1}{\l\,|M\nu|}\Big|Me-\Big(Me\cdot\frac{M\nu}{|M\nu|}\Big)\,\frac{M\nu}{|M\nu|}\Big|^2
    \ge\frac{\s_{min}^{n-1}}\l\,\bigg|M\bigg(e-\Big(Me\cdot\frac{M\nu}{|M\nu|^2}\Big)\,\nu\bigg)\bigg|^2
    \\
    &\ge\frac1\l\,\Big(\frac{\s_{min}}{\s_{max}^2}\Big)^{n-1}\Big|e-\Big(Me\cdot\frac{M\nu}{|M\nu|^2}\Big)\,\nu\Big|^2
    \ge\frac1\l\,\Big(\frac{\s_{min}}{\s_{max}^2}\Big)^{n-1}\,\big|e-(e\cdot\nu)\nu\big|^2\,,
  \end{split}
  \end{equation*}
where in the last inequality we have used that \(t\mapsto |e-t\nu|^2\) is minimized by \(t_*=e\cdot \nu\).
  \medskip

  \noindent {\it Step two}: Clearly, $L(E)\subset L(H)$, and one easily checks from the distributional definition of relative perimeter that $L(E)$ has locally finite perimeter in $L(A)$. Let now $G\subset L(H)$ with $G\Delta L(E)\cc V\cc L(A)$ for some open set $V$ with $\diam(V)<2s_0$. If we set $W=L^{-1}(V)$ and $F=L^{-1}(G)$, then $F\subset H$, $F\Delta E\cc W\cc A$ and $\diam(W)<\|(\nabla L)^{-1}\|\diam(V)<\|(\nabla L)^{-1}\|\,s_0<r_0$ (provided $s_0=r_0/\|(\nabla L)^{-1}\|$), so that
  \[
  \PHI(E;W\cap H)\le \PHI(F;W\cap H)+\La\,|E\Delta F|\,.
  \]
  By \eqref{change of variable linear} we find
  \begin{eqnarray*}
    \PHI^L(L(E);V\cap L(H))&\le& \PHI^L(G;V\cap L(H))+\La\,|L^{-1}(L(E)\Delta G)|
  \\
  &=&\PHI^L(G;V\cap L(H))+\La\,|\det L|^{-1}\,|L(E)\Delta G|\,,
  \end{eqnarray*}
  and this concludes the proof.
\end{proof}

\section{The $\e$-regularity theorem}\label{section eps regularity} This section is devoted to the proof of a boundary regularity criterion (Theorem \ref{thm epsilon}) formulated in terms of the smallness of a quantity known as spherical excess. We thus begin by introducing the relevant notation and definitions needed in the formulation of this criterion. Given an open set $A$ and an open half-space $H$ in $\R^n$, we consider a set $E\subset H$, of locally finite perimeter in an open set $A$, fix $x\in A\cap \pa H$ and $r<\dist(x,\pa A)$, and define the {\it spherical excess of $E$ at the point $x$, at scale $r$, relative to $H$}, by setting
\[
{\bf exc}^H(E,x,r)=\inf\Big\{\frac1{r^{n-1}}\int_{B_{x,r}\cap H\cap\pa^*E}\frac{|\nu_E-\nu|^2}2\,d\H^{n-1}:\nu\in \mathbf S^{n-1}\Big\}\,.
\]
Another useful notion of excess is that of cylindrical excess. Given $\nu\in \mathbf S^{n-1}$, we set $\q_\nu(y)=y\cdot\nu$, $\p_\nu(y)=y-(y\cdot\nu)\,\nu$ for every $y\in\R^n$, and let
\begin{eqnarray*}
\C_\nu(x,r)=\Big\{y\in\R^n:|\p_\nu(y-x)|<r\,,|\q_\nu(y-x)|<r\Big\}\,,
\\
\D_\nu(x,r)=\Big\{y\in\R^n:|\p_\nu(y-x)|<r\,,|\q_\nu(y-x)|=0\Big\}\,.
\end{eqnarray*}
With this notation at hand,
\begin{figure}
  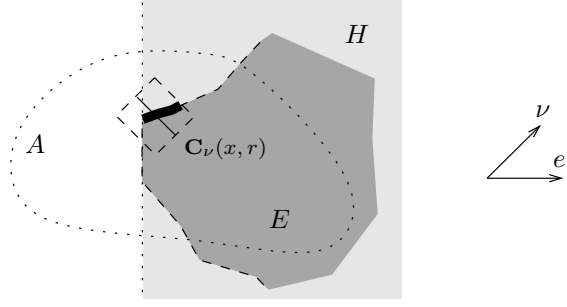\caption{{\small When computing $\exc_\nu^H(E,x,r)$, one considers only the part of the boundary of $E$ that is interior to $H$ (that is depicted as a bold line in this figure).}}\label{fig excess}
\end{figure}
the {\it cylindrical excess of $E$ at $x$, at scale $r$, in the direction $\nu$, relative to $H$}, is defined as
\[
\exc_\nu^H(E,x,r)=\frac1{r^{n-1}}\,\int_{H\cap\C_\nu(x,r)\cap\pa^*E}\frac{|\nu_E-\nu|^2}2\,d\H^{n-1}\,;
\]
see Figure \ref{fig excess}. If $\nu=e_n$, then we shall simply set $\exc_n^H$ in place of $\exc_{e_n}^H$. As usual, the definition is made so that the excess is invariant by scaling, precisely
\begin{equation}
  \label{excess scaling}
  \exc_\nu^H(E,x,r)=\exc_\nu^{H^{z,s}}\Big(E^{z,s},\frac{x-z}s,\frac{r}s\Big)=\exc_{\nu}^{H^{x,r}}(E^{x,r},0,1)\,.
\end{equation}
It is easily seen that, if $x\in\cl(H\cap\spt\mu_E)$ and $r>0$, then
\begin{eqnarray}\label{sferico nullo}
&&{\bf exc}^H(E,x,r)=0\quad\mbox{iff}\quad
\left\{\begin{array}
  {l}
  \mbox{there exist $\nu\in{\bf S}^{n-1}$ and $s\in\R$ such that}
  \\
  \mbox{$E\cap H\cap  B_{x,r}=\{x\cdot \nu<s\}\cap H \cap B_{x,r}$\,,}
\end{array}
\right .
\\\label{cilindrico nullo}
&&\exc_\nu^H(E,x,r)=0\quad\mbox{iff}\quad
\left\{\begin{array}
  {l}
  \mbox{there exists $s\in\R$ such that}
  \\
  \mbox{$E\cap H\cap \C_\nu(x,r)=\{x\cdot \nu<s\}\cap H \cap \C_\nu(x,r)$\,.}
\end{array}
\right .
\end{eqnarray}
 Finally, we recall that the normalized projection ${\bf e_1}(\nu)$ of $\nu\in{\bf S}^{n-1}$ (with $|\nu\cdot e_1|<1$) on $e_1^\perp$ was defined in \eqref{bf e1} by ${\bf e_1}(\nu)=(\nu-(\nu\cdot e_1))/\sqrt{1-(\nu\cdot e_1)^2}$, so that
\[
\nu^\perp=\Big\{x\in\R^n:x\cdot\nu=0\Big\}=\Big\{x\in\R^n:x\cdot{\bf e_1}(\nu)=-\frac{(\nu\cdot e_1)\,x_1}{\sqrt{1-(\nu\cdot e_1)^2}}\Big\}\,.
\]
and that, the normalization of Lemma \ref{lem:normalization} being in force, we have $A\cap\pa E=\spt\mu_E$ on almost-minimizers.

\begin{theorem}[$\e$-regularity theorem]\label{thm epsilon}
  For every $n\ge 2$ and \(\lambda\ge 1\)  there exist positive constants  $\ecc=\ecc(n,\l)$, $\beta_1=\beta_1(n,\l)\le \beta_2=\beta_2(n,\l)$ and $C=C(n,\l)$ with the following properties. If $H=\{x_1>0\}$,
  \begin{eqnarray*}\nonumber
  &&\Phi\in\X(B_{4\,r}\cap H,\lambda,\ell)\,,
  \\\nonumber
  &&\mbox{$E$ is a $(\La,r_0)$-minimizer of $\PHI$ in $(B_{4\,r},H)$ and  \(0<2\,r\le r_0\)}\,,
  \\\nonumber
  &&0\in\cl(H\cap\pa E)\,,
  \\\label{sferico piccolo}
  &&{\bf exc}^H(E,0,2\,r)+(\La+\ell)\,r\le \ecc\,,
  \end{eqnarray*}
  then \(M=\cl(\pa E\cap H)\cap B(0, \beta_1 r)\) is a \(C^{1,1/2}\) manifold with boundary, with
  \[
  M \cap \pa H=\pa_{\pa H} (\pa E \cap \pa H)\cap B(0, \beta_1 r)\,,
  \]
  and such that the anisotropic Young's law holds true on $M\cap\pa H$, i.e.
  \[
  \nabla \Phi\big(x,\nu_E(x))\cdot e_1=0\,,\qquad \forall\, x\in  M\cap \pa H\,.
  \]
  More precisely, there exist $\nu\in \mathbf S^{n-1}$ with
  \[
  \nabla \Phi(0,\nu)\cdot e_1=0\,,\qquad |\nu\cdot e_1|\le 1-\frac1C\,,
  \]
  and \(u\in C^{1,1/2}(\cl(\D_{\mathbf {e_1}\!(\nu)}(0,\beta_2\,r)\cap H))\) with
  \begin{equation*}
  \sup_{x,y\in\D_{\mathbf {e_1}\!(\nu)}(0,\beta_2\,r)\cap H}\frac{|u(x)|}r+|\nabla u(x)|+r^{1/2}\,\frac{|\nabla u(x)-\nabla u(y)|}{|x-y|^{1/2}}
  \le C\,\sqrt{\ecc}\,,
  \end{equation*}
  such that, $M$ is obtained, in a $\beta_2\,r$-neighborhood of $\nu^\perp$, as a perturbation of $\nu^\perp$ by $u$:
  \begin{multline}\label{allard1x}
  \Big\{x\in H\cap\pa E:|\p_{\mathbf {e_1}\!(\nu)}x|<\beta_2\,r\,,\,
  -\beta_2 r-\frac{(\nu\cdot e_1)\,x_1}{\sqrt{1-(\nu\cdot e_1)^2}}<
  \q_{\mathbf {e_1}\!(\nu)}\,x<
  \beta_2 r-\frac{(\nu\cdot e_1)\,x_1}{\sqrt{1-(\nu\cdot e_1)^2}}\Big\}
  \\
  =\Big\{x\in H: |\p_{\mathbf {e_1}\!(\nu)}x|<\beta_2\,r\,,\,\q_{\mathbf {e_1}\!(\nu)}\,x=-\frac{(\nu\cdot e_1)\,x_1}{\sqrt{1-(\nu\cdot e_1)^2}}+u\big(\p_{\mathbf {e_1}\!(\nu)}\,x\big)\Big\}\,;
  \end{multline}
  see
  \begin{figure}
    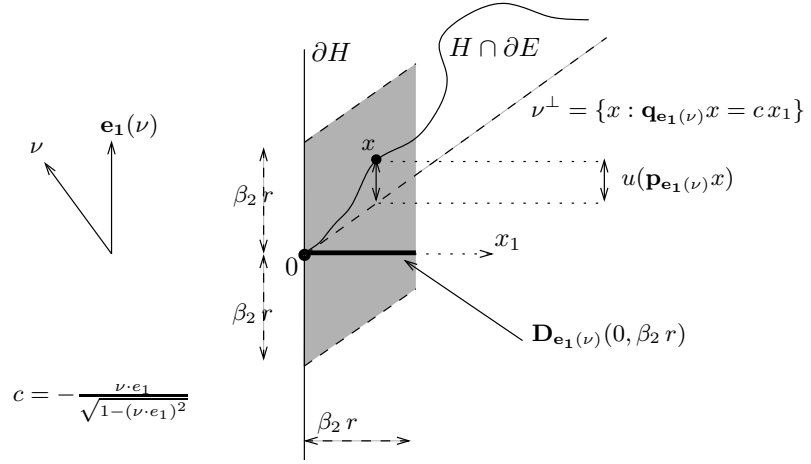\caption{{\small The situation in \eqref{allard1x}. The region (depicted in gray) where the graphicality of $H\cap\pa E$ is proved is obtained as a ``shear-strained'' deformation of $\C_{{\bf e_1}(\nu)}(0,\beta_2\,r)$, where the amount of vertical deformation depends on the coordinate $x_1$ only. Of course, the function $u$ parameterizing $H\cap\pa E$ depends on the full set of variables $\p_{{\bf e_1}(\nu)} x$, not just on $x_1$.}}\label{fig sei}
  \end{figure}
  Figure \ref{fig sei}.
  \end{theorem}

\begin{definition}[Boundary singular set]\label{definition singular set}
  If \(E\) is  a \((\Lambda,r_0)\)-minimizer of \(\PHI\) in \((A,H)\) (normalized as in Lemma \ref{lem:normalization}) and we set $M=A\cap\cl(H\cap\pa E)$, then the {\it boundary singular set $\S_A(E;\pa H)$ of $E$} is defined as the subset of $M\cap \pa H$ such that
  \begin{equation}
    \label{regularset}
  (M\cap\pa H)\setminus \S_A(E,\pa H)=\bigg\{x\in M\cap \pa H:
  \begin{array}{l}
  \textrm{there exists \(r_x>0\) such that \(M\cap B_{x,r_x}\)}
  \\
  \textrm{is a \(C^{1,1/2}\) manifold with boundary}
  \end{array}\bigg\}\,.
  \end{equation}
\end{definition}

\begin{remark}\label{rmk:singular set}
  By Theorem \ref{thm epsilon},
  \[
  \S_A(E;\pa H)=\Big\{x\in M\cap \pa H :\liminf_{r\to 0^+}{\bf exc}^H(E,x,r)>0\Big\}\,.
  \]
  This identity will be the starting point in section \ref{section singular set} to prove that $\H^{n-2}(\S(E;\pa H))=0$.
\end{remark}

The main step in the proof of Theorem \ref{thm epsilon} is proving the validity of the following lemma. (We will do this in section \ref{section proof of lemma eps}.)

\begin{lemma}\label{thm regularity en}
  For every \(\lambda\ge 1\)  there exist positive constants  $\er=\er(n,\l)$ and  \(C=C(n,\l)\) with the following properties. If $H=\{x\cdot e_1>0\}$,
  \begin{eqnarray*}
  &&\Phi\in\X(\C_{128\,r}\cap H,\lambda,\ell)\,,
  \\
  &&\mbox{$E$ is a $(\La,r_0)$-minimizer of $\PHI$ in $(\C_{128\,r},H)$ with \(0<64r\le r_0\)}\,,
  \\
  &&0\in\cl(H\cap\pa E)\,,
  \\
  &&\big|\nabla \Phi(0,e_n)\cdot e_1\big|+ \exc_n^H(E,0,64r)+(\L +\ell) r<\er\,,
  \end{eqnarray*}
  then there exists a function \(u\in C^{1,1/2}(\cl(\D_r\cap H))\) such that
  \begin{gather}
  \nonumber
  \C^+_{r}\cap \pa E=\Big\{x\in H: |\p x|<r\,,\q x=u(\p x)\Big\}\,,
  \\
  \label{allard4}
  \sup_{z,y\in\D_r\cap H}\frac{|u(z)|}r+|\nabla u(z)|+r^{1/2}\frac{|\nabla u(z)-\nabla u(y)|}{|z-y|^{1/2}}\le C\,\sqrt{\er}\,,
  \\
  \label{allard2}
  \nabla \Phi\big((z,u(z)),(-\nabla u (z),1)\big)\cdot e_1=0\,\qquad\forall z\in\D_{r}\cap\pa H\,.
  \end{gather}
 \end{lemma}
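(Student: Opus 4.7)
The plan is to follow the De Giorgi--Tamanini harmonic approximation method, adapted to free boundaries. The heart of the argument is a \emph{one-step excess decay}: one proves universal constants $\tau\in(0,1/2)$ and $\theta\in(0,1)$ such that, under the smallness hypotheses of the lemma, there exists a unit vector $\nu$ close to $e_n$ with
\[
\exc_\nu^H(E,0,\tau r)+|\nabla\Phi(0,\nu)\cdot e_1|\le\theta\Bigl(\exc_n^H(E,0,r)+|\nabla\Phi(0,e_n)\cdot e_1|\Bigr)+C\,(\La+\ell)\,r.
\]
Iterating this at dyadic scales $\tau^k r$ yields a power-law decay of the excess; combined with the density estimates in Lemma~\ref{lemma stime di densita} and an Allard-type argument, this forces $H\cap\pa E\cap\C^+_r$ to be a $C^1$ graph, and Campanato's criterion upgrades the regularity to $C^{1,1/2}$, giving \eqref{allard4}. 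The Hölder exponent $1/2$ is dictated by the balance between the geometric decay and the linear-in-$r$ error $(\La+\ell)\,r$.

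\textbf{Reduction to the flat linearized problem.} Lemma~\ref{lemma 0} produces a direction $\nu_0$ with ${\bf e_1}(\nu_0)=e_n$, $\nabla\Phi(0,\nu_0)\cdot e_1=0$, and $|\nu_0-e_n|\le C\er$; by Proposition~\ref{proposition young} the half-space $\{x\cdot\nu_0<0\}\cap H$ is then a minimizer of the frozen functional $\PHI(0,\cdot)$ on $H$. A shear-strained transformation $L$ fixing $\pa H$ pointwise and sending $\nu_0$ to $e_n$, as in Lemma~\ref{lemma PHIL}, brings us to the case where the reference minimizing half-space is $\{x_n<0\}\cap H$ and $\nabla\Phi^L(0,e_n)\cdot e_1=0$. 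The parameters $(\l,\ell,\La,r_0)$ are perturbed only by universal multiplicative factors, and the excess is preserved up to such constants.

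\textbf{Harmonic approximation with Neumann condition.} This is the main technical step and the main obstacle. One first derives a boundary Caccioppoli-type inequality for $E$ by testing \eqref{minimalityintro} against vertical displacements of $\pa E$; then a boundary Lipschitz approximation, in the spirit of Tamanini's construction (excising a subset of $\D_{r/2}\cap H$ of $\H^{n-1}$-measure $\le C\,r^{n-1}\,\exc_n^H(E,0,r)$), produces a $1$-Lipschitz $v:\D_{r/2}\cap H\to\R$ whose graph agrees with $H\cap\pa E$ outside a small bad set. Taylor-expanding the minimality inequality to second order around $e_n$ via \eqref{k1k2}, one shows that $v$ is $L^2$-close, up to errors of order $\exc_n^H(E,0,r)^{1+\gamma}+(\La+\ell)\,r$, to the unique solution $h$ of
\[
\Div(A\,\nabla h)=0\text{ in }\D_{r/2}\cap H,\qquad A\,e_1\cdot\nabla h=0\text{ on }\D_{r/2}\cap\pa H,\qquad h=v\text{ on }H\cap\pa\D_{r/2},
\]
where $A=\nabla^2\Phi(0,e_n)$ restricted to $e_n^\perp$; the Neumann condition is precisely the linearization of $\nabla\Phi(x,(-\nabla v,1))\cdot e_1=0$ at $\nabla v=0$. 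A suitable odd/even reflection of $h$ across $\pa H$ (with the coefficient matrix adjusted by flipping signs of the off-diagonal $(1,j)$-entries) extends $h$ to a solution of a constant-coefficient elliptic equation on $\D_{r/2}$, so standard interior estimates give $\mathrm{osc}_{\D_{\tau r}}\nabla h\le C\,\tau\,\mathrm{osc}_{\D_{r/2}}\nabla h$. Combining the two estimates produces the one-step decay above. The obstacle lies in handling the simultaneous linearization of bulk integrand and nonlinear Young's law, and in tracking how the error $|\nabla\Phi(0,e_n)\cdot e_1|$ propagates through the direction-selection provided by Lemma~\ref{lemma 0}.

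\textbf{Iteration and Young's law.} Iterating the one-step decay yields a sequence of directions $\nu_k\to\nu_\infty$ with $|\nu_k-\nu_\infty|\le C\,\tau^{k/2}$ and excess bounds $\exc_{\nu_k}^H(E,0,\tau^k r)\le C\bigl(\tau^k\,\er+(\La+\ell)\,\tau^k r\bigr)$, whose Campanato-type reorganization gives the graphical representation of $H\cap\pa E\cap\C^+_r$ and the $C^{1,1/2}$ estimate \eqref{allard4}. Finally, \eqref{allard2} follows by a standard blow-up argument: at each $z\in\D_r\cap\pa H$, the blow-ups of $E$ around $(z,u(z))$ are minimizers of $\PHI((z,u(z)),\cdot)$ in $(\R^n,H)$ of the form $\{x\cdot\nu(z)<0\}\cap H$, and the characterization of minimizing half-spaces in Proposition~\ref{proposition young} forces $\nabla\Phi((z,u(z)),\nu(z))\cdot e_1=0$.
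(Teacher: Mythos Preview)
Your proposal follows essentially the same route as the paper: the core is a one-step ``tilt lemma'' (Lemma~\ref{thm tilt lemma}) combining Lipschitz approximation, $A$-harmonic approximation with Neumann data, and a Caccioppoli inequality, iterated via shear-strained affine maps to produce power-law excess decay; the Campanato conclusion and the blow-up argument for \eqref{allard2} are as you describe.

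The one place where your sketch underestimates the difficulty is the Caccioppoli inequality. You write that it follows ``by testing \eqref{minimalityintro} against vertical displacements of $\pa E$'', but first-variation arguments only yield an approximate Euler--Lagrange identity (this is indeed how the paper obtains \eqref{eq:pde} in Lemma~\ref{thm lip approx}), not a reverse Poincar\'e bound $\exc\le C(\fl+(\La+\ell)r)$. The paper proves the latter (Lemma~\ref{thm caccioppoli}) via Almgren-type inner/outer competitors built by radial affine interpolation in half-cylinders (Lemma~\ref{lemma FinFout}); this is where the hypothesis $\nabla\Phi(x_0,e_n)\cdot e_1=0$ and the quantitative minimality \eqref{minimality of half-spaces lambda neumann} enter essentially. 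Relatedly, in the paper the Caccioppoli inequality is applied \emph{after} the harmonic approximation, to convert the flatness decay coming from elliptic regularity of the approximant into excess decay, not before.

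Two further minor differences: the paper uses a compactness-based $A$-harmonic approximation lemma (Lemma~\ref{lemma pde stabilita}) rather than a direct power-gain estimate, and invokes boundary regularity for the Neumann problem directly rather than via reflection; and the paper maintains the \emph{exact} condition $\nabla\Phi^{L_k}(x_0,e_n)\cdot e_1=0$ at every step of the iteration (by reapplying Lemma~\ref{lemma 0} inside the Tilt Lemma) rather than tracking a decaying Young's-law defect. These are legitimate alternative choices, but note that the paper's intermediate lemmas (both the Caccioppoli inequality and the approximate PDE in the Lipschitz approximation) take exact Young's law as a hypothesis, so your combined-quantity iteration would need to be set up carefully to interface with them.
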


We now devote the remaining part of this section to show how to deduce Theorem \ref{thm epsilon} from Lemma \ref{thm regularity en}. The first step consists in showing that the smallness of the spherical excess implies the existence of a direction such that the cylindrical excess is small. Moreover, this direction is close to satisfy the anisotropic Young's law.

\begin{lemma}\label{lemma sferico cilindrico}
  For every $\l\ge 1$ and $\tau>0$ there exists $\esc=\esc(n,\tau)>0$ with the following property. If $H=\{x_1>0\}$, $\Phi\in\X(\C_{4r}\cap H,\l,\ell)$, and $E$ is a $(\La,r_0)$-minimizer of $\PHI$ on $(\C_{4r},H)$ with $0\in\cl(H\cap\pa E)$, $2\,r<r_0$, and
  \[
  {\bf exc}^H(E,0,2r)+(\La+\ell)\,r<\esc\,,
  \]
  then there exists $\nu\in \mathbf S^{n-1}$ with
  \[
  |\nabla\Phi(0,\nu)\cdot e_1|+\exc_{\nu}^H(E,0,r)+(\La+\ell)\,r<\tau\,.
  \]
\end{lemma}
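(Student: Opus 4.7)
The plan is to argue by contradiction and compactness. Suppose the lemma fails: there exist \(\tau>0\) and, for each \(h\in\N\), data \((r_h,E_h,\Phi_h,\Lambda_h,\ell_h,r_{0,h})\) satisfying the hypotheses with \({\bf exc}^H(E_h,0,2r_h)+(\Lambda_h+\ell_h)\,r_h\to 0\), but
\[
|\nabla\Phi_h(0,\nu)\cdot e_1| + \exc_{\nu}^H(E_h,0,r_h) + (\Lambda_h+\ell_h)\,r_h \ge \tau,\qquad\forall\,\nu\in \mathbf S^{n-1}.
\]
First I would rescale by \(2r_h\): by Remark \ref{remark blow-up}, \(\tilde E_h:=E_h^{0,2r_h}\) is a \((2r_h\Lambda_h,r_{0,h}/(2r_h))\)-minimizer of \(\tilde\Phi_h:=\Phi_h^{0,2r_h}\in\X(\C_2\cap H,\lambda,2r_h\ell_h)\) in \((\C_2,H)\), with rescaled parameters \(2r_h\Lambda_h,2r_h\ell_h\to 0\) and \(r_{0,h}/(2r_h)\ge 1\). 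Since spherical excess is scale invariant, \({\bf exc}^H(\tilde E_h,0,1)\to 0\). Passing to a subsequence, Ascoli--Arzel\`a yields \(\tilde\Phi_h\to\Phi_\infty\in\X_*(\lambda)\) in \(C^0(\cl(\C_2\cap H)\times\mathbf S^{n-1})\) with \(C^2(\mathbf S^{n-1})\)-convergence in \(\nu\); Theorem \ref{thm compactness minimizers} produces a minimizer \(E_\infty\) of \(\Phi_\infty\) in \((\C_2,H)\) with \(\tilde E_h\to E_\infty\) in \(L^1_{\rm loc}(\C_2)\) and \(|\mu_{\tilde E_h}|\llcorner H\weak |\mu_{E_\infty}|\llcorner H\); by \eqref{kuratowski1}, \(0\in\cl(H\cap\pa E_\infty)\).

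Next, I would choose \(\nu_h\in\mathbf S^{n-1}\) almost-optimal for the spherical excess at scale one, so that
\[
\int_{B_1\cap H\cap\pa^*\tilde E_h}\frac{|\nu_{\tilde E_h}-\nu_h|^2}{2}\,d\H^{n-1}\le{\bf exc}^H(\tilde E_h,0,1)+\frac{1}{h}\to 0,
\]
and extract a subsequential limit \(\nu_h\to\nu_\infty\in\mathbf S^{n-1}\). For a.e. radius \(r'\in(1/2,1)\) one has \(|\mu_{E_\infty}|(\pa B_{r'}\cap H)=0\), so Reshetnyak continuity, applied to the continuous function \(\omega\mapsto|\omega-\nu_\infty|^2/2\) on the open set \(B_{r'}\cap H\) together with the uniform control \(\nu_h\to\nu_\infty\), gives
\[
\int_{B_{r'}\cap H\cap\pa^*E_\infty}\frac{|\nu_{E_\infty}-\nu_\infty|^2}{2}\,d\H^{n-1}=0.
\]
Hence \({\bf exc}^H(E_\infty,0,r')=0\), and since \(0\in\cl(H\cap\pa E_\infty)\), the equivalence \eqref{sferico nullo} yields \(E_\infty\cap H\cap B_{r'}=\{x\cdot\nu_\infty<s\}\cap H\cap B_{r'}\) for some \(s\in\R\). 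The accumulation of boundary points of \(E_\infty\) in \(H\) at the origin forces \(s=0\) and \(\nu_\infty\ne\pm e_1\) (otherwise \(E_\infty\) would be either empty or fill \(B_{r'}\cap H\), leaving no free-boundary point near \(0\)).

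Finally, I would apply Proposition \ref{proposition young}: on a small ball around the origin, \(E_\infty\) is a minimizer of the autonomous integrand \(\Phi_\infty\in\X_*(\lambda)\) that coincides with the half-space \(\{x\cdot\nu_\infty<0\}\cap H\), so the proposition yields \(\nabla\Phi_\infty(\nu_\infty)\cdot e_1=0\). By \(C^2\)-convergence of \(\tilde\Phi_h(0,\cdot)\) and \(\nu_h\to\nu_\infty\), \(\nabla\Phi_h(0,\nu_h)\cdot e_1=\nabla\tilde\Phi_h(0,\nu_h)\cdot e_1\to 0\). The inclusion \(\C_{\nu_h}(0,1/2)\subset B_{\sqrt 2/2}\subset B_1\) together with the scaling invariance \eqref{excess scaling} gives
\[
\exc_{\nu_h}^H(E_h,0,r_h)=\exc_{\nu_h}^H(\tilde E_h,0,1/2)\le 2^{n-1}\int_{B_1\cap H\cap\pa^*\tilde E_h}\frac{|\nu_{\tilde E_h}-\nu_h|^2}{2}\,d\H^{n-1}\to 0.
\]
Summing the three contributions contradicts the assumed lower bound \(\tau\) along \(\nu=\nu_h\) for \(h\) large. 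The main technical point is the clean passage to the limit in the spherical excess, which hinges on the Reshetnyak-type convergence \(|\mu_{\tilde E_h}|\llcorner H\weak |\mu_{E_\infty}|\llcorner H\) supplied by Theorem \ref{thm compactness minimizers} and on selecting a radius \(r'\) that is not charged by \(|\mu_{E_\infty}|\); once this is in hand, the anisotropic Young's law \(\nabla\Phi_\infty(\nu_\infty)\cdot e_1=0\) for the blow-up limit is an immediate consequence of Proposition \ref{proposition young}.
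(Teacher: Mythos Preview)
Your proof is correct and follows essentially the same contradiction/compactness strategy as the paper. The paper rescales to $r=1$ from the outset, then uses the lower-semicontinuity property \eqref{compactness 6} to get ${\bf exc}^H(E_\infty,0,2)=0$, reads off the direction $\nu$ from \eqref{sferico nullo}, and passes the cylindrical excess to the limit at that fixed $\nu$ via \eqref{compactness 7}; the contradiction then comes from $|\nabla\Phi_\infty(\nu)\cdot e_1|\ge\tau_0$ versus Proposition \ref{proposition young}. You instead track near-optimal directions $\nu_h$ through the limit and control $\exc_{\nu_h}^H$ by the spherical quantity via the inclusion $\C_{\nu_h}(0,1/2)\subset B_1$; this is an equally valid route and avoids invoking \eqref{compactness 7} at the price of redoing a Reshetnyak-type passage by hand. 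One small remark: you do not actually need $s=0$ to apply Proposition \ref{proposition young}, only that $B_{r'}\cap H\cap\pa E_\infty\ne\emptyset$ and $\nu_\infty\ne\pm e_1$, both of which follow from $0\in\cl(H\cap\pa E_\infty)$; the paper uses exactly this weaker observation.
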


\begin{remark}\label{remark continuity excess}
  The following continuity properties of the cylindrical excess are useful in the proof of Lemma \ref{lemma sferico cilindrico} (as well as in other arguments): if $\{E_h\}_{h\in\N}$ and $E$ are sets of locally finite perimeter in $A$, with $E_h\to E$ in $L^1_{\rm loc}(A)$ and
  \begin{equation}
    \label{compactness 00}
    |\mu_{E_h}|\llcorner H\weak|\mu_E|\llcorner H\,,\quad |\mu_{E_h}|\weak|\mu_E|\,,\quad\mbox{as Radon measures in $A$}\,,
  \end{equation}
  as $h\to\infty$,  then
  \begin{eqnarray}
    \label{compactness 6}
    &&\exc_\nu^H(E,x,r)\le\liminf_{h\to\infty}\exc_\nu^H(E_h,x,r)\,,\qquad\mbox{whenever $\C_\nu(x,r)\cc A$}\,,
    \\
    \label{compactness 7}
    &&\exc_\nu^H(E,x,r)=\lim_{h\to\infty}\exc_\nu^H(E_h,x,r)\,,\qquad
    \hspace{0.1cm}\begin{array}
      {l}
      \mbox{whenever $\C_\nu(x,r)\cc A$}
      \\
      \mbox{with $P(E;\pa\C_\nu(x,r))=0$}\,.
    \end{array}
  \end{eqnarray}
  Indeed, \eqref{compactness 6} follows from \eqref{compactness 7} by monotonicity, while $|\nu_E-\nu|^2/2=1-(\nu_E\cdot\nu)$ gives
  \[
  \exc_\nu^H(E,x,r)=\frac{|\mu_E|(\C_\nu(x,r)\cap H)-\nu\cdot\mu_E(\C_\nu(x,r)\cap H)}{r^{n-1}}\,,
  \]
  by which \eqref{compactness 7} is immediately seen to be consequence of \eqref{compactness 00}.
\end{remark}

\begin{proof}[Proof of Lemma \ref{lemma sferico cilindrico}]
  By Remark \ref{remark blow-up} and \eqref{excess scaling}, up to replace \(E\) and \(\Phi\) with \(E^{x_0,r}\) and \(\Phi^{x_0,r}\) respectively, we can directly assume that \(x_0=0\) and \(r=1\). We then argue by contradiction, and assume the existence  of $\tau_0>0$ and of sequences $\{\Phi_h\}_{h\in\N}\in\X(\C_{4}\cap H,\l,\ell_h)$ and $\{E_h\}_{h\in\N}$ such that $E_h$ is a $(\La_h,2)$-minimizer of $\PHI_h$ on $(\C_{4},H)$ with $0\in\cl(H\cap\pa E_h)$ for every $h\in\N$ and
  \begin{gather}\label{zurigo1}
  \lim_{h\to\infty}  {\bf exc}^H(E_h,0,2)+\La_h+\ell_h=0\,,
  \\\label{zurigo2}
  \inf_{\nu\in \mathbf S^{n-1}}\Big\{
  |\nabla\Psi_h(0,\nu)\cdot e_1|+\exc_{\nu}^H(E_h,0,1)\Big\}\ge\tau_0\,.
  \end{gather}
  By Theorem \ref{thm compactness minimizers} and \eqref{zurigo1}, there exist $\Phi_\infty\in\X_*(\l)$, and a $(0,2)$-minimizer $E_\infty$ of $\PHI_\infty$ on $(\C_{4},H)$ with $0\in\cl(H\cap\pa E_\infty)$, such that $\nabla\Phi_h(0,\nu)\to\nabla\Phi_\infty(\nu)$ uniformly on $\nu\in{\bf S}^{n-1}$, $E_h\to E_\infty $ in $L^1_{\rm loc}(\C_4)$, and $|\mu_{E_h}|$ and $|\mu_{E_h}|\llcorner H$ that converge, respectively, to $|\mu_{E_\infty}|$ and $|\mu_{E_\infty}|\llcorner H$, as Radon measures in $\C_4$ when $h\to\infty$. We can thus apply \eqref{compactness 6} and \eqref{zurigo1} to deduce that ${\bf exc}^H(E_\infty,0,2)=0$. By \eqref{sferico nullo}, there exist $\nu\in \mathbf S^{n-1}$ and $s\in\R$ such that
  \begin{equation}
    \label{sei iperpiano}
      E_\infty\cap B_2\cap H=B_2\cap\{x\cdot\nu<s\}\cap H\,,
  \end{equation}
  so that, in particular $\exc_{\nu}^H(E_\infty,0,1)=0$ and we can apply \eqref{compactness 7} to deduce that
  \[
  \lim_{h\to\infty}\exc_{\nu}^H(E_h,0,1)=0\,.
  \]
  By \eqref{zurigo2}, we conclude that
  \[
  |\nabla\Phi_\infty(\nu)\cdot e_1|\ge\tau_0\,.
  \]
  However, $0\in\cl(H\cap\pa E_h)$ for every $h\in\N$ and \eqref{kuratowski1} imply $0\in\cl(H\cap\pa E_\infty)$, so that $B_2\cap H\cap\pa E_\infty$ is non-empty, in particular \(\nu\ne \pm e_1\). Thanks to \eqref{sei iperpiano} we can apply Proposition \ref{proposition young} to conclude that $\nabla\Phi_\infty(\nu)\cdot e_1=0$. We thus reach a contradiction and complete the proof of the lemma.
  \end{proof}

  The second tool we need to deduce Theorem \ref{thm epsilon} from Lemma \ref{thm regularity en} is the following lemma.

  \begin{lemma}\label{lemma tauuu}
  For every $\tau\in[0,1)$, there exists a positive constant $C=C(\tau)$ with the following property. If $H=\{x_1>0\}$, $\nu\in \mathbf S^{n-1}$ and $|\nu\cdot e_1|\le\tau<1$, then there exists a linear map $L:\R^n\to\R^n$ such that $L(H)=H$,
  \begin{eqnarray}\label{BBQ2 t}
    &&L(\nu^\perp)=e_n^\perp\,,\qquad\mbox{so that}\qquad e_n=\frac{(\cof\nabla L)\nu}{|(\cof\nabla L)\nu|}\,,
    \\\label{cilindri spostati}
    &&L^{-1}(\C\cap H)=\Big\{y\in H:|y-(y\cdot{\bf e_1}(\nu)){\bf e_1}(\nu)|<1\,,
    \\\nonumber
    &&\hspace{4cm}
    -1-\frac{(\nu\cdot e_1)\,y_1}{\sqrt{1-(\nu\cdot e_1)^2}}< y\cdot {\bf e_1}(\nu)<
    1-\frac{(\nu\cdot e_1)\,y_1}{\sqrt{1-(\nu\cdot e_1)^2}}\Big\}\,,
  \end{eqnarray}
  see Figure \ref{fig cinque},
  \begin{figure}
    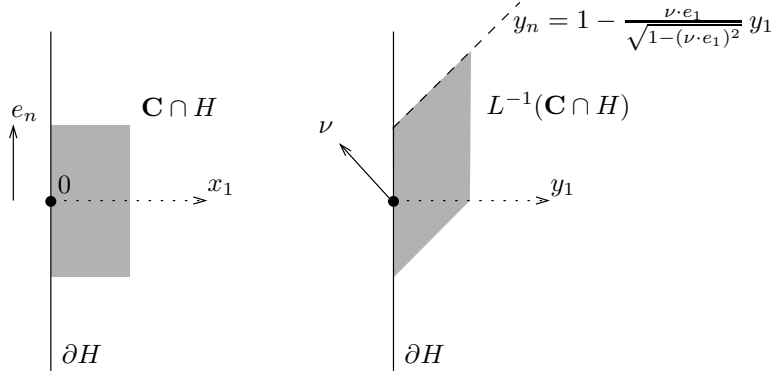\caption{{\small The image of $\C\cap H$ through $L^{-1}$. The picture refers to the situation when ${\bf e_1}(\nu)=e_n$. Notice that, in this case, the projection of $L^{-1}(\C\cap H)$ on $e_n^\perp$ is $\D\cap H$. In the general case, the projection of $L^{-1}(\C\cap H)$ over ${\bf e_1}(\nu)^\perp$ is $\D_{{\bf e_1}(\nu)}\cap H$.}}\label{fig cinque}
  \end{figure}
  and
  \begin{equation}\label{tau conclusione}
  \max\{\|\nabla L\|,\|(\nabla L)^{-1}\|\}\le C\,,\qquad\det\nabla L=1\,,\qquad\nabla\Phi^L(e_n)\cdot e_1=\nabla\Phi(\nu)\cdot e_1\,.
  \end{equation}
  whenever $\Phi$ is an autonomous elliptic integrand and $\Phi^L$ is defined by $\Phi^L(e)=\Phi((\cof\,\nabla L)^{-1}e)$. Moreover,
  \begin{equation}\label{LmenoId continuity}
  \|\nabla L-\Id\|\le C\,|\nu- e_n|\,.
  \end{equation}
  \end{lemma}

  \begin{proof} For some $|\a|\le \eta(\tau)<\pi/2$ we have
  \begin{eqnarray}\label{piccolo nu0 t}
  \nu=\cos\a\,{\bf e_1}(\nu)-\sin\a\,e_1\,, \qquad|\sin\,\a|=|\nu\cdot e_1|\le |\nu-e_n|\,.
  \end{eqnarray}
  We define a linear map $Q$ by setting $Q=\Id$ if ${\bf e_1}(\nu)=e_n$, and by setting $Q=\Id$ on $e_n^\perp\cap{\bf e_1}(\nu)^\perp$ and $Q$ to be the rotation taking ${\bf e_1}(\nu)$ into $e_n$ on ${\rm Span}(e_n,{\bf e_1}(\nu))$ otherwise. Finally, we define a linear map \(L:\R^n\to \R^n\) by setting
  \begin{equation}\label{defL t}
  \nabla L=Q\circ\Big(\Id-\tan\a\,{\bf e_1}(\nu)\otimes e_1\Big)\,.
  \end{equation}
  Trivially, $\det\nabla L=1$ and $\|\nabla L\|\le 1+|\tan\,\a|\le C(\tau)$. If $v\in \nu^\perp$, then $ {\bf e_1}(\nu)\cdot v=\tan\,\a\,(e_1\cdot v)$ and thus
  \[
  (Lv)\cdot e_n=\Big(v-\tan\a\,(e_1\cdot v)\,{\bf e_1}(\nu)\Big)\cdot Q^{-1}e_n=v \cdot {\bf e_1}(\nu)-\tan\,\a\,(e_1\cdot v)=0\,,
  \]
  so that \eqref{BBQ2 t} holds true. By noticing that,
  \[
  \nabla L^{-1}= \Big(\Id+\tan\a\,{\bf e_1}(\nu)\otimes e_1\Big)\circ Q^{-1}\,,
  \]
  we also have $\|(\nabla L)^{-1}\|\le C(\tau)$. By definition of $\Phi^L$, we see that
  \begin{equation}
    \label{mad1 t}
      \nabla\Phi^L(e_n)\cdot e_1=\nabla\Phi((\cof \nabla L)^{-1}e_n)\cdot\Big((\cof \nabla L)^{-1}e_1\Big)\,.
  \end{equation}
  Since $\nabla L^*=(\det \nabla L)\,(\cof \nabla L)^{-1}=(\cof \nabla L)^{-1}$ and $\nabla\Phi$ is zero-homogeneous, by \eqref{BBQ2 t} we find
  \begin{equation}
    \label{mad2 t}
  \nabla\Phi^L(e_n)\cdot e_1=\nabla\Phi(\nu)\cdot(\nabla L^*e_1)=\nabla\Phi(\nu)\cdot e_1\,,
  \end{equation}
  where we have used that $\nabla L^*e_1=e_1$, as it can be seen from \eqref{defL t}. This completes the proof of \eqref{tau conclusione}, while the validity of \eqref{cilindri spostati} is easily checked. To finally prove \eqref{LmenoId continuity}, we notice that $\|Q-\Id\|\le C\,|{\bf e_1}(\nu)-e_n|\le C\,|\nu-e_n|$ for some constant $C$, so that for every $e\in \mathbf S^{n-1}$ one has
  \[
  |\nabla L\,e-e|=|Qe-e-\tan\,a\,(e_1\cdot e)\,e_n|\le \|Q-\Id\|+|\tan\,\a|\le C\,|\nu-e_n|\,.\qedhere
  \]
  \end{proof}

  Finally, we estimate how cylindrical excess changes under transformation by an affine map.

  \begin{lemma}\label{lm:excess comparison} For every $\eta\ge 1$ there exists a constant $C=C(n,\eta)$  with the following property. If $H$ is an open half-space and \(L:\R^n\to \R^n\) is an affine transformation, with $L(H)=H$ and
\begin{equation}
  \label{ki}
  \|\nabla L\|\,,\|\nabla L^{-1}\|\le\eta\,,
\end{equation}
then for every set of finite perimeter \(E\) and every \(\nu\in \mathbf S^{n-1}\),
\begin{equation}\label{eq:excess comparison}
\exc^{H}_{\hat \nu} \bigg(L(E),Lx, \frac{r}{\sqrt{2}\,\eta}\bigg)\le C\,\exc^{H}_{\nu} (E,x,r)\qquad\textrm{where}\qquad
\hat \nu =\frac{(\cof \nabla L) \nu}{|(\cof \nabla L) \nu|}\,.
\end{equation}
\end{lemma}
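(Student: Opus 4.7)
My plan is to apply the change of variables formula \eqref{change of variables cofattore} with a carefully chosen one-homogeneous integrand, and then to control the resulting factors (the Jacobian $|\cof(\nabla L)\nu_E|$, the normalization of the normal, and the region of integration) by means of the bounds \eqref{ki}. As in the proof of Lemma \ref{lemma PHIL}, I will first reduce to the case in which $L$ is linear by composing with a translation, which has no effect on the excess; I will then write $L$ in place of $\nabla L$.

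The first key step is a normalization estimate. Setting $M=\cof L$, the bounds \eqref{ki} together with the identity $M^{-1}=(\det L)^{-1}L^*$ imply $\|M\|+\|M^{-1}\|\le C_0(n,\eta)$, so in particular $|Mw|\ge 1/C_0$ for every $w\in \mathbf S^{n-1}$. A routine computation then shows that the normalization map $w\mapsto\widehat w:=Mw/|Mw|$ is Lipschitz on $\mathbf S^{n-1}$ with constant depending only on $n$ and $\eta$; in particular $|\widehat{\nu_E(y)}-\hat\nu|\le C(n,\eta)\,|\nu_E(y)-\nu|$ for $\H^{n-1}$-almost every $y\in\pa^*E$.

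Next I will choose the Borel integrand $\Psi(y,\xi):=|\xi|\,\tfrac12\,\bigl|\xi/|\xi|-\hat\nu\bigr|^2\,\mathbf 1_{U}(y)$, with $U:=H\cap\C_{\hat\nu}(Lx,r/(\sqrt2\,\eta))$; this $\Psi$ is one-homogeneous in $\xi$. Since $\nu_{L(E)}\circ L=\widehat{\nu_E}$ on $\pa^*E$ and $L(H)=H$, formula \eqref{change of variables cofattore} combined with $|M\nu_E|\le C_0(n,\eta)$ and the Lipschitz estimate above yields
\[
\int_{\pa^*L(E)\cap U}\frac{|\nu_{L(E)}-\hat\nu|^2}{2}\,d\H^{n-1}\le C(n,\eta)\int_{L^{-1}(U)\cap\pa^*E}\frac{|\nu_E-\nu|^2}{2}\,d\H^{n-1}.
\]
A direct geometric check finally shows that $L^{-1}(U)\subset H\cap\C_\nu(x,r)$: indeed $L(H)=H$ gives $L^{-1}(U)\subset H$, and every $y\in\C_{\hat\nu}(Lx,\rho)$ satisfies $|y-Lx|\le\sqrt 2\,\rho$, whence $|L^{-1}y-x|\le\eta\,\sqrt 2\,\rho=r$ for $\rho=r/(\sqrt 2\,\eta)$, so that $L^{-1}(U)\subset B_{x,r}\subset\C_\nu(x,r)$. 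Dividing the displayed inequality by $(r/(\sqrt 2\,\eta))^{n-1}$ then gives \eqref{eq:excess comparison} with $C=C(n,\eta)$. I do not expect a genuine obstacle here; the only step requiring some care is the quantitative Lipschitz estimate for $w\mapsto Mw/|Mw|$ on $\mathbf S^{n-1}$, which is however standard once both $\|M\|$ and $\|M^{-1}\|$ are controlled.
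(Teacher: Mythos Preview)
Your proposal is correct and follows essentially the same route as the paper's proof: reduce to the linear case, control $M=\cof L$ via \eqref{ki}, use the change of variables formula \eqref{change of variables cofattore} to pull the integral back to $\pa^*E$, bound the normalization map $w\mapsto Mw/|Mw|$ Lipschitz, and absorb the domain via the inclusion $\C_{\hat\nu}(Lx,r/(\sqrt2\,\eta))\subset L(\C_\nu(x,r))$. The only cosmetic differences are that the paper writes the explicit constant $4\eta^{3(n-1)}$ in the Lipschitz step and states the cylinder inclusion directly, whereas you route it through $\C_{\hat\nu}(\cdot,\rho)\subset B(\cdot,\sqrt2\,\rho)$ and $B(x,r)\subset\C_\nu(x,r)$; both are equivalent.
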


\begin{proof}
  By \eqref{excess scaling} we can assume that $x=0$ and $r=1$, as well as that $L$ is linear. Correspondingly, we set $L$ in place $\nabla L$. In this way, by arguing as in \eqref{l-l+2}, we have
\begin{equation}
  \label{ki2}
  \frac{|e|}{\|L^{-1}\|^{n-1}}\le\s_{min}^{n-1}|e|\le|\cof L\,e|\le \s_{max}^{n-1}|e|=\|L\|^{n-1}|e|\,,\qquad\forall e\in\R^n\,.
\end{equation}
By \eqref{ki}, we find $\C_{\hat\nu}(0,1/{\sqrt{2}\eta})\subset L(\C)$, so that, if we set $M=\cof\,L$, then
\begin{eqnarray*}\label{tearancia}
\int_{\C_{\hat\nu}(0,1/{\sqrt{2}\eta})\cap L(H)\cap L(\pa^* E)}|\hat \nu-\nu_{L(E)}|^2\,d\H^{n-1}&\le&\int_{L(\C\cap H\cap \pa^* E)}|\hat \nu-\nu_{L(E)}|^2\,d\H^{n-1}
\\
&\le&\int_{\C\cap H\cap \pa^* E}\Big|\frac{M \nu }{|M \nu|}-\frac{M \nu_E }{|M \nu_E|}\Big|^2\,|M\nu_E|\,d\H^{n-1}\,.
\end{eqnarray*}
We thus find \eqref{eq:excess comparison} thanks to the fact that, by \eqref{ki2},
\[
\Big|\frac{M \nu }{|M \nu|}-\frac{M \nu_E }{|M \nu_E|}\Big|^2\,|M\nu_E|\le \Big(\frac{2|M\nu-M\nu_E|}{|M\nu_E|}\Big)^2|M\nu_E|\le 4\,\eta^{3(n-1)}\,|\nu-\nu_E|^2\,.\qedhere
\]
\end{proof}

  We now combine Lemma \ref{thm regularity en} (to be proved in section \ref{section proof of lemma eps}) with Lemma \ref{lemma sferico cilindrico}, Lemma \ref{lemma tauuu} and Lemma \ref{lm:excess comparison} to prove Theorem \ref{thm epsilon}.

  \begin{proof}
  [Proof of Theorem \ref{thm epsilon} (assuming Lemma \ref{thm regularity en})] Correspondingly to $\l\ge 1$, we can find $\e_*=\e_*(\l)$ such that, if we set
  \[
  \tau=\sqrt{1-\frac1{\l^4}}+\e_*\,\l^4\,,
  \]
  then $\tau\in(0,1)$. Let us now consider $\Phi\in\X(B_{4r}\cap H,\lambda,\ell)$, $E$ a $(\La,r_0)$-minimizer of $\PHI$ in $(B_{4r},H)$ with \(0<2\,r\le r_0\), $0\in\cl(H\cap\pa E)$, and
  \[
  {\bf exc}^H(E,0,2\,r)+(\La+\ell)\,r\le \ecc\,.
  \]
  If $\ecc(n,\l)\le\esc(n,\min\{\e_*(\l),\er(n,\l)\})$, then by Lemma \ref{lemma sferico cilindrico} there exists $\nu\in \mathbf S^{n-1}$ such that
  \[
  |\nabla\Phi(0,\nu)\cdot e_1|+\exc_{\nu}^H(E,0,r)+(\La+\ell)\,r<\min\{\e_*(\l),\er(n,\l)\}\,.
  \]
  By $|\nabla\Phi(0,\nu)|\ge1/\l$, we have $|\nabla\Phi(0,\nu)\cdot e_1|\le \l\,\e_*\,|\nabla\Phi(0,\nu)|$, and thus, by \eqref{elliptic boundary eps},
  \begin{equation}
    \label{tau fissato}
      |\nu\cdot e_1|\le \sqrt{1-\frac1{\l^4}}+\e_*\,\l^4=\tau(\l)<1\,.
  \end{equation}
  By Lemma \ref{lemma tauuu}, there exists a linear map $L:\R^n\to\R^n$ such that $L(H)=H$ and \eqref{BBQ2 t} and \eqref{tau conclusione} hold true.
  By Lemma \ref{lemma PHIL} and \eqref{tau conclusione}, if we set
  \[
  \Phi^L(x,\nu)=\Phi(L^{-1}x,(\cof \nabla L)^{-1}\nu)\,,
  \]
  then $\Phi^L\in\X(L(B_{4\,r})\cap H,\widetilde{\l},\widetilde{\ell})$ and $L(E)$ is a $(\La,\widetilde{r_0})$-minimizer of $\PHI^L$ on $(L(B_{4\,r}),H)$, where
  \[
  \widetilde{\l}\le C\,\l\,,\qquad \widetilde{\ell}\le C\,\ell\,,\qquad r_0\le C\,\widetilde{r_0}\,,\qquad B_{r/C}\subset L(B_{4r})\,,
  \]
  for a constant $C=C(\l)$. Moreover, by Lemma \ref{lm:excess comparison} and \eqref{BBQ2 t}, for some $\eta=\eta(\l)\ge 1$ and $C=C(n,\l)$, we have
  \[
  \exc^{H}_n \bigg(L(E),0, \frac{r}{\sqrt{2}\,\eta}\bigg)\le C\,\exc^{H}_{\nu} (E,0,r)\qquad\textrm{since}\qquad
  e_n=\frac{(\cof \nabla L) \nu}{|(\cof \nabla L) \nu|}\,,
  \]
  as well as, again by \eqref{tau conclusione},
  \[
  \nabla\Phi^L(0,e_n)\cdot e_1=\nabla\Phi(0,\nu)\cdot e_1\,.
  \]
  Summarizing, there exist positive constants $C_*=C_*(n,\l)$ and $C_{**}=C_{**}(n,\l)$ such that $\Phi^L\in\X(B_{r/C_*}\cap H,C_*\l,C_*\ell)$, $L(E)$ is a $(\La,r_0/C_*)$-minimizer of $\PHI^L$ on $(B_{r/C_*},H)$ with $0\in\cl(H\cap\pa L(E))$, and
  \[
  |\nabla\Phi^L(0,e_n)\cdot e_1|+\exc^{H}_n \bigg(L(E),0,\frac{r}{2C_*}\bigg)+(\La+C_*\,\ell)\,r\le C_{**}\,\ecc\,.
  \]
  If $C_{**}\,\ecc\le\er(n,C_*\,\l)$, then by Lemma \ref{thm regularity en} there exists a function \(u\in C^{1,1/2}(\cl(\D_{r/128\,C_*}\cap H))\) such that
  \begin{equation}\label{allard1xx}
    \C_{r/128\,C_*}\cap H\cap \pa L(E)=\Big\{x\in H: |\p x|<r\,,\q x=u(\p x)\Big\}\,,
  \end{equation}
  with
    \begin{gather*}
     \sup_{z,y\in\D_{r/128\,C_*}\cap H}\frac{|u(z)|}r+|\nabla u(z)|+r^{1/2}\,\frac{|\nabla u(z)-\nabla u(y)|}{|z-y|^{1/2}}
     \le C\,\sqrt{\er}\,,
  \\
    \nabla \Phi\big((z,u(z)),(-\nabla u (z),1)\big)\cdot e_1=0\,,\qquad\forall z\in\D_{r/128C_*}\cap\pa H\,.
  \end{gather*}
  for some $C=C(n,\l)$. By exploiting \eqref{cilindri spostati} and by applying $L^{-1}$ to the identity \eqref{allard1xx} we complete the proof of Theorem \ref{thm epsilon}.
\end{proof}

\section{Proof of Lemma  \ref{thm regularity en}}\label{section proof of lemma eps}
In this section we prove Lemma \ref{thm regularity en}. The argument is that commonly used in most proofs of $\e$-regularity criterions, and can be very roughly sketched as follows. Based on a {\it height bound} (Lemma \ref{lemma height bound}), one shows that locally at points with small excess it is possible to cover a large portion of the boundary with the graph of a Lipschitz function $u$, that is close to solve the linearized Euler-Lagrange equation of a suitable non-parametric functional (Lemma \ref{thm lip approx}). One then approximates $u$ with a solution of the Euler-Lagrange equation it approximately solves, transfers to $u$ the estimates that the exact solution enjoys by elliptic regularity theory, and then reads these estimates on the boundary of $E$ (Lemma \ref{thm tilt lemma}). An iteration of this scheme leads to prove that, at a sufficiently small scale, $u$ covers {\it all} of the boundary of $E$, and that it is actually of class $C^{1,1/2}$ by a classical integral criterion for h\"olderianity due to Campanato. For ease of presentation, we dedicate a separate section to each step of this long argument. Recall that thorough the proof, the normalization conventions of Lemma \ref{lem:normalization} are in force. In particular, we always have \(A\cap\pa E=\spt \mu_E\).

\subsection{Height bound}\label{section height bound} We start with the  height bound.

\begin{lemma}[Height bound]\label{lemma height bound} For every \(\l\ge 1\) and   $\s\in(0,1/4)$ there exists a positive constant $\eh=\eh(n,\l,\s)$ with the following property. If $H=\{x_1>b\}$ for some $b\in\R$, $x_0\in\cl(H)$, and
  \begin{eqnarray}
    \nonumber
    &&\Phi\in\X(\C_{x_0,4\,r}\cap H,\l,\ell)\,,
    \\
    \nonumber
    &&\mbox{$E$ is a $(\La,r_0)$-minimizer of $\PHI$ in $(\C_{x_0,4\,r},H)$ with \(0<r\le r_0\)}\,,
    \\
    \label{hb1}
    &&x_0\in\cl(H\cap\pa E)\,,
    \\\label{hb2}
    && (2\lambda \Lambda+\ell)\, r\le 1 \,,
    \\\label{hb3}
    &&\exc_n^H(E,x_0,2\,r)<\eh\,,
  \end{eqnarray}
  then
  \begin{eqnarray}
    \label{height bound}
        \sup\bigg\{\frac{|\q (x-x_0)|}{r}:x\in\C_{x_0,r_0}\cap H\cap\pa E \bigg\}\le \s\,,
       \\
   \label{height bound volume 1}
    \Big|\Big\{x\in \C_{x_0,r}\cap H\cap E:\q(x-x_0)>\s\,r\Big\}\Big|=0\,,
    \\\label{height bound volume 2}
    \Big|\Big\{x\in (\C_{x_0,r}\cap H)\setminus E:\q(x-x_0)<-\s\,r\Big\}\Big|=0\,.
  \end{eqnarray}
  Moreover, the identity
  \[
  \zeta(G)=P(E;\p^{-1}(G)\cap\C_{x_0,r}\cap H)-\H^{n-1}(G\cap H)\,,\qquad G\subset\D_{\p x_0,r}\,,
  \]
  defines a finite Radon measure $\zeta$ on $\D_{\p x_0,r}$ concentrated on \(H\cap   \D_{\p x_0, r}\) and such that
  \begin{equation}
    \label{zeta total mass excess}
    \zeta(\D_{\p x_0,r})=r^{n-1}\,\exc_n^H(E,x_0,r)\,.
  \end{equation}
\end{lemma}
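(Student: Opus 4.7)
The proof naturally splits into a single contradiction--compactness argument establishing the height bound \eqref{height bound} and the two one-sided volume estimates \eqref{height bound volume 1}--\eqref{height bound volume 2}, followed by a Gauss--Green computation for the defect-measure identity \eqref{zeta total mass excess}.

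\emph{Compactness setup.} After Remark \ref{remark blow-up} I reduce to $x_0 = 0$ and $r = 1$, so that \eqref{hb3} becomes $\exc_n^H(E, 0, 2) < \eh$ with $\Lambda + \ell$ uniformly bounded by \eqref{hb2}. Fix $\sigma \in (0, 1/4)$ and argue by contradiction: assume there exist half-spaces $H_h = \{x_1 > b_h\}$ with $b_h \le 0$, integrands $\Phi_h \in \X(\C_4 \cap H_h, \lambda, \ell_h)$, and $(\Lambda_h, r_0)$-minimizers $E_h$ of $\PHI_h$ in $(\C_4, H_h)$ with $0 \in \cl(H_h \cap \pa E_h)$, $\exc_n^{H_h}(E_h, 0, 2) \to 0$, yet violating one of \eqref{height bound}--\eqref{height bound volume 2}. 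Extracting subsequences, $b_h \to b_\infty \in [-\infty, 0]$ and $H_h \to H_\infty$; Theorem \ref{thm compactness minimizers} then yields an autonomous $\Phi_\infty \in \X_*(\lambda)$ and a $(\Lambda_\infty, r_0)$-minimizer $E_\infty$ of $\PHI_\infty$ in $(\C_4, H_\infty)$ with $E_h \to E_\infty$ in $L^1_{\rm loc}(\C_4)$ and the weak-$*$ convergences from \eqref{compactness 0}. By the semicontinuity \eqref{compactness 6}, $\exc_n^{H_\infty}(E_\infty, 0, 2) = 0$, and by Kuratowski closure \eqref{kuratowski1}, $0 \in \cl(H_\infty \cap \pa E_\infty)$; combined with \eqref{cilindrico nullo}, this forces
\[
E_\infty \cap \C_2 \cap H_\infty = \{\q x < 0\} \cap \C_2 \cap H_\infty.
\]

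\emph{Ruling out the three failure modes.} If \eqref{height bound} fails infinitely often, I extract $y_h \in \C_1 \cap H_h \cap \pa E_h$ with $|\q y_h| > \sigma$ and pass to a limit $y_\infty \in \cl(\C_1)$; applying \eqref{kuratowski1} gives $y_\infty \in \cl(H_\infty \cap \pa E_\infty) \cap \cl(\C_1) \subset \{\q x = 0\}$, contradicting $|\q y_\infty| \ge \sigma > 0$. If either \eqref{height bound volume 1} or \eqref{height bound volume 2} fails uniformly in $h$, then the symmetric difference of $E_h$ with the limit half-space $\{\q x < 0\} \cap H_\infty$ has uniformly positive Lebesgue measure inside $\C_1$, contradicting the $L^1_{\rm loc}$ convergence. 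Undoing the rescaling delivers \eqref{height bound}--\eqref{height bound volume 2}.

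\emph{The identity for $\zeta$.} Granted \eqref{height bound}--\eqref{height bound volume 2}, the set function $\zeta$ is nonnegative on open sets (by the $\H^{n-1}$-contractivity of the $1$-Lipschitz projection $\p$) and concentrated on $H \cap \D_{\p x_0, r}$ (because $\p^{-1}(G) \cap \C_{x_0, r} \cap H$ is $\H^{n-1}$-null for $G \subset \D_{\p x_0, r} \setminus H$), so it extends to a finite Radon measure. For the total mass, expand
\[
r^{n-1}\exc_n^H(E, x_0, r) = P(E; \C_{x_0, r} \cap H) - \int_{\pa^* E \cap \C_{x_0, r} \cap H} \nu_E \cdot e_n\,d\H^{n-1},
\]
and apply the divergence theorem to the constant field $e_n$ on the finite-perimeter set $E \cap \C_{x_0, r} \cap H$. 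Its reduced boundary decomposes into (i) $\pa^* E \cap \C_{x_0, r} \cap H$ (contributing $\int \nu_E \cdot e_n\,d\H^{n-1}$), (ii) the lateral cylindrical face (outer normal horizontal, orthogonal to $e_n$, zero contribution), (iii) the piece on $\pa H \cap \C_{x_0, r}$ (outer normal $-e_1 \perp e_n$, zero contribution), (iv) the top face $\{\q(x - x_0) = r\}$ (disjoint from $E^{(1)}$ by \eqref{height bound volume 1}, zero contribution), and (v) the bottom face $\{\q(x - x_0) = -r\}$ (equal $\H^{n-1}$-a.e. to $E^{(1)}$ by \eqref{height bound volume 2}, contributing $-\H^{n-1}(\D_{\p x_0, r} \cap H)$). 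Summing to zero yields $\int_{\pa^* E \cap \C_{x_0, r} \cap H} \nu_E \cdot e_n\,d\H^{n-1} = \H^{n-1}(\D_{\p x_0, r} \cap H)$, which is \eqref{zeta total mass excess}.

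\emph{Main obstacle.} The delicate point is that one compactness argument must simultaneously exclude boundary excursions (the height bound) and bulk caps or holes of $E_h$ (the two one-sided volume estimates): this is possible precisely because Theorem \ref{thm compactness minimizers} packages $L^1_{\rm loc}$ convergence of sets, weak-$*$ convergence of the boundary measures, and the Kuratowski-type closure \eqref{kuratowski1}--\eqref{kuratowski2} all together. A secondary technical point, needed in the Gauss--Green step, is the normalization of Lemma \ref{lem:normalization}, used to identify $\pa E$ with $\spt \mu_E$ and to correctly attribute the reduced boundary of $E \cap \C_{x_0, r} \cap H$ to the five constituent faces.
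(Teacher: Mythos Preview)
Your overall architecture matches the paper's (contradiction/compactness for \eqref{height bound}--\eqref{height bound volume 2}, then a Gauss--Green computation for $\zeta$), but there are two genuine gaps.

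\textbf{Varying half-spaces.} The compactness theorem (Theorem~\ref{thm compactness minimizers}) is stated for a \emph{fixed} open half-space $H$; you cannot feed it a sequence $H_h=\{x_1>b_h\}$ and a putative limit $H_\infty$. The paper explicitly addresses this: after rescaling, $H_h=\{x_1>-t_h\}$ with $t_h\ge 0$, and one extracts a subsequence with $t_h\to t_*\in[0,\infty]$. If $t_*\in[0,5]$, one \emph{translates} by $t_h e_1$ so that all sets sit in the fixed half-space $H_0=\{x_1>0\}$, then applies Theorem~\ref{thm compactness minimizers} there (on a slightly smaller cylinder). If $t_*>5$, the boundary $\pa H_h$ never enters $\C_4$, so each $E_h$ is in fact a $(\La_h,1)$-minimizer in $(\C_4,\R^n)$, and one applies Theorem~\ref{thm compactness minimizers} with the degenerate half-space $\R^n$. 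Without this two-case reduction your invocation of \eqref{compactness 0}--\eqref{kuratowski1} is not justified.

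\textbf{Upgrading failure of the volume bounds.} Your sentence ``if either \eqref{height bound volume 1} or \eqref{height bound volume 2} fails uniformly in $h$'' begs the question: the contradiction hypothesis only gives, along a subsequence, that the relevant set has \emph{some} positive measure, not measure bounded away from zero. The paper's fix is to first establish \eqref{height bound} (step one), so that $\C\cap H_h\cap\pa E_h\subset\D\times[-\sigma,\sigma]$ for all large $h$; then, since $\pa E_h$ does not reach $\{\q x>\sigma\}$, positivity of $|\C\cap H_h\cap E_h\cap\{\q x>\sigma\}|$ forces this set to equal \emph{all} of $\C\cap H_h\cap\{\q x>\sigma\}$, which has measure bounded below independently of $h$. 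Only then does $L^1_{\rm loc}$-convergence to $\{\q x<0\}$ yield a contradiction. Your argument needs this intermediate step.

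A minor point: nonnegativity of $\zeta$ on general Borel sets $G\subset\D_{\p x_0,r}$ does not follow from $1$-Lipschitz contractivity of $\p$ alone; one also needs that $\p$ maps $\pa^*E\cap\p^{-1}(G)\cap H$ \emph{onto} $G\cap H$ (up to $\H^{n-1}$-null sets), which uses \eqref{height bound volume 1}--\eqref{height bound volume 2}. The paper packages both the nonnegativity and the total-mass identity into Lemma~\ref{misecc}, proved by applying the divergence theorem to vector fields $\varphi(\p x)\,e_n$ with $\varphi\in C^1_c(\D_{x_0,r})$; your global Gauss--Green computation gives only the $G=\D_{\p x_0,r}$ case.
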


\begin{proof}[Proof of Lemma \ref{lemma height bound}]
   The fact that \(\zeta\) is a positive Radon measure and satisfies \eqref{zeta total mass excess} follows by \eqref{height bound}, \eqref{height bound volume 1} \eqref{height bound volume 2} and Lemma \ref{misecc} below. We thus focus on the proof of these three properties. By \eqref{excess scaling} and by Remark \ref{remark blow-up}, up to replace \(E\) and \(\Phi\) with \(E^{x_0,r}\) and \(\Phi^{x_0,r}\) respectively, we may directly assume that \(x_0=0\), \(r=1\) and that \(H=\{x_1>-t\}\) with  \(t\ge 0\). Arguing by contradiction, we thus assume the existence of $\l\ge1$ and $\s\in (0,1/4)$ such that for every $h\in\N$ there exist an half-space $H_{h}=\{x_1> -t_h\}$ ($t_h\ge 0 $), and
   \begin{eqnarray*}
    \nonumber
    &&\Phi_h\in\X(\C_{4}\cap H_{h},\l,\ell_h)\,,
    \\
    \nonumber
    &&\mbox{$E_h$ a $(\La_h,1)$-minimizer of $\PHI_h$ in $(\C_{4},H_{h})$}
    \\
    &&0\in\cl(H_{h}\cap\pa E_h)\,,
    \\
    && 2\lambda \Lambda_h\, +\ell_h \le 1 \,,
   \end{eqnarray*}
   such that $\exc_n^{H_{h}}(E_h,0,2)\to 0$ as $h\to\infty$, and
     \begin{align}
    \label{buffa2}
    \textrm{either}&&\sup\Big\{|\q x|:x\in\C\cap H_{h}\cap\pa E_h\Big\}&>\s\,,
    \\
    \label{buffa2a}
     \textrm{or}&&  \Big|\Big\{x\in \C\cap H_h\cap\pa E_{h}:\q x>\s\,\Big\}\Big|&>0\,,
     \\
     \label{buffa2b}
     \textrm{or}&&  \Big|\Big\{x\in (\C\cap H_h)\setminus  E_h:\q x<-\s\,\Big\}\Big|&>0\,,
  \end{align}
  for infinitely many \(h\in \N\).

 \medskip

 \noindent {\it Step one}: We start  showing  that \eqref{buffa2} cannot hold for infinitely many \(h\in \N\). To this end, we set $H_0=\{x_1>0\}$, and notice that
 \[
 H_0\subset H_h=H_0-t_h\,e_1\,,\qquad\forall h\in\N\,.
 \]
 In order to apply the compactness theorem, Theorem \ref{thm compactness minimizers}, we need to get rid of the moving half-spaces $H_h$. Since $t_h\ge0$ for every $h\in\N$, up to extracting subsequences, we may assume that $t_h\to t_*\in[0,\infty]$ as $h\to\infty$. We then consider two cases separately:

  \medskip

  \noindent {\it Case one}: We have $t_*\in[0,5]$. Set $F_h=E_h+t_h\,e_1$ and $\Psi_h(x,\nu)=\Phi_h(x-t_he_1,\nu)$ so that $\Psi_h\in\X(\C(t_h\,e_1,4)\cap H,\l,1)$. Since $H_0=H_h+t_h\,e_1$, and, for $h$ large enough, $\C(t_*e_1,3)\cc \C(t_he_1,4)$, we find that $F_h$ is a $(1/2\l,1)$-minimizer of $\PSI_h$ on $(\C(t_*e_1,3),H)$ (recall that \(2\l \Lambda_h\le 1\)), with $t_h\,e_1\in\cl(H\cap\pa F_h)$ and (up to extracting a subsequence)
  \begin{gather}
  \nonumber
   \lim_{h\to\infty}\exc_n^H(F_h,t_h\,e_1,2)=0\,,
    \\
    \label{buffa2*}
   \sup\Big\{|\q x|:x\in\C(t_h\,e_1,1)\cap H\cap\pa F_h \Big\}\ge\s\,,\qquad\forall h\in\N\,.
  \end{gather}
  By Theorem \ref{thm compactness minimizers}, we can find $\Psi_\infty \in\X(\C(t_*e_1,3)\cap H,\l,1)$ and $F_\infty \subset H$ such that $F_\infty$ is a $(1/2\l,1)$-minimizer of $\PSI_\infty$ on $(\C(t_*e_1,3),H)$ with $t_*e_1\in\cl(H\cap\pa F_\infty)$. By \eqref{compactness 6},
  \[
  \begin{split}
  \exc_n^H(F_\infty,t_*e_1,3/2)&\le\liminf_{h\to\infty}\exc_n^H(F_h,t_*e_1,3/2)\\
  & \le\Big(\frac{4}{3}\Big)^{n-1}\liminf_{h\to\infty}\exc_n^H\Big(F_h,t_he_1,2\Big)=0\,.
  \end{split}
  \]
  Thus, by \eqref{cilindrico nullo},
  \begin{equation}
    \label{luis}
    F_\infty \cap\C(t_*e_1,3/2)\cap H=\Big\{x\in \C(t_*e_1,3/2)\cap H:\q x<0\Big\}\,.
  \end{equation}
  At the same time, by \eqref{buffa2*}, for every $h\in\N$ we can find $z_h\in \C(t_h\,e_1,1)\cap H\cap\pa F_h$ with $|\q z_h|>\s$. In particular, up to extracting subsequences and by \eqref{kuratowski1}, $z_h\to z_0$ for some $z_0\in \cl(\C(t_*\,e_1,1)\cap H\cap\pa F_\infty)$ with  $|\q z_0|\ge\s$. By \eqref{luis}, it must be $P(F_\infty;B_{z_0,s})=0$ for $s$ small enough: hence, $z_0\not\in\pa F_\infty$, contradiction.

  \medskip

  \noindent {\it Case two}: We have $t_*>5$. In this case the presence of $H_h$ is not detected by the minimality condition of $E_h$ in $\C_4$,   so that $E_h$ turns out to be a $(1/2\l,1)$-minimizer of $\PHI_h$ in $(\C_4,\R^n)$. This time we apply Theorem \ref{thm compactness minimizers} with the degenerate half-space $\R^n$ and we find a contradiction with \eqref{buffa2} by the same argument used in dealing with case one.

  \medskip

  \noindent {\it Step two}: We now prove that neither \eqref{buffa2a} nor \eqref{buffa2b} can hold for infinitely many \(h\). Once again, we argue by contradiction, and assume for example that \eqref{buffa2a} holds true for infinitely many values of $h$. Since we know by step one that
  \[
  \C\cap H_h\cap \pa E_h\subset \D\times [-\sigma, \sigma]\,,\qquad \forall h\in \N\,,
  \]
  this assumption, combined with basic properties of the distributional derivative, implies that
  \[
  \C\cap H_h\cap E_h\cap \{\q x>\sigma \}= \C\cap H_h\cap \{\q x>\sigma \}\,,\qquad \forall h\in\N\,.
  \]
  By exploiting the compactness theorem for almost-minimizers as in step one, we see however that \(E_h\) is converging to \(\{\q x<0\}\) (up to horizontal translations and inside of $\C$), thus reaching a contradiction.
\end{proof}

The following lemma can be proved by a simple application of the divergence theorem to vector fields of the form \(\vphi(\p x)\,e_n\) for \(\vphi\in C^1_c(\D_{x_0,r})\). We refer to \cite[Lemma 22.11]{maggiBOOK}, and leave the details to the reader.

\begin{lemma}\label{misecc} If \(H=\{x_1>b\}\) for some \(b\in \R\) and \(E\subset H\) is a set of finite perimeter with
\begin{gather*}
    |\q x|<\s_0\,,\qquad\forall x\in \C \cap H \cap\spt\mu_E\,,
    \\
    \Big\{x\in\C\cap H:\q x<-\s_0\Big\}\subset E\cap \C \cap H\subset\Big\{x\in\C \cap H:\q x<\s_0\Big\}\,,
  \end{gather*}
   for some \(\s_0\in (0,1)\), then the set function
  \[
  \zeta(G)=P(E;\p^{-1}(G)\cap\C\cap H)-\H^{n-1}(G\cap H)\,,\qquad G\subset\D\,,
  \]
  defines a positive finite Radon measure on \(\D\) concentrated on \(H\) with
  \[
  \zeta(G)=\int_{\p^{-1}(G)\cap\C\cap H\cap\pa^*E}\frac{|\nu_E-e_n|^2}2\,d\H^{n-1}\,,\qquad\forall G\subset\D\,.
  \]
  In particular, $\zeta (\D)=\exc_n^H(E,0,1)$.
\end{lemma}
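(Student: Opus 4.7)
The strategy is to feed the Gauss--Green / trace formula \eqref{eq:trace} with vector fields of the form
\[
T(x)=\varphi(\p x)\,\eta(\q x)\,e_n,\qquad \varphi\in C^1_c(\D),\ \eta\in C^1_c((-1,1)),
\]
where $\eta\equiv 1$ on the slab $[-\s_0,\s_0]$ that contains every point of $\C\cap H\cap\spt\mu_E$ by hypothesis. Two features make this field convenient: first, since $H=\{x_1>b\}$ has $\nu_H=-e_1\perp e_n$, the boundary term $\int_{\tr_{\pa H}(E)}T\cdot\nu_H\,d\H^{n-1}$ in \eqref{eq:trace} vanishes identically; second, $\Div T(x)=\varphi(\p x)\,\eta'(\q x)$ is orthogonal to the slab $\{|\q x|\le\s_0\}$, which is where $E$ is allowed to behave wildly.

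Next I would compute $\int_E\Div T\,dx$ by Fubini, slicing $E$ into $E_z=\{t:(z,t)\in E\}$ for $z\in\D$. For $z\in\D\setminus H$ the slice is empty (as $E\subset H$) and the contribution vanishes. For $z\in\D\cap H$ the structural assumptions give $(-1,-\s_0)\subset E_z\subset(-1,\s_0)$, and since $\eta'$ vanishes on $[-\s_0,\s_0]$,
\[
\int_{E_z}\eta'(t)\,dt=\eta(-\s_0)-\eta(-1)=1,
\]
independently of what $E_z$ does in the slab. Thus $\int_E\Div T\,dx=\int_{\D\cap H}\varphi(z)\,dz$. On the surface side, because $|\q x|<\s_0$ at $\H^{n-1}$-a.e.\ $x\in\C\cap H\cap\pa^* E$, one has $\eta(\q x)=1$ there, and the trace identity becomes
\[
\int_{\C\cap H\cap\pa^* E}\varphi(\p x)\,(\nu_E\cdot e_n)\,d\H^{n-1}=\int_{\D\cap H}\varphi(z)\,dz,\qquad \forall\varphi\in C^1_c(\D).
\]
A standard density/Riesz representation argument upgrades this to $\varphi=\mathbf 1_G$ for Borel $G\subset\D$:
\[
\int_{\pa^*E\cap\p^{-1}(G)\cap\C\cap H}\nu_E\cdot e_n\,d\H^{n-1}=\H^{n-1}(G\cap H).
\]

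Subtracting this identity from $P(E;\p^{-1}(G)\cap\C\cap H)=\int_{\pa^*E\cap\p^{-1}(G)\cap\C\cap H}1\,d\H^{n-1}$ and using $1-\nu_E\cdot e_n=|\nu_E-e_n|^2/2$ yields the claimed formula
\[
\zeta(G)=\int_{\pa^*E\cap\p^{-1}(G)\cap\C\cap H}\frac{|\nu_E-e_n|^2}{2}\,d\H^{n-1}\ge 0.
\]
Positivity and $\s$-additivity on Borel subsets of $\D$ follow at once, and finiteness is the total mass bound $\zeta(\D)=\exc_n^H(E,0,1)$ coming straight from the definition of the excess. The concentration on $H$ is immediate: if $G\subset\D\setminus H$, then every $z\in G$ has $z_1\le b$, so $\p^{-1}(G)\cap H=\emptyset$, which kills both terms in the definition of $\zeta(G)$.

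There is no genuine obstacle to this argument; the only point requiring care is matching the cutoff $\eta$ to the height bound $\s_0$ so that $\eta'$ is supported away from the part of the slab where $E$ is not deterministic, and checking that the boundary contribution on $\pa H$ disappears because of the orthogonality $e_n\perp\nu_H$. Both are essentially bookkeeping.
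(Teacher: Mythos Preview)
Your proof is correct and follows exactly the approach the paper indicates: applying the divergence theorem to vector fields of the form $\varphi(\p x)\,e_n$ (with your cutoff $\eta(\q x)$ handling the compact support in $\C$), which is precisely the argument the paper refers to in \cite[Lemma 22.11]{maggiBOOK}. The details you supply --- the Fubini computation on the slices, the vanishing of the $\pa H$ contribution by $e_n\perp e_1$, and the passage from $C^1_c$ test functions to Borel sets --- are all sound.
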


\subsection{Lipschitz approximation} The next key ingredient in the proof of Lemma \ref{thm regularity en} is the construction of a Lipschitz approximation of \(\pa E\).

\begin{lemma}[Lipschitz approximation]\label{thm lip approx} For every \(\lambda\ge 1\) and   $\s\in(0,1/4)$ there exist positive constants  $\el=\el(n,\l,\s)$,  \(C_2=C_2(n,\l)\), and $\de_1 =\de_1(n,\l)$ with the following property. If $H=\{x_1>b\}$ for some $b\in\R$, $x_0\in\cl(H)$, and
  \begin{eqnarray*}
  &&\Phi\in\X(\C_{x_0,16\,r}\cap H,\lambda,\ell)\,,
  \\
  &&\mbox{$E$ is a $(\La,r_0)$-minimizer of $\PHI$ in $(\C_{x_0,16\,r},H)$ with \(0<4r\le r_0\)}\,,
  \\
  &&\mbox{$x_0\in\cl(H\cap\spt\,\pa E)$ }\,,
  \\
  &&(8\lambda \Lambda+4\ell)\, r\le 1\,,
  \\
  &&\exc_n^H(E,x_0,8r)<\el\,,
  \end{eqnarray*}
  then there exists a Lipschitz function $u:\R^{n-1}\to\R$ such that, on setting,
  \begin{eqnarray*}
  M&=&\C_{x_0,r}\cap H\cap\spt\pa E\,,
  \\
  M_0&=&\Big\{y\in M:\sup_{0<s<4r}\exc_n^H(E,y,s)\le\de_1\Big\}\,,
  \\
  \Gamma&=&\Big\{(z,u(z)):z\in\D_{\p x_0,r}\cap H\Big\}\,,
  \end{eqnarray*}
  we have
  \begin{equation}  \label{lapp4}
    \sup_{\R^{n-1}}\frac{|u-\q x_0|}r\le \s\,,\qquad\Lip(u)\le 1\,,\qquad M_0\subset M\cap\Gamma\,,
  \end{equation}
  and
  \begin{align}
  \label{lapp5}
  \frac{\H^{n-1}(M\Delta \Gamma)}{r^{n-1}}&\le C_2\,\exc_n^H(E,x_0,8\,r)\,,
  \\
  \label{lapp6}
  \frac1{r^{n-1}}\,\int_{\D_{\p x_0,r}\cap H}|\nabla u|^2&\le C_2\,\exc_n^H(E,x_0,8\,r)\,.
  \end{align}
  Finally, if
      \begin{eqnarray}
      \label{boundary case pde}
      \mbox{either}&\qquad&\mbox{$x_0\in\pa H$ and $\nabla\Phi(x_0,e_n)\cdot e_1=0$}\,,
      \\  \label{interior case pde}
      \mbox{or}&\qquad&\dist(x_0,\pa H)>r\,,
  \end{eqnarray}
 then we also have that
  \begin{equation}\label{eq:pde}
    \frac1{r^{n-1}}\int_{\D_{\p x_0,r}\cap H}\Big(\nabla^2\Phi(x_0,e_n)(\nabla u,0)\Big)\cdot(\nabla\vphi,0)\le C_2\,\|\nabla \vphi\|_\infty\,\Big(\exc_n^H(E,x_0,8\,r)+(\L+\ell)\,r\Big)\,,
  \end{equation}
 for every $\vphi\in C^1(\D_{\p x_0,r})$ with $\vphi=0$ on $H\cap\pa\D_{\p x_0,r}$. (Notice that this implies $\vphi=0$ on $\pa\D_{\p x_0,r}$ when \eqref{interior case pde} holds true.)
\end{lemma}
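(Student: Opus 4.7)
\medskip

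\noindent\textbf{Proof proposal for Lemma \ref{thm lip approx}.}

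By Remark \ref{remark blow-up} and \eqref{excess scaling} I would first rescale so that $x_0=0$, $r=1$, and $\mathbf{q}x_0=0$; thus $E$ is a $(\La,r_0)$-minimizer in $(\C_{16},H)$ with small $\exc_n^H(E,0,8)+\La+\ell$. The plan is the standard Almgren--Allard--De Giorgi approach to Lipschitz approximation, adapted to the half-space setting. The core observation is that, provided $\delta_1$ is small (depending on $\s$ via $\eh(n,\l,\s)$ from Lemma \ref{lemma height bound}), every point $y\in M_0$ inherits the height bound at scale $1$ around itself. This forces a Lipschitz relation between different points of $M_0$: for $y_1,y_2\in M_0$, applying Lemma \ref{lemma height bound} at $y_1$ with $r=|\p(y_2-y_1)|/\s$ (when this is admissible) yields $|\mathbf{q}(y_2-y_1)|\le \s\,|\p(y_2-y_1)|$, so after choosing $\s$ small the map $y\mapsto \mathbf{q}y$ is $1$-Lipschitz on $\p(M_0)$. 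I then extend it to a $1$-Lipschitz function $u:\R^{n-1}\to\R$ by McShane's formula, with values clamped so that $|u|\le\s$. This proves \eqref{lapp4}.

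To estimate $\H^{n-1}(M\setminus M_0)$, I would cover $M\setminus M_0$ by balls $B_{y,s}$, $y\in M\setminus M_0$, $s\in(0,4)$, for which $\exc_n^H(E,y,s)>\delta_1$, apply Vitali's $5r$-covering lemma to get a disjoint subfamily $\{B_{y_i,s_i}\}_{i}$ covering $M\setminus M_0$ with the $B_{y_i,5s_i}$, and combine Lemma \ref{lemma stime di densita} (giving $P(E;B_{y_i,5s_i})\le C s_i^{n-1}$) with the disjointness: by definition of excess,
\[
\sum_i s_i^{n-1}\le \frac{1}{\delta_1}\sum_i \int_{B_{y_i,s_i}\cap H\cap\pa^*E}\frac{|\nu_E-e_n|^2}{2}\,d\H^{n-1}\le \frac{C\,\exc_n^H(E,0,8)}{\delta_1}\,,
\]
which gives \eqref{lapp5} (after the standard identification $M\setminus\Gamma\subset(M\setminus M_0)\cup(\Gamma\setminus M_0)$, the $\Gamma$-side being controlled by the graphicality of $u$ on $\p(M_0)$). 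The gradient bound \eqref{lapp6} then follows: on $\p(M_0)$ one has $|\nabla u|\le C|\nu_E-e_n|/(\nu_E\cdot e_n)$ $\H^{n-1}$-a.e.\ by the graph formula, while on $\D\setminus \p(M_0)$ one uses $|\nabla u|\le 1$ together with the measure bound on the bad set and the fact that $\int_{\D\setminus\p(M_0)}|\nabla u|^2\le \H^{n-1}(\D\setminus\p(M_0))$.

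For the PDE estimate \eqref{eq:pde} I would plug in the vector field $T=\vphi(\p x)\,e_n$, consider $f_t(x)=x+tT(x)$, and test $(\La,r_0)$-minimality of $E$ against $f_t(E)$ on $\C_1\cap H$: note that $T$ preserves $H$ (since $T\cdot e_1=0$), so $f_t(E)\subset H$, and the competitor is admissible. By \eqref{change of variables cofattore} and \eqref{sviluppini},
\[
0\le \PHI(f_t(E);\C_1\cap H)-\PHI(E;\C_1\cap H)+\La\,|E\Delta f_t(E)|,
\]
and expanding to first order in $t$ gives, at $t=0$,
\[
\int_{\C_1\cap H\cap \pa^*E}\!\!\bigl(\Phi(x,\nu_E)\,\Div T-\nabla\Phi(x,\nu_E)\cdot[(\nabla T)^*\nu_E]\bigr)\,d\H^{n-1}+O(\La\|T\|_\infty).
\]
Replacing $\Phi(x,\cdot)$ by $\Phi(x_0,\cdot)$ costs $O(\ell\|\vphi\|_\infty)$ by \eqref{Phi x ell}; linearizing $\Phi(x_0,\nu_E)$ around $\nu_E=e_n$ produces the quadratic form $\nabla^2\Phi(x_0,e_n)$ contracted against the ``tilt'' $\nu_E-e_n$, and the $\exc_n^H$ terms absorb the $O(|\nu_E-e_n|^2)$ remainders using \eqref{Phi nabla 1}. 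Replacing $\pa^*E$ by $\Gamma$ costs $O(\|\nabla\vphi\|_\infty\H^{n-1}(M\Delta\Gamma))$, which by \eqref{lapp5} contributes the desired $\exc$-term; writing the tilt on $\Gamma$ as $(-\nabla u,1)/\sqrt{1+|\nabla u|^2}$ and using \eqref{elliptic} to throw away higher-order terms yields \eqref{eq:pde}. In the interior case \eqref{interior case pde}, $T$ is supported away from $\pa H$ and no boundary contribution arises. In the free-boundary case \eqref{boundary case pde}, an integration-by-parts on $\pa H$ produces exactly a term proportional to $\nabla\Phi(x_0,e_n)\cdot e_1$ times $\int_{\D\cap \pa H}\vphi$, which vanishes by the Young's-law hypothesis.

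The main obstacle I expect is the free-boundary PDE estimate, because the admissibility of the competitor $f_t(E)$ requires the vector field to be tangent to $\pa H$, and the boundary integrals generated by the linearization must be shown to either vanish (by the Young's-law assumption on $\Phi$ at $x_0$) or to be controlled by $\ell r\cdot\|\vphi\|_\infty$ arising from the $x$-dependence of $\Phi$. A secondary technical point is ensuring that the admissible test fields $\vphi$ only need to vanish on $H\cap \pa \D$ (not on $\pa H\cap \D$), which is crucial so that \eqref{eq:pde} captures an effective Neumann-type boundary condition that can later be converted into $C^{1,1/2}$ regularity at the free boundary.
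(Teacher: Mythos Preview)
Your proposal is correct and follows essentially the same route as the paper: height bound, Lipschitz graph over $\p(M_0)$, covering argument for $M\setminus M_0$, first variation with the vertical field $\vphi(\p x)\,e_n$, passage from $\pa^*E$ to $\Gamma$, Taylor expansion of $\nabla\Phi_0$ at $e_n$, and the Gauss--Green identity that kills the zeroth-order term $\int_{\D\cap H}\nabla\Phi_0(e_n)\cdot(\nabla\vphi,0)$ via the Young's-law hypothesis \eqref{boundary case pde}. Two small technical adjustments are needed: first, $\sigma$ is a \emph{given} parameter (not to be chosen), so the Lipschitz bound is obtained instead by fixing $\delta_1<\eh(n,\lambda,1/8)$ and applying the height bound at scale $s=\max\{|\p(x-y)|,|\q(x-y)|\}<2$ (automatically admissible for $y\in M_0$, $x\in M$) to get $|\q(x-y)|\le s/8$, hence $\Lip(u)\le 1/8$ independently of $\sigma$; second, the paper multiplies the test field by a vertical cutoff $\alpha(\q x)$ (identically $1$ near $\pa^*E$ thanks to the height bound) so that $f_t$ is a global diffeomorphism with $f_t(E)\Delta E\cc\C_2$---this leaves the first-variation computation on $\pa^*E$ unchanged but makes admissibility of the competitor transparent.
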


\begin{proof}
  {\it Step one}: By \eqref{excess scaling} and Remark \ref{remark blow-up} we may directly assume that \(x_0=0\), \(r=1\) and \(H=\{x_1>-t\}\) with \(t\ge 0\). With $\eh(n,\l,\s)$ defined as in Lemma \ref{lemma height bound}, we shall assume that
  \begin{equation}
    \label{lapp2}
      \el(n,\l,\s)\le \eh(n,\l,\s)\,.
  \end{equation}
  Since $E$ is a $(\Lambda, 4)$-minimizer of $\PHI$ in $(\C_{16},H)$, with $\Phi\in\X(\C_{16}\cap H,\l,\ell )$, $0\in\cl(H\cap\pa E)$, $\exc_n^{H}(E,0,8)<\el$, and $8\,\lambda \Lambda\, +4\,\ell\, \le 1$, by \eqref{lapp2} we can apply  Lemma \ref{lemma height bound} to get that
  \begin{equation}
    \label{lapp3}
    \sup\Big\{|\q y|:y\in \C_4\cap H\cap\pa E \Big\}\le\s\,;
  \end{equation}
  moreover,
  \[
  \zeta(G)=P(E;\p^{-1}(G)\cap\C_4\cap H)-\H^{n-1}(G\cap \D_4\cap H)\,, \quad G\subset\D_4\,,
  \]
  defines a positive  finite Radon measure $\zeta$ on $\D_4\cap H$. We now notice that if $y\in M_0$, $x\in M$ and $s=\max\{|\p(x-y)|,|\q(x-y)|\}$, then $s< 2$ and by definition of $M_0$, we have
  \[
  \C_{y,4s}\subset\C_{16}\,,\qquad \exc_n^H(E,y,2s)\le\de_1\,,\qquad (2\l\La+\ell)s\le1\,.
  \]
  Up to assume that
  \[
  \de_1(n,\l)<\eh\Big(n,\l,\frac18\Big)\,,
  \]
  and since, by construction, $x\in\cl(\C_{y,s}\cap\pa E)\cap H$, we can thus apply Lemma \ref{lemma height bound} at the point $y$ at scale $s$ to infer
  \[
  |\q y-\q x|\le \frac{s}8\,,
  \]
  which in turn implies
  \[
  |\q x-\q y|\le \frac{|\p x-\p y|}{8}\,.
  \]
 In particular, $\p$ is invertible on $M_0$, so that we can define a function $u:\p(M_0)\to\R$ with the property that $u(\p x)=\q x$ for every $x\in M_0$ (thus $|u(z)|<\s$ for every $z\in\p(M_0)$ by \eqref{lapp3}), and
  \[
  |u(\p y)-u(\p x)|\le \frac{|\p y-\p x|}{8}\,,\qquad \forall x,y\in M_0\,.
  \]
  We may thus extend $u$ as a Lipschitz function on $\R^{n-1}$ with the properties that
  \[
  \sup_{\R^{n-1}}|u|\le\s\,,\qquad\Lip(u)\le \frac18\,,\qquad M_0\subset\Gamma=\Big\{(z,u(z)):z\in \D\cap H\Big\}\,.
  \]
  This proves \eqref{lapp4}. We now prove \eqref{lapp5} by a standard application of Besicovitch's covering theorem (that we sketch in detail just for the sake of completeness). We start noticing that, if $x\in M\setminus M_0$, then there exists $s_x\in(0,4)$ such that
  \[
  \de_1\,s_x^{n-1}\le \int_{H\cap\C_{x,s_x}\cap \pa^*E}\frac{|\nu_E-e_n|^2}2\,d\H^{n-1}\,.
  \]
  If $\xi(n)$ is the Besicovitch covering constant, then (see for instance \cite[Corollary 5.2]{maggiBOOK}) we can find a countable disjoint family of balls $B(x_h,\sqrt{2}\,s_h)$ with $x_h\in M\setminus M_0$, $s_h=s_{x_h}\in(0,4)$,
  \[
  \de_1\,s_h^{n-1}\le \int_{H\cap\C_{x_h,s_h}\cap \pa^*E}\frac{|\nu_E-e_n|^2}2\,d\H^{n-1}\,,\qquad\forall h\in\N\,,
  \]
  and
  \[
  \H^{n-1}(M\setminus M_0)\le\xi(n)\sum_{h\in\N}\H^{n-1}\Big((M\setminus M_0)\cap B(x_h,\sqrt{2}\,s_h)\Big)\,.
  \]
  By \eqref{stime densita perimetro upper}, $\H^{n-1}(M\cap B(x_h,\sqrt{2}\,s_h))\le C(n,\l)\,s_h^{n-1}$ for every $h\in\N$, so that, by combining these last three inequalities,
  \begin{eqnarray*}
  \H^{n-1}(M\setminus M_0)&\le&\frac{C(n,\l)}{\de_1}\,\sum_{h\in\N}\int_{H\cap\C_{x_h,s_h}\cap \pa^*E}\frac{|\nu_E-e_n|^2}2\,d\H^{n-1}
  \\
  &=&\frac{C(n,\l)}{\de_1}\,\int_{H\cap\C_8\cap \pa^*E}\frac{|\nu_E-e_n|^2}2\,d\H^{n-1}\,,
  \end{eqnarray*}
  where in the last identity we have used the fact that the cylinders $\C_{x_h,s_h}$ are disjoint (as they are contained in the disjoint balls $B(x_h,\sqrt{2}\,s_h)$), as well as the fact that their union is contained in $\C_8$. Recalling that  $M\setminus\Gamma\subset M\setminus M_0$, we have proved that
  \[
  \H^{n-1}(M\setminus \Gamma)\le C(n,\lambda) \,\exc_n^{H}(E,0,8)\,.
  \]
  The proof of \eqref{lapp5} is then completed by noticing that, since $\Lip(u)\le 1$,
  \begin{equation*}
  \begin{split}
  \H^{n-1}(\Gamma\setminus M)&\le\sqrt{2}\H^{n-1}(\p(\Gamma\setminus M))
  \\
  &\le\sqrt{2}\H^{n-1}(M\cap\p^{-1}\p(\Gamma\setminus M))\le \sqrt{2} \H^{n-1}(M\setminus \Gamma)\,,
  \end{split}
  \end{equation*}
  where in the last inequality we have used  that
  \[
  0\le \zeta(\p(\Gamma\setminus M))=\H^{n-1}(M\cap\p^{-1}\p(\Gamma\setminus M))-\H^{n-1}(\p(\Gamma\setminus M)),
  \]
  and that $M\cap\p^{-1}\p(\Gamma\setminus M)\subset M\setminus\Gamma$. We finally prove \eqref{lapp6}. We first notice that
  \begin{equation}
    \label{hey}
      \nu_E(x)=\pm\,\frac{(-\nabla u(\p x),1)}{\sqrt{1+|\nabla u(\p x)|^2}}\,,\qquad\mbox{for $\H^{n-1}$-a.e. $x\in M\cap\Gamma$}\,.
  \end{equation}
  Since $|\nu_E-e_n|^2\ge |\p\nu_E|^2$ and $\Lip(u)\le 1$, by the area formula and by \eqref{hey} we get
  \begin{eqnarray*}
   8^{n-1} \exc_n^{H}(E,0,8)&\ge&\frac12\int_{M\cap\Gamma}|\p\nu_E|^2=\frac12\int_{M\cap\Gamma}\frac{|\nabla u(\p x)|^2}{1+|\nabla u(\p x)|^2}\,d\H^{n-1}
    \\
    &=&\frac12\int_{\p(M\cap\Gamma)}\frac{|\nabla u|^2}{\sqrt{1+|\nabla u|^2}}\ge\frac1{2\sqrt{2}}\int_{\p(M\cap\Gamma)}|\nabla u|^2\,,
  \end{eqnarray*}
  as well as $\int_{\p(M\Delta\Gamma)}|\nabla u|^2\le\H^{n-1}(\p(M\Delta\Gamma))\le\H^{n-1}(M\Delta\Gamma)$, so that \eqref{lapp6} follows from  \eqref{lapp5}. We now devote the next two steps of the proof to show the validity of \eqref{eq:pde}.

  \medskip

  \noindent {\it Step two}: We start showing that, setting  \(\Phi_0(\nu)=\Phi(0,\nu)\),
  \begin{equation}
    \label{hey-1}
      \Big|\int_{\C\cap H\cap\pa^*E}\nabla\Phi_0(\nu_E)\cdot(\nabla\vphi(\p x,0),0)\,(\nu_E\cdot e_n)\,d\H^{n-1}\Big|\le C\,\|\nabla\vphi\|_\infty\,(\La+\ell)\,,
  \end{equation}
  whenever $\vphi\in C^1(\D\cap H)$ with \(\vphi=0\) on \(\pa \D\cap H\). In showing this, we can assume without loss of generality that $\|\nabla\vphi\|_\infty=1$, and notice that, correspondingly, $\sup_{\D\cap H_*}|\vphi|\le 1$. Let us now fix $\a\in C^\infty_c([-1,1];[0,1])$ with $\a=1$ on $[0,1/2]$ and $|\a'|\le 3$, and consider the family of maps
  \[
  f_t(x)=x+t\,\a(\q x)\,\vphi(\p x)\,e_n\,,\qquad x\in\R^n\,.
  \]
  For $t$ small enough, $f_t$ is a diffeomorphisms of $\R^n$, with
  \begin{equation*}
  f_t(E)\subset H,\qquad f_t(E)\Delta E\cc \C_2\,,\qquad |f_t(E)\Delta E|\le C(n)\,|t|\,P(E;\C_2) \,,
  \end{equation*}
  (see \cite[Lemma 17.9]{maggiBOOK} for the last inequality). Hence, by minimality of $E$ and by \eqref{stime densita perimetro upper},
  \begin{equation}\label{hey0}
  \begin{split}
  \PHI(E,H\cap\C_2)&\le\PHI(f_t(E),H\cap\C_2)+\La\,|f_t(E)\Delta E|
  \\
  &\le \PHI(f_t(E),H\cap\C_2)+C(n,\l)\,\La\,|t|\,.
  \end{split}
  \end{equation}
  By \eqref{change of variables cofattore}, \eqref{Phi x ell}, \(|f_t(x)-x|\le C\,|t|\), and, again, by \eqref{stime densita perimetro upper}, we find
    \begin{equation}\label{caos}
    \begin{split}
  \PHI(f_t(E),H\cap\C_2)&=\int_{H\cap\C_2\cap \pa^*E}\Phi\big(f_t(x),(\cof \nabla f_t(x))\nu_E(x)\big)d\H^{n-1}\,
  \\
  &\le\int_{H\cap\C_2\cap \pa^*E}\Phi\big(x,(\cof \nabla f_t(x))\nu_E(x)\big)d\H^{n-1}+C(n,\l)\,\ell |t|\,.
  \end{split}
  \end{equation}
  By \eqref{sviluppini}, $(\cof \nabla f_t)\nu_E=\nu_E+t\,((\Div\,T)\,\nu_E-(\nabla T)^*\nu_E)$, with $T(x)=\a(\q x)\,\vphi(\p x)\,e_n$. Since  $\a(\q x)\equiv 1$ for $x$ in a neighborhood of $\pa^*E$, we have
  \[
  (\cof \nabla f_t)\nu_E=\nu_E-t\,(e_n\cdot\nu_E)\,(\nabla\vphi,0)\qquad  \textrm{on $\pa^*E$.}
  \]
 Thus, by \eqref{caos},
  \begin{equation}\label{hey1}
    \begin{split}
    \PHI(f_t(E),H\cap\C_2)&- \PHI(E,H\cap\C_2)\\
    &\le \int_{H\cap\C_2\cap \pa^*E}\big[\Phi(x,(\cof \nabla f_t)\nu_E)-\Phi(x,\nu_E)\Big] d\H^{n-1}+C(n,\l)\,\ell |t|\\
    &\le- t\int_{H\cap\C_2\cap \pa^*E}\nabla \Phi(x,\nu_E)\cdot (\nabla \phi,0)(e_n\cdot\nu_E)d\H^{n-1}+C(n,\l)\Big(\ell\,|t|+t^2\Big)\\
    &\le -t\,\int_{H\cap\C_2\cap\pa^*E}\nabla\Phi_0(\nu_E)\cdot(\nabla\vphi,0)\,(\nu_E\cdot e_n)\,d\H^{n-1}+C(n,\l)\Big(\ell\,|t|+t^2\Big)\,,
\end{split}
  \end{equation}
 where in the second inequality we have used  \eqref{Phi nabla 1} and that \(P(E,\C_2)\le C(n,\l)\)  (by  \eqref{stime densita perimetro upper}) while  in the third one we have used  \eqref{Phi x ell}  (and again that   \(P(E,\C_2)\le C(n,\l)\)). We combine \eqref{hey0} and \eqref{hey1} to find that
  \[
  \,\Big|\int_{H\cap\C_2\cap\pa^*E}\nabla\Phi_0(\nu_E)\cdot(\nabla\vphi,0)\,(\nu_E\cdot e_n)\,d\H^{n-1}\Big|
  \le C(n,\l)\,\Big(|t|+(\La+\ell)\Big)\,.
  \]
  We prove \eqref{hey-1} by  letting choosing \(t\to 0\).
  \medskip

  \noindent {\it Step three}: We conclude the proof of \eqref{eq:pde}. We start by showing that
    \begin{equation}
    \label{hey3}
      \Big|\int_{\D\cap H}\nabla\Phi_0((-\nabla u(z),1))\cdot(\nabla\vphi(z),0)\,dz\Big|\le C\,\|\nabla \vphi\|_{\infty}\,\Big(\exc_n^{H}(E,0,8)+(\La+\ell)\Big)\,,
  \end{equation}
  whenever $\vphi\in C^1(\D\cap H)$ with $\vphi=0$ on $H\cap\pa\D$. Indeed, let us set
  \[
  \Gamma_1=\bigg\{x\in\Gamma:\nu_E(x)=\frac{(-\nabla u(\p x),1)}{\sqrt{1+|\nabla u(\p x)|^2}}\bigg\}\subset \Gamma\,.
  \]
  By \eqref{hey}, if \(x\in \Gamma \setminus \Gamma_1\), then \(|\nu_E-e_n|^2\ge1\). Hence, by \eqref{lapp5},
  \begin{equation}
    \label{hey4}
     \H^{n-1}(M\Delta\Gamma_1)\le8^{n-1}\exc_n^{H}(E,0,8)+\H^{n-1}(M\Delta\Gamma)\le C\,\exc_n^{H}(E,0,8)\,,
  \end{equation}
  and thus, by \eqref{hey-1},
  \begin{equation}
    \label{hey5}
      \Big|\int_{H\cap\Gamma_1}\nabla\Phi_0(\nu_E)\cdot(\nabla\vphi(\p x),0)\,(\nu_E\cdot e_n)\,d\H^{n-1}\Big|\le C\,\|\nabla \vphi\|_ \infty\,\Big(\exc_n^{H}(E,0,8)+(\La+\ell)\Big)\,,
  \end{equation}
  where $C=C(n,\l)$. By taking into account the definition of $\Gamma_1$ and the formula for the area of a graph of a Lipschitz function we thus find
  \[
  \Big|\int_{H\cap\p(\Gamma_1)}\frac{\nabla\Phi_0((-\nabla u,1))\cdot(\nabla\vphi,0)}{\sqrt{1+|\nabla u|^2}}\Big|
  \le C\,\|\nabla \vphi\|_ \infty\,\Big(\exc_n^{H}(E,0,8)+(\La+\ell)\Big)\,.
  \]
  By \eqref{Phi nabla 1}, for every $G\subset\D\cap H$, we have
  \begin{equation*}
    \Big|\int_{G}\frac{\nabla\Phi_0((-\nabla u,1))\cdot(\nabla\vphi,0)}{\sqrt{1+|\nabla u|^2}}-
    \int_{G}\nabla\Phi_0((-\nabla u,1))\cdot(\nabla\vphi,0)\Big|
    \le C(n,\l)\,\|\nabla \vphi\|_ \infty\,\int_G|\nabla u|^2\,,
  \end{equation*}
  and thus by  \eqref{lapp6} we find
  \begin{equation}
    \label{hey6}
      \Big|\int_{H\cap\p(\Gamma_1)}\nabla\Phi_0((-\nabla u,1))\cdot(\nabla\vphi,0) \Big|\le C\,\|\nabla \vphi\|_ \infty\,\Big(\exc_n^{H}(E,0,8)+(\La+\ell)\Big)\,.
  \end{equation}
  At the same time, by \eqref{Phi nabla 1} and again by \eqref{hey4} we have
  \begin{eqnarray*}
          \Big|\int_{H\cap(\D\Delta\p(\Gamma_1))}\nabla\Phi_0((-\nabla u,1))\cdot(\nabla\vphi,0)\Big|
          &\le&
          \l\,\|\nabla \vphi\|_ \infty\,\H^{n-1}(M \Delta\Gamma_1)
          \\
          &\le& C\,\|\nabla \vphi\|_ \infty\,\exc_n^{H}(E,0,8)\,.
  \end{eqnarray*}
  We combine this last inequality with \eqref{hey6} to prove \eqref{hey3}. We now notice that, by \eqref{Phi nabla 1},
  \[
  |\nabla\Phi_0(-\nabla u,1)-\nabla\Phi_0(e_n)-\nabla^2\Phi_0(e_n)\cdot(-\nabla u,0)|\le C(\l)\,|\nabla u|^2\,.
  \]
 This, together with \eqref{lapp6} and \eqref{hey3}, gives
  \begin{equation}
    \label{hey7}
      \Big|\int_{\D\cap H}\Big(\nabla\Phi_0(e_n)+\nabla^2\Phi_0(e_n)(-\nabla u,0)\Big)\cdot(\nabla\vphi,0)\Big|\le C\,\|\nabla \vphi\|_ \infty\,\Big(\exc_n^{H}(E,0,8)+(\La+\ell)\Big)\,.
  \end{equation}
  We finally notice that, by Gauss--Green theorem,
  \begin{eqnarray*}
    \int_{\D\cap H}\nabla\Phi_0(e_n)\cdot(\nabla\vphi,0)
    =\nabla\Phi_0(e_n)\cdot\int_{H\cap\pa\D}\vphi\,\nu_{\D}-(\nabla\Phi_0(e_n)\cdot e_1)\int_{\D\cap\pa H}\vphi\,\,,
  \end{eqnarray*}
  where the first term vanishes as $\vphi=0$ on $H\cap\pa \D$, and the second term vanishes as either \eqref{boundary case pde} is in force (and thus \(\nabla \Phi_0(e_n)\cdot e_1=0\) by assumption) or \eqref{interior case pde} holds true, and then one simply has \(\D\cap \pa  H=\emptyset\). This completes the proof of \eqref{eq:pde}, thus of the lemma.
\end{proof}

\subsection{Caccioppoli inequality} The third tool used in the proof of Lemma \ref{thm regularity en} is the {\it Caccioppoli inequality} of Lemma \ref{thm caccioppoli} below. This result, also known as reverse Poincar\'e inequality, is morally analogous to its well-known counterpart in elliptic regularity theory, and shall be used here to translate decay estimates for the flatness of almost-minimizers into decay estimates for their excess. Here, given an open set $A$ and an open half-space $H$ in $\R^n$, a set $E\subset H$ of locally finite perimeter in $A$, and $x\in A\cap \cl(H)$, $\nu\in \mathbf S^{n-1}$ and $r>0$ such that $\C_\nu(x,r)\cc A$, we define the {\it flatness of $E$ at $x$, at scale $r$, in the direction $\nu$, relative to $H$} as
\[
\fl_\nu^H(E,x,r)=\inf_{c\in\R}\,\frac1{r^{n-1}}\int_{H\cap\C_\nu(x,r)\cap\pa^*E}\frac{|(y-x)\cdot\nu-c|^2}{r^2}\,d\H^{n-1}\,.
\]
As usual, we set $\fl_\nu^H(E,x,r)=\fl_n^H(E,x,r)$ when $\nu=e_n$, and notice that flatness enjoys analogous scaling properties to the one of excess, see \eqref{excess scaling}.

\begin{lemma}[Caccioppoli inequality]\label{thm caccioppoli}
 For every \(\lambda\ge 1\) there exist positive constants  $\ec=\ec(n,\l)$ and  \(C_3=C_3(n,\l)\) with the following property. If $H=\{x_1>b\}$ for some $b\in\R$, $x_0\in\cl(H)$, and
  \begin{eqnarray*}
  &&\Phi\in\X(\C_{x_0,16\,r}\cap H,\lambda,\ell)\,,
  \\
  &&\mbox{$E$ is a $(\La,r_0)$-minimizer of $\PHI$ in $(\C_{x_0,16\,r},H)$ with \(0<8r\le r_0\)}\,,
  \\
  &&\mbox{$x_0\in\cl(H\cap\pa E)$ }\,,
  \\
  && (16\,\lambda \Lambda+8\ell)\, r\le 1\,,
  \\
  &&\exc_n^H(E,x_0,8r)<\ec\,,
 \end{eqnarray*}
 with
    \begin{eqnarray}
      \label{boundary case reverse poincare}
      \mbox{either}&\qquad&\mbox{$x_0\in\pa H$ and $\nabla\Phi(x_0,e_n)\cdot e_1=0$}\,,
      \\  \label{interior case reverse poincare}
      \mbox{or}&\qquad&\dist(x_0,\pa H)>16\,r\,,
  \end{eqnarray}
  then
  \begin{equation}
    \label{rev poncare 0}
  \exc_n^H(E,x_0,r)\le C_3\,\Big(\fl_n^H(E,x_0,4r)+(\Lambda+\ell)\,r\Big)\,.
  \end{equation}
\end{lemma}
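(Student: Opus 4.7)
The plan is to adapt the classical interior Caccioppoli inequality for $\Phi$-minimizers to the free-boundary setting, exploiting the fact that under hypothesis \eqref{boundary case reverse poincare} the flat half-space $\{y\cdot e_n<c\}\cap H$ is a minimizer of the frozen functional $\Phi_0=\Phi(0,\cdot)$ on $H$ by Proposition~\ref{proposition young}. After applying Remark~\ref{remark blow-up} I reduce to $x_0=0$, $r=1$, $\Phi\in\X(\C_{16}\cap H,\lambda,\ell)$, $E$ a $(\Lambda,8)$-minimizer with $0\in\cl(H\cap\pa E)$. First, by taking $\ec\le \eh(n,\lambda,\sigma)$ for some small $\sigma=\sigma(n,\lambda)$ to be fixed, the height bound (Lemma~\ref{lemma height bound}) localises $\C_4\cap H\cap\pa E$ inside the slab $\{|y_n|\le\sigma\}$; a Chebyshev-type averaging against the flatness then shows that the optimal constant $c_0\in\R$ realising $\fl_n^H(E,0,4)$ satisfies $|c_0|\lesssim \sigma+\fl_n^H(E,0,4)^{1/2}$.

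For a.e.\ $(\rho,t)\in(1,2)\times(c_0-\sigma,c_0+\sigma)$, I introduce the two competitors
\begin{equation*}
F^+_{\rho,t}=E\cup\bigl(\C_\rho\cap H\cap\{y_n<t\}\bigr),\qquad F^-_{\rho,t}=E\setminus\bigl(\C_\rho\cap H\cap\{y_n>t\}\bigr),
\end{equation*}
both of which lie in $H$ and satisfy $F^\pm_{\rho,t}\Delta E\cc\C_2$, so that $(\Lambda,8)$-minimality yields $\PHI(E;\C_\rho\cap H)\le\PHI(F^\pm_{\rho,t};\C_\rho\cap H)+\Lambda|E\Delta F^\pm_{\rho,t}|$. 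Unfolding both sides via the set-operation formulas \eqref{cap}--\eqref{minus}, and freezing the $x$-variable by replacing $\Phi(x,\cdot)$ with $\Phi_0$ through \eqref{Phi x ell} and the upper density estimate \eqref{stime densita perimetro upper} (at a cost of $O(\ell\,P(E;\C_2))=O(\ell)$), this gives a bound for the $\Phi_0$-perimeter of $E\cap\{\pm(y_n-t)>0\}$ in $\C_\rho\cap H$ by a horizontal slice term on $\{y_n=t\}$ plus a cylindrical side-slice term supported on $E\cap\pa\C_\rho\cap\{\pm(y_n-t)>0\}$.

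The next step linearises these $\Phi_0$-perimeters by applying the divergence identity of step one of the proof of Proposition~\ref{proposition young} (namely \eqref{divteo cons}) with the constant vector field $\nabla\Phi_0(e_n)$. Here the key cancellation occurs: in case \eqref{boundary case reverse poincare} the Young-type hypothesis $\nabla\Phi_0(e_n)\cdot e_1=0$ eliminates the $\pa H$-contribution of this vector field, while in case \eqref{interior case reverse poincare} one simply has $\C_4\cap\pa H=\emptyset$; and the resulting flat-slice term exactly cancels the horizontal piece of $F^\pm_{\rho,t}$. Combining with the quantitative ellipticity \eqref{k1k2}, one obtains
\begin{equation*}
\kappa_1\!\!\int_{\C_\rho\cap H\cap\pa^*E\cap\{\pm(y_n-t)>0\}}\!\!\frac{|\nu_E-e_n|^2}{2}\,d\H^{n-1}\le I^\pm_{\mathrm{side}}(\rho,t)+C(n,\lambda)(\Lambda+\ell),
\end{equation*}
where $I^\pm_{\mathrm{side}}(\rho,t)$ is a codimension-two slice quantity on $\pa\C_\rho\cap E$.

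Finally, summing the $\pm$ inequalities, integrating in $t$ over $(c_0-\sigma,c_0+\sigma)$, and averaging $\rho$ over a subinterval of $(1,3/2)$, a Fubini/coarea identity of the form
\begin{equation*}
\int_{c_0-\sigma}^{c_0+\sigma}\bigl(I^+_{\mathrm{side}}(\rho,t)+I^-_{\mathrm{side}}(\rho,t)\bigr)\,dt\;\le\;C\int_{\C_2\cap H\cap\pa^*E}(y_n-c_0)^2\,d\H^{n-1}
\end{equation*}
converts the side-slice errors into the flatness $\fl_n^H(E,0,4)$, giving \eqref{rev poncare 0}. \emph{Main obstacle.} The delicate step, as usual in Caccioppoli-type arguments, is the conversion of the codimension-two side-slice error into flatness; the needed cancellation does not hold pointwise in $(\rho,t)$ and only emerges after integration, relying on the height bound to keep $E\cap\pa\C_\rho$ confined to $\{|y_n-c_0|\le 2\sigma\}$. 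The genuinely new feature compared to the classical interior proof is the use of the Young-law hypothesis $\nabla\Phi(0,e_n)\cdot e_1=0$ in the divergence identity: it is precisely what cancels the otherwise uncontrolled trace of the anisotropic perimeter on $\pa H$, and it enters the argument exactly once, in the linearisation step above.
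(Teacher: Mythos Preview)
There is a genuine gap, and it lies precisely where you flag the ``main obstacle'': the claimed Fubini/coarea identity
\[
\int_{c_0-\sigma}^{c_0+\sigma}\bigl(I^+_{\mathrm{side}}(\rho,t)+I^-_{\mathrm{side}}(\rho,t)\bigr)\,dt\;\le\;C\int_{\C_2\cap H\cap\pa^*E}(y_n-c_0)^2\,d\H^{n-1}
\]
is false as stated. Your side terms $I^\pm_{\mathrm{side}}$ are (up to bounded $\Phi_0$-weights) the $\H^{n-1}$-measures of the ``wrong-phase'' lateral regions $E^{(0)}\cap\pa\C_\rho\cap H\cap\{y_n<t\}$ and $E^{(1)}\cap\pa\C_\rho\cap H\cap\{y_n>t\}$. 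In the model case where $H\cap\pa E$ is the graph $\{y_n=u(\p x)\}$ with $|u|\le\sigma/2$, one computes directly, for each $z\in\pa\D_\rho\cap H$,
\[
\int_{c_0-\sigma}^{c_0+\sigma}\Bigl[(t-u(z))_++(u(z)-t)_+\Bigr]\,dt=(u(z)-c_0)^2+\sigma^2\,,
\]
so that after the $t$-integration the side contribution equals $\int_{\pa\D_\rho\cap H}\bigl[(u-c_0)^2+\sigma^2\bigr]\,d\H^{n-2}$, and averaging in $\rho$ does not remove the $\sigma^2$ term. Dividing through by the factor $2\sigma$ produced on the left-hand side leaves a residual additive constant $C\sigma$: the argument only yields
\[
\exc_n^H(E,0,1)\le C\bigl(\sigma^{-1}\fl_n^H(E,0,4)+\sigma+(\La+\ell)\bigr)\,,
\]
which is vacuous for small excess since $\sigma=\sigma(n,\l)$ is a fixed positive constant.

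The paper's proof is organised quite differently and avoids this by using the cone-interpolation competitors $F_{\rm in},F_{\rm out}$ of Lemma~\ref{lemma FinFout} rather than horizontal cut-and-fill. Their side contribution is a \emph{codimension-two} slice integral over $\pa\K_{z,\a s}\cap\pa E$ with weight $\sqrt{1+((\q x-c)/(\theta s))^2}$; expanding $\sqrt{1+a^2}\le1+a^2$ splits it into a ``$1$'' part---which exactly reconstructs the area of an annulus and cancels against $\H^{n-1}(\D^+_{z,\xi s}\setminus\D^+_{z,s})$---and a quadratic part that, after a good-slice choice via coarea, is bounded by $\theta^{-1}$ times the flatness. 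The price is an additive term $C\theta$ times the excess on the \emph{larger} cylinder, and absorbing this requires the covering/iteration argument of step two of the proof. Your scheme attempts to reach flatness directly, without any excess term to reabsorb; the computation above shows this cannot work with horizontal cut-and-fill competitors.
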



The proof is based on the construction of ``interior'' and ``exterior'' competitors in arbitrary cylinders, that is detailed in Lemma \ref{lemma FinFout} below, and originates from \cite{Almgren68}; see also  \cite[Section V]{bombieri} and \cite[Section 4]{DuzaarSteffen}. We shall need the following terminology and notation: first, we shall say that $E\subset\R^n$ is a polyhedron if $E$ is open and $\pa E$ is contained in finitely many hyperplanes (in this case, $\H^{n-2}(\pa E\setminus\pa^*E)=0$, and $\nu_E(x)$ agrees with the elementarily defined outer unit normal to $E$ at every $x\in\pa^*E$); second, given $z\in\R^{n-1}$ and $r>0$, we shall set
\begin{equation}\label{eq:defK}
\K_{z,r}=\Big\{x\in\R^n:|\p x-z|<r\,,|\q x|<1\Big\}\,.
\end{equation}
Referring to Figure \ref{fig finfout} for an illustration of the considered construction,
\begin{figure}
  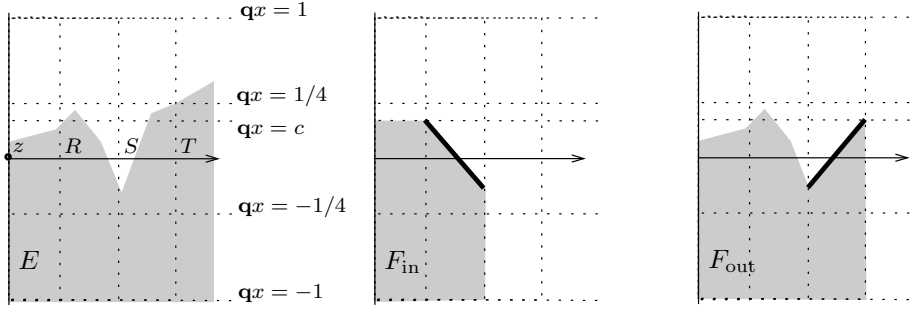\caption{\small{The construction of the the competitors $F_{\rm in}$ and $F_{\rm out}$ in Lemma \ref{lemma FinFout}. The picture is relative to the case when $z\in\pa H$. The bold lines represent the surfaces in the boundaries of $F_{\rm in}$ and $F_{\rm out}$ that are obtained by affine interpolation between $\{\q x=c\}\cap \pa\K_{z,R}$ and $\pa E\cap\pa\K_{z,S}$ in the case of $F_{\rm in}$, and between $\pa E\cap\pa\K_{z,S}$ and  $\{\q x=c\}\cap \pa\K_{z,T}$ in the case of $F_{\rm out}$.}}\label{fig finfout}
\end{figure}
we now state and prove the following lemma.

\begin{lemma}\label{lemma FinFout}
  If $H=\{x_1>b\}$ for some $b\in\R$, $0<R<S<T$, $|c|<1/4$, $z\in \R^{n-1} \cap H$, $E\subset H$ is a polyhedron, and
  \begin{gather}\label{E 1}
    \mbox{$|\nu_E(x)\cdot e_n|<1$ for every $x\in H\cap\K_{z,T}\cap\pa^*E$}\,,
    \\\label{E 2}
    |\q x|<\frac14\,,\qquad\forall x\in\K_{z,T}\cap H\cap\pa E\,,
    \\\label{E 3}
    \Big\{x\in\K_{z,T}\cap H:\q x<-\frac14\Big\}\subset E\cap \K_{z,T}\subset\Big\{x\in\K_{z,T}\cap H:\q x<\frac14\Big\}\,,
  \end{gather}
  and if
  \begin{equation}
    \label{hp su z}
      \mbox{either $z\in\pa H$ or $\dist(z,\pa H)>T$}\,,
  \end{equation}
  then there exist open sets of finite perimeter $F_{\rm in}$ and $F_{\rm out}$ such that $F_{\rm in}\subset \K_{z,S}\cap H$ with
  \begin{eqnarray}\label{Fin 1}
    H\cap\pa\K_{z,S}\cap \cl(F_{\rm in})&=&H\cap\pa\K_{z,S}\cap E\,,
    \\\label{Fin 2}
    H\cap\K_{z,R}\cap F_{\rm in}&=&H\cap\K_{z,R}\cap\{\q x<c\}\,,
  \end{eqnarray}
  \begin{equation}
  \label{Fin 3}
    \Big\{x\in \K_{z,S}\cap H:\q x<-\frac14\Big\}\subset F_{\rm in}\subset\Big\{x\in\K_{z,S}\cap H:\q x<\frac14\Big\}\,,
  \end{equation}
  and
  \begin{eqnarray}\label{perimetro Fin}
    P(F_{\rm in};\K_{z,S}\cap H)&\le& \H^{n-1}(\D_{z,R}\cap H)
    \\\nonumber
    &&+\frac{ S^{n-1}-R^{n-1}}{(n-1)S^{n-2}}\int_{H\cap\pa\K_{z,S}\cap\pa E}
    \sqrt{1+\Big(\frac{\q x-c}{S-R}\Big)^2}\,d\H^{n-2}\,,
  \end{eqnarray}
  while $F_{\rm out}\subset\K_{z,T}\cap H$ with
  \begin{eqnarray}
    \label{Fout 1}
  H\cap\pa\K_{z,T}\cap\cl(F_{out})&=&H\cap\pa\K_{z,T}\cap \{\q x<c\}\,,
    \\
        \label{Fout 2}
    H\cap\K_{z,S}\cap F_{\rm out}&=&H\cap\K_{z,S}\cap E\,,
  \end{eqnarray}
  \begin{equation}
      \label{Fout 3}
    \Big\{x\in \K_{z,T}\cap H:\q x<-\frac14\Big\}\subset F_{\rm out}\subset\Big\{x\in\K_{z,T}\cap H:\q x<\frac14\Big\}\,,
  \end{equation}
  and
  \begin{eqnarray}\label{perimetro Fout}
    P(F_{\rm out};\K_{z,T}\cap H)&\le& P(E;\K_{z,S}\cap H)
    \\\nonumber
    &&+\frac{T^{n-1}- S^{n-1}}{(n-1)\,S^{n-2}}\int_{H\cap\pa\K_{z,S}\cap\pa E}
    \sqrt{1+\Big(\frac{\q x-c}{T-S}\Big)^2}\,d\H^{n-2}\,.
  \end{eqnarray}
\end{lemma}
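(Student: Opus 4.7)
We construct $F_{\rm in}$ and $F_{\rm out}$ by an explicit \emph{cone interpolation} in cylindrical coordinates $(\rho, \omega, t) = (|\p x - z|, (\p x - z)/|\p x - z|, \q x)$ centered on the axis $\{\p x = z\}$. Hypothesis \eqref{hp su z} ensures that cylindrical geometry is compatible with the half-space: either $\K_{z,T}\subset H$ outright, or $z \in \pa H$ and every radial segment $\{z + \sigma\omega : \sigma \in [0, T]\}$ meeting $\K_{z,T}\cap H$ lies entirely in $\cl(H)$. The competitors are defined through the Lipschitz parameterizations
\begin{align*}
\Psi_{\rm out}(\rho, x) &= \Bigl(z + \tfrac{\rho}{S}(\p x - z),\ \q x + \tfrac{\rho - S}{T - S}(c - \q x)\Bigr), & \rho&\in[S,T],\\
\Psi_{\rm in}(\rho, x) &= \Bigl(z + \tfrac{\rho}{S}(\p x - z),\ c + \tfrac{\rho - R}{S - R}(\q x - c)\Bigr), & \rho&\in[R,S],
\end{align*}
with $x$ ranging over the polyhedral $(n-2)$-dimensional set $H\cap\pa\K_{z,S}\cap\pa E$. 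Their images $\Sigma_{\rm out}$ and $\Sigma_{\rm in}$ are cone-type surfaces: $\Sigma_{\rm out}$ interpolates between $\pa E\cap \pa\K_{z,S}$ (at $\rho = S$) and $\pa\D_{z,T}\times\{c\}$ (at $\rho = T$); $\Sigma_{\rm in}$ collapses at $\rho = R$ to the circle $\pa\D_{z,R}\times\{c\}$. I would then set $F_{\rm out} = (E\cap \K_{z,S}) \cup G_{\rm out}$ and $F_{\rm in} = (\{\q x < c\}\cap \K_{z,R}\cap H) \cup G_{\rm in}$, where $G_{\rm out}$ and $G_{\rm in}$ are the open regions in the respective annular shells lying below the cone surfaces, chosen to match $E$ on $\pa\K_{z,S}$ and $\{\q x < c\}$ on $\pa\K_{z,T}$, respectively $\pa\K_{z,R}$. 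The boundary-trace identities \eqref{Fin 1}--\eqref{Fin 2} and \eqref{Fout 1}--\eqref{Fout 2} are immediate, while the height containments \eqref{Fin 3} and \eqref{Fout 3} follow because the $\q$-coordinate of each $\Psi$-image is a convex combination of $\q x\in(-1/4, 1/4)$ (by \eqref{E 2}) and $c\in(-1/4, 1/4)$.

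For the perimeter estimates, since $F_{\rm out} = E$ on $\K_{z,S}$ we have $P(F_{\rm out}; \K_{z,T}\cap H) \le P(E;\K_{z,S}\cap H) + \H^{n-1}(\Sigma_{\rm out})$, and we must bound $\H^{n-1}(\Sigma_{\rm out})$ via the area formula applied to $\Psi_{\rm out}$. The radial derivative $\pa_\rho\Psi_{\rm out} = (\omega, (c-\q x)/(T-S))$ has modulus $\sqrt{1 + ((\q x - c)/(T-S))^2}$. For tangent vectors of the form $(Sa, b)$ to the lateral wall of $\pa\K_{z,S}$ at a point of $\pa E$, a direct computation gives $D_x\Psi_{\rm out}(\rho,\cdot)(Sa, b) = (\rho a, \tau b)$ with $\tau_{\rm out} = (T-\rho)/(T-S)$; writing $\sigma = \rho/S$, the associated Gram matrix equals $\sigma^2 I + (\tau^2 - \sigma^2)\mathbf b\mathbf b^T$, whose determinant is bounded by $\sigma^{2(n-2)}$ precisely when $\tau\le\sigma$. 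The elementary inequality $(T-\rho)/(T-S)\le \rho/S$ holds for all $\rho\in[S,T]$, and combining with Cauchy--Binet (the $(n-1)$-Jacobian of $\Psi_{\rm out}$ is at most the product of the radial speed and the tangential Jacobian) yields
\[
\H^{n-1}(\Sigma_{\rm out}) \le \int_{H\cap\pa\K_{z,S}\cap\pa E}\int_S^T \Bigl(\tfrac{\rho}{S}\Bigr)^{n-2}\sqrt{1 + \Bigl(\tfrac{\q x - c}{T-S}\Bigr)^2}\,d\rho\, d\H^{n-2},
\]
and the inner $\rho$-integral equals $(T^{n-1}-S^{n-1})/((n-1)S^{n-2})$, proving \eqref{perimetro Fout}. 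The same argument applied to $\Psi_{\rm in}$, with $\tau_{\rm in} = (\rho - R)/(S - R)$ (which likewise satisfies $\tau\le \rho/S$ for $\rho\in[R,S]$), together with the flat contribution $\H^{n-1}(\D_{z,R}\cap H)$ coming from the horizontal disk at height $c$ inside $\K_{z,R}$, yields \eqref{perimetro Fin}.

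The main technical point is the Jacobian estimate $|J_{n-2}\Psi(\rho,\cdot)| \le (\rho/S)^{n-2}$, which reduces to the elementary inequality $\tau\le\sigma$ in both regimes and is precisely what ensures the cone construction pays only the spherical factor $(\rho/S)^{n-2}$. A secondary technicality is ensuring that the area formula is applied with multiplicity one: polyhedrality of $E$ makes $H\cap\pa\K_{z,S}\cap\pa E$ polyhedral of dimension $n-2$, and condition \eqref{E 1} (forbidding vertical normals) allows one to perturb $S$ slightly, if needed, to achieve transverse intersections without altering the estimate, so the area formula gives the required upper bound on $\H^{n-1}(\Sigma_{\rm out})$ and $\H^{n-1}(\Sigma_{\rm in})$.
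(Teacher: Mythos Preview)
Your approach is correct and is essentially the same radial cone interpolation the paper uses: the paper makes the sets $F_{\rm in},F_{\rm out}$ explicit by invoking \eqref{E 1} to write $E\cap\K_{z,T}$ as a finite stack of regions between affine graphs $f_{i,j}<g_{i,j}$ over a partition $\{\Omega_i\}$ of $\D_{z,T}\cap H$ and then interpolating each graph radially toward the constant $c$, whereas you parameterize the resulting surfaces directly by $\Psi_{\rm in},\Psi_{\rm out}$ and bound their area via a Gram-matrix computation whose key inequality $\tau\le\rho/S$ is exactly the paper's bound $\frac{|y|-R}{S-R}\,\frac{S}{|y|}\le 1$ combined with $\sqrt{1+a^2+b^2}\le\sqrt{1+a^2}\sqrt{1+b^2}$. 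Your closing remark about perturbing $S$ to achieve transversality is both unnecessary (for an \emph{upper} bound on $\H^{n-1}(\Sigma)$ the area formula with multiplicity suffices) and not permitted ($S$ is fixed in the hypotheses), but this does not affect the validity of your argument; the only place where your write-up is genuinely looser than the paper's is the phrase ``open regions \ldots lying below the cone surfaces,'' which the paper's explicit layer decomposition makes precise when $\partial E$ has several sheets over the same base point.
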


\begin{proof}
  Without loss of generality we may assume that $z=0$, and write $\K_r$ in place of $\K_{0,r}$ for every $r>0$. Moreover, we shall set $G^+=G\cap H$ for every $G\subset\R^n$. By \eqref{E 1} there exists a partition (modulo $\H^{n-1}$) of $\D_T^+$ by finitely many open Lipschitz sets $\{\Om_i\}_{i=1}^N$,
  \[
  \D_T^+=_{\H^{n-1}} \bigcup_{i=1}^N\Om_i\,,\qquad \H^{n-1}(\Om_i\cap\Om_{i'})=0\quad\mbox{if $1\le i<i'\le N$}\,,
  \]
  and finitely many affine functions $\{f_{i,j}\}_{j=1}^{N(i)}$ and $\{g_{i,j}\}_{j=1}^{N(i)}$ with $f_{i,j}<g_{i,j}<f_{i,j+1}$ for every $i$ and $j$, such that
  \begin{equation}
    \label{KT E}
      \K_T^+\cap E=_{\H^n}\bigcup_{i=1}^N\bigcup_{j=1}^{N(i)}\Big\{x\in \Om_i\times\R: f_{i,j}(\p x)<\q x<g_{i,j}(\p x)\Big\}\,.
  \end{equation}
  By \eqref{E 2} and \eqref{E 3} we have
  \[
  f_{i,1}=-1\,,\qquad g_{i,1}\ge-\frac{1}4\,,\qquad g_{i,N(i)}\le\frac14\,,\qquad\forall i=1,\dots,N\,,
  \]
  and, moreover,
  \[
  \K_T^+\cap\pa E=_{\H^{n-1}}\bigcup_{i=1}^N\bigcup_{j=2}^{N(i)}{\rm graph}(f_{i,j},\Om_i)\cup \bigcup_{i=1}^N\bigcup_{j=1}^{N(i)}{\rm graph}(g_{i,j},\Om_i)\,.
  \]

  \medskip

  \noindent {\it Construction of $F_{\rm in}$.} For every $i=1,\dots,N$, we can define an open set $\S_i$ such that
  \[
  \S_i=_{\H^{n-1}}\Big\{y\in(\D_S\setminus\D_R)^+:S\,\hat{y}\in\Om_i\cap(\pa\D_S)^+\Big\}\,,\quad\mbox{where}\quad \hat{y}=\frac{y}{|y|}\,.
  \]
  Thanks to  \eqref{hp su z},  $\{\S_i\}_{i=1}^N$ is a partition modulo $\H^{n-1}$ of $(\D_S\setminus\D_R)^+$.  We then define functions $f_{i,j}^*$ and $g_{i,j}^*$ on $\S_i$ by joining the values of $f_{i,j}$ and $g_{i,j}$ on $\Om_i\cap(\pa\D_S)^+$ to the constant value $|c|<1/4$ via affine interpolation along radial directions: precisely, we set
  \begin{eqnarray}
    &&f_{i,1}^*(y)=-1\,,
    \\
    &&f_{i,j}^*(y)=\frac{|y|-R}{S-R}\,f_{i,j}(S\,\hat{y})+\frac{S-|y|}{S-R}\,c\,,\qquad j=2,\dots,N(i)\,,
    \\
    &&g_{i,j}^*(y)=\frac{|y|-R}{S-R}\,g_{i,j}(S\,\hat{y})+\frac{S-|y|}{S-R}\,c\,,\qquad j=1,\dots,N(i)\,.
  \end{eqnarray}
  where \(\hat y=y/|y|\). Finally, we define
  \[
  F_{\rm in}=\Big(\K_R^+\cap\{\q x< c\}\Big)\cup
  \bigcup_{i=1}^N\bigcup_{j=1}^{N(i)}\Big\{x\in \S_i\times\R: f_{i,j}^*(\p x)<\q x<g_{i,j}^*(\p x)\Big\}\,.
  \]
  Trivially, $F_{\rm in}\subset\K_S^+$ is an open set of finite perimeter, and \eqref{Fin 1}, \eqref{Fin 2} and \eqref{Fin 3} hold true. In order to prove \eqref{perimetro Fin}, we start noticing that
  \[
  P(F_{\rm in};\K_S^+)=\H^{n-1}(\D_R^+)+\sum_{i=1}^N\sum_{j=2}^{N(i)}\int_{\S_i}\sqrt{1+|\nabla f_{i,j}^*|^2}
  +\sum_{i=1}^N\sum_{j=1}^{N(i)}\int_{\S_i}\sqrt{1+|\nabla g_{i,j}^*|^2}\,.
  \]
  Now, if $\vphi_{i,j}^S:(\pa\D_S)^+\to\R$ is defined as the restriction of $f_{i,j}$ to $(\pa\D_S)^+$, then
  for every $y\in \S_i$,
  \begin{eqnarray*}
  \nabla f_{i,j}^*(y)&=&\frac{|y|-R}{S-R}\,\frac{S}{|y|}\,\Big(\nabla f_{i,j}(S\hat{y})-\Big(\nabla f_{i,j}(S\hat{y})\cdot \hat{y}\Big)\hat{y}\Big)+\frac{f_{i,j}(S\,\hat{y})-c}{S-R}\,\hat{y}
  \\
  &=&\frac{|y|-R}{S-R}\,\frac{S}{|y|}\,\nabla_\tau\vphi^S_{i,j}(S\hat{y})+\frac{\vphi^S_{i,j}(S\,\hat{y})-c}{S-R}\,\hat{y}\,,
  \end{eqnarray*}
  where \(\nabla_\tau\) is the tangential gradient along \((\pa \D_S)^+\). Since  $\nabla_\tau\vphi^S_{i,j}(S\hat{y})\cdot\hat{y}=0$ for every $y\in\S_i$ and
  \[
  0\le \frac{|y|-R}{S-R}\,\frac{S}{|y|}\le 1\qquad \textrm{for \(R\le |y|\le S\),}
  \]
  we obtain that
  \[
  |\nabla f_{i,j}^*(y)|^2\le |\nabla_\tau\vphi^S_{i,j}(S\hat{y})|^2+\Big(\frac{\vphi^S_{i,j}(S\hat y)-c}{S-R}\Big)^2\qquad \forall\, y\in \S_i.
  \]
  Hence by the co area formula and   by the elementary inequality $\sqrt{1+a^2+b^2}\le\sqrt{1+a^2}\sqrt{1+b^2}$ we  find
  \begin{equation*}
  \begin{split}
  \int_{\S_i}\sqrt{1+|\nabla f_{i,j}^*|^2}&=\int_{R}^S\,dr\int_{\S_i\cap(\pa\D_r)^+}\,\sqrt{1+|\nabla f_{i,j}^*|^2}\,d\H^{n-2}
  \\
  &\le \frac1{S^{n-2}}\int_{R}^S\,r^{n-2}\,dr\int_{\S_i\cap(\pa\D_S)^+}\,\sqrt{1+|\nabla_\tau\vphi^S_{i,j}(S\hat{y})|^2+\Big(\frac{\vphi^S_{i,j}(S\hat y)-c}{S-R}\Big)^2}\,d\H^{n-2}
  \\
  &\le\frac{ S^{n-1}-R^{n-1}}{(n-1)S^{n-2}}\int_{\Om_i\cap(\pa\D_S)^+}
  \sqrt{1+|\nabla_\tau\vphi^S_{i,j}|^2}\sqrt{1+\Big(\frac{\vphi^S_{i,j}-c}{S-R}\Big)^2}\,d\H^{n-2}
  \\
  &=\frac{ S^{n-1}-R^{n-1}}{(n-1)S^{n-2}}\,\int_{{\rm graph}(f_{i,j})\cap(\pa\K_S)^+}
  \sqrt{1+\Big(\frac{x-c}{S-R}\Big)^2}\,d\H^{n-2}\,,
 \end{split}
  \end{equation*}
  where in the last step we have used the area formula.  Since similar inequalities apply to $g_{i,j}^*$, by \eqref{KT E} we deduce the validity of \eqref{perimetro Fin}.

  \medskip

  \noindent {\it Construction of $F_{\rm out}$.} In this case we use affine interpolation along radial directions above the annulus $(\D_T\setminus\D_S)^+$. More precisely, this time setting $\Gamma_i=\{y\in(\D_T\setminus\D_S)^+:S\,\hat{y}\in\Om_i\cap(\pa\D_S)^+\}$, we let, for every $y\in\Gamma_i$,
  \begin{eqnarray}
    &&f_{i,1}^{**}(y)=-1\,,
    \\
    &&f_{i,j}^{**}(y)=\frac{T-|y|}{T-S}\,f_{i,j}(S\,\hat{y})+\frac{|y|-S}{T-S}\,c\,,\qquad j=2,\dots,N(i)\,,
    \\
    &&g_{i,j}^{**}(y)=\frac{T-|y|}{T-S}\,g_{i,j}(S\,\hat{y})+\frac{|y|-S}{T-S}\,c\,,\qquad j=1,\dots,N(i)\,,
  \end{eqnarray}
  and correspondingly define
  \[
  F_{\rm out}=\Big(\K_S^+\cap E\Big)\cup
  \bigcup_{i=1}^N\bigcup_{j=1}^{N(i)}\Big\{x\in \Gamma_i\times\R: f_{i,j}^{**}(\p x)<\q x<g_{i,j}^{**}(\p x)\Big\}\,.
  \]
  One checks the validity of \eqref{Fout 1}, \eqref{Fout 2}, \eqref{Fout 3} and \eqref{perimetro Fout} by arguing as above.
\end{proof}

\begin{proof}[Proof of Lemma \ref{thm caccioppoli}]
  By \eqref{excess scaling} and Remark \ref{remark blow-up} we can assume that \(x_0=0\) and \(r=1\). By requiring that $\ec<\eh(n,\l,1/8)$, we can apply Lemma \ref{lemma height bound} to find that
  \begin{gather}\label{fagioli1}
    |\q x|<\frac18\,,\qquad\forall x\in \C_4\cap \pa E\,,
    \\\label{fagioli2}
    \Big\{x\in\C_4\cap H:\q x<-\frac18\Big\}\subset E\cap \C_4\cap H\subset\Big\{x\in\C_4\cap H:\q x<\frac18\Big\}\,,
  \end{gather}
  and to have that
  \begin{equation}\label{qfinito}
  \zeta(G)=P(E;\p^{-1}(G)\cap \C_4\cap H))-\H^{n-1}(G\cap H )\,,\qquad G\subset\D_4\,,
  \end{equation}
  defines a finite positive  Radon measure on $\D_4$, concentrated on $\D_4\cap H$, and such that
  \begin{eqnarray}
  \label{qfinito2}
  &&\zeta(\D_4)=4^{n-1}\,\exc_n^H(E,0,4)\le 2^{n-1}\,\ec\,,
  \\\nonumber
  &&\zeta(G)=\int_{\p^{-1}(G)\cap\C_4\cap H\cap\pa^*E}\frac{|\nu_E-e_n|^2}2\,,\qquad\forall G\subset\D_4\,.
  \end{eqnarray}
  We now divide the argument in two steps, setting $G^+=G\cap H$ for every $G\subset\R^n$.

  \medskip

  \noindent {\it Step one}: We prove that for every $\xi\in (1,2)$ there exist positive constants $C_*=C_*(n,\l,\xi)$ and $\theta_*=\theta_*(\xi)$  such that if $z\in \R^{n-1}$, $s>0$, $\D_{z,\xi\,s}\subset\D_4$ (i.e., $|z|+\xi\,s\le 4$), $|c|<1/4$ and either $z\in\pa H$ or $\dist(z,\pa H)>\xi\,s$, then
  \begin{eqnarray}\label{weak poincare bordox}
  && P(E;\K_{z,s}^+)-\H^{n-1}(\D_{z,s}^+)
  \le C_*\bigg\{\theta\Big(P(E;\K_{z,\xi\,s}^+)-\H^{n-1}(\D_{z,\xi\,s}^+)\Big)
  \\&&\hspace{6cm}+\frac1\theta\int_{\K_{z,\xi\,s}^+\cap\pa^*E}|\q x-c|^2\,d\H^{n-1}\bigg\}+C_*(\La+\ell)\,,
    \nonumber
  \end{eqnarray}
  for every $\theta\in(0,\theta_*)$. This follows by testing the minimality of $E$ against the competitors constructed in Lemma \ref{lemma FinFout} and by exploiting the ellipticity of $\Phi$ to compare these competitors with half-spaces through Proposition \ref{proposition young}.
  Precisely, with the end of exploiting Lemma \ref{lemma FinFout}, let us consider a sequence $\{E_k\}_{k\in\N}$ of open subsets of $H$ with polyhedral boundaries such that
  \begin{gather}
  \label{bojo1}
    \textrm{\(|\nu_{E_k}\cdot e_n|<1\) on $\pa E_k$}\,,
 \\
   \label{bojo2}
    \mbox{$E_k\to E$ in $L^1_{\rm loc}(\C_{16})$} \qquad \textrm{and}  \qquad  |\mu_{E_k}|\llcorner H\weak|\mu_E|\llcorner H\,,
   \\
   \label{bojo3}
    |\q x|<\frac14\,,\qquad\forall x\in \C_4\cap H\cap\pa E_k\,,
    \\
    \label{bojo4}
    \Big\{x\in\C_4\cap H:\q x<-\frac14\Big\}\subset E_k\cap \C_4\cap H\subset\Big\{x\in\C_4\cap H:\q x<\frac14\Big\}\,.
   \end{gather}
 Note that the existence of a sequence $\{E_k\}_{k\in\N}$ satisfying the above properties can be   obtained by  trivial modifications of the classical polyhedral approximation of sets of finite perimeter, see e.g. \cite[Theorem 13.8]{maggiBOOK}.  In  particular, since the normal of a polyhedron takes finitely many values,  \eqref{bojo1} can be  achieved by performing arbitrary small rotations.  Recalling that \(\D_{\xi,s}\subset \D_4\), by \eqref{qfinito},
 \[
 \begin{split}
 P(E;\K_{z, \xi \,s}^+\setminus \K_{z,s}^+ )-\H^{n-1}(\D_{z, \xi \,s}^+\setminus \D_{z,s}^+)&=\zeta(\D_{z, \xi \,s}\setminus \D_{z,s})
 \\
 &\le \zeta(\D_{z, \xi \,s})= P(E;\K_{z, \xi \,s}^+)-\H^{n-1}(\D_{z, \xi \,s}^+)\,;
 \end{split}
 \]
 moreover, by  \eqref{bojo3},  \eqref{bojo4} and Lemma \ref{misecc}, an analogous inequality holds true with \(E_k\) in place of $E$. By a slicing argument based on coarea formula, there exists $\a\in (1,\xi)$ with
  \begin{equation}
    \label{alpha posizione}
      1<\frac{2}{3}+\frac\xi 3<\a<\frac13+\frac{2\xi}{3}<\xi\,,
  \end{equation}
  such that, up to extracting a not relabeled subsequence in $k$, one has
  \begin{eqnarray}
  \label{poin1}
  &&\H^{n-2}((\pa\K_{z,\a s})^+\cap\pa E_k)-\H^{n-2}((\pa\D_{\a s})^+)\le \frac{C}s\,\Big( P(E_k;\K_{z,\xi \,s}^+)-\H^{n-1}(\D_{z,\xi \,s}^+)\Big)\,,\hspace{1.4cm}
  \\
  \label{poin2}
  &&\int_{(\pa\K_{z,\a s})^+\cap\pa E_k}(\q x-c)^2\,d\H^{n-2}\le \frac{C}s\,\int_{\K_{z,\xi\,s}^+\cap\pa E_k} (\q x-c)^2 \,d\H^{n-1}\,,
  \end{eqnarray}
  for every $k\in\N$ and for a suitable constant  $C=C(\xi)$,
  and
    \begin{eqnarray}
    \label{tende a zero poincare}
      &&\lim_{k\to\infty}\H^{n-1}\Big((E_k\Delta E^{(1)})\cap (\pa\K_{z,\a s})^+\Big)=0\,,
      \\
      \label{tende a zero poincare amica}
      &&P(E;(\pa\K_{\a\,s})^+)=0\,.
  \end{eqnarray}
  Next, we choose any $\theta_*=\theta_*(\xi)$ such that
  \[
  1<(1-\theta_*)\Big(\frac23+\frac\xi 3\Big)\,,\qquad \Big(\frac13+\frac{2\xi }3\Big)<\xi(1-\theta_*)\,,
  \]
  so to entail that,
  \[
  s<(1-\theta)\a\,s<\a\,s<\frac{\a s}{(1-\theta)}<\xi s\,,\qquad \forall\, \theta\in (0,\theta_*]\,.
  \]
  Finally, we set
  \begin{equation}\label{defRST}
  R=(1-\theta)\,\a s\,, \qquad S=\a\,s\,,\qquad T=\frac{\a s}{1-\theta},
  \end{equation}
  so that $s\le R< S< T< \xi s$.

  Since we are assuming that either $z\in\pa H$ or $\dist(z,\pa H)>\xi\,s$, by \eqref{bojo1}, \eqref{bojo3}, and \eqref{bojo4} we can apply Lemma \ref{lemma FinFout} to find sequences of sets $\{F^k_{\rm in}\}_{k\in\N}$ and $\{F^k_{\rm out}\}_{k\in\N}$ corresponding to the values of $R$, $S$ and $T$ defined in \eqref{defRST}. Let us now  define
  \[
  \widetilde F_k=(F^k_{\rm in}\cap\K_{z,S}^+)\cup(E\setminus\K_{z,S}^+)\,.
  \]
  By exploiting the minimality of \(E\), by \eqref{cap}, \eqref{cup}, and \eqref{minus} (that shall be repeatedly used in the sequel), and by taking into account  \eqref{Fin 1} and \eqref{Fin 2}, we thus find that for every $k\in\N$,
  \begin{eqnarray}\label{poinmini}
  \PHI(E;\K_{z,S}^+)\le  \PHI(F^k_{\rm in};\K_{z,S}^+)+\l\,\H^{n-1}\Big((E^{(1)}\Delta E_k)\cap(\pa\K_{z,S})^+\Big)+\La\,|E\Delta F^k_{\rm in}|\,,
  \end{eqnarray}
  where \eqref{Phi 1} was also taken into account. Since $|E\Delta F^k_{\rm in}|\le |K_T^+|\le |\C_4|$, and, by \eqref{bojo2}, \eqref{tende a zero poincare amica}, and Reshetnyak continuity theorem \cite[Theorem 1, section 3.4]{GMSbook2} $\PHI(E_k;\K_{z,S}^+)\to\PHI(E;\K_{z,S}^+)$ as $k\to\infty$, we deduce from \eqref{tende a zero poincare} and \eqref{poinmini} that
  \begin{equation}\label{poinmini3}
  \PHI(E_k;\K_{z,S}^+)\le\PHI(F^k_{\rm in};\K_{z,S}^+)+\e_k+C \La\,,
  \end{equation}
  where $C=C(n)$ and $\e_k\to0$ as $k\to\infty$. We now notice that by  \eqref{Fout 2} we can apply Lemma \ref{misecc} to $F^k_{\rm out}$ on the cylinder $\K_{z,T}^+$ to see that
  \[
  \zeta_k(G)=P(F^k_{\rm out};\p^{-1}(G)\cap\K_{z,T}^+)-\H^{n-1}(G\cap\D_{z,T}^+)\,,\qquad G\subset\D_{z,T}\,,
  \]
  defines a positive Radon measure on $\D_{z,T}$ with
  \begin{eqnarray}\nonumber
      P(E_k;\K_{z,S}^+)-\H^{n-1}(\D_{z,S}^+)&=&P(F^k_{\rm out};\K_{z,S}^+)-\H^{n-1}(\D_{z,S}^+)=\zeta_k(\D_{z,S}^+)
      \\\nonumber
      &\le&\zeta_k(\D_{z,T}^+)
      =\int_{\K_{z,T}^+\cap\pa F^k_{\rm out}}\frac{|\nu_{F^k_{\rm out}}-e_n|^2}2\,d\H^{n-1}
      \\\label{navierstokes2}
      &\le&\frac1{\k_1}\Big\{\PHI_0(F^k_{\rm out};\K_{z,T}^+)-\Phi_0(e_n)\,\H^{n-1}(\D_{z,T}^+)\Big\}\,.
  \end{eqnarray}
  where in the last inequality we have applied either \eqref{minimality of half-spaces lambda neumann} or \eqref{minimality of half-spaces lambda interna} (depending on whether $z\in\pa H$ and thus $\nabla\Phi(0,e_n)\cdot e_1=0$, or $\dist(z,\pa H)>\xi\,s>T$) to the autonomous elliptic integrand $\Phi_0(\nu)=\Phi(0,\nu)$. By  \eqref{Fout 2} and \eqref{tende a zero poincare amica} we find (with obvious notations)
  \begin{equation}\label{navierstokes3}
    \PHI_0(F^k_{\rm out};\K_{z,T}^+)=\PHI(E_k;\K_{z,S}^+)+\PHI(F^k_{\rm out};\K_{z,T}^+\setminus\K_{z,S}^+)+(\PHI_0-\PHI)(F^k_{\rm out};\K_{z,T}^+)\,.
  \end{equation}
  Thus, we may combine \eqref{bojo2}, \eqref{poinmini3}, \eqref{navierstokes2}, and \eqref{navierstokes3} to get
  \begin{equation}
  \begin{split}\label{ecci2}
  P(E;\K_{z,S}^+)&-\H^{n-1}(\D_{z,S}^+)
  \\
  &\le P(E_k;\K_{z,S}^+)-\H^{n-1}(\D_{z,S}^+)+\e_k
  \\
  &\le C\Big\{\PHI_0(F^k_{\rm out};\K_{z,T}^+)-\Phi_0(e_n)\,\H^{n-1}(\D_{z,T}^+)\Big\}+\e_k
  \\
  &\le C\Big\{\PHI(E_k;\K_{z,S}^+)+\PHI(F^k_{\rm out};\K_{z,T}^+\setminus\K_{z,S}^+)-\Phi_0(e_n)\,\H^{n-1}(\D_{z,T}^+)
  \\
  &\qquad+(\PHI_0-\PHI)(F^k_{\rm out};\K_{z,T}^+)+\e_k\Big\}
  \\
  &\le C\Big\{\PHI(F^k_{\rm in};\K_{z,S}^+)+\PHI(F^k_{\rm out};\K_{z,T}^+\setminus\K_{z,S}^+)-\Phi_0(e_n)\,\H^{n-1}(\D_{z,T}^+)\\
  &\qquad +(\PHI_0-\PHI)(F^k_{\rm out};\K_{z,T}^+)+\La+\e_k\Big\}
  \\
  &=C\Big\{\PHI(F^k;\K_{z,T}^+)-\Phi_0(e_n)\,\H^{n-1}(\D_{z,T}^+)+(\PHI_0-\PHI)(F^k_{\rm out};\K_{z,T}^+)+\La+\e_k\Big\}\,.
 \end{split}
  \end{equation}
  where \(\e_k\to 0\) as $k\to\infty$,  $C=C(n,\l)$, and  we have  set
  \begin{equation}\label{definizionefk}
  F^k=(F^k_{\rm in}\cap\K_{z,S}^+)\cup(F^k_{\rm out}\cap(\K_{z,T}^+\setminus\K_{z,S}^+))\,.
  \end{equation}
  (Notice that, thanks to \eqref{Fin 1}, \eqref{Fout 1}, and \eqref{tende a zero poincare amica} one has $P(F^k;(\pa\K_{z,S})^+)=0$.) By applying \eqref{minimality of half-spaces lambda neumann} or \eqref{minimality of half-spaces lambda interna}
to the set $F^k$, we see that
\begin{equation}\label{fioparma4}
\begin{split}
 \PHI(F^k;\K_{z,T}^+)&-\Phi_0(e_n)\,\H^{n-1}(\D_{z,T}^+)\\
 &=  \PHI_0(F^k;\K_{z,T}^+)-\Phi_0(e_n)\,\H^{n-1}(\D_{z,T}^+)+(\PHI-\PHI_0)(F^k;\K_{z,T}^+)
 \\
 &\le\kappa_2 \Big(P(F^k,\K_{z,T}^+)-\H^{n-1}(\D_{z,T}^+)\Big)+(\PHI-\PHI_0)(F^k;\K_{z,T}^+)\,.
\end{split}
\end{equation}
By combining \eqref{ecci2} and \eqref{fioparma4} and by recalling the definition of \(F^k\), \eqref{definizionefk}, we then obtain
\begin{equation}\label{sprudel}
\begin{split}
  P(E;\K_{z,S}^+)&-\H^{n-1}(\D_{z,S}^+)
  \\
  &\le C\Big\{\PHI(F^k;\K_{z,T}^+)-\Phi_0(e_n)\,\H^{n-1}(\D_{z,T}^+)+(\PHI_0-\PHI)(F^k_{\rm out};\K_{z,T}^+)+\La+\e_k\Big\}
  \\
  &\le C\Big\{\kappa_2 \big(P(F^k,\K_{z,T}^+)-\H^{n-1}(\D_{z,T}^+)\big)\\
  &\qquad+(\PHI-\PHI_0)(F_{\rm in}^k;\K_{z,S}^+)+(\PHI_0-\PHI)(E_k;\K_{z,S}^+)+\La+\e_k\Big\}\,.
  \end{split}
\end{equation}
Let us now notice that since \(\K_{z,S}\subset \C_4\), thanks to \eqref{Phi x ell}, \eqref{bojo2} and the density estimates \eqref{stime densita perimetro upper} we have
\begin{equation}\label{bonnbis1}
\big|(\PHI-\PHI_0)(E_k;\K_{z,S}^+)\big|\le C\ell P(E_k,\C^+_4)\le  C\ell P(E,\C^+_4)+\e_k\le C\ell +\e_k .
\end{equation}
Moreover,  since  \(\H^{n-1}(\D_{z,S}^+)\le C(n)\) (by  \(S\le \xi s\le 4\)),
\begin{equation}\label{bonnbis2}
\big|(\PHI-\PHI_0)(F_{\rm in}^k;\K_{z,S}^+)\big|\le C\ell P(F_{\rm in}^k,\K_{z,S}^+)\le   C\ell \big\{P(F_{\rm in}^k,\K_{z,S}^+)-\H^{n-1}(\D_{z,S}^+)\big\}+C\ell\,.
\end{equation}
Finally , by \eqref{Fin 1}, \eqref{Fout 1}, and \eqref{tende a zero poincare amica},
\begin{equation}
\label{infinegiunsero}
\begin{split}
  P(F^k,\K_{z,T}^+)-\H^{n-1}(\D_{z,T}^+)&=P(F^k_{\rm in},\K_{z,S}^+)-\H^{n-1}(\D_{z,S}^+)
  \\
  &+P(F^k_{\rm out},\K_{z,T}^+\setminus\K_{z,S}^+)-\H^{n-1}(\D_{z,T}^+\setminus \D_{z,S}^+)\,.
  \end{split}
\end{equation}
Hence, by combining \eqref{sprudel}, \eqref{bonnbis1}, \eqref{bonnbis2} and \eqref{infinegiunsero}  and taking into account that \(\ell \le 1\) and that both terms in the right hand side of \eqref{infinegiunsero} are non-negative (by Lemma \ref{misecc}), we obtain:
\begin{equation}\label{bonnbis3}
\begin{split}
  P(E;\K_{z,S}^+)&-\H^{n-1}(\D_{z,S}^+)
  \\
  &\le C\Big\{ P(F^k_{\rm in},\K_{z,S}^+)-\H^{n-1}(\D_{z,S}^+)\\
  &\qquad+ P(F^k_{\rm out},\K_{z,T}^+\setminus\K_{z,S}^+)-\H^{n-1}(\D_{z,T}^+\setminus \D_{z,S}^+)+(\L+\ell)+\e_k \Big\}.
\end{split}
\end{equation}
We now observe that   both in the case $z\in\pa H$ and in the case $\dist(z,\pa H)>\xi\,s$ we have
\begin{equation}\label{ovvia}
\begin{split}
\H^{n-1}(\D_{z,S}^+\setminus\D_{z,R}^+)&=\frac{ S^{n-1}-R^{n-1}}{(n-1)S^{n-2}}\H^{n-2}\big((\pa\D_{z,S})^+\big)
\\
   \H^{n-1}(\D_{z,T}^+\setminus\D_{z,S}^+)&=\frac{T^{n-1}- S^{n-1}}{(n-1)T^{n-2}}\H^{n-2}\big((\pa\D_{z,S})^+\big)\,.
\end{split}
\end{equation}
By \eqref{perimetro Fin}, by \eqref{ovvia} and since $\sqrt{1+t^2}\le1+t^2$, we find
\begin{equation*}
\begin{split}
   P(F_{\rm in}^k;\K_{z,S}^+)&-\H^{n-1}(\D_{z,S}^+)
    \\
    &\le\frac{S^{n-1}-R^{n-1}}{(n-1)S^{n-2}}\int_{(\pa\K_{z,S})^+\cap\pa E_k}
     1+\Big(\frac{\q x-c}{S-R}\Big)^2 \,d\H^{n-2}-\H^{n-1}(\D_{z,S}^+\setminus\D_{z,R}^+)
    \\
    &=
    \frac{ S^{n-1}-R^{n-1}}{(n-1)S^{n-2}}\bigg\{\int_{(\pa\K_{z,S})^+\cap\pa E_k}
     \Big(\frac{\q x-c}{S-R}\Big)^2 \,d\H^{n-2}\\
     &\qquad+\H^{n-1}((\pa\K_{z,S})^+\cap\pa E_k)-\H^{n-2}((\pa\D_{z,S})^+)\bigg\}
     \\
    &\le
    \frac{C(\xi)}s\,\frac{ S^{n-1}-R^{n-1}}{(n-1)S^{n-2}}\bigg\{\int_{\K_{z,\xi\,s}^+\cap\pa E_k}
     \Big(\frac{\q x-c}{S-R}\Big)^2 \,d\H^{n-1}+P(E_k;\K_{z,\xi\,s}^+)-\H^{n-1}(\D_{z,\xi\,s}^+)\bigg\}\,,
 \end{split}
\end{equation*}
where in the last inequality we have used \eqref{poin1} and \eqref{poin2}. Since $S=\a\,s$ and $R=(1-\theta)\a\,s$ we have that
\[
S-R=\a \theta s\qquad\textrm{and}\qquad \frac{S^{n-1}-R^{n-1}}{(n-1)S^{n-2}}\le C(n)\,\theta\,\a\,s \,.
\]
Thus, by also taking into account that $\a\in(1,2)$ by \eqref{alpha posizione}, we conclude that
\begin{multline}\label{fioparma1}
  P(F_{\rm in}^k;\K_{z,S}^+)-\H^{n-1}(\D_{z,S}^+)
     \\
    \le C\,\bigg\{\frac1\theta\int_{\K_{z,\xi\,s}^+\cap\pa E_k}
     \Big(\frac{\q x-c}{s}\Big)^2 \,d\H^{n-1}+\theta\Big(P(E_k;\K_{z,\xi\,s}^+)-\H^{n-1}(\D_{z,\xi\,s}^+)\Big)\bigg\}\,,
\end{multline}
for some $C=C(n,\xi)$. By an entirely similar argument we exploit \eqref{perimetro Fout}, \eqref{poin1}, and \eqref{poin2} to show that
\begin{multline}
\label{fioparma2}
P(F^k_{\rm out};\K_{z,T}^+\setminus \K_{z,S}^+)-\H^{n-1}(\D_{z,T}^+\setminus\D_{z,S}^+)
\\
 \le C\,\bigg\{\frac1{\theta}\int_{\K_{z,\xi\, s}^+\cap\pa E_k}
    \Big(\frac{\q x-c}{s}\Big)^2\,d\H^{n-1}
+\theta\,\Big(P(E_k; \K_{z,\xi\, s}^+)-\H^{n-1}(\D_{\xi\, s}^+)\Big)\bigg\}\,,
\end{multline}
for some $C=C(n,\xi)$.  By combining \eqref{bonnbis3},  \eqref{fioparma1} and \eqref{fioparma2} and by letting \(k\to \infty\), taking also into account \eqref{bojo2}, we finally get
\begin{multline}\label{fioparma6}
  P(E;\K_{z,S}^+)-\H^{n-1}(\D_{z,S}^+)
     \\
    \qquad \le   C\,\bigg\{\frac1\theta\int_{\K_{z,\xi\,s}^+\cap\pa E}
     \Big(\frac{\q x-c}{s}\Big)^2 \,d\H^{n-1}+\theta\Big(P(E;\K_{z,\xi\,s}^+)-\H^{n-1}(\D_{z,\xi\,s}^+)\Big)+(\La+\ell)\bigg\}\,,
\end{multline}
for some $C=C(n,\l,\xi)$. By \eqref{qfinito} and since $S>s$,
 the left-hand side of \eqref{fioparma6} is an upper bound to $P(E;\K_{z,s}^+)-\H^{n-1}(\D_{z,s}^+)$, so that \eqref{fioparma6} implies \eqref{weak poincare bordox}.
%

\medskip

\noindent {\it Step two}: We finally deduce \eqref{rev poncare 0} from the weaker inequality \eqref{weak poincare bordox} through a covering argument, see \cite{Simon2}. We start noticing that, as a consequence of \eqref{weak poincare bordox}, for every $\xi\in(1,2)$ there exist $C_*=C_*(n,\l,\xi)$ and $\theta_*=\theta_*(\xi)$ such that
 \begin{equation}\label{weak poincare bordo}
  s^2\,\zeta(\D_{z,s})\le C_*\,\Big\{\theta s^2\,\zeta(\D_{z,\xi\,s}) +\frac{h}{\theta}+ (\La+\ell)\Big\}\,,\qquad \forall \theta\in (0,\theta_*]\,,
 \end{equation}
  whenever $z\in \R^{n-1}$, $s>0$, $\D_{z,\xi\,s}\subset \D_{4}$ with either $z\in\pa H$ or $\dist(z,\pa H)>\xi\,s$,  \(\zeta\) is given by \eqref{qfinito} and
    \begin{equation}\label{eq:defh}
  h=\inf_{|c|<1/4}\int_{\C_4\cap H\cap\pa^* E}|\q x-c|^2\,d\H^{n-1}\,.
  \end{equation}
We now conclude the proof of the lemma under the assumption \eqref{boundary case reverse poincare}, that is $0\in\pa H$ and $\nabla\Phi(0,e_n)\cdot e_1=0$ (recall that we have set \(x_0=0\) and \(r=1\)). A simpler, analogous argument covers the case when \eqref{interior case reverse poincare} holds true. We start by showing that if $\xi\in(1,2)$ is sufficiently close to \(1\), then there there exist $C=C(n,\l,\theta)$ such that
\begin{equation}\label{vediamo}
s^2\zeta(\D_{z,s})\le C \Big\{\theta s^2\zeta(\D_{z,4s})+\frac{h}{\theta}+(\Lambda+\ell)\Big\}\,,
\end{equation}
for every $\theta\in (0,\theta_*]$ and \(\D_{z,4s}\subset \D_{4}\). Indeed, let $z\in\R^{n-1}$ and $s>0$ satisfy $\D_{z,4s}\subset \D_{4}$. If $\dist(z,\pa H)>\xi s$ then \eqref{vediamo} follows immediately from \eqref{weak poincare bordo} by the trivial inclusion $\D_{z,\xi\,s}\subset \D_{z,4\,s}$ (recall that $\xi<2$). If, instead, $\dist(z,\pa H)\le\xi s$, then we consider the projection $\bar z$ of $z$ on $\pa H$; since $\D_{z,s}\subset \D_{\bar z,(\xi+1)s}$, we have $\zeta(\D_{z,s})\le\zeta(\D_{\bar z,(\xi+1)\,s})$; at the same time, since $\bar z\in\pa H$ with $\D_{\bar z,\xi(\xi+1)\,s}\subset\D_{z,\xi(\xi+2)s}\subset\D_{z,4s}\subset\D_4$, we can apply \eqref{weak poincare bordox} at $\bar z$ at the scale $(\xi+1)\,s$, to conclude that
\[
s^2\zeta(\D_{z,s})\le
s^2\zeta(\D_{\bar z,(\xi+1)\,s})
\le
  C \Big\{\theta s^2\zeta(\D_{\bar z,(\xi^2+\xi)\,s})+\frac{h}{\theta}+(\Lambda+\ell)\Big\}\,,\qquad\forall \theta\in(0,\theta_*]\,.
\]
If we choose \(\xi\)  such that  $\xi(\xi+2)<4$, then $\D_{\bar z,(\xi^2+\xi)\,s}\subset \D_{z,(\xi^2+2\xi)\,s}\subset \D_{z,4s}$ and we deduce the validity of \eqref{vediamo}. Let us now define
  \[
  Q=\sup\big\{s^2\,\zeta(\D_{z,s}): \D_{z, 4\,s}\subset\D_{4}\big\}\,,
  \]
  so that $Q<\infty$ by \eqref{qfinito2}.  We now  notice  that, if $\D_{z,4\,s}\subset\D_2$, then there exists a family of points $\{z_k\}_{k=1}^{N(n)}\subset\D_{z,s}$ such that $\D_{z,s}\subset\bigcup_{k=1}^{N(n)}\D_{z_k,s/16}$. Since, trivially $\D_{z_k,s}\subset \D_4$, by applying \eqref{vediamo} at each $z_k$ at scale $s/16$ we find that
  \begin{eqnarray*}
    s^2\,\zeta(\D_{z,s})&\le&256\,\sum_{k=1}^{N(n)}\Big(\frac{s}{16}\Big)^2\zeta(\D_{z_k,s/16})
    \\
    &\le& C\,\sum_{k=1}^{N(n)} \bigg\{\theta \Big(\frac{s}{16}\Big)^2\zeta(\D_{z_k,s/4}) +\frac{h}{\theta}+(\La+\ell)\bigg\}
    \\
    &\le& C\Big\{\theta Q+\frac{h}{\theta}+(\La+\ell)\Big\}\,,\qquad \forall \theta\in (0,\theta_*]\,,
  \end{eqnarray*}
  where $C=C(n,\l,\xi)$. In other words,
  \[
  Q\le C\Big\{\theta Q+\frac{h}{\theta}+(\La+\ell)\Big\}\,,\qquad \forall \theta\in (0,\theta_*]\,.
  \]
  Keeping $\xi$ fixed, we choose  \(\theta<\theta_*\) such that $C\,\theta\le 1/2$ in order to conclude that
  \[
  Q\le 2C\,(h+(\La+\ell)\,r)\,.
  \]
  By recalling the definition of \(h\) (i.e., \eqref{eq:defh}), and by noticing that \(\D\) is admissible in the definition of \(Q\), we conclude that
  \begin{equation}
    \label{amen1}
      \exc_n^H(E,0,1)=\zeta (\D)\le  Q\le C\Big\{\inf_{|c|<1/4}\int_{{\C_4\cap H\cap\pa^* E}}|\q x-c|^2\,d\H^{n-1}+(\La+\ell)\Big\}\,.
  \end{equation}
  Finally, if $|c|\ge1/4$, then by \eqref{fagioli1}, \eqref{stime densita perimetro lower} and since \(\H^{n-1}((\C_{16}\cap\pa E)\setminus \pa^* E)=0\) by Lemma \ref{lem:normalization}, we find
  \[
  \int_{\C_4\cap H\cap \pa^* E}|\q x-c|^2\,d\H^{n-1}\ge  \frac{c_1 16^{n-1}}{8^2}\,.
  \]
  Hence, provided $\ec< c_1\,(16)^{n-1}/64$, we find
  \begin{equation}
    \label{amen2}
  \exc_n^H(E,0,1)\le 8^{n-1}\,\exc_n^H(E,0,8)\le  8^{n-1}\,\int_{\C_4\cap H\cap\pa ^* E}|\q x-c|^2\,d\H^{n-1}\,.
  \end{equation}
  We combine \eqref{amen1} and \eqref{amen2} to deduce \eqref{rev poncare 0}.
\end{proof}

\subsection{Tilt lemma} We now combine the results from the previous three sections to obtain the key estimate in the proof of Lemma \ref{thm regularity en}. Indeed, Lemma \ref{thm regularity en} will follow by an iterated application of the following lemma.

\begin{lemma}[Tilt lemma]\label{thm tilt lemma}
  For every \(\lambda\ge 1\) and \(\beta\in (0,1/64)\) there exist positive constants  $\et=\et(n,\l,\beta)$ and  \(C_4=C_4(n,\l)\) with the following properties. If $H=\{x_1>b\}$ for some $b\in\R$, $x_0\in\cl(H)$,
  \begin{eqnarray*}
  &&\Phi\in\X(\C_{x_0,16\,r}\cap H,\lambda,\ell)\,,
  \\
  &&\mbox{$E$ is a $(\La,r_0)$-minimizer of $\PHI$ in $(\C_{x_0,16\,r},H)$ with \(0<8r\le r_0\)}\,,
  \\
  &&\mbox{$x_0\in\cl(H\cap\pa E)$ }\,,
  \\
  &&\exc_n^H(E,x_0,8r)+(\L +\ell) r<\et\,,
  \end{eqnarray*}
  and
  \begin{eqnarray}
   \label{boundary case tilt}
   \mbox{either}&\qquad&\mbox{$x_0\in\pa H$ and $\nabla\Phi(x_0,e_n)\cdot e_1=0$}\,,
      \\  \label{interior case tilt}
      \mbox{or}&\qquad&\dist(x_0,\pa H)>r\,,
  \end{eqnarray}
  then there exists an affine map $L:\R^n\to\R^n$ with \(L x_0=x_0\) and \(L(H)=H\), such that
  \begin{eqnarray*}\label{L vicina id}
  \|\nabla L-\Id\|^2&\le& C_4\,\Big(\,\exc^H_n(E,x_0, 8r)+(\La+\ell)\,r\Big)\,,
  \\
  \label{tilt decay E}
  \exc^{H}_n(L(E),x_0,\beta \,r)&\le& C_4\,\Big(\beta^2\,\exc^H_n(E,x_0, 8r)+\beta\,(\La+\ell) r\Big)\,.
  \end{eqnarray*}
  Moreover, if we set as usual $\Phi^L(x,\nu)=\Phi(L^{-1}(x),(\cof \nabla L)^{-1}\nu)$, then $\Phi^L\in\X(\C_{x_0,2\beta\,r}\cap H,\widetilde{\l},\widetilde{\ell})$ and $L(E)$ is a $(\La,\widetilde{r_0})$-minimizer of $\PHI^L$ on $(\C_{x_0,2\beta r},H)$, where
  \begin{equation*}\label{nonvacosimale}
  \max\Big\{\frac{|\widetilde{\l}-\l|}\l,\frac{|\widetilde{\ell}-\ell|}\ell,\frac{|\widetilde{r_0}-r_0|}{r_0}\Big\}\le
  C_4\,\Big(\,\exc^H_n(E,x_0, 8r)+(\La+\ell)\,r\Big)^{1/2}\,
  \end{equation*}
  Finally,
  \begin{equation*}\label{restaneum}
   \nabla \Phi^L(x_0,e_n)\cdot e_1=0\,,\qquad \textrm{ if \eqref{boundary case tilt} holds.}
 \end{equation*}
 \end{lemma}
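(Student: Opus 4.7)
\medskip
\noindent\textbf{Proof proposal.} The plan is to run the classical De Giorgi harmonic-approximation scheme in the free-boundary setting, using the Neumann-type linearized problem identified in \eqref{eq:pde} as the appropriate elliptic proxy. Throughout, by Remark \ref{remark blow-up} and \eqref{excess scaling}, I would first reduce to the normalized situation $x_0=0$, $r=1$, so that $\exc_n^H(E,0,8)+(\La+\ell)=:\e$ is small. A preliminary consequence of Lemma \ref{thm lip approx}, applied at the point $0$ at scale $1$ with $\s$ fixed sufficiently small, is the existence of a Lipschitz function $u\colon\R^{n-1}\to\R$, defined on $\D\cap H$, whose graph coincides with $\cl(H\cap \pa E)$ up to a set of $\H^{n-1}$-measure $\le C\e$, which satisfies $\int_{\D\cap H}|\nabla u|^2\le C\e$ together with the approximate Neumann PDE \eqref{eq:pde} (both forms \eqref{boundary case pde} and \eqref{interior case pde} of the hypothesis are treated simultaneously by \eqref{eq:pde}, since in the interior case the relevant $\vphi$ automatically vanish on $\pa\D$).

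The core step is a harmonic approximation. Let $v\in C^2(\cl(\D_{1/2}\cap H))$ be the unique solution of the linear constant-coefficient Neumann problem
\[
\Div\bigl(\nabla^2\Phi(0,e_n)(\nabla v,0)\bigr)_{\p}=0\;\text{in}\;\D_{1/2}\cap H,\qquad
\bigl(\nabla^2\Phi(0,e_n)(\nabla v,0)\bigr)\cdot e_1=0\;\text{on}\;\D_{1/2}\cap\pa H,
\]
matching the trace of $u$ on the rest of $\pa(\D_{1/2}\cap H)$. Testing the difference with $u-v$ (cut off appropriately, as in the standard argument of \cite[Sec.~22]{maggiBOOK}) and using \eqref{eq:pde} with a suitably smoothed version of $u-v$ as test function, I would obtain
\[
\int_{\D_{1/2}\cap H}|\nabla u-\nabla v|^2\le C\,\e.
\]
Because $v$ solves a uniformly elliptic constant-coefficient equation with homogeneous Neumann condition on a piece of flat boundary, $v$ is smooth and, up to subtracting the harmless constant $v(0)$, one has the pointwise decay
\[
\sup_{y\in\D_{4\beta}\cap H}\!\bigl|v(y)-\nabla v(0)\cdot y\bigr|^2\le C\,\beta^{4}\,\|\nabla v\|_{L^2(\D_{1/2}\cap H)}^{2}\le C\,\beta^{4}\,\e,
\]
with $|\nabla v(0)|^2\le C\,\e$. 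Combining the two estimates yields an $L^2$-affine approximation of $u$ on $\D_{2\beta}\cap H$ of size $C(\beta^{n+3}\e+\beta^{n-1}\e)$.

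Set $\tilde c=(\nabla v(0),0)\in e_n^\perp$ and define the shear
\[
L(x)=x-(\tilde c\cdot\p x)\,e_n.
\]
Then $L(0)=0$, $L(H)=H$ (only the $e_n$-coordinate is altered, while $\pa H=\{x_1=b\}$ is preserved), $\det\nabla L=1$, and $\|\nabla L-\Id\|=|\tilde c|\le C\sqrt\e$. In the interior case \eqref{interior case tilt} this choice is already conclusive. In the boundary case \eqref{boundary case tilt} the vector $\tilde c$ satisfies $\nabla^2\Phi(0,e_n)\tilde c\cdot e_1=0$ by the Neumann boundary condition evaluated at the origin, so Taylor's formula combined with Lemma \ref{lemma 0} produces a second, small correction of the direction $e_n-\tilde c$ into a direction $\nu_0$ with $\nabla\Phi(0,\nu_0)\cdot e_1=0$ and $|\nu_0-(e_n-\tilde c)/|e_n-\tilde c||\le C|\tilde c|^2$; substituting this $\nu_0$ in the recipe of Lemma \ref{lemma tauuu} replaces $L$ by a map with the same quantitative bounds for which, by \eqref{tau conclusione}, the Neumann condition $\nabla\Phi^L(0,e_n)\cdot e_1=0$ holds exactly.

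The remaining task is to transfer the affine approximation from $u$ to $L(E)$ and to convert it into an excess decay. The pushforward $L(E)$ is a $(\La,\widetilde{r_0})$-minimizer of $\PHI^L$ on $(\C_{0,2\beta r},H)$, with parameters $\widetilde\l,\widetilde\ell,\widetilde{r_0}$ controlled as in Lemma \ref{lemma PHIL} and \eqref{PHIL continuita parametri}. By construction, the graph of $u$ is mapped to the graph of the function $u(y)-\tilde c\cdot y$, so Lemma \ref{thm lip approx} applied at scale $\beta$ to $L(E)$, together with the $L^2$-affine approximation above, gives the flatness bound
\[
\fl_n^H\!\bigl(L(E),0,\beta\bigr)\le C\bigl(\beta^{2}\e+\beta(\La+\ell)\bigr),
\]
the contribution of the non-graphical part being absorbed in the excess term by Lemma \ref{thm lip approx}. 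Applying the Caccioppoli inequality of Lemma \ref{thm caccioppoli} to $L(E)$ at scale $\beta/4$ (the hypothesis \eqref{boundary case reverse poincare} is exactly what the second correction in the construction of $L$ secures in the Neumann case) then converts this into
\[
\exc_n^H\!\bigl(L(E),0,\beta\bigr)\le C\bigl(\beta^{2}\,\exc_n^H(E,0,8)+\beta(\La+\ell)\bigr),
\]
which, after absorbing minor multiplicative constants into $C_4$, is the claim.

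The main obstacle I anticipate is the coupling between the analytic and the geometric step in the Neumann case: one needs the direction selected by the harmonic approximation to be compatible with the exact Young's law for the transformed integrand $\Phi^L$, not merely with its linearization. This is the reason for splitting the construction of $L$ into a shear determined by $\nabla v(0)$ and a quadratically small rotation determined by Lemma \ref{lemma 0}, and for keeping careful track of how $\widetilde\l,\widetilde\ell,\widetilde{r_0}$ deteriorate under both moves so that the Caccioppoli estimate applied at scale $\beta$ can be invoked with parameters still in the admissible range.
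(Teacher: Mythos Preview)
Your overall architecture is correct and matches the paper's: Lipschitz approximation via Lemma~\ref{thm lip approx}, comparison with a solution of the linearized Neumann problem, correction of the tilt direction via Lemma~\ref{lemma 0} and Lemma~\ref{lemma tauuu} to restore the exact Young's law for $\Phi^L$, transfer of parameters via Lemma~\ref{lemma PHIL}, and finally the Caccioppoli inequality of Lemma~\ref{thm caccioppoli} to convert flatness into excess decay. The construction of $L$ and the handling of $\nabla\Phi^L(x_0,e_n)\cdot e_1=0$ are essentially as in the paper.

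There is, however, a genuine gap at the harmonic-approximation step. You propose to solve a mixed Dirichlet--Neumann problem for $v$ with $v=u$ on $H\cap\pa\D_{1/2}$ and then test \eqref{eq:pde} with $u-v$ to obtain $\int_{\D_{1/2}\cap H}|\nabla u-\nabla v|^2\le C\e$. Two problems arise. First, the right-hand side of \eqref{eq:pde} carries $\|\nabla\vphi\|_\infty$, not $\|\nabla\vphi\|_{L^2}$, so testing with $u-v$ requires a uniform bound on $\|\nabla v\|_\infty$; for a constant-coefficient operator whose conormal direction is not $e_1$, the mixed problem on the half-disk has a genuine corner singularity where Dirichlet meets Neumann, and the claim $v\in C^2(\cl(\D_{1/2}\cap H))$ is unjustified. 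Second, and more fundamentally, even granting the $H^1$-closeness you assert, it is \emph{not} strong enough: Poincar\'e on $\D_{1/2}\cap H$ only yields $\int|u-v|^2\le C\e$, and localizing to $\D_{2\beta}$ gives at best $\int_{\D_{2\beta}\cap H}|(u-v)-\mathrm{avg}|^2\le C\beta^2\e$. After dividing by $\beta^{n+1}$ this contributes $C\e\,\beta^{1-n}$ to the flatness of $L(E)$ at scale $\beta$, which blows up as $\beta\to0$ rather than producing the required $C\beta^2\e$. Your stated bound $C(\beta^{n+3}\e+\beta^{n-1}\e)$ suffers the same defect: the second term gives flatness $C\e\beta^{-2}$.

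The paper's remedy is precisely the point you are trying to short-circuit: the compactness-based $A$-harmonic approximation Lemma~\ref{lemma pde stabilita}. That lemma trades the $\|\nabla\vphi\|_\infty$-error for an \emph{arbitrary} $L^2$-closeness $\int|u_0-v_0|^2\le\tau$ (with $u_0=u/\sqrt\chi$), at the price of making the excess threshold depend on $\tau$. One then \emph{chooses} $\tau=\beta^{n+3}$, so that the $|u-v|^2$-contribution to the flatness at scale $\beta$ becomes $C\tau\chi/\beta^{n+1}=C\beta^2\chi$, exactly matching the interior-regularity contribution from $v$. This is why $\et$ in the statement depends on $\beta$. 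Replacing your direct $H^1$ comparison with an invocation of Lemma~\ref{lemma pde stabilita} (and choosing $\tau$ and the height-bound parameter $\sigma$ of order $\beta^{n+3}$) repairs the argument; the rest of your outline then goes through as in the paper.
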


We premise the following lemma, usually known as a $A$-harmonic approximation lemma, that in our setting just amounts to a remark in the theory of constant coefficients elliptic PDEs.

\begin{lemma}\label{lemma pde stabilita}
  For every $\l\ge1$ and $\tau>0$ there exists a positive constant $\ea=\ea(\tau,\lambda)$ with the following property. If \(H=\{x_1>b\}\) for some $b\in \R$, $A\in\Sym(n)$ with $\l^{-1}\,\Id\le A\le \l\,\Id$ and $u\in W^{1,2}(\D\cap H)$ is such that
  \[
  \int_{\D\cap H}|\nabla u|^2\le 1\,,\qquad
    \int_{\D\cap H}(A\nabla u)\cdot\nabla\vphi\le \ea \|\nabla \vphi\|_\infty\,,
  \]
  for every $\vphi\in C^1(\D)$ with $\vphi=0$ on $H\cap \pa\D$, then there exists $v\in W^{1,2}(\D\cap H)$ such that
  \[
  \int_{\D\cap H}|u-v|^2\le\tau\,,\quad \int_{\D\cap H}|\nabla v|^2\le 1\,,\quad \int_{\D\cap H}(A\nabla v)\cdot\nabla\vphi=0\,,
  \]
 for every $\vphi\in C^1(\D\cap H)$ with $\vphi=0$ on $H\cap\pa\D$.
\end{lemma}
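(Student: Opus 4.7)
The natural approach is compactness/contradiction. Suppose the statement fails. Then one can find $\tau_0>0$, matrices $A_k\in\Sym(n)$ with $\lambda^{-1}\Id\le A_k\le\lambda\Id$, half-spaces $H_k=\{x_1>b_k\}$, and functions $u_k\in W^{1,2}(\D\cap H_k)$ satisfying the gradient bound $\int_{\D\cap H_k}|\nabla u_k|^2\le 1$ and the approximate PDE with error $1/k$ in place of $\ea$, but such that no admissible $v$ with $\int_{\D\cap H_k}|u_k-v|^2\le\tau_0$ exists. Subtracting mean values (harmless, since constants are admissible and do not alter gradients) I would assume $\int_{\D\cap H_k}u_k=0$. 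Then I would pass to a subsequence with $A_k\to A_\infty$ (compactness in the ellipticity class) and $b_k\to b_\infty\in[-\infty,+\infty]$. The case $b_\infty\ge 1$ is trivial: the sets $\D\cap H_k$ have vanishing volume and the choice $v\equiv 0$ immediately contradicts the assumption. Otherwise I set $H_\infty=\{x_1>b_\infty\}$ (with $H_\infty=\R^n$ when $b_\infty=-\infty$) and $\D_\infty=\D\cap H_\infty$.

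Second, I would identify the limit equation. Poincaré's inequality on $\D\cap H_k$ yields $\|u_k\|_{W^{1,2}(\D\cap H_k)}\le C$ with a constant uniform in $k$ since $b_k$ stays bounded away from $1$. Via Rellich-Kondrachov on compact subsets of $\D_\infty$, extract $u_\infty\in W^{1,2}(\D_\infty)$ with $u_k\to u_\infty$ strongly in $L^2_{\mathrm{loc}}(\D_\infty)$ and weakly in $W^{1,2}_{\mathrm{loc}}(\D_\infty)$, so that $\int_{\D_\infty}|\nabla u_\infty|^2\le 1$ by lower semicontinuity. Given any $\vphi\in C^1(\D)$ with $\vphi=0$ on $H_\infty\cap\pa\D$, I would produce $\vphi_k\in C^1(\D)$ with $\vphi_k=0$ on $H_k\cap\pa\D$ by multiplying $\vphi$ by a suitable cutoff near the strip between $H_k\cap\pa\D$ and $H_\infty\cap\pa\D$, chosen so that $\|\nabla\vphi_k\|_\infty$ remains uniformly bounded and $\vphi_k\to\vphi$ strongly. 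Testing the hypothesis on $u_k$ with $\vphi_k$ and passing to the limit yields
\[
\int_{\D_\infty}(A_\infty\nabla u_\infty)\cdot\nabla\vphi=0
\]
for every such $\vphi$, i.e., $u_\infty$ is an exact weak solution of the PDE on $\D_\infty$ with the required Neumann condition on $\D\cap\pa H_\infty$.

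Third, I would transfer $u_\infty$ to $\D\cap H_k$ to derive the contradiction. Let $w_k\in W^{1,2}(\D\cap H_k)$ solve $\Div(A_k\nabla w_k)=0$ with Neumann condition on $\D\cap\pa H_k$ and Dirichlet data on $H_k\cap\pa\D$ inherited from $u_\infty$ (after extending $u_\infty$ to a $W^{1,2}$ function on a neighborhood of $\D$ via a bounded extension operator, when $b_k<b_\infty$). Continuous dependence of the mixed Dirichlet-Neumann problem with respect to matrix and smooth variation of domain gives $w_k\to u_\infty$ in $L^2$ and $\int_{\D\cap H_k}|\nabla w_k|^2\to\int_{\D_\infty}|\nabla u_\infty|^2\le 1$. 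Setting $v_k=(1-\delta_k)w_k$ with $\delta_k\to 0^+$ chosen so that $\int_{\D\cap H_k}|\nabla v_k|^2\le 1$ produces an admissible competitor solving the PDE with $\|u_k-v_k\|_{L^2(\D\cap H_k)}\to 0$ (combining $u_k\to u_\infty$, $w_k\to u_\infty$, $\|w_k-v_k\|_{L^2}\to 0$, and the vanishing measure of the symmetric difference of $\D\cap H_k$ and $\D_\infty$). This contradicts the assumed lower bound $\tau_0$.

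The principal obstacle is the varying geometry: in the limit passage one must produce admissible test functions $\vphi_k$ vanishing on the moving set $H_k\cap\pa\D$ from a test function defined with respect to $H_\infty\cap\pa\D$, while retaining uniform $W^{1,\infty}$ control; and in the transfer step one must solve the Neumann problem on the approximating domain $\D\cap H_k$ with data inherited from $u_\infty$, then control it via continuous dependence for mixed Dirichlet-Neumann problems under smooth domain perturbation. Both issues are resolved by standard cutoff constructions combined with a bounded $W^{1,2}$ extension operator across the smooth hyperplane $\pa H_\infty$.
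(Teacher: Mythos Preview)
Your contradiction argument is correct and is exactly the approach the paper takes: its entire proof reads ``By contradiction; see, for example, \cite[Lemma 2.1]{duzaarmingioneharmonic}.'' You have in fact supplied considerably more detail than the paper, in particular addressing the extra complication of the varying half-space $H$, which is absent from the classical $A$-harmonic approximation lemma.
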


\begin{proof}[Proof of Lemma \ref{lemma pde stabilita}]
By contradiction; see, for example, \cite[Lemma 2.1]{duzaarmingioneharmonic}.
\end{proof}

\begin{proof}[Proof of Lemma \ref{thm tilt lemma}]
  {\it Step one}: By \eqref{excess scaling} and Remark \ref{remark blow-up}, we may reduce to prove the following statement (where $H=\{x_1\ge-t\}$ for some $t\ge 0$, and $G^+=G\cap H$ for every $G\subset\R^n$). If
  \begin{eqnarray}
  \nonumber
  &&\Phi\in\X(\C_{16}\cap H,\lambda,\ell)\,,
  \\\nonumber
  &&\mbox{$E$ is a $(\La,r_0)$-minimizer of $\PHI$ in $(\C_{16},H)$ with $r_0>8$}\,,
  \\ \nonumber
  &&\mbox{$0\in\cl(H\cap\spt\,\pa E)$ }\,,
  \\\label{gino4}
  &&\exc_n^H(E,0,8)+(\L +\ell) <\et\,,
  \end{eqnarray}
  and
  \begin{eqnarray}
   \label{boundary case tilt ris}
   \mbox{either}&\qquad&\mbox{$0 \in\pa H$ and $\nabla\Phi(0,e_n)\cdot e_1=0$}\,,\quad\hspace{1.5cm}\mbox{(boundary case)}
      \\  \label{interior case tilt ris}
      \mbox{or}&\qquad&\dist(0,\pa H)>1\,,\hspace{4cm}\quad\mbox{(interior case)}
  \end{eqnarray}
  then there exists a linear map $L:\R^n\to\R^n$ with \(L(H)=H\) such that
  \begin{eqnarray}\label{L vicina id ris}
  &&\|\nabla L-\Id\|^2\le C_4\,\Big(\,\exc^H_n(E,0, 8)+(\La+\ell)\Big)\,,
  \\
  \label{tilt decay E ris}
  &&\exc^{H}_n(L(E),0,\beta)\le C_4\,\Big(\beta^2\,\exc^H_n(E,0, 8)+\beta\,(\La+\ell) \Big)\,,
  \\
  \label{proprieta di PhiL}
  &&\Phi^L\in\X(\C_{2\beta}^+,\widetilde{\l},\widetilde{\ell})\,,
  \\
  \label{proprieta di LE}
  &&\mbox{$L(E)$ is a $(\La,\widetilde{r_0})$-minimizer of $\PHI^L$ on $(\C_{2\beta},H)$}\,,
  \\
  \label{nonvacosimale ris}
  &&\max\Big\{\frac{|\widetilde{\l}-\l|}\l,\frac{|\widetilde{\ell}-\ell|}\ell,\frac{|\widetilde{r_0}-r_0|}{r_0}\Big\}\le
  C_4\,\Big(\,\exc^H_n(E,0, 8)+(\La+\ell)\Big)^{1/2}\,,\hspace{1cm}
  \end{eqnarray}
  and
  \begin{equation}\label{restaneum ris}
   \nabla \Phi^L(0,e_n)\cdot e_1=0\qquad \textrm{ if \eqref{boundary case tilt ris} holds}\,.
  \end{equation}

  \medskip

  \noindent {\it Step two}: Given $\s\in(0,1/4)$, let us consider the constant \(\el=\el(n,\l,\sigma)\) determined by Lemma \ref{thm lip approx}. We shall work under the assumption that
  \begin{equation}
    \label{etilt 1}
      \et<\min\Big\{\el(n,\l,\sigma),\frac1{8\,\l}\Big\}\,.
  \end{equation}
  Of course, this will be compatible with $\et=\et(n,\l,\beta)$ as we shall fix (later on in the argument) a definite (sufficiently small) value of $\s$ depending on $n$, $\l$, and $\beta$ only. This said, by \eqref{gino4}, we can apply Lemma \ref{thm lip approx} to find a Lipschitz function $u:\R^{n-1}\to\R$ satisfying
\begin{equation}\label{u}
\begin{gathered}
\sup_{\R^{n-1}}|u|\le \s\,,\qquad\qquad\qquad \Lip(u)\le 1\,,
\\
  \H^{n-1}(M\Delta \Gamma) \le  C_2\,\exc_n^{H}(E,0,8)\,, \qquad \int_{\D^+}|\nabla u|^2 \le  C_2\,\exc_n^{H}(E,0,8)\,,
\end{gathered}
\end{equation}
where $C_2=C_2(n,\l)$ (note in particular that $C_2$ does not depend on $\s$), and
\begin{equation}\label{utile}
M=\C^+\cap\pa E\subset \D^+\times (-\sigma,\sigma)\,,\qquad \Gamma=\big\{(z,u(z)):z\in\D^+\big\}\,.
\end{equation}
Moreover, setting \(\Phi_0(\nu)=\Phi(0,\nu)\), we also know that
\begin{equation}
  \label{quasipde}
    \int_{\D^+}\Big(\nabla^2\Phi_0(e_n)(\nabla u,0)\Big)\cdot(\nabla\vphi,0)\le C_2\,\|\nabla \vphi\|_\infty\,\Big(\exc_n^{H}(E,0,8)+(\L+\ell)\Big)\,,
  \end{equation}
  for every $\vphi\in C^1(\D)$ with $\vphi=0$ on $(\pa\D)^+$. (Notice that $(\pa\D)^+$ is half $\pa\D$ in the boundary case, and it it actually coincides with the whole $\pa\D$ in the interior case.) Let us  set
 \begin{equation}\label{defchi}
   \chi=C_2\,\Big(\exc_n^H(E,0,8)+(\L+\ell)\Big)\le C_2\, \et\,
    \end{equation}
 and let us define
 \begin{equation}\label{defA}
 u_0=\frac{u}{\sqrt{\chi}}\,,\qquad
 A_{ij}=\nabla^2\Phi_0(e_n)e_i \cdot e_j \qquad i\,,j=1\dots, n-1\,.
 \end{equation}
 If we require $\et<1/C_2(n,\l)$, then $\chi<1$, while \(\l^{-1}\Id_{n-1}\le A\le \l \Id_{n-1}\) thanks to \eqref{Phi nabla 1} and \eqref{elliptic}. Moreover, by \eqref{u} and \eqref{quasipde},
  \begin{equation}\label{defchi2}
    \int_{\D^+}|\nabla u_0|^2\le 1\,,\quad \int_{\D^+}A\nabla u_0\cdot \nabla\vphi\le \|\nabla \vphi\|_\infty\,\sqrt{\chi}\,,
  \end{equation}
  for every $\vphi\in C^1(\D)$  with $\vphi=0$ on $(\pa\D)^+$.  Let us now introduce, in addition to $\s$, an additional parameter $\tau>0$ to be fixed later on depending on $n$, $\l$, and $\beta$ only. In this way it makes sense to require that $\et\le \ea(n,\l,\tau)/C_2(n,\l)$. Correspondingly, \eqref{defchi} and \eqref{defchi2} allows us to apply Lemma \ref{lemma pde stabilita} to find $v_0\in W^{1,2}(\D^+)$ with
  \begin{equation}\label{v0 risolve pde}
  \int_{\D^+}|\nabla v_0|^2\le 1\,,\qquad \int_{\D^+}(A\nabla v_0)\cdot\nabla\vphi=0\,,
  \end{equation}
  for every $\vphi\in C^1(\D^+)$ with $\vphi=0$ on $(\pa\D)^+$, and
  \begin{equation}\label{tau}
  \int_{\D^+}|u_0-v_0|^2\le\tau\,.
  \end{equation}
  By elliptic regularity, there exists a constant \( C= C(n,\lambda)>1\) such that if
  \begin{equation}\label{w_0}
  w_0(z)=v_0(0)+\nabla v_0 (0)\cdot z\,,\qquad z\in \D\,,
  \end{equation}
  is the tangent map to \(v_0\) at the origin,  then we have
 \begin{eqnarray}
 \label{bound}
 &&|\nabla w_0|\le C\,,
 \\
 \label{neum}
 &&(A\nabla w_0)\cdot e_1=0\,,\qquad\mbox{in the boundary case \eqref{boundary case tilt ris}}\,,
 \end{eqnarray}
 as well as, for every $s\le 1/2$,
 \begin{eqnarray}
   \label{height}
 |w_0(0)|^2\le \frac{C}{s^{n-1}} \int_{\D_s^+}|v_0|^2\,,\qquad
 \frac1{s^{n-1}}\int_{\D_s^+}\frac{|v_0-w_0|^2}{s^2}\le  C \,s^2\,.
 \end{eqnarray}
 Let us now set
  \begin{equation}\label{defwnu}
 v=\sqrt{\chi}v_0\,,\qquad w=\sqrt\chi\,w_0\,,\qquad \nu=\frac{(-\nabla w,1)}{\sqrt{1+|\nabla w|^2}}\,,\qquad c=\frac{w(0)}{\sqrt{1+|\nabla w|^2}}\,.
  \end{equation}
 By  \eqref{bound},  \(|\nabla w|\le C \sqrt \chi\) and, provided \(\et\) is sufficiently small (with respect to a constant depending on $n$ and $\l$ only), we have
   \begin{equation}\label{nuen}
  |\nu-e_n|\le C(n,\l)|\nabla w|\le   C(n,\l)\,\sqrt \chi\,.
 \end{equation}
 We now claim that, if \(|\nu-e_n|\le 1/4\) and we are in the boundary case \eqref{boundary case tilt ris}, then
  \begin{equation}\label{fondamentale}
  |\nabla\Phi_0(\nu)\cdot e_1|\le  C(n,\l)\,\chi\,,
  \end{equation}
  Indeed, by zero-homogeneity of $\nabla\Phi_0$, one has $\nabla\Phi_0(\nu)=\nabla \Phi_0(-\sqrt \chi\, \nabla w_0,1)$, and then  \eqref{boundary case tilt ris}, \eqref{defA}, \eqref{neum}, and a Taylor expansion (recall \eqref{Phi nabla 1}) imply
  \[
  \begin{split}
  |\nabla\Phi_0(\nu)\cdot e_1|
  &\le \big|\nabla \Phi_0(e_n)\cdot e_1+\big(\nabla^2 \Phi_0(e_n)(\sqrt \chi \,\nabla w_0,0)\big)\cdot e_1|+C|\sqrt \chi \,\nabla w_0|^2\\
  &=C|\sqrt \chi\, \nabla w_0|^2\le  C\, \chi\,,
    \end{split}
  \]
  for $C=C(n,\l)$. Up to further decrease the value of $\et$ depending on $n$ and $\l$ only, \eqref{fondamentale} enables us to apply Lemma \ref{lemma 0} to deduce that, if  we are in the boundary case \eqref{boundary case tilt ris}, then there exists $\nu_0\in \mathbf S^{n-1}$ such that
  \begin{eqnarray}\label{Phi e nu0 boundary case}
  &&\nabla\Phi_0(\nu_0)\cdot e_1=0\,,
  \\\label{Phi e nu0 contro nu}
  &&|\nu_0-\nu|<C(n,\l)\,\chi\,.
  \end{eqnarray}
  In the interior case, \eqref{interior case tilt ris}, we simply set $\nu_0=\nu$, so that \eqref{Phi e nu0 contro nu} holds true in both cases. We now notice that, by \eqref{tau} and \eqref{height}, if $s\le 1/2$, then, for $C=C(n,\l)$,
  \begin{equation}
    \label{eio}
      \frac1{s^{n-1}}\int_{\D_s^+}\frac{|u-w|^2}{s^2}\le
  C\,\Big(\frac1{s^{n-1}}\int_{D_s^+}\frac{|v-w|^2}{s^2}+\frac{\tau\,\chi}{s^{n+1}}\Big)\le C\,\Big(s^2+\frac{\tau}{s^{n+1}}\Big)\,\chi\,.
  \end{equation}
  By taking into account the definition of $c$ in \eqref{defwnu}, and thanks to \eqref{height} and \eqref{tau}, we find
 \begin{equation}\label{stimac}
 |c|^2 \le \chi |w_0(0)|^2\le C\,\chi\,\int_{\D_{1/2}^+}|v_0|^2
 \le C\,\Big(\chi\int_{\D_{1/2}^+}|v_0-u_0|^2+\int_{\D_{1/2}^+}|u|^2\Big\}\le C(\chi+\sigma)\,,
 \end{equation}
 for $C=C(n,\l)$, and where we have also taken into account that  \(u=\sqrt{\chi}u_0\) and  \(|u|\le \sigma\), as well as that $\s, \tau<1$. Moreover, by \eqref{defwnu} and \eqref{bound}, for some $C=C(n,\l)$ we find
 \begin{equation}\label{stimapezzaccio}
\sup_{x\in M\cup \Gamma} |x\cdot \nu|^2\le \sup_{x\in M\cup \Gamma}\big( |\p x|\,|\nabla w|+|\q x|\big)^2\le C(\sqrt \chi+\sigma)^2\le C(\chi+\s)\,,
 \end{equation}
where we have used that $\s,\chi<1$.  Finally, setting \(\K_s^+=\D_s^+\times(-1,1)\subset \C\) and using that we have both $\tau<1$ and $\chi<1$,
 \begin{eqnarray*}
     \frac1{s^{n-1}}\int_{\K_s^+\cap\pa^* E}\frac{|x\cdot\nu_0-c|^2}{s^2}
  &\le&\frac2{s^{n-1}}\int_{\K_s^+\cap\pa^* E}\frac{|x\cdot\nu-c|^2}{s^2}+\frac{4P(E;\C^+)|\nu-\nu_0|^2}{s^{n+1}}
   \\
  \small{\mbox{(by \eqref{stime densita perimetro upper} and \eqref{Phi e nu0 contro nu})}}\qquad
  &\le&\frac2{s^{n-1}}\int_{\K_s^+\cap\pa^* E}\frac{|x\cdot\nu-c|^2}{s^2}+C\,\frac{\chi^2}{s^{n+1}}
  \\
  &\le& \frac2{s^{n-1}}\int_{\K_{s}^+\cap\Gamma}\frac{|x\cdot\nu-c|^2}{s^2}+C\,\frac{\chi^2}{s^{n+1}}
  \\
  &&+ \frac{C}{s^{n+1}}\,\H^{n-1}(M\Delta \Gamma)\,\Big(|c|^2+\sup_{x\in M\cup \Gamma}|x\cdot \nu|^2\Big)
  \\  \small{\textrm{(by \eqref{u},  \eqref{stimac}, and \eqref{stimapezzaccio})}}\qquad
 &\le& \frac2{s^{n-1}}\int_{\K_{s}^+\cap\Gamma}\frac{|x\cdot\nu-c|^2}{s^2}+\frac{C}{s^{n+1}}\,\chi\,(\chi+\s)
 \\
 \small{\mbox{(by \eqref{defwnu})}}\qquad
  &=&
   \frac2{s^{n-1}}\int_{\D_{s}^+}\frac{|u-w|^2\,\sqrt{1+|\nabla u|^2}}{s^2\sqrt{1+|\nabla w|^2}}+\frac{C}{s^{n+1}}\chi\,\big(\chi+\sigma\big)
  \\
  \small{\mbox{(since \(\Lip (u)\le 1\))}}\qquad
  &\le& \frac{2\sqrt 2}{s^{n-1}}\int_{\D_{s}^+}\frac{|u-w|^2}{s^2}+\frac{C}{s^{n+1}}\chi\,\big(\chi+\sigma \big)
  \\
  \small{\mbox{(by \eqref{eio})}}\qquad
 &\le& C\,\chi\,\Big(s^2+\frac{\chi+\sigma +\tau}{s^{n+1}}\Big)\,.
 \end{eqnarray*}
 where $C=C(n,\l)$. We plug the value \( s=32\beta\le 1/2\) into this estimate so that, recalling the definition of \(\chi\) \eqref{defchi}, we get
 \[
 \frac1{\beta^{n-1}}\int_{\K_{32\,\beta}^+\cap\pa^* E}\frac{|x\cdot\nu_0-c|^2}{\beta^2}\le
 C(n,\l)\,\Big(\beta^2+\frac{\chi+\sigma +\tau}{\beta^{n+1}}\Big)\,\Big(\exc_n^H(E,0,8)+(\La+\ell)\Big)\,.
 \]
 If we first  choose $\s=\tau=\beta^{n+3}$ and  then $\et<\beta^{n+3}$, then the above estimate gives
 \begin{equation}\label{questax}
 \frac1{\b^{n-1}}\int_{\K_{32\beta}^+\cap\pa^* E}\frac{|x\cdot\nu_0-c|^2}{\beta^2}\le C(n,\l) \beta^2 \,\Big(\exc_n(E,0,8)+(\L+\ell)\Big)\,.
 \end{equation}
 We notice that, by \eqref{nuen} and \eqref{Phi e nu0 contro nu}, one has
 \begin{equation}
   \label{nu0en vicini}
    |\nu_0-e_n|\le C(n,\l)\,\sqrt{\chi}\,.
 \end{equation}
  We now use  Lemma \ref{lemma tauuu} to construct the map \(L\). More precisely, \eqref{nu0en vicini} ensures that  $|\nu_0\cdot e_1|\le1/2$, provided $\et$ is small enough. Then we can apply Lemma \ref{lemma tauuu} to $\nu_0$ to construct a linear map $L:\R^n\to\R^n$ with $L(H)=H$  and
 \begin{eqnarray}\label{BBQ2}
  &&L(\nu_0^\perp)=e_n^\perp\,,\quad\mbox{so that}\quad e_n=\frac{(\cof\nabla L) \nu_0}{|(\cof \nabla L) \nu_0|}\,,
  \\\label{BBQ 10}
  &&\nabla\Phi^L(0,e_n)\cdot e_1=\nabla\Phi_0(\nu_0)\cdot e_1\,,
  \\
  \label{BBQ1}
  &&\|\nabla L-\Id\|\le C(n,\l)\,\sqrt\chi\,,\qquad\det\,\nabla L=1\,.
  \end{eqnarray}
Thanks to Lemma \ref{lemma PHIL}, $\Phi^L\in\X(L(\C_{16})\cap H,\widetilde{\l},\widetilde{\ell})$, $L(E)$ is a  $(\La,\widetilde{r_0})$-minimizer of $\PHI^L$ on $(L(\C_{16}),H)$ and
  \begin{equation}\label{dai}
  \max\Big\{\frac{|\widetilde{\l}-\l|}\l,\frac{|\widetilde{\ell}-\ell|}{\ell},\frac{|\widetilde{r_0}-r_0|}{r_0}\Big\}\le
  C(n,\l)\,\sqrt{\chi}\,.
  \end{equation}
  Note that \(\widetilde \Lambda=\Lambda\) in the application of  Lemma \ref{lemma PHIL}, since  \(\det\,\nabla L=1\). This proves  \eqref{proprieta di PhiL}, \eqref{proprieta di LE}, and \eqref{nonvacosimale ris}. (Indeed $\C_{2\beta}\subset L(\C_{16})$ as, trivially, $\C_{2\beta}\subset\C$, and as one can make $L$ close enough to the identity to ensure $\C\subset L(\C_{16})$ by \eqref{BBQ1} and up to further tuning the value of $\et$.) Also, \eqref{BBQ 10} implies \eqref{restaneum ris} when we are in the boundary case. We are thus left to prove \eqref{tilt decay E ris}. Up to further decrease $\et$, by \eqref{nu0en vicini} we can entail the inclusion \(\C_{4\beta}\subset L(\K_{32 \beta})\), so that, if
  \[
   \widetilde{c}= \frac{c}{|(\cof \nabla L)\nu_0|}\,,
  \]
  then by the area formula we find that
  \begin{equation}
    \label{BBQ3}
      \int_{\C_{4\beta }^+\cap L(\pa ^* E)} |x \cdot e_n-\widetilde{c}|^2 \,d\H^{n-1}\le
   \int_{\pa^* E\cap \K_{32 \beta }^+} \bigg| Lx \cdot \frac{(\cof  \nabla L) \nu_0}{|(\cof \nabla L) \nu_0|}-\frac{c }{|\cof \nabla L \nu_0|}\bigg|^2|(\cof \nabla L) \nu_E| \,d\H^{n-1}\,.
  \end{equation}
  Now,  \(\det L=1\) so that   $L^*(\cof \nabla L)=\Id$ (see \eqref{mortacci}). Hence
  \[
  Lx \cdot (\cof \nabla L) \nu_0=\,x\cdot \nu_0\,,\qquad\forall x\in\R^n\,,
  \]
  and thus, taking also into account that, by \eqref{BBQ1},
  \[
  \frac{|(\cof \nabla L)\nu_E|}{|(\cof  \nabla L)\nu_0|^2}\le1+C(n)\,\|\nabla L-\Id\|\le C(n,\l)\,,
  \]
  we deduce from \eqref{BBQ3} that
  \begin{equation*}
      \int_{\C_{4 \beta }^+\cap L(\pa ^* E)} |x \cdot e_n-\widetilde{c}|^2 \,d\H^{n-1}
      \le
      C(n,\l)\int_{\pa^* E\cap \K_{32 \beta }^+}|x\cdot \nu_0-c|^2\,d\mathcal H^{n-1}\,.
  \end{equation*}
  Hence  \eqref{questax} implies that
 \begin{equation}\label{pippo}
 \fl_n^H(L(E),0,4\beta )\le C(n,\l)\,\beta^2  \,\Big(\exc_n^H(E,0,8)+(\L+\ell)\Big)\,.
 \end{equation}
 We now want to apply Lemma \ref{thm caccioppoli} to $L(E)$. To this end, we start noticing that, up to decrease the value of $\et$ in order to entail $L^{-1}(\K_{8\beta})\subset\C_8$, and setting $M=\cof\nabla L$ for the sake of brevity, we have
 \begin{eqnarray}\nonumber
  2\,  \exc_n^H(L(E),0,8\beta)
 &=&\frac1{(8\beta)^{n-1}}\int_{\K_{8\beta}^+\cap\pa^*\,L(E)}|\nu_{L(E)}-e_n|^2\,d\H^{n-1}
   \\\nonumber
   &=&\frac1{(8\beta)^{n-1}}\int_{[L^{-1}(\K_{8\beta})]^+\cap\pa^*E}\Big|\frac{M\nu_E}{|M\nu_E|}-\frac{M\nu_0}{|M\nu_0|}\Big|^2\,
   |M\nu_E|\,d\H^{n-1}
   \\\nonumber
  &\le&\frac{1+C(n,\l)\,\|\nabla L-\Id\|}{(8\beta)^{n-1}}\int_{\C_8^+\cap\pa^*E}|\nu_E-\nu_0|^2\,d\H^{n-1}
  \\\nonumber
  {\small \mbox{(by \eqref{BBQ1})}}\quad
  &\le&\frac{C(n,\l)}{\beta^{n-1}}\Big(\int_{\C_8^+\cap\pa^*E}|\nu_E-e_n|^2\,d\H^{n-1}+P(E;\C_8^+)\,|\nu_0-e_n|^2\Big)
  \\\label{johell1}
  {\small \mbox{(by \eqref{stime densita perimetro upper} and \eqref{nu0en vicini})}}\quad
  &\le&\frac{C(n,\l)}{\beta^{n-1}}\,\chi<\ec(n,2\l)\,,
 \end{eqnarray}
 provided $\et$ is small enough. In the same way, we can deduce from \eqref{dai} that
 \begin{equation}\label{fiolazio}
 \widetilde \l \le 2\l\,,\qquad  \widetilde \ell \le 2\ell\,, \qquad \widetilde r_0\ge r_0/2\,,
 \end{equation}
 and $\C_{16\beta}\subset L(\C_{16})$, again provided \(\et\) is small enough. In particular, since $\beta<1/64$, $r_0>8$, and $(\La+\ell)<1/8\l$ by \eqref{etilt 1}, we find that
  \begin{eqnarray*}
  &&\Phi^L\in\X(\C_{16\,\b}^+,2\lambda,2\ell)\,,
  \\
  &&\mbox{$L(E)$ is a $(2\La,r_0/2)$-minimizer of $\PHI^L$ in $(\C_{16\,\beta},H)$ with $8\,\beta<r_0/2$}\,,
  \\
  && 16\,(2\lambda) \Lambda\, \beta+8(2\ell )\, \beta\le 1\,,
 \end{eqnarray*}
 with $\exc_n^H(L(E),0,8\b)<\ec$ (by \eqref{johell1}), $0\in\cl(H\cap\spt\pa E)\cap\pa H$ and, when we are in the boundary case, with $\nabla\Phi^L(0,e_n)\cdot e_1=0$ (by \eqref{restaneum ris}). We can thus apply Lemma \ref{thm caccioppoli} to $L(E)$ at scale $16\beta$ to deduce that
 \[
 \exc_n^H(L(E),0,\beta)\le C_3\Big(\fl_n^H(L(E),0,4\b)+\beta (\La+\ell)\Big)\,.
 \]
 We combine this estimate with \eqref{pippo} to obtain
 \[
 \exc_n^H(L(E),0,\beta)\le C(n,\l)\Big(\beta^2\,\exc_n^H(E,0,8)+\beta\,(\La+\ell)\Big)\,,
 \]
 that is \eqref{tilt decay  E ris}. This completes the proof of the lemma.
\end{proof}

\subsection{Proof of Lemma \ref{thm regularity en}}
  By scaling, see \eqref{excess scaling} and Remark \ref{remark blow-up}, we can directly set $r=1$. With the notation $G^+=G\cap H$ for $G\subset\R^n$, we thus want to prove that, setting $\Phi_0(\nu)=\Phi(0,\nu)$, if
  \begin{eqnarray}\label{pablo1}
  &&\Phi\in\X(\C_{128}^+,\lambda,\ell)\,,
  \\ \nonumber
  &&\mbox{$E$ is a $(\La,r_0)$-minimizer of $\PHI$ in $(\C_{128},H)$ with \( r_0\ge 64\)}\,,
  \\
  \nonumber
  &&0\in\cl(H\cap\pa E )\,,
  \\\label{pablo4}
  &&|\nabla \Phi_0(e_n)\cdot e_1|+ \exc_n^H(E,0,64)+(\L +\ell) <\er\,,
  \end{eqnarray}
   then there exists  \(u\in C^{1,1/2}(\cl(\D^+))\) such that
 \begin{eqnarray}  \label{allard4 scaled}
 &&\sup_{z,y\in\D^+}|u(x)|+|\nabla u(x)|+\frac{|\nabla u(x)-\nabla u(y)|}{|x-y|^{1/2}} \le C(n,\l)\,\sqrt{\er}\,,
 \\
 \label{allard1 scaled}
&& \C^+\cap \pa E=\Big\{x\in H: |\p x|<1\,,\q x=u(\p x)\Big\}\,,
  \\
 \label{allard2 scaled}
  &&\nabla \Phi\big((z,u(z)),(-\nabla u (z),1)\big)\cdot e_1=0\,,\qquad\forall z\in\D\cap\pa H\,.
  \end{eqnarray}
  We divide the proof into four steps.

  \medskip

  \noindent {\it Step one}: We claim that for every \(x\in \cl(H\cap\pa E)\cap \pa H\cap \C_{16}\) there exists an affine map \(L:\R^n\to\R^n\) (depending on $x$) with \(L(x)=x\), \(L(H)=H\),  and
  \begin{equation}\label{exc decay bordo}
  \|\nabla L-\Id\|^2\le C\,\er\,,\qquad \exc_n^H(L(E),x,\varrho )\le C\,\er \, \varrho\,, \qquad  \forall\varrho\le16\,,
  \end{equation}
  where $C=C(n,\l)$. Firsy we notice that it suffices to prove this under the assumption that
  \begin{equation}
    \label{watson}
      \e(x)=|\nabla \Phi(x,e_n)\cdot e_1|+ \exc_n^H(E,x,32)+(\L +\ell) \le\e_0\,,
  \end{equation}
  for a suitably small positive constant $\e_0=\e_0(n,\l)$. Indeed, by \eqref{pablo1}, \eqref{Phi x ell}, and \eqref{pablo4}, if $x\in\C_{16}\cap\pa H$, then
  \begin{eqnarray*}
  &&|\nabla \Phi(x,e_n)\cdot e_1|\le |\nabla \Phi_0(e_n)\cdot e_1|+32\ell \le 32\,\er\,,
  \\
  &&\exc_n^H(E,x,32)\le 2^{n-1} \exc_n^H(E,0, 64)\le 2^{n-1} \er\,,
  \end{eqnarray*}
  so that $\e(x)\le C(n)\,\er$ for every $x\in\C_{16}\cap\pa H$; in particular, we can ensure the validity of \eqref{watson} at every $x\in \cl(H\cap\pa E)\cap \pa H\cap \C_{16}$ provided we pick $\er$ sufficiently small.

  \medskip

  We now prove our claim, setting $\e$ in place of $\e(x)$ for the sake of brevity. By exploiting the convergence of the geometric series, it will suffice to prove the following statement:

  \medskip

  There exist positive constants $\e_*,\beta_*<1$, $K_1$ and $K_2$ (depending on $n$ and $\l$ only) such that, if \(\e\le \e_*\), then for every \(k\in \N\) there exists an affine map \(L_k:\R^n\to\R^n\) with $L_k(x)=x$, \(L_k(H)=H\) and
  \begin{equation}\label{lk}
  \left\{\begin{array}
    {l l}
    \|\nabla L_k-\nabla L_{k-1}\|^2\le K_1\b_*^{k} \e\,,&\qquad\mbox{if $k\ge 1$}\,,
    \\
    \|\nabla L_0-\Id\|^2\le K_1\,\e\,,&
  \end{array}\right .
  \end{equation}
  such that,
  \begin{eqnarray}\label{rob1}
  &&\Phi_k=\Phi^{L_k}\in \X\big(\C_{x,2\b_*^k}^+, \l_k, \ell_k  \big )\,,
  \\\label{rob2}
  &&\mbox{\(E_k=L_k(E)\) is a \((\Lambda  , r_{0,k})\) minimizer of \(\PHI_k\) in \((\C_{x,2\b_*^k}, H)\)}\,,
  \\
  \label{neu}
  &&\nabla\Phi^{L_k}(x,e_n)\cdot e_1=0\,,
  \\ \label{excgrad}
  &&\exc_n^H(E_k,x,\b_* ^k)\le K_2 \b_* ^k \e\,.
  \end{eqnarray}
  where $\lambda_0=2\lambda$, $\ell_0=2\ell$, $r_{0,0}=r_0/2$, and
  \begin{equation}\label{lambdak}
  \max\Big\{
  \frac{|\l_k-\l_{k-1}|}{\l},\frac{|\ell_k-\ell_{k-1}|}{\ell},
  \frac{|r_{0,k}-r_{0,k-1}|}{r_{0,k-1}}\Big\}\le K_1\, \sqrt{\b_*^k\,\e},\qquad\forall k\ge 1\,.
  \end{equation}
  We prove this statement by induction.

  \smallskip

  \noindent {\it Base case}: If $\e_*$ is small enough, then by $|\nabla\Phi(x,e_n)\cdot e_1|<\e$ we can apply Lemma \ref{lemma 0} to find \(\nu_0\in \mathbf S^{n-1}\) such that
  \begin{equation}
  \label{exploit1}
  |\nu_0-e_n|\le C(n,\l)\,\e\,,\qquad \nabla \Phi(x,\nu_0)\cdot e_1=0\,.
  \end{equation}
  Up to further decrease $\e_*$ so to entail $|e_1\cdot\nu_0|\le1/2$, we can apply Lemma \ref{lemma tauuu} to $\nu_0$ to find an affine map $L_0:\R^n\to\R^n$ with $L_0(H)=H$, $L_0(x)=x$, $\|\nabla L_0-\Id\|\le C(n,\l)\,|e_n-\nu_0|$, and $\nabla\Phi^{L_0}(x,e_n)\cdot e_1=\nabla\Phi(x,\nu_0)\cdot e_1$ (so that \eqref{lk} and \eqref{neu} hold true by \eqref{exploit1}). The validity of \eqref{rob1} and \eqref{rob2} is easily checked thanks to Lemma \ref{lemma PHIL} and \eqref{lk}, up to further decrease the value of $\e_*$. Finally, by exploiting \eqref{lk} (with $k=0$) and \eqref{exploit1} as in the proof of \eqref{johell1}, we see that, if $\e_*$ is small enough (also to entail that $L_0^{-1}(\C_x)\subset\C_{x,64}$), then we have
  \[
  \exc_n^H(L_0(E),x,1)\le C(n,\l)\,(32)^{n-1}\exc_n^H(E,x,32)\le  C(n,\l)\,\e\,.
  \]
  This proves the case $k=0$ of our claim.

  \smallskip

  \noindent {\it Choice of $\e_*$, $\beta_*$, $K_1$ and $K_2$}: Since $\e_*$, $\beta_*$, $K_1$ and $K_2$ have to be chosen in a careful order, it seems useful to fix their choice before entering into the inductive step. We shall pick $\beta_*=\beta_*(n,\l)$ so that
  \begin{equation}
    \label{scelta betastar}
    \beta_*<\min\Big\{\frac1{512}\,,\frac1{64\,C_4(n,3\l)}\Big\}\,,
  \end{equation}
  where $C_4(n,3\l)$ is defined by means of Lemma \ref{thm caccioppoli}. By \eqref{scelta betastar}, it is possible to choose $K_2=K_2(n,\l)$ so that
  \begin{equation}
    \label{scelta K2}
    K_2\ge \frac{3\,C_4(n,3\l)}{1-64\,C_4(n,3\l)\,\beta_*}\,.
  \end{equation}
  Finally, we choose $K_1=K_1(n,\l)$ so that
  \begin{equation}
    \label{scelta K1}
    K_1\ge \frac{3\,C_4(n,3\l)\,\sqrt{K_2+3}}{\sqrt{\beta_*}}\,,
  \end{equation}
  and in such a way that the case $k=0$ of \eqref{lk} and \eqref{lambdak} holds true. Finally, $\e_*$ shall be chosen to be small enough with respect to other constants determined by $n$, $\l$, $\beta_*$, $K_1$ and $K_2$.

\smallskip

  \noindent {\it Inductive step}: Let us  assume our claim holds true for  $j\le k$ and let us prove its validity for $j=k+1$. To this end,
  we notice that, by exploiting \eqref{lambdak}, and provided $\e_*$ is small enough, we can certainly ensure that
  \begin{equation}\label{3}
  \lambda_k\le 3\l\,,\qquad \ell_k \le 3\ell\,,\qquad r_{0,k}\ge \frac{r_0}3\,.
  \end{equation}
Let us set  $\beta=8\,\b_*\in(0,1/64)$, so that we can consider the constant \(\et(n,3\l,8\b_*)\) determined by Lemma \ref{thm tilt lemma}. By the inductive step on \eqref{excgrad}, by \eqref{3} and by definition of $\e$, we see that
  \[
  \exc_n^H(E_k,x,\beta_*^k)+(\L+\ell_k)\,\beta^k_*\le K_2\,\e+3\,(\Lambda+\ell)\le (K_2+3)\,\e\,,
  \]
  so that, by \eqref{rob1}, \eqref{rob2}, \eqref{neu} and provided we assume that
  \[
  (K_2+3) \e_*\le \et(n,3\l,8\,\b_*)\,,
  \]
  we can apply Lemma \ref{thm tilt lemma} with $x$, $\Phi_k$, \(E_k\), $2\beta_*^k$, $\l_k\le 3\l$, $\ell_k$  and $r_{0,k}$ in place of $x_0$, $\Phi$, $E$, $16r$, $\l$ and $r_0$ respectively. (Notice that we have $r_{0,k}\ge 16\beta_*^k$ thanks to \eqref{3}, $r_0\ge64$, and $\beta_*<1/512$.) Hence, there exists an affine map $\widetilde L:\R^n\to\R^n$ with $\widetilde{L}(x)=x$, $\widetilde L(H)=H$, and constants $\widetilde \lambda\ge1$, \(\tilde \ell\ge0\),  and \(\widetilde r_0>0\) such that
  \begin{eqnarray}\label{boia2}
    &&\Phi_k^{\widetilde L}\in \X(\C_{x,\beta \beta_*^k/4}^+,\widetilde\l,\widetilde\ell)\,,
    \\\label{boia3}
    &&\mbox{$\widetilde{L}(E_k)$ is a $( \Lambda, \widetilde r_0)$-minimizer of \(\PHI_k^{\widetilde L}\) in \((\C_{x,\beta  \beta_*^k/4}, H)\)}\,,
    \\
    \label{talk}
    &&\nabla \Phi_k^{\widetilde L} (x,e_n)\cdot e_1=0\,,
    \\
    \label{tilt decay E a}
    &&\exc^{H}_n\bigg(\widetilde L(E_k),x,\beta \frac{\beta_*^{k}}8\bigg)\le C_4(n,3\l)\,\Big(\beta^2\,\exc^H_n(E_k,x,\beta_*^k)
    +\beta\,\frac{\beta_*^k}8\,(\La+\ell_k) \Big)\,,
  \end{eqnarray}
  \begin{eqnarray}\label{boia}
  &&\max\Big\{\|\widetilde{L}-\Id\|\,,\frac{|\widetilde{\l}-\l_k|}{\l_k},\frac{|\widetilde{\ell}-\ell_k|}{\ell_k},
  \frac{|\widetilde{r_0}-(r_{0,k})|}{r_{0,k}}\Big\}
  \\\nonumber
  &&\hspace{5cm}\le\,C_4(n,3\l)\,\Big(\exc^H_n(E_k,x,\beta_*^k)+(\La+\ell_k)\,\beta_*^k\Big)^{1/2}\,.
  \end{eqnarray}
  We claim that by setting
  \begin{equation}
    \label{boia4}
      L_{k+1}=\widetilde{L}\circ L_k\,,\qquad \l_{k+1}=\widetilde{\l}\,,\qquad \ell_{k+1}=\widetilde\ell\,,\qquad r_{0,k+1}=\widetilde{r_0}\,,
  \end{equation}
  the proof of the inductive step is completed. First, by \eqref{boia4} and since $\beta\,\beta_*^k/4=2\beta_*^{k+1}$ and $\Phi_k^{\widetilde L}=\Phi^{L_{k+1}}$, we see that \eqref{boia2}, \eqref{boia3} and \eqref{neu} immediately imply \eqref{rob1}, \eqref{rob2}, and \eqref{talk} with $k+1$ in place of $k$ respectively. Next we notice that, by \eqref{tilt decay E a} and by $\beta=8\,\beta_*$,
  \begin{eqnarray}\nonumber
    \exc^{H}_n(L_{k+1}(E),x,\beta_*^{k+1})
    &\le& C_4(n,3\l)\,\Big(64\,\beta_*^2\,\exc^H_n(E_k,x,\beta_*^k)+\beta_*^{k+1}\,(\La+\ell_k)\Big)
    \\\nonumber
    {\small \mbox{(by \eqref{excgrad} and by \eqref{3})}}
    \qquad&\le& C_4(n,3\l)\,\Big(64\,K_2\,\beta_*^{k+2}\,\e+3\,\beta_*^{k+1}\,(\La+\ell)\Big)
    \\\label{boia5}
    &\le&C_4(n,3\l)(64\,K_2\,\beta_*+3)\,\beta_*^{k+1}\,\e\,,
  \end{eqnarray}
  where in the last inequality we have used $\La+\ell<\e$. By the choice \eqref{scelta K2} of $K_2$, \eqref{boia5} implies the validity of \eqref{excgrad} with $k+1$ in place of $k$. Similarly, we notice that, by \eqref{excgrad}, by $\eqref{3}$ and by definition of $\e$
  \begin{eqnarray}\nonumber
    C_4(n,3\l)\,\Big(\,\exc^H_n(E_k,x,\beta_*^k)+(\La+\ell_k)\,\beta_*^k\Big)^{1/2}
    &\le&
    C_4(n,3\l)\,\Big(K_2\,\beta_*^k\,\e+3\,\e\,\beta_*^k\Big)^{1/2}
    \\
    &=&\label{boia6}
    \frac{C_4(n,3\l)\,\sqrt{K_2+3}}{\sqrt{\beta_*}}\,\sqrt{\beta_*^{k+1}\e}\,.\hspace{0.7cm}
  \end{eqnarray}
  By \eqref{boia}, \eqref{boia4}, \eqref{boia6} and \eqref{scelta K1} we deduce that \eqref{lambdak} holds true with $k+1$ in place of $k$. Finally,
  by exploiting the validity of \eqref{lk} for  $j\le k$, we see that
  \[
  \|\nabla L_k\|\le 1+\|\nabla L_0-\Id\|+\sum_{j=0}^{k-1}\|\nabla L_{j+1}-\nabla L_j\|\le 1+\Big(1+\frac{\sqrt{\e_*}}{1-\sqrt{\beta_*}}\Big)\,\sqrt{K_1}\le 3\,,
  \]
  provided $\e_*$ is small enough. Hence, by \eqref{boia} and \eqref{boia6}, we find
  \begin{eqnarray*}
  \|\nabla L_{k+1}-\nabla L_k\|&\le& \|\nabla L_k\|\|\nabla \widetilde  L-\Id\|
  \\
  &\le&\frac{3\,C_4(n,3\l)\,\sqrt{K_2+3}}{\sqrt{\beta_*}}\,\sqrt{\beta_*^{k+1}\e}\le K_1\,\sqrt{\beta_*^{k+1}\e}\,,
  \end{eqnarray*}
  once again thanks to \eqref{scelta K1}. This completes the proof of step one.

  \medskip

  \noindent {\it Step two}: The argument used in step one, where now the interior case of Lemma \ref{thm tilt lemma} is used in place of the boundary case at each step of the iteration, allows us to prove the following statement: There exists  $\e_{**}=\e_{**}(n,\l)$ such that, if \(x\in \cl(H\cap\pa E)\cap H\cap \C_{16}\) with
  \begin{equation}\label{dusseldorf}
  \e=\exc_n^H\big(E,x,2\,\dist(x,\pa H)\big)+2\dist(x,\pa H)(\Lambda+\l) \le\e_{**}\,,
  \end{equation}
  then there exists an affine map \(L:\R^n\to\R^n\) (depending on $x$) with \(L(x)=x\), \(L(H)=H\),  and
  \begin{equation}\label{exc decay interno lontano}
  \|\nabla L-\Id\|^2\le C(n,\l)\,\e\,,\qquad \exc_n^H(L(E),x,\varrho )\le C(n,\l)\,\e \, \varrho\,, \qquad  \forall\,\varrho\le\dist(x,\pa H)\,.
  \end{equation}
  This statement is an ``interior'' analogous to the ``boundary'' statement proved in step one, with \eqref{watson} playing the role of \eqref{dusseldorf}. The only difference is that \eqref{watson} follows directly from \eqref{pablo4}, while \eqref{dusseldorf} cannot be so immediately deduced from it. Showing the validity of \eqref{dusseldorf} at every \(x\in \C_{16}^+\cap \pa E\) is, essentially, the content of the next step of the proof.

  \medskip

  \noindent {\it Step three}: We now prove that for every \(x\in \cl(H\cap\pa E)\cap  \C\) there exists an affine map \(L\) such that \(L(H)=H\) and
  \begin{eqnarray}
  \label{exc decay dentro 0}
  \label{exc decay dentro 1}
  &&\|\nabla L-\Id\|^2\le C(n,\l)\,\er\,,
  \\
  \label{exc decay dentro 2}
  &&\exc_n^H(L(E),L(x),\varrho)\le C(n,\l)\,\er \, \varrho\,, \qquad \forall\,  \varrho\, \le 8\,.
  \end{eqnarray}
 We start with the following simple observation: if \(\er\) is sufficiently small with respect to  \(\eh(n, \l, 1/32)\), then by applying Lemma \ref{lemma height bound} to \(E\) in \(\C_8\) we have
   \begin{equation}\label{pranzo}
   \begin{gathered}
  \sup\Big\{|\q y| :y\in\C_4^+\cap\pa E\Big\}\le \frac1{32}\,,
  \\
    \Big|\Big\{y\in \C_{4}^+ \cap E:\q y> \frac1{32}\Big\}\Big|=0\,,
    \\
    \Big|\Big\{y\in \big(\C_{4}^+\setminus E\big): \q<-\frac1{32}\Big\}\Big|=0\,.
  \end{gathered}
  \end{equation}
  From this it follows that for every \(y \in \pa H\cap \C_2\) there exists a point \(y'\in \pa H\)  such that
  \begin{equation}\label{uffa}
y'\in    \cl (\C_2\cap \pa E)\cap \pa H\qquad \textrm{and} \qquad \p y' =\p y\,.
  \end{equation}
  Indeed, thanks to \eqref{pranzo}, for every \(s\in(0,2)\)
  \[
 | \K_{\p y, s} \cap E|>0\qquad | \K_{\p y, s}\setminus E|>0,
  \]
  where \(\K_{\p y,s}\) is defined as in \eqref{eq:defK}. This gives  that \(\spt\mu_E\cap  \K_{\p y,s}\ne \emptyset \), thus \eqref{uffa}, since \(\pa E=\spt \mu_E\)  by  Lemma \ref{lem:normalization}.
  Let now \(x\in \cl(H\cap\pa E)\cap  \C\) and \(\bar x\in \pa H\cap \C_2\) be such that
  \begin{equation}\label{distanze}
  |\p x-\p \bar x|=|x-\bar x|=\dist(x,\pa H)\,,
  \end{equation}
   Let \(x' \in \cl (\C_2\cap \pa E)\cap \pa H\) be the corresponding point satisfying \eqref{uffa}. By step one, there exists an affine map \(L_1:\R^n\to\R^n\) with \(L_1(H)=H\), $L_1(x')=x'$, and
  \begin{eqnarray}
  \label{exc decay dentro x1}
  &&\|\nabla L_1-\Id\|^2\le C(n,\l)\,\er\,,
  \\
  \label{exc decay dentro x2}
  &&\exc_n^H(L_1(E),x',\varrho)\le C(n,\l)\,\er \, \varrho\,, \qquad \forall  \varrho \le 16\,.
  \end{eqnarray}
 Since $L_1(x')=x'$ and $L_1$ is affine, by \eqref{exc decay dentro x1},
  \begin{equation}
    \label{L1x}
      |L_1(x)-x|=|\nabla L_1(x-x')-(x-x')|\le C(n,\l)\sqrt{\er}\,|x-x'|\\\
  \end{equation}
  and
  \begin{equation}
    \label{L1xbis}
     \big(1-C\sqrt{\er}\big)|L_1(x)-x'|\le  |x-x'|\le \big(1+C\sqrt{\er}\big)|L_1(x)-x'|.
  \end{equation}
  In particular we can choose \(\er\) sufficiently small to ensure that \(L_1(x)\in \C_2\). We now claim that, provided \(\er\) is sufficiently small,
  \begin{equation}\label{distanza2}
  |L_1(x)-x'|\le 2\,\dist(L_1(x),\pa H)\,.
  \end{equation}
  First notice that  thanks to \eqref{L1x} and \eqref{L1xbis},
  \begin{equation}\label{primopezzetto}
  \begin{split}
  \dist(x,\partial H)&\le \dist (L_1(x),\partial H)+C\sqrt{\er}\,|x-x'|\\
  &\le \dist (L_1(x),\partial H)+C\sqrt{\er}\,|L_1(x)-x'|.
  \end{split}
  \end{equation}
Moreover,  thanks to  Lemma \ref{lemma 0},
  \begin{equation}\label{ugo2}
  \begin{aligned}
    &\Phi^{L_1}\in\X(\C_{127}^+,2\l,2\ell)\,,
    \\
    &\mbox{$L_1(E)$ is a $(\La,r_0/2)$-minimizer of $\PHI^{L_1}$ on $(\C_{127},H)$ with $r_0/32\ge 32$}\,.
  \end{aligned}
  \end{equation}
  By \eqref{exc decay dentro x2} and \eqref{L1x}, if $\er$ is small enough with respect to $\eh(n,2\l,1/8)$, we can thus apply Lemma \ref{lemma height bound} to $L_1(E)$ on the cylinder $\C(x',4\,|x'-L_1(x)|)$, to deduce that
  \[
|\q L_1(x)-\q x'|\le | L_1(x)- x'|/8.
  \]
 By this, \eqref{L1x}, \eqref{L1xbis},  \eqref{distanze} and recalling that \(\p x'=\p \bar x\), we obtain
 \[
 \begin{split}
\frac{7}{8} |L_1(x)-x'|&\le |\p L_1(x)-\p x'|\le |\p L_1(x)-  \p x|+|\p x- \p x'|\\
&\le  |L_1(x)-  x|+|\p x- \p \bar x|\\
&\le C\sqrt{\er}|L_1(x)-x'|+ \dist(x,\pa H)\\
&\le C\sqrt{\er}|L_1(x)-x'|+ \dist(L(x),\pa H),
 \end{split}
  \]
 where in the last inequality we have used \eqref{primopezzetto}. Choosing \(\er\) suitably small we obtain   \eqref{distanza2}.
By  \eqref{distanza2}, if $\varrho\ge2\,\dist(L_1(x),\pa H)$, then
\[
\C(L_1(x),\varrho)\subset \C\big( x',\varrho+|L_1(x)-x'|\big)\subset\C( x',2\varrho)\,,
\]
and thus
  \[
  \exc_n^H(L_1(E),L_1(x),\varrho)\le 2^{n-1}\,\exc_n^H(L_1(E),x',2\varrho)\,.
  \]
Hence,  \eqref{exc decay dentro x2} implies
  \[
  \exc_n^H(L_1(E),L_1(x),\varrho)\le C\er \, \varrho\,, \qquad \forall  \varrho\in\big(2\,\dist(L_1(x),\pa H),8\big)\,,
  \]
 for a constant \(C\) depending on \(n\) and \(\l\) only.  Of course up to suitably  increase the constant \(C\)  we also have
  \begin{equation}\label{exc decay dentro xx}
  \exc_n^H(L_1(E),L_1(x),\varrho)\le C\er \, \varrho\,, \qquad \forall  \varrho\in\big(\dist(L_1(x),\pa H),8\big)\,.
  \end{equation}
  We thus find
  \begin{equation}\label{exc decay dentro xx bis}
  \exc_n^H\big(L_1(E),L_1(x),\dist(L_1(x),\pa H)\big) \le C \er \,\dist (L_1(x),\pa H)\,.
 \end{equation}
  Since, by \eqref{pablo4},
 \[
 \dist(L_1(x),\pa H)\,(\L+\ell)\le \dist(L_1(x),\pa H)\,\er\,,
 \]
 by choosing \(\er\) sufficiently small we can  exploit \eqref{ugo2}  to apply step two to $L_1(E)$ at $L_1(x)$, and deduce the existence of an affine map $L_2:\R^n\to\R^n$ such that $L_2(L_1(x))=L_1(x)$, $L_2(H)=H$ and
  \begin{gather}
  \label{exc decay interno lontano x 1}
 \|\nabla L_2-\Id\|^2\le C \er \,\dist (L_1(x),\pa H) \,,
  \\
  \label{exc decay interno lontano x 2}
 \exc_n^H(L_2(L_1(E)),L_1(x),\varrho )\le C\er \,\dist (L_1(x),\pa H)  \, \varrho\,, \quad  \forall\, \varrho\le\dist(L_1(x),\pa H)\,.
  \end{gather}
 We now claim that the map  $L=L_2\circ L_1$ satisfies   \eqref{exc decay dentro 1} and   \eqref{exc decay dentro 2}. Indeed, clearly
\(L(H)=H\) while \eqref{exc decay dentro 1} follows from  \eqref{exc decay dentro x1} and \eqref{exc decay interno lontano x 1}. Let us now prove that
\begin{equation}\label{mais}
\exc_n^H(L(E),L(x),\varrho )\le C\er   \, \varrho\,, \quad  \forall\, \varrho\in \big(\dist(L_1(x),\pa H),8)\,.
\end{equation}
For, let us set \(M_2=\cof \nabla L_2\), so that
\[
\nu_{L(E)}= \frac{M_2\,\nu_{L_1(E)}}{|M_2\nu_{L_1(E)}|}\,,
\]
and consider $\hat\nu\in \mathbf S^{n-1}$ such that
\[
e_n=\frac{ M_2 \hat \nu}{|M_2 \hat \nu|}\,.
\]
Since \(L_2(L_1(x))=L_1(x)\) we can choose  \(\er\) suitably small to ensure that  \((L_2)^{-1}(\C(L_1(x),\varrho))\subset \C(L_1(x),2\varrho)\), hence  we  get (compare with \eqref{johell1})
\begin{equation}
\begin{split}\label{mopiove}
 2\,\exc_n^H(L(E),L(x),\varrho )&=\frac{1}{\varrho^{n-1}}\int_{L(\pa^* E)\cap \C(L_1(x),\varrho)\cap H}|\nu_{L(E)}-e_n|^2\\
 &\le \frac{1}{\varrho^{n-1}}\int_{L_1(\pa^* E)\cap \C(L_1(x),2\varrho)\cap H}\left| \frac{M_2\,\nu_{L_1(E)}}{|M_2\nu_{L_1(E)}|}-\frac{ M_2 \hat \nu}{|M_2 \hat \nu|} \right|^2|M_2 \nu_{L_1(E)}|\\
 &\le  \frac{C}{\varrho^{n-1}}\int_{L_1(\pa^* E)\cap \C(L_1(x),2\varrho)\cap H}\left| \nu_{L_1(E)}-\hat \nu \right|^2\\
&\le C\, \exc_n^H(L_1(E),L_1(x),2\varrho)+\frac{C  P\big(L_1(E);\C(L_1(x),2\varrho)\cap H \big)}{\varrho^{n-1}}\left | \hat \nu  -e_n\right|^2\\
&\le C\er \varrho+C\er \dist(L_1(x),\pa H)\,.
\end{split}
\end{equation}
Where in the last inequality we have used \eqref{stime densita perimetro upper}, as well as the fact that
\[
|\hat \nu-e_n|^2=\left|\frac{ M_2 \hat \nu }{|M_2 \hat \nu|}-\hat \nu\right|^2\le C\er \dist(L_1(x),\pa H)\,,
\]
since  \(\|M_2-\Id\|^{2}\le C \er \dist(L_1(x),\pa H)\) by \eqref{exc decay interno lontano x 1}. Since \eqref{mopiove} immediately implies \eqref{mais}, and \eqref{mais} together with \eqref{exc decay interno lontano x 2} gives \eqref{exc decay dentro 2}, the proof of this step is complete.

  \medskip

  \noindent {\it Step four}: We finally prove \eqref{allard4 scaled}, \eqref{allard1 scaled} and  \eqref{allard2 scaled}. For \(x\in \cl(\pa E \cap H)\cap \C\) let us define
  \begin{equation}\label{nux}
\nu(x)=\frac{\cof (\nabla L^{-1})\,e_n}{|\cof (\nabla L^{-1})\,e_n|}\,.
  \end{equation}
  where \(L\) is the affine map appearing in  \eqref{exc decay dentro 2} (which, of course, depends on \(x\)).  In this way, provided \(\er\) is sufficiently small to ensure that \(\C(x,\varrho)\subset L^{-1}(\C(L(x),2\varrho))\), the same computations done in \eqref{mopiove} give
 \begin{equation}\label{vedolaluce}
 \frac{1}{\varrho^{n-1}}\int_{\pa ^*E\cap \C^+_{x,\varrho}}\frac{|\nu_E-\nu(x)|^2}{2}\le C\, \exc_n^H(L(E),L(x),2\varrho)\le C\er \varrho\,, \qquad \forall\, \varrho \le 4\,.
 \end{equation}
Moreover thanks to \eqref{exc decay dentro 1} and the definition of \(\nu(x)\), \eqref{nux}, \(|\nu(x)-e_n|^2\le C\er\). By exploiting the upper density estimates \eqref{stime densita perimetro upper} we get
\begin{equation}\label{vedolaluce1}
\begin{split}
\frac{1}{\varrho^{n-1}}\int_{\pa ^*E\cap \C^+_{x,\varrho}}\frac{|\nu_E-e_n|^2}{2}&\le \frac{1}{\varrho^{n-1}}\int_{\pa ^* E\cap \C^+_{x,\varrho}}|\nu_E-\nu(x)|^2+\frac{P(E;\C^+_{x,\varrho})|\nu(x)-e_n|^2}{\varrho^{n-1}}\\
&\le C\er\varrho+C\er\le C\er\,, \qquad \forall \varrho\le 4\,.
\end{split}
\end{equation}
Now, \eqref{vedolaluce} and \eqref{vedolaluce1} imply  \eqref{allard4 scaled} and \eqref{allard1 scaled} by a classical argument. For the sake of completeness we give  below a sketch of the proof.  First, if we  choose \(\er\) small enough, then we can apply Lemma \ref{thm lip approx} to find a Lipschitz function $u:\R^{n-1}\to\R$ such that, if we set
  \[
  M_0=\Big\{x\in \C^+\cap\pa E:\sup_{0<s<4}\exc_n^H(E,x,s)\le\de_1(n,\l)\Big\}\,,
  \]
  and $\Gamma=\{(z,u(z)):z\in\D\}$, then $M_0\subset\Gamma$. By \eqref{vedolaluce1}, up to further decrease the value of $\er$, we have that $M_0=\C^+\cap\pa E \subset \Gamma$. This easily implies, see for instance \cite[Theorem 23.1]{maggiBOOK}, that
    \[
\C^+\cap \pa E =\Big\{x\in H: |\p x|<1 \,,\q x=u(\p x)\Big\}\,,
\]
and this proves \eqref{allard1 scaled}.  We now notice that \eqref{allard1 scaled} and  \(0\in \pa E\) imply \(u(0)=0\), while \eqref{lapp6} gives
\[
\int_{\D} |\nabla u|^2\le C \er\,,
\]
so that \eqref{allard4 scaled} will follow by interpolation up to bound (in terms of a constant depending on $n$ and $\l$ only) the $C^{0,1/2}$ semi-norm of $\nabla u$ on $\D^+$. To this end, let us set
\[
v(x)=-\frac{\p \nu(x)}{\q \nu(x)}\,,\qquad x\in\cl(H\cap \pa E)\cap\C\,,
\]
(which is well-defined since \(|\nu(x)-e_n|<1\)). Since the map  \(\psi(\xi)=(-\xi,1) /(1+|\xi|^2)^{1/2}\) ($\xi\in\R^{n-1}$) satisfies $\Lip(\psi)\le 1$, by \eqref{vedolaluce} we get
\begin{equation*}\label{campanato}
\inf_{v\in \R^{n-1}}\frac{1}{\varrho^{n-1}}\int_{\D_{z,\varrho}}|\nabla u-v|^2\le \frac{1}{\varrho^{n-1}}\int_{\D_{z,\varrho}}|\nabla u-v((z,u(z)))|^2\le C\,\er \varrho\,,
\end{equation*}
for every $z\in\D^+$ and $\varrho\le 4$. By Campanato's criterion, see for instance \cite[Theorem 2.9]{Gi}, the $C^{0,1/2}$ semi-norm of $\nabla u$ on $\D^+$ is bounded by some $C=C(n,\l)$.  Finally \eqref{allard2 scaled} can be obtained by a simple first variation argument since the map \(u\) satisfies
\begin{equation*}
\int_{\D^+} \Phi\big(z,u(z),(-\nabla u(z),1)\big)\,dz\le \int_{\D^+} \Phi\big(z,w(z),(-\nabla w(z),1)\big)\,dz+C\Lambda\,\int_{\D^+}|w-u|
\end{equation*}
for every  \(w\in \Lip(\D)\) such that  \(w=u \) on \((\pa \D)^+\), where \(C=C(n,\l)\). This completes the proof of Lemma \ref{thm regularity en}.

\section{On the size of the boundary singular set}\label{section singular set} In this section we estimate the size of the set where Theorem \ref{thm epsilon} does not apply. More precisely, let us recall from Remark \ref{rmk:singular set} that, if $E$ is a $(\La,r_0)$-minimizer of $\PHI$ in $(A,H)$ for some $\Phi\in\X(A\cap H,\l,\ell)$, then the boundary singular set $\S(E;\pa H)$ (i.e., the set of those $x\in \pa_{\pa H}(\pa E\cap\pa H)\cap A$ such that $A\cap\cl(H\cap\pa E)$ is not a $C^{1,1/2}$ manifold with boundary at $x$) is characterized in the terms of the spherical excess of $E$ at $x$ as
 \begin{equation}\label{char}
  \S_A(E;\pa H)=\Big\{x\in\pa_{\pa H}(\pa E\cap\pa H)\cap A:\liminf_{r\to 0^+}{\bf exc}^H(E,x,r)>0\Big\}\,.
 \end{equation}
This identity provides a particularly useful starting point in the study of $\S(E;\pa H)$ undertaken in this section, and leading to the following result.

\begin{theorem}
  \label{thm singular set} If $A$ and $H$ are an open set and an open half-space in $\R^n$, $\Phi\in\X(A\cap H,\l,\ell)$, and $E$ is a $(\Lambda,r_0)$-minimizer of $\PHI$ on $(A,H)$, then for every $x\in \pa_{\pa H}^*(\pa E\cap\pa H)$ we have
  \[
  \lim_{r\to 0}{\bf exc}^H(E,x,r)=0\,.
  \]
  In particular,
  \begin{equation}\label{singularreduced}
  \S_A(E;\pa H)=(\pa_{\pa H}(\pa E\cap\pa H)\cap A)\setminus \pa_{\pa H}^*(\pa E\cap\pa H)\,,
  \end{equation}
  and thus    $\H^{n-2}(\S_A(E;\pa H))=0$.
\end{theorem}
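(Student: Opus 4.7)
\textbf{Proof plan for Theorem \ref{thm singular set}.}
I would first reduce the full statement to the excess-limit assertion. Assuming $\lim_{r\to 0}\mathbf{exc}^H(E,x,r)=0$ for every $x\in\partial^*_{\partial H}(\partial E\cap\partial H)\cap A$, the identification \eqref{singularreduced} is immediate from Remark~\ref{rmk:singular set}: the regular set is by definition the set where the liminf of the excess is zero (regular points via Theorem~\ref{thm epsilon}), hence the singular set inside the topological boundary of the contact set coincides with the complement of the reduced boundary. The measure bound $\H^{n-2}(\Sigma_A(E;\partial H))=0$ then follows directly from \eqref{pizza} in Lemma~\ref{lem:normalization}(iii). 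So the heart of the matter is establishing the excess limit at reduced boundary points.

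Next I would run a subsequential blow-up argument. Fix $x\in\partial^*_{\partial H}(\partial E\cap\partial H)\cap A$; after translation and rotation we may assume $x=0$ and $H=\{x_1>0\}$. Given $r_h\to 0^+$, set $E_h=E^{0,r_h}$ and $\Phi_h(y,\nu)=\Phi(r_h y,\nu)$. By Remark~\ref{remark blow-up}, $E_h$ is a $(\Lambda r_h,r_0/r_h)$-minimizer of $\PHI_h$ in $(A^{0,r_h},H)$ with $\Phi_h\in\X((A\cap H)^{0,r_h},\lambda,r_h\ell)$. Theorem~\ref{thm compactness minimizers} provides, up to a subsequence, $\Phi_\infty\in\X_*(\lambda)$ and a minimizer $F\subset H$ of $\PHI_\infty$ in $(\R^n,H)$ with $E_h\to F$ in $L^1_{\rm loc}(\R^n)$ and $\tr_{\partial H}(E_h)\to\tr_{\partial H}(F)$ in $L^1_{\rm loc}(\partial H)$. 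Since $\partial E\cap\partial H=_{\H^{n-1}}\tr_{\partial H}(E)$ is of locally finite perimeter in $\partial H$ (Lemma~\ref{lem:finiteperimeter}) and $0$ is one of its reduced boundary points, De~Giorgi's rectifiability theorem in $\partial H\cong\R^{n-1}$ forces
\[
\tr_{\partial H}(F)=K:=\{z\in\partial H:z\cdot\nu_*<0\}
\]
for the (well-defined) measure-theoretic unit normal $\nu_*\in\partial H\cap \mathbf S^{n-1}$ to $\partial E\cap\partial H$ at $0$.

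The main obstacle is then a classification: any autonomous minimizer $F\subset H$ of $\Phi_\infty\in\X_*(\lambda)$ in $(\R^n,H)$ whose trace on $\partial H$ is the half-space $K$ must coincide with
\[
F_0:=\{y\in H:y\cdot\nu_0<0\},
\]
where $\nu_0\in\mathbf S^{n-1}$ is the vector produced by Lemma~\ref{lemma 0} with $\mathbf{e_1}(\nu_0)=\nu_*$ and $\nabla\Phi_\infty(\nu_0)\cdot e_1=0$. Proposition~\ref{proposition young} guarantees that every translate $F_0^t:=\{y\cdot\nu_0<t\}\cap H$ is itself a minimizer of $\Phi_\infty$ in $(\R^n,H)$ (the anisotropic Young condition only depends on $\nu_0$). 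I would then follow the spirit of Hardt's sliding argument in \cite{hardt}: using the uniform density estimates of Lemma~\ref{lemma stime di densita} to ensure that $F$ is trapped between $F_0^{t}$ and $F_0^{-t}$ for large $|t|$, I would define $t^*=\inf\{t\in\R:F\subset F_0^t\text{ a.e.}\}$, observe that for $t^*\neq 0$ the traces of $F$ and $F_0^{t^*}$ on $\partial H$ are strictly separated half-spaces in $\partial H$, so any first contact point of $F$ and $F_0^{t^*}$ must lie in the open half-space $H$; the interior strong maximum principle encoded in Lemma~\ref{lemma strong max} (whose engine is the comparison principle Lemma~\ref{lemma comparison}) then yields $F=F_0^{t^*}$, contradicting $\tr_{\partial H}(F)=K$ unless $t^*=0$. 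The symmetric slide from below produces $t^*=0$, and hence $F=F_0$.

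With $F=F_0$ in hand, $\mathbf{exc}^H(F,0,1)=0$ with optimal direction $\nu_0$; using the full perimeter convergence $P(E_h;B_1\cap H)\to P(F;B_1\cap H)$ of Theorem~\ref{thm compactness minimizers} (see \eqref{compactness 0}) in combination with the weak convergence of Gauss-Green measures and Reshetnyak's continuity theorem, one obtains
\[
\int_{H\cap B_1\cap\partial^*E_h}\frac{|\nu_{E_h}-\nu_0|^2}{2}\,d\H^{n-1}\;\longrightarrow\;\int_{H\cap B_1\cap\partial^*F}\frac{|\nu_F-\nu_0|^2}{2}\,d\H^{n-1}=0,
\]
so that $\mathbf{exc}^H(E,0,r_h)=\mathbf{exc}^H(E_h,0,1)\to 0$ along the extracted subsequence. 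Since the original sequence $r_h\to 0^+$ was arbitrary and the limit value $0$ does not depend on the subsequence, the full limit vanishes, completing the argument. I expect the classification step (the sliding plus strong maximum principle) to be the principal difficulty: it is the ingredient that replaces the interior monotonicity formula unavailable in the anisotropic setting and is precisely where the minimality of $E$ (rather than a merely De~Giorgi--type rectifiability of the trace) enters in an essential way, as announced in the introduction.
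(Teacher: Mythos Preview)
Your reduction of the theorem to the excess-limit assertion, and the derivation of \eqref{singularreduced} and $\H^{n-2}(\Sigma_A(E;\partial H))=0$ from it via Remark~\ref{rmk:singular set} and Lemma~\ref{lem:normalization}(iii), are correct and match the paper. The blow-up step and the identification of the trace of every blow-up $F$ as the half-space $K=\{z\cdot\nu_*<0\}$ (Lemma~\ref{lem:blowup} in the paper) are also fine.

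The gap is in your classification of $F$. The sliding argument you propose does not get off the ground: the density estimates of Lemma~\ref{lemma stime di densita} are purely local and do \emph{not} give $F\subset F_0^t$ for any finite $t$. Even if you invoke the wedge property (Lemma~\ref{lem:wedge}), which bounds $|x\cdot e_n|\le L\,x_1$ on $(\partial F)^+$, you only obtain $F\subset\{x\cdot e_n\le L\,x_1\}$; since the Young direction $\nu_0=\cos\alpha_0\,e_n-\sin\alpha_0\,e_1$ may satisfy $\tan\alpha_0<L$, the upper edge of the wedge lies \emph{above} every hyperplane $\{x\cdot\nu_0=t\}$ as $x_1\to\infty$, so $t^*=\inf\{t:F\subset F_0^t\}=+\infty$ in general. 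There is therefore no first contact point to which a maximum principle could be applied. (Incidentally, Lemma~\ref{lemma strong max} as stated concerns the trace on $\partial H$ of a minimizer in $(B,\R^n)$; it is not the interior touching statement you need.)

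The paper circumvents this by \emph{not} attempting to classify an arbitrary blow-up. Instead, it runs Hardt's variational selection twice (Lemma~\ref{lem:blowup2}): among all blow-ups it picks $F_1$ minimizing $\beta_1(F)=\sup_{(\partial F)^+} (x\cdot e_n)/x_1$, uses the non-parametric comparison (Lemma~\ref{nonpar}, part two, together with Lemma~\ref{lemma comparison}) to show $(\partial H_1)^+\subset(\partial F_1)^+$, transfers superminimality to the half-space $H_1$ via Proposition~\ref{proposition subsuper}, and concludes $\nabla\Phi_0(\nu_1)\cdot e_1\ge0$ by Proposition~\ref{proposition young}. A second blow-up of $F_1$, this time maximizing the $\inf$, gives the opposite inequality for $\nu_2$; strict monotonicity of $\alpha\mapsto\nabla\Phi_0(\cos\alpha\,e_n-\sin\alpha\,e_1)\cdot e_1$ forces $\nu_1=\nu_2$ and the second blow-up is exactly a half-space. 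This yields only $\liminf_{r\to0}\mathbf{exc}^H(E,x_0,r)=0$, but that is all the $\varepsilon$-regularity criterion (Theorem~\ref{thm epsilon}, via Remark~\ref{rmk:singular set}) needs. Your stronger uniqueness-of-blow-up conclusion is not required, and in the anisotropic setting it does not follow from the tools available here.
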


The proof of Theorem  \ref{thm singular set} is based on the study of blow-ups of  $E$ at points $x_0\in \pa_{\pa H}(\pa E\cap\pa H)$. We first show that such blow-ups always exist and are non-trivial (i.e., they are neither empty nor equal to $H$), and that, if $x\in \pa_{\pa H}^*(\pa E\cap\pa H)$, then there exists an half-space inside $\pa H$ that is the trace on $\pa H$ of every such blow-up; see Lemma \ref{lem:blowup}. Then, we show that if a blow-up \(F\) of $E$ has the same trace on $\pa H$ as that left by an half-space, then \(H\cap\pa F\) is actually contained into a ``wedge'' of universal amplitude; see Lemma \ref{lem:wedge}.  At this point we follow some ideas of Hardt \cite{hardt} to show that this wedge property forces a blow-up $G$ of $F$ (at the origin) to coincide with an half-space also {\it inside} of $H$; see Lemma \ref{lem:blowup2}. Since $G$ is also a blow-up of $E$ at $x$, Theorem \ref{thm epsilon} now implies that $H\cap\pa E$ is a $C^{1,1/2}$ manifold with boundary locally at $x$, and thus that $x\not\in\S(E;\pa H)$. We premise to the proof of these lemmas the following useful definition: \\
Given a set \(E\) of locally finite perimeter in $A$ and \(x_0\in A\), we denote by \(\B_{x_0}(E)\) the family of blow-ups of \(E\) at \(x_0\), that is
\begin{equation}\label{def:blowup}
\B_{x_0}(E)=\bigg\{F\subset\R^n:
\begin{array}
  {l}
  \mbox{there exists $r_h\to 0$ as $h\to\infty$ such that}
  \\
  \mbox{$E^{x_0,r_h}\to F$ in \(L^1_{\rm loc}(\R^n)\) as $h\to\infty$}
\end{array}\bigg\}\,.
\end{equation}
By a diagonal argument, one immediately checks that \(\B_{x_0}(E)\) is closed in \(L^{1}_{\rm loc}(\R^n)\), and that
\begin{equation}\label{contenimento blowup}
\B_{0}(F)\subset \B_{x_0}(E)\,,\qquad \forall\, F\in \B_{x_0}(E)\,.
\end{equation}
We now start to implement the strategy described above.

\begin{lemma}\label{lem:blowup} If $A$ is an open set, $H=\{x_1>0\}$, $\Phi\in\X(A\cap H,\l,\ell)$, and $E$ is a $(\La,r_0)$-minimizer of $\PHI$ in $(A,H)$, then for every \(x_0\in \pa_{\pa H}  (\pa E\cap \pa H)\cap A\)
\begin{equation}
  \label{blowup non banali}
\emptyset\,, H\notin \B_{x_0}(E)\setminus\{\emptyset,H\}\,,
\end{equation}
and every \(F\in\B_{x_0}(E)\) is a minimizer of \(\PHI_{x_0}\) in \((\R^{n}, H)\) for \(\Phi_{x_0}=\Phi(x_0,\cdot)\in \X_*(\lambda)\). Moreover, for every \(x_0\in \pa_{\pa H}^*(\pa E\cap\pa H)\) there exists \(e_{x_0}\in \mathbf S^{n-1}\cap e_1^\perp\) such that
\begin{equation}\label{tracehyper}
\H^{n-1}\Big((\pa F \cap \pa H)\Delta \big\{ x\in \pa H: x\cdot e_{x_0}\le 0\big\}\Big)=0\,,\qquad \forall\, F\in \B_{x_0}(E)\,.
\end{equation}
\end{lemma}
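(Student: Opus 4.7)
The plan is to construct the blow-ups through the compactness machinery of Theorem \ref{thm compactness minimizers}, and then to read off their properties from the density estimates of Lemma \ref{lemma stime di densita}, the trace continuity \eqref{compactness 4}, and De Giorgi's theorem applied inside $\pa H$.

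First, fix any sequence $r_h\to 0^+$. Since $x_0\in\pa H$, translation and scaling leave $H$ invariant, so by Remark \ref{remark blow-up} each $E^{x_0,r_h}$ is a $(\Lambda\,r_h,r_0/r_h)$-minimizer of $\PHI^{x_0,r_h}$ in $(A^{x_0,r_h},H)$, with $\Phi^{x_0,r_h}\in\X(A^{x_0,r_h}\cap H,\lambda,\ell\,r_h)$. The estimate \eqref{Phi x ell} yields $\Phi^{x_0,r_h}\to\Phi_{x_0}$ uniformly on compact sets, with $\Phi_{x_0}\in\X_*(\lambda)$; moreover $\La r_h,\ell r_h\to 0$ and $r_0/r_h\to\infty$, and $A^{x_0,r_h}$ exhausts $\R^n$. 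Applying Theorem \ref{thm compactness minimizers} on any open exhaustion of $\R^n$ and diagonalizing, one extracts a subsequence along which $E^{x_0,r_h}\to F$ in $L^1_{\rm loc}(\R^n)$, with $F\subset H$ a minimizer of $\PHI_{x_0}$ in $(\R^n,H)$. This shows that every $F\in\B_{x_0}(E)$ is such a minimizer, and in particular it is normalized as in Lemma \ref{lem:normalization}.

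Next, I deduce non-triviality. By \eqref{pizza2} in Lemma \ref{lem:normalization} one has
\[
\pa_{\pa H}(\pa E\cap\pa H)\cap A=\cl(\pa E\cap H)\cap \pa H\cap A\subset\cl(\pa E)\cap A=\pa E=\spt\mu_E\,,
\]
so that $x_0\in A\cap\cl(H\cap\spt\mu_E)$. Thus the density estimates \eqref{stime densita volume lower} and \eqref{stime densita volume upper} apply at $x_0$: for all sufficiently small $r$,
\[
c_1\,|B_{x_0,r}\cap H|\le |E\cap B_{x_0,r}|\le (1-c_1)\,|B_{x_0,r}\cap H|\,.
\]
Passing to the limit on the rescaled sets $E^{x_0,r_h}$ (whose $L^1_{\rm loc}$ convergence to $F$ implies convergence of the relevant volume fractions) gives $c_1|B\cap H|\le|F\cap B|\le (1-c_1)|B\cap H|$, so $F$ is neither equivalent to $\emptyset$ nor to $H$. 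This proves \eqref{blowup non banali}.

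Finally, I identify the trace when $x_0\in\pa_{\pa H}^*(\pa E\cap\pa H)$. By Lemma \ref{lem:finiteperimeter} the set $T:=\pa E\cap\pa H$ is of locally finite perimeter in $A\cap\pa H$, and by Lemma \ref{lem:normalization}(ii), $T=_{\H^{n-1}}\tr_{\pa H}(E)$. Since $x_0\in\pa_{\pa H}^*T$, De Giorgi's rectifiability theorem applied inside $\pa H\simeq\R^{n-1}$ yields a vector $e_{x_0}\in \mathbf S^{n-1}\cap e_1^\perp$ (the measure-theoretic outer normal to $T$ inside $\pa H$) such that $T^{x_0,r_h}\to\{z\in\pa H:z\cdot e_{x_0}\le 0\}$ in $L^1_{\rm loc}(\pa H)$. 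On the other hand, the compactness theorem also provides the trace convergence \eqref{compactness 4}: $\tr_{\pa H}(E^{x_0,r_h})\to\tr_{\pa H}(F)$ in $L^1_{\rm loc}(\pa H)$. Since $\tr_{\pa H}(E^{x_0,r_h})=_{\H^{n-1}}T^{x_0,r_h}$, uniqueness of $L^1_{\rm loc}$ limits gives $\tr_{\pa H}(F)=_{\H^{n-1}}\{z\cdot e_{x_0}\le 0\}$, and applying Lemma \ref{lem:normalization}(ii) to $F$ rewrites this as \eqref{tracehyper}. Note that $e_{x_0}$ depends only on $x_0$ (not on the subsequence defining $F$) because the limit of $T^{x_0,r}$ is uniquely determined by De Giorgi's theorem.

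The only subtle point is the last step: making sure that the two ways of taking traces --- De Giorgi's blow-up of $T$ inside $\pa H$ versus the trace of the bulk blow-up $F$ --- yield the same limit. This is guaranteed by the trace continuity \eqref{compactness 4} of Theorem \ref{thm compactness minimizers}, which in turn relies on the perimeter convergence $P(E^{x_0,r_h};H\cap K)\to P(F;H\cap K)$ that the compactness argument also produces.
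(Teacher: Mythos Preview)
Your proof is correct and follows essentially the same route as the paper's: blow-up scaling via Remark \ref{remark blow-up}, compactness via Theorem \ref{thm compactness minimizers} (with the exhaustion/diagonalization you mention to pass from $A^{x_0,r_h}$ to $\R^n$), non-triviality from the two-sided volume density estimates at points of $\cl(H\cap\pa E)\cap\pa H$ via \eqref{pizza2}, and the trace identification by combining De Giorgi's theorem inside $\pa H$ with the trace continuity \eqref{compactness 4} and Lemma \ref{lem:normalization}(ii). Your explicit remark that $e_{x_0}$ is independent of the chosen subsequence is a nice clarification the paper leaves implicit.
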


\begin{proof} Let $x_0\in \pa_{\pa H}  (\pa E\cap \pa H)\cap A$. Given $r_h\to 0$ as $h\to\infty$, by Remark \ref{remark blow-up} \(E^{x_0,r_h}\) is a $(\La r_h,r_0/r_h)$-minimizer of $\PHI^{x_0,r_h}$ in $(A^{x_0,r_h},H)$, with $\Phi^{x_0,r_h}\in\X(A^{x_0,r_h}\cap H,\l,r_h\,\ell)$ (note that since \(x_0\in \pa H\), \(H^{x_0,r}=H\)). By Theorem \ref{thm compactness minimizers}, up to extracting a not relabeled subsequence, $E^{x_0,r_h}\to F$ in $L^1_{\rm loc}(\R^n)$ as $h\to\infty$, where $F$ is a minimizer of $\PHI_{x_0}$ on $(\R^n,H)$. Moreover, by \eqref{compactness 4}, as $h\to\infty$,
\begin{equation}
  \label{thanksgiving1}
  \tr_{\pa H}(E^{x_0,r_h})\to\tr_{\pa H}(F)\,,\qquad\mbox{in $L^1_{\rm loc}(\pa H)$}\,.
\end{equation}
Since, by \eqref{pizza2}, $\pa_{\pa H}(\pa E\cap \pa H)\cap A=\cl(\pa E\cap H)\cap \pa H\cap A$, we can apply both \eqref{stime densita volume lower} and \eqref{stime densita volume upper} to $E$ at $x_0$, to find that
\[
c_1\,|B_1\cap H|\le |E^{x_0,r_h}\cap H\cap B_1|\le (1-c_1)\,|B_1\cap H|\,,
\]
where \(c_1=c_1(n,\l)\in(0,1)\). By letting $h\to\infty$ in these inequalities, we thus find that $|F|\,|H\setminus F|>0$, and prove \eqref{blowup non banali}. Let us now assume that $x_0\in\pa^*_{\pa H}(\pa E\cap\pa H)$. By De Giorgi's rectifiability  theorem  (applied to the set of finite perimeter $\pa E\cap\pa H$ at the point $x_0$), there exists $e_{x_0}\in \mathbf S^{n-1}\cap e_1^\perp$ such that
\begin{equation}
  \label{thanksgiving2}
(\pa E\cap\pa H)^{x_0,r}\to \{x\in\pa H:x\cdot e_{x_0}\le 0\}\,,\qquad\mbox{in $L^1_{{\rm loc}}(\pa H)$ as $r\to 0^+$}\,.
\end{equation}
Now, for every $r>0$, $(\pa E\cap\pa H)^{x_0,r}=\pa (E^{x_0,r})\cap\pa H$, where $\pa (E^{x_0,r})\cap\pa H=_{\H^{n-1}}\tr_{\pa H}(E^{x_0,r})$ and $\pa F\cap\pa H=_{\H^{n-1}}\tr_{\pa H}(F)$ by statement (ii) in Lemma \ref{lem:normalization}. Therefore \eqref{tracehyper} follows by \eqref{thanksgiving1} and \eqref{thanksgiving2}.
\end{proof}

We now prove a (universal) wedge property for global minimizers with half-spaces as traces.

\begin{lemma}[Wedge property] \label{lem:wedge}
  For ever \(\l \ge1\) there exists a positive constant $L=L(n,\l)$ with the following property. If \(H=\{x_1>0\}\), \(\Phi\in \X_*(\l)\), \(E\) is a minimizer of \(\PHI\) in \((\R^n,H)\) and, for  some \(e\in \mathbf S^{n-1}\cap e_1^\perp\),
\begin{equation*}\label{tracehyperE}
\H^{n-1}\Big((\pa E \cap \pa H)\Delta \big\{ x\in \pa H: x\cdot e\le 0\big\}\Big)=0\,,
\end{equation*}
then
\begin{equation*}\label{eq:wedge}
\sup\,\left\{ \frac{|  x\cdot e |}{x\cdot e_1}:  x\in \pa E\cap H \right\}  \le L\,.
\end{equation*}
\end{lemma}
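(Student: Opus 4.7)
The plan is to construct two ``extremal'' minimizing half-spaces $V^\pm \subset H$ matching (respectively complementing) the trace of $E$ on $\pa H$, with boundaries tilted according to the anisotropic Young's law, and then sandwich $E$ between them modulo $\H^n$-null sets. WLOG $e = e_n$. Consider the map $\alpha \mapsto \nabla\Phi(\cos\alpha\, e_n - \sin\alpha\, e_1)\cdot e_1$: by \eqref{Phi 1} and one-homogeneity of $\Phi$, it equals $\mp\Phi(\mp e_1)\in[\mp\lambda,\mp\lambda^{-1}]$ at $\alpha = \pm\pi/2$, so by the intermediate value theorem there exists $\alpha^+ \in (-\pi/2, \pi/2)$ with $\nu^+ := \cos\alpha^+\, e_n - \sin\alpha^+\, e_1$ satisfying $\nabla\Phi(\nu^+) \cdot e_1 = 0$; an analogous construction yields $\nu^- := -\cos\alpha^-\, e_n - \sin\alpha^-\, e_1$ with $\nabla\Phi(\nu^-)\cdot e_1 = 0$. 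Plugging into \eqref{elliptic boundary eps} with the Young term vanishing gives $\cos\alpha^\pm \ge \lambda^{-2}$, hence $|\tan\alpha^\pm| \le L := \sqrt{\lambda^4-1}$. By Proposition \ref{proposition young}, the half-spaces
\[
V^+ = \{x \in H : x_n < \tan\alpha^+\, x_1\},\qquad V^- = \{x \in H : x_n > -\tan\alpha^-\, x_1\}
\]
are minimizers of $\PHI$ in $(\R^n, H)$, with $\tr_{\pa H}(V^+) = \{x_n < 0\} = \tr_{\pa H}(E)$ and $\tr_{\pa H}(V^-) = \{x_n > 0\}$.

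The wedge bound will follow from the inclusions $E \subset V^+$ and $H \setminus V^- \subset E$, both modulo $\H^n$-null sets, since these jointly confine $\pa E \cap H$ to $\{-\tan\alpha^-\,x_1 \le x_n \le \tan\alpha^+\,x_1\}$ and deliver the bound with $L$ as above. I will argue for $E \subset V^+$; the second inclusion follows upon applying the identical argument to $H \setminus E$, which by Remark \ref{remark complement} is a minimizer of $\tilde\Phi(\nu) := \Phi(-\nu) \in \X_*(\lambda)$, paired with direction $-e_n$. To prove $E \subset V^+$, I slide: for each $s > 0$, $V^+_s := \{x \cdot \nu^+ < s\} \cap H$ is still a minimizer by Proposition \ref{proposition young} (the minimality criterion depends only on the normal $\nu^+$), and its trace on $\pa H$ strictly contains $\tr_{\pa H}(E)$. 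It suffices to show $|E \setminus V^+_s| = 0$ for every $s > 0$ and then let $s \to 0^+$. Assuming for contradiction $|E \setminus V^+_s| > 0$, pick $R$ large so that $K_R := E \cap B_R \setminus V^+_s$ is bounded with $|K_R| > 0$, and observe that $F_R := E \setminus K_R$ and $G_R := V^+_s \cup K_R$ are admissible localized competitors for $E$ and $V^+_s$ respectively, satisfying $F_R \cap G_R = E \cap V^+_s$ and $F_R \cup G_R = E \cup V^+_s$. Summing the two minimality inequalities, combining with the submodular identity \eqref{capcup} applied to $(E, V^+_s)$, and tracking the $\pa B_R$-contributions produced by the set operations via the Gauss--Green formulas \eqref{cap}--\eqref{subset} will reduce (after cancellation of these boundary contributions) to equality in the quadratic coercive gap \eqref{minimality of half-spaces lambda neumann}; by the strict ellipticity \eqref{elliptic} this forces $\nu_E = \nu^+$ $\H^{n-1}$-a.e.\ on $\pa^* E \cap H \cap B_R \setminus V^+_s$. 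Such a flat piece of $\pa E$ with normal $\nu^+$, extended to $\pa H$, yields a trace strictly larger than $\tr_{\pa H}(E) = \{x_n \le 0\}$, violating the trace-continuity \eqref{trace continuity} and closing the argument.

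The main technical obstacle is precisely the control of the $\pa B_R$-boundary contributions in the comparison step: since both $E$ and $V^+_s$ are global unbounded minimizers, the jumps of $F_R$ and $G_R$ across $\pa B_R$ appear with the \emph{same} orientation rather than opposite ones, and so do not cancel for free. Overcoming this will require either a more refined localization (a smooth radial interpolation between $E$ and $V^+_s$ on an annulus $B_{R_2} \setminus B_{R_1}$, with $R_1, R_2$ chosen by a slicing argument so that $\H^{n-1}(\pa B_{R_i} \cap \pa^* E) = \H^{n-1}(\pa B_{R_i} \cap \pa^* V^+_s) = 0$), or a calibration argument based on the constant divergence-free vector field $\nabla\Phi(\nu^+)$, which is tangent to $\pa H$ by construction. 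Once this bookkeeping is in place, the coercive bound \eqref{minimality of half-spaces lambda neumann} cleanly identifies $\nu_E$ with $\nu^+$ on the relevant set, and the trace-continuity argument concludes the proof.
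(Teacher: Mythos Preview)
Your barrier construction is sound up to the point where you produce $\nu^\pm$ satisfying Young's law and bound $|\tan\alpha^\pm|\le\sqrt{\lambda^4-1}$; that part would even yield an explicit constant. The genuine gap is exactly the one you flag, and neither proposed fix closes it. The cut-and-paste competitors $F_R,G_R$ carry jump normals $-\nu_{B_R}$ and $+\nu_{B_R}$ on the same set $(E\setminus V^+_s)^{(1)}\cap\pa B_R$, so summing the two minimality inequalities and applying \eqref{capcup} yields only $0\le\int[\Phi(\nu_{B_R})+\Phi(-\nu_{B_R})]$, which is vacuous. If instead you combine minimality of $E$ against $F_R$ with the calibration by $T=\nabla\Phi(\nu^+)$ (divergence theorem on $K_R$; the $\pa H$-flux vanishes since $T\cdot e_1=0$ and $\tr_{\pa H}(E\setminus V^+_s)=_{\H^{n-1}}\emptyset$), you obtain
\[
\k_1\int_{\pa^*E\cap H\cap B_R\setminus V^+_s}\frac{|\nu_E-\nu^+|^2}{2}\,d\H^{n-1}\;\le\;\int_{(E\setminus V^+_s)^{(1)}\cap\pa B_R}\big(\Phi(-\nu_{B_R})+T\cdot\nu_{B_R}\big)\,d\H^{n-1}\,,
\]
and there is no a priori reason for the right-hand side to vanish along a sequence $R_k\to\infty$: by coarea that would require $|E\setminus V^+_s|<\infty$, which is essentially the conclusion you are after. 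Radial interpolation on an annulus costs comparable area and suffers the same circularity. This is not mere bookkeeping---comparing two unbounded global minimizers without any a priori confinement is the heart of the difficulty. Finally, even granting $\nu_E=\nu^+$ on $\pa^*E\cap H\setminus V^+_s$, your last step is unjustified: nothing forces that flat piece to reach $\pa H$, and \eqref{trace continuity} concerns convergent sequences, not a single set.

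The paper avoids the global comparison entirely via a short compactness--contradiction argument exploiting scale invariance. Assuming $x_h\in H\cap\pa E_h$ with $|x_h\cdot e|/(x_h\cdot e_1)\to\infty$, one normalizes (translates in $e_1^\perp\cap e^\perp$, reflects if needed, and rescales by $|x_h\cdot e|$) so that the bad point lands near $e\in\pa H$; the rescaled sets remain minimizers with the same half-space trace, and by Theorem~\ref{thm compactness minimizers} together with \eqref{compactness 4} they subconverge to a minimizer $F_\infty$ with $\tr_{\pa H}(F_\infty)=_{\H^{n-1}}\{x\in\pa H:x\cdot e\le0\}$ and $e\in\pa F_\infty\cap\pa H$ (via \eqref{kuratowski1}). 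But then Lemma~\ref{lem:densitycontact} forces $\H^{n-1}(B_{e,1/2}\cap\tr_{\pa H}(F_\infty))>0$, whereas $B_{e,1/2}\cap\pa H\cap\{x\cdot e\le0\}=\emptyset$.
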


\begin{proof} We argue by contradiction, and thus assume that for every $h\in\N$ there exist \(\Phi_h\in \X_*(\l)\) and a minimizer $E_h$ of $\PHI_h$ in $(\R^n,H)$ with
\begin{equation}\label{traceh}
\H^{n-1}\Big((\pa E_h \cap \pa H)\Delta \big\{ x\in \pa H: x\cdot e_n  \le 0\big\}\Big)=0\,,
\end{equation}
and \(x_h\in H\cap\pa E_h\) such that, up to a rotation (keeping $e_1$ fixed),
\begin{equation}\label{suph}
\lim_{h\to\infty}\frac{ |x_h\cdot e_n|}{ x_h\cdot e_1}=+\infty\,.
\end{equation}
Up to translate each set along a suitable direction in \(e_1^\perp\cap e_n^\perp\) (note that both \eqref{traceh} and \eqref{suph} are unaffected by such an operation), we can assume that \(x_h=( x_h\cdot e_1,0,\dots,0, x_h\cdot e_n)\). Furthermore, up to change \(E_h\) with \(H\setminus E_h\), and to reflect along  \(\{x_n=0\}\), we can assume that \(x_h\cdot e_n>0\) for every \(h\in \N\). We now look at the sets \(F_h=E_h^{0,x_h\cdot e_n}\). By Remark \ref{remark blow-up}, \(F_h\) is a minimizer of \(\PHI_h\) in $(\R^n,H)$, with
\begin{gather}\label{traceh x}
\H^{n-1}\Big((\pa F_h \cap \pa H)\Delta \big\{ x\in \pa H: x\cdot e_n  \le 0\big\}\Big)=0\,,
\\
\label{puntoh}
\Big( \frac{ x_h\cdot e_1}{ x_h\cdot e_n}\,,0\,,\dots\,,0\,,1\Big)\in \pa F_h\,,
\end{gather}
thanks to $x_h\cdot e_n>0$. By Theorem \ref{thm compactness minimizers}, up to extracting a not relabeled subsequence, $F_h\to F_\infty$ in $L^1_{{\rm loc}}(H)$ as $h\to\infty$, where $F_\infty$ is a minimizer of $\PHI_\infty$ in $(\R^n,H)$ and \(\Phi_\infty\in \X_*(\l)\). By \eqref{compactness 4} and \eqref{traceh x}, we have
\begin{equation}\label{traceinf}
 \H^{n-1}\Big((\pa F_\infty \cap \pa H)\Delta \big\{ x\in \pa H: x\cdot e_n  \le 0\big\}\Big)=0\,.
\end{equation}
However, \eqref{kuratowski2}, \eqref{suph} and \eqref{puntoh} imply that \(p=(0,\ldots,0,1)\in \pa F_\infty\cap \pa H\), so that, by Lemma \ref{lem:densitycontact},  \(\H^{n-1}(\pa F_\infty\cap\pa H\cap B_{p,1/2})>0\). This is a contradiction to \eqref{traceinf}, and the lemma is proved.
\end{proof}

The following lemma, which is the analogous of \cite[Lemma 4.5]{hardt}, shows that for every point in the reduced boundary of the trace of a minimizer it is possible to find a blow-up given by the intersection of \(H\) with an half-space.

\begin{lemma}\label{lem:blowup2}
If $A$ is an open set, $H=\{x_1>0\}$, $\Phi\in\X(A\cap H,\l,\ell)$, and $E$ is a $(\La,r_0)$-minimizer of $\PHI$ in $(A,H)$, then for every \(x_0\in \partial_{\pa H}^*(\pa E\cap \pa H)\) there exists  \(\nu\in \mathbf S^{n-1}\) with
\begin{equation}\label{tesiblowup}
H\cap \{\nu \cdot x\le 0 \}\in \B_{x_0}(E)\,,\qquad \nabla \Phi(x_0,\nu)\cdot e_1=0\,.
\end{equation}
\end{lemma}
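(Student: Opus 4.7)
I would deduce the lemma by an iterated blow-up argument in the spirit of Hardt, combining the two previous lemmas with the structure of autonomous integrands.

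First I would set up the initial blow-up. Pick any sequence $r_h\to 0^+$ and, by Theorem \ref{thm compactness minimizers} applied to $E^{x_0,r_h}$, extract (along a not relabeled subsequence) a blow-up $F\in\B_{x_0}(E)$. Lemma \ref{lem:blowup} gives that $F$ is a minimizer of $\PHI_{x_0}$ in $(\R^n,H)$, with $\Phi_{x_0}=\Phi(x_0,\cdot)\in\X_*(\lambda)$, and that $\tr_{\pa H}(F)$ coincides $\H^{n-1}$-a.e. with the half-space $T:=\{x\in\pa H:x\cdot e_{x_0}\le 0\}$ since $x_0\in \pa^*_{\pa H}(\pa E\cap\pa H)$. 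Applying Lemma \ref{lem:wedge} yields a universal $L=L(n,\lambda)$ with $\pa F\cap H\subset W_L:=\{x\in H:|x\cdot e_{x_0}|\le L\,x_1\}$. The complement $H\setminus W_L$ splits into the two connected open half-wedges $W^{\pm}=\{x\in H:\pm x\cdot e_{x_0}>L\,x_1\}$; since $\pa F$ avoids each $W^\pm$, by Lemma \ref{lem:normalization} each is either entirely inside $F^{(1)}$ or inside $F^{(0)}$, and the trace identity \eqref{eq:traceapprox} together with the density estimate of Lemma \ref{lem:densitycontact} (applied to $F$ and to $H\setminus F$ near points of $\pa H\cap W^{\pm}$) forces $W^-\subset F^{(1)}$ and $W^+\subset F^{(0)}$ up to $\H^n$-null sets. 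Hence $F$ agrees with the half-space $\{x\cdot e_{x_0}<0\}\cap H$ outside $W_L$.

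Next, I would iterate the blow-up at the origin. By \eqref{contenimento blowup}, every $G\in\B_{0}(F)$ belongs to $\B_{x_0}(E)$ and thereby inherits, via Lemma \ref{lem:blowup} and Lemma \ref{lem:wedge} applied to $G$, both the trace identity $\tr_{\pa H}(G)=T$ and the wedge inclusion $\pa G\cap H\subset W_L$, together with the identification of $G$ with $\{x\cdot e_{x_0}<0\}\cap H$ outside $W_L$. Since $\Phi_{x_0}$ is autonomous, the minimality of $F$ at $0$ together with the density estimates (Lemma \ref{lemma stime di densita}) let me choose such a $G$ which is moreover invariant under dilation at the origin, i.e., a cone $\lambda G=G$ for every $\lambda>0$. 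For such a conical $G$, the half-space behavior on the complement of the wedge extends inside $W_L$ as follows: the function $w_G$ of Lemma \ref{lemma strong max} is positively $1$-homogeneous along rays from the origin, so it must be the restriction to $H$ of a linear function, whence $G=\{x\cdot\nu\le 0\}\cap H$ (up to null sets) for some $\nu\in\mathbf{S}^{n-1}$ with $\mathbf{e}_1(\nu)=e_{x_0}$. Finally, Proposition \ref{proposition young} forces $\nabla\Phi_{x_0}(\nu)\cdot e_1=0$, proving \eqref{tesiblowup} with this $G\in\B_{x_0}(E)$.

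The heart of the argument, and its main difficulty, is producing the conical blow-up $G$ in the iteration step: at a free-boundary point the standard monotonicity formula is unavailable for arbitrary $\Phi_{x_0}\in\X_*(\lambda)$, and one has to exploit instead the rigidity coming jointly from the wedge property, the half-space trace, and the autonomous minimality of $F$. Granted this, the comparison and maximum-principle tools of Section \ref{section contact sets} (Lemma \ref{lemma comparison} and Lemma \ref{lemma strong max}) give enough rigidity to rule out any non-planar conical structure inside $W_L$, forcing the desired planar conclusion.
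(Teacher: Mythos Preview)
Your proposal has a genuine gap at the step where you assert the existence of a \emph{conical} blow-up $G\in\B_0(F)$. You write that ``the minimality of $F$ at $0$ together with the density estimates let me choose such a $G$ which is moreover invariant under dilation at the origin,'' but density estimates give only compactness, not monotonicity of a scale-invariant quantity, and for a generic $\Phi_{x_0}\in\X_*(\lambda)$ there is no monotonicity formula at free-boundary points (or even at interior points). You yourself flag this as ``the main difficulty,'' but you never resolve it; the argument as written simply assumes what needs to be proved. Moreover, even granting a conical $G$, your conclusion that $w_G$ is linear and hence $\pa G\cap H$ is planar is not justified: a cone with vertex at the origin and boundary inside the wedge $W_L$ need not be a half-space, and Lemmas \ref{lemma comparison}--\ref{lemma strong max} do not by themselves force planarity.

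The paper's proof avoids monotonicity entirely by a variational selection on the compact family $\B_0(E)$. One defines the ``upper slope'' $\beta_1(F)=\sup_{H\cap\pa F}(x\cdot e_{x_0})/x_1$, shows it is lower semicontinuous on $\B_0(E)$, and picks $F_1$ minimizing $\beta_1$. The extremal property of $F_1$, combined with the comparison lemma and Hopf's boundary lemma (Lemma \ref{nonpar}, part two), forces the plane $\pa H_1=\{x\cdot\nu_1=0\}\cap H$ to lie inside $\pa F_1$; then Proposition \ref{proposition subsuper} transfers superminimality to $H_1$, and Proposition \ref{proposition young} gives $\nabla\Phi_{x_0}(\nu_1)\cdot e_1\ge 0$. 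A symmetric argument with the ``lower slope'' $\beta_2$ on $\B_0(F_1)$ produces $\nu_2$ with $\nabla\Phi_{x_0}(\nu_2)\cdot e_1\le 0$, and strict monotonicity of $\alpha\mapsto\nabla\Phi_{x_0}(\cos\alpha\,e_{x_0}-\sin\alpha\,e_1)\cdot e_1$ pins $\nu_1=\nu_2$, yielding the desired half-space blow-up. No cone ever needs to be produced.
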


\begin{proof} Without loss of generality we take $x_0=0$, and then set \(\Phi_0(\nu)=\Phi(0,\nu)\), so that $\Phi_0\in \X_*(\l)$. As usual,  we shall set  $G^+=G\cap H$ for every $G\subset\R^n$. By Lemma \ref{lem:blowup}, \(\B_{0}(E)\) is a non-empty family of  minimizers of \(\PHI_0\) in \((\R^n,H)\). By Theorem \ref{thm compactness minimizers}, $\B_0(E)$ is also a compact subset of $L^1_{{\rm loc}}(\R^n)$. By \eqref{tracehyper}, there exists a vector \(e\in \mathbf S^{n-1}\cap e_1^\perp\)  such that
\begin{equation}
  \label{mytrace}
  \H^{n-1}\Big((\pa F \cap \pa H)\Delta \big\{ x\in \pa H: x\cdot e\le 0\big\}\Big)=0\,,\qquad\forall F\in\B_0(E)\,.
\end{equation}
Up to a rotation, we can assume that \(e=e_{n}\), so that Lemma \ref{lem:wedge} ensures that
\begin{equation*}
\sup_{(\pa F)^+}\,\frac{ |x\cdot e_n|}{x \cdot e_1}\le L\,,\qquad\forall F\in \B_{0}(E)\,,
\end{equation*}
where $L=L(n,\l)$. Let us now define $\beta_1:\B_0(E)\to[-L,L]$ by setting
\begin{eqnarray}\label{defbeta1}
\beta_1(F)=\sup_{(\pa F)^+}\,\frac {x\cdot e_n}{x \cdot e_1}\,,&&\qquad F\in\B_0(E)\,.
\end{eqnarray}
We notice that $\beta_1$ is lower semicontinuous on $\B_0(E)$ with respect to the $L^1_{{\rm loc}}(\R^n)$ convergence. Indeed, if \(F_{h},F\subset \B_{0}(E)\) and $F_h\to F$ in $L^1_{\rm loc}(\R^n)$, then, by \eqref{kuratowski2}, for every \(x\in H\cap \pa F\) there exist \(x_h\in H\cap\pa F_h\), $h\in\N$, such that \(x_h \to x\) as $h\to\infty$. Hence,
\[
\frac{x\cdot e_n}{x\cdot e_1}=\lim_{h\to\infty} \frac{x_h\cdot e_n}{ x_h \cdot e_1}\le \liminf_{h\to\infty} \beta_1 (F_h)\,,
\]
as claimed. Since $\beta_1$ is lower semicontinuous and $\B_0(E)$ is a compact subset of  $L^1_{{\rm loc}}(\R^n)$, we can find $F_1\in\B_0(E)$ such that
\begin{equation}\label{minF1}
\beta_1(F_1)\le\beta_1(F)\,,\qquad\forall F\in \B_0 (E)\,.
\end{equation}
Correspondingly, we define \(\alpha_1 \in (-\pi/2,\pi /2)\) so that $\tan \alpha_1=\beta_1(F_1)$ and set
\[
\nu_1=\cos \alpha_1\, e_n-\sin \alpha_1 \,e_1\in \mathbf S^{n-1}\,,\qquad H_1=\Big\{x\in H:x\cdot\nu_1\le0\Big\}\,.
\]
We now claim that
\begin{eqnarray}\label{hypercont}
(\pa H_1)^+ \subset (\pa F_1)^+\,.
\end{eqnarray}
To prove \eqref{hypercont}, we first take into account the definition of $\beta_1$ to find
\begin{equation}
  \label{meme1}
  (\pa F_1)^+\subset  H_1\,.
\end{equation}
By  \eqref{mytrace} and \eqref{meme1}, the upper semicontinuous function $w_{F_1}:\R^{n-1}_+\to [-\infty,+\infty)$ defined by setting
\[
w_{F_1}(z)=\sup\Big\{t\in\R:  (z,t)\in \pa F_1\Big\}\,,\qquad z\in\R^{n-1}_+\,,
\]
(here $\R^{n-1}_+=\{z\in\R^{n-1}:z_1>0\}$) satisfies
\begin{eqnarray}
  \label{wF1 1}
  F_1\subset\Big\{x\in H:\q x\le w_{F_1}(\p x)\Big\}\,,&&
  \\
  \label{wF1 2}
  w_{F_1}(z)\le \beta_1(F_1)\,z\cdot e_1\,,&&\qquad\forall z\in\R^{n-1}_+\,.
\end{eqnarray}
Now, if \eqref{hypercont} fails, see Figure \ref{fig fail},
\begin{figure}
  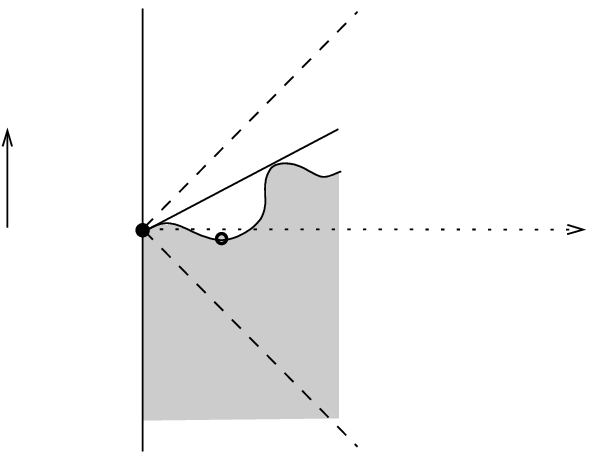\caption{\small{Failure of \eqref{hypercont}.}}\label{fig fail}
\end{figure}
then there exists $x_*\in (\pa F_1)^+$ such that
\begin{equation}
  \label{wF1 3}
  w_{F_1}(\p x_*)< \beta_1(F_1)\,x_*\cdot e_1\,.
\end{equation}
By \eqref{wF1 2} and \eqref{wF1 3}, if we set \(r_*=|\p x_*|\) and $z_*=\p x_*$, then we can find \(\vphi\in C^{1,1}(\pa(\D_{r_*}^+))\) such that
\begin{eqnarray}\label{w0 prop}
w_{F_1}(z)\le \vphi (z)\le \beta_1(F_1)\,z\cdot e_1\,,&&\qquad\forall z\in \pa (\D_{r_*}\cap H)
\\\label{diomio}
\vphi(z_*)<\beta_1(F_1)\,z_*\cdot e_1\,.
\end{eqnarray}
In particular, $\vphi=0 $ on $\D_{r_*}\cap\pa H$. By part two of Lemma \ref{nonpar}, there exists \(u_0\in C^{1,1}(\D_{r_*}^+)\cap\Lip(\cl(\D_{r_*}^+))\) such that, if we set $\Phi_0^\#(\xi)=\Phi_0(\xi,-1)$ for $\xi\in\R^{n-1}$, then
\begin{equation*}
\begin{cases}
\Div (\nabla_{\xi} \Phi_0^\#(\nabla u))=0\,,\qquad&\textrm{in \(\D_{r_*}^+\)}\,,
\\
u=\varphi\,,&\textrm{on \(\pa (\D_{r_*}^+)\)}\,,
\end{cases}
\end{equation*}
with
\begin{equation}\label{koln}
|\nabla u(0)|=|\nabla u(0)\cdot e_1 |<\beta_1(F)\,.
\end{equation}
By \eqref{wF1 1} and \eqref{w0 prop} we can apply Lemma \ref{lemma comparison} to infer that
\begin{equation}
  \label{w01x}
  F_1\cap(\D_{r_*}^+\times\R)\subset_{\H^n}\Big\{(z,t)\in\D_{r_*}^+\times\R:t\le u(z)\Big\}\,.
\end{equation}
If we now pick a sequence $\{s_h\}_{h\in\N}$ such that $s_h\to 0$ as $h\to\infty$ and $(F_1)^{0,s_h}\to\widetilde{F_1}$ in $L^1_{\rm loc}(\R^n)$, then, by \eqref{w01x} and $u(0)=0$, we find that
\[
\widetilde{F_1}\subset\Big\{(z,t):t\le (\nabla u_0(0)\cdot e_1)\,(z\cdot e_1)\Big\}\,,
\]
so that, thanks to \eqref{koln}, $\beta_1(\widetilde{F_1})<\beta_1(F_1)$. Since $\widetilde{F_1}\in \B_{0}(F_1)\subset \B_{0}(E)$, this contradicts \eqref{minF1}, and completes the proof of \eqref{hypercont}. By \eqref{mytrace}, \eqref{hypercont}, and \eqref{meme1},
\begin{equation}
  \label{F1 subset H1}
  \pa H_1\subset_{\H^{n-1}} \pa F_1\,\qquad\mbox{and}\qquad F_1\subset H_1\,.
\end{equation}
Since $F_1$ is a  minimizer of $\PHI_0$ on $(\R^n,H)$, by \eqref{F1 subset H1} and by Proposition \ref{proposition subsuper} we find that $H_1$ is a superminimizer of $\PHI_0$ on $(\R^n,H)$. Hence, by Proposition  \ref{proposition young},
\begin{equation}\label{supersub1}
\nabla\Phi_0(\nu_1)\cdot e_1\ge 0\,.
\end{equation}
In order to prove the lemma, we now take a further blow-up of $F_1$ at $0$. Precisely, we consider $\beta_2:\B_0(F_1)\to[-L,L]$ to be defined as
\begin{eqnarray}
\beta_2(F)=\inf_{(\pa F)^+}\,\frac {x\cdot e_n}{x \cdot e_1}\,,&&\qquad F\in\B_0(F_1)\,.
\end{eqnarray}
Since $\beta_2$ is upper semicontinuous and $\B_0(F_1)$ is a compact subset of  $L^1_{{\rm loc}}(\R^n)$, we can find $F_2\in\B_0(F_1)$ such that
\begin{equation}
  \label{maxF2}
  \beta_2(F)\le\beta_2(F_2)\,,\qquad\forall F\in\B_0(F_1)\,.
\end{equation}
If we now define \(\alpha_2 \in (-\pi/2,\pi /2)\) so that $\tan \alpha_2=\beta_2(F_2)$, and set
\[
\nu_2=\cos \alpha_2\, e_n-\sin \alpha_2 \,e_1\in \mathbf S^{n-1}\,,\qquad H_2=\Big\{x\in H:x\cdot\nu_2\le0\Big\}\,,
\]
then, by arguing as in the proof of \eqref{hypercont} we find that $\pa H_2\subset_{\H^{n-1}} \pa F_2$ and $H_2\subset F_2$. By Proposition \ref{proposition subsuper} \(H_2\) is a subminimizer and hence Proposition \ref{proposition young} implies
\begin{equation}\label{supersub2}
\nabla\Phi_0(\nu_2)\cdot e_1\le 0\,.
\end{equation}
Note now that the second inclusion in \eqref{F1 subset H1} implies \(F\subset H_1\) for every \(F\in \B_0 (F_1)\). In particular, \(F_2\subset H_1\) and thus \(\beta_2(F_2)\le \beta_1(F_2)=\beta_1(F_1)\), that is, $\a_2\le\a_1$. If we set, as in Lemma \ref{lemma 0},
\[
f(\alpha)=\nabla \Phi_0(\cos \alpha\, e_n-\sin \alpha \,e_1)\cdot e_1\,,\qquad \alpha\in [-\pi/2,\pi/2]\,,
\]
then \eqref{supersub1} and \eqref{supersub2} give $f(\alpha_1)\ge 0$ and $f(\alpha_2)\le 0$, and since $f'(\a)<0$ by \eqref{fprimo}, we must conclude that $\a_1=\a_2$. In particular, $H_2=F_2\in\B_0(E)$ and $\nabla\Phi_0(\nu_2)\cdot e_1=0$, as required.
\end{proof}

\begin{proof}
  [Proof of Theorem \ref{thm singular set}] By \eqref{regularset} it is clear that if $x_0\in(A\cap\pa_{\pa H}(\pa H\cap\pa E))\setminus \S_A(E;\pa H)$, then $x_0\in\pa^*_{\pa H}(\pa E\cap\pa H)$. This proves the inclusion $\supset$ in \eqref{singularreduced}. To complete the proof of \eqref{singularreduced} we are going to show that if $x_0\in\pa^*_{\pa H}(\pa E\cap\pa H)$, then
  \begin{equation}
    \label{thanksgiving3}
      \liminf_{r\to 0^+}\exc^H(E,x_0,r)=0\,.
  \end{equation}
  To this end, we exploit Lemma \ref{lem:blowup2} to find a sequence $\{r_h\}_{h\in\N}$ with $r_h\to 0$ as $h\to\infty$, such that
  \[
  E^{x_0,r_h}\to F=H\cap \{\nu \cdot x\le 0 \}\qquad \textrm{ in $L^1_{\rm }(\R^n)$}\,,
  \]
  as $h\to\infty$. By scale invariance of $\exc^H$ and by arguing as in Remark \ref{remark continuity excess} we thus find
  \[
  \lim_{h\to\infty}{\bf exc}^H(E,x_0,r_h)=\lim_{h\to\infty}{\bf exc}^H(E^{x_0,r_h},0,1)={\bf exc}^H(H\cap \{\nu \cdot x\le 0 \},0,1)=0\,,
  \]
  that is \eqref{thanksgiving3}. This proves \eqref{singularreduced}, and then $\H^{n-2}(\S_A(E,\pa H))=0$ follows by \eqref{pizza}.
\end{proof}

\section{Proofs of Theorems \ref{thm capillari} and \ref{thm main}}\label{section examples} In this section Theorem  \ref{thm capillari}, Corollary \ref{corollario capillari}, and Theorem \ref{thm main} are finally deduced from Theorems \ref{thm epsilon} and \ref{thm singular set}. The key step is of course getting rid of the relative adhesion coefficient $\s$, as we do in the following lemma.

\begin{lemma}\label{lemma da 1 a 2}
Given $\l\ge1$ and $\La,\ell,L\ge0$, there exist constants \( \ell_0, \Lambda_0\ge 0\) depending on $n$, $\l$, $\La$, $\ell$ and $L$ only, with the following property. If \(A\) is an open set in $\R^n$, $H=\{x_1>0\}$, $\Phi\in \X(A\cap H,\l, \ell)$, \(\sigma\in \Lip( A\cap \pa H)\) with $\Lip(\s)\le L$ and
\begin{equation}\label{sigmalim22}
-\Phi(z,e_1)<\sigma(z)<\Phi(z,-e_1)\,,\qquad \forall\, z\in A\cap \pa H\,,
\end{equation}
\(E\) is a \((\L,r_0)\)-minimizer of $(\PHI,\s)$ in $(A,H)$ and \(x_0\in  A\cap\pa H\), then there exist $\l_*\ge 1$ and $\varrho_*>0$ and \(\Psi\in   \X(B_{x_0,\varrho_*} \cap H, \l_*,  \ell_0)\) such that \(E\) is a \(( \L_0,r_0)\)-minimizer of \(\PSI\) in $(B_{x_0,\varrho_*},H)$. Moreover, for every $x\in B_{x_0,\varrho}\cap\pa H$ and $\nu\in{\bf S}^{n-1}$ one has
\begin{equation}\label{young forever}
\nabla\Psi(x,\nu)\cdot e_1=0\qquad\textrm{if and only if}\qquad \nabla \Phi(x,\nu)\cdot (-e_1)=\sigma(x)\,.
\end{equation}
\end{lemma}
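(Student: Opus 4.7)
The plan is to absorb the contact-angle surface integral over $\pa H$ into a modified bulk integrand via the divergence theorem, reducing $E$ to an almost-minimizer of a $\s$-free functional $\PSI$. To this end, first extend $\s$ from $A\cap\pa H$ to a Lipschitz function $\tilde\s:\R^n\to\R$ with $\Lip(\tilde\s)\le CL$ (Kirszbraun), and set
\[
\Psi(x,\nu) = \Phi(x,\nu) + \tilde\s(x)\,(\nu \cdot e_1)\,, \qquad T(x) = -\tilde\s(x)\,e_1\,.
\]
Since the perturbation $\tilde\s(x)(\nu\cdot e_1)$ is linear in $\nu$, the convexity, one-homogeneity, Hessian bounds \eqref{Phi nabla 1}, and ellipticity \eqref{elliptic} of $\Phi(x,\cdot)$ transfer unchanged to $\Psi(x,\cdot)$ (with the same $\lambda$), and the Lipschitz constants in $x$ of $\Psi$ and $\nabla_\nu\Psi$ are at most $\ell_0 := \ell + CL$. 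The only delicate requirement of Definition \ref{def:integrand} is the positive lower bound in \eqref{Phi 1}. Viewing $\Phi(x,\cdot)$ as the support function of its unit Wulff shape $W_x^\Phi\subset\R^n$, we have $\Psi(x,\nu) > 0$ for every $\nu\in\mathbf S^{n-1}$ if and only if $-\tilde\s(x)\,e_1\in{\rm int}(W_x^\Phi)$, i.e.\ $-\Phi(x,e_1) < \tilde\s(x) < \Phi(x,-e_1)$. The strict inequality in \eqref{sigmalim22}, together with continuity of $\Phi$ and of $\tilde\s$, then yields $\varrho_*>0$ and $\de>0$ such that
\[
-\Phi(x,e_1) + \de \le \tilde\s(x) \le \Phi(x,-e_1) - \de\,, \qquad \forall x\in\cl(B_{x_0,\varrho_*}\cap H)\,,
\]
whence $\Psi(x,\nu)\ge\de$ on $\mathbf S^{n-1}$ and $\Psi\in\X(B_{x_0,\varrho_*}\cap H,\lambda_*,\ell_0)$ for some $\lambda_*\ge 1$.

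For the minimality transfer, observe that $T\cdot\nu_E = -\tilde\s\,(\nu_E\cdot e_1)$ on $H\cap\pa^*E$, while $T\cdot\nu_E = \tilde\s = \s$ on $\pa^*E\cap\pa H$, since $\nu_E = -e_1$ there by \eqref{subset of H}. Given a competitor $F\subset H$ with $E\Delta F\cc W\cc B_{x_0,\varrho_*}$, choose $W$ satisfying $\H^{n-1}(\pa W\cap(\pa^*E\cup\pa^*F))=0$ (by a standard slicing argument) and apply the Gauss--Green formula for $T$ on each of the sets of finite perimeter $E\cap W$ and $F\cap W$. Since $E\cap\pa W = F\cap\pa W$ (a.e.), the integrals on $\pa W$ cancel, giving
\[
\begin{aligned}
&\PHI(E;H\cap W) + \int_{W\cap\pa^*E\cap\pa H}\!\!\s\,d\H^{n-1} - \PHI(F;H\cap W) - \int_{W\cap\pa^*F\cap\pa H}\!\!\s\,d\H^{n-1}\\
&\qquad = \PSI(E;H\cap W) - \PSI(F;H\cap W) + \int_{F\cap W}\pa_1\tilde\s\,dx - \int_{E\cap W}\pa_1\tilde\s\,dx\,.
\end{aligned}
\]
Since $\|\pa_1\tilde\s\|_\infty\le CL$, the bulk difference is controlled by $CL\,|E\Delta F|$; combining with the $(\Lambda,r_0)$-minimality of $E$ for $(\PHI,\s)$ yields $\PSI(E;H\cap W)\le\PSI(F;H\cap W) + (\Lambda + CL)\,|E\Delta F|$, so $E$ is a $(\Lambda_0,r_0)$-minimizer of $\PSI$ in $(B_{x_0,\varrho_*},H)$ with $\Lambda_0 := \Lambda + CL$.

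Finally, $\nabla_\nu\Psi(x,\nu) = \nabla_\nu\Phi(x,\nu) + \tilde\s(x)\,e_1$, so on $\pa H$ the identity $\nabla\Psi(x,\nu)\cdot e_1 = 0$ is equivalent to $\nabla\Phi(x,\nu)\cdot(-e_1) = \s(x)$, i.e.\ to the original Young's law \eqref{youngs law anisotropic} with $\nu_H = -e_1$; this proves \eqref{young forever}. The main obstacle in the argument is precisely the quantitative positive lower bound on $\Psi$: this is where the \emph{strict} inequality in \eqref{sigmalim22} enters essentially, for if equality held at $x_0$ the Wulff-shape argument would only yield $\Psi(x_0,\cdot)\ge 0$ with a zero at some unit $\nu$, violating \eqref{Phi 1} and destroying the ellipticity needed to recast $E$ as an almost-minimizer of a regular elliptic integrand with no contact-angle term.
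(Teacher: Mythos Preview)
Your overall strategy is the same as the paper's: absorb the boundary term via a divergence-theorem identity using a vector field $T$ with $-T\cdot e_1=\s$ on $\pa H$, then set $\Psi(x,\nu)=\Phi(x,\nu)-T(x)\cdot\nu$. The minimality transfer and the Young's-law equivalence \eqref{young forever} go through exactly as you wrote them.

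The genuine gap is in your positivity argument for $\Psi$. Your claim that $-\tilde\s(x)\,e_1\in{\rm int}(W_x^\Phi)$ is \emph{equivalent} to $-\Phi(x,e_1)<\tilde\s(x)<\Phi(x,-e_1)$ is false in general: the two scalar inequalities only record the $\nu=\pm e_1$ supporting constraints. The intersection of the Wulff shape $W_x^\Phi$ with the $e_1$-axis is the segment $[-\inf_\xi\Phi(x,(-1,\xi)),\,\inf_\xi\Phi(x,(1,\xi))]\,e_1$, which is strictly shorter than $[-\Phi(x,-e_1),\Phi(x,e_1)]\,e_1$ whenever $\nabla\Phi(x,\pm e_1)$ is not parallel to $e_1$. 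For a tilted anisotropy (e.g.\ a smooth strictly convex body whose $e_1$-supporting point is off the $e_1$-axis) one can have $\s(x_0)$ satisfying \eqref{sigmalim22} strictly while $\Phi(x_0,\nu)+\s(x_0)\,\nu_1<0$ for some $\nu$, so your $\Psi$ fails the lower bound in \eqref{Phi 1} and is not in $\X$. In particular your ``whence $\Psi(x,\nu)\ge\de$'' does not follow from the displayed two-sided bound on $\tilde\s$.

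The paper fixes this not by a different estimate but by a different choice of $T$: instead of $-\tilde\s\,e_1$, it takes $T(x)=-\dfrac{\s(\h x)}{\Phi_0(e_1)}\nabla\Phi_0(e_1)$ when $\s(x_0)\le 0$ (and the analogous expression with $-e_1$ when $\s(x_0)>0$), where $\Phi_0=\Phi(x_0,\cdot)$. This $T$ points along $\nabla\Phi_0(e_1)\in\pa W_{x_0}^\Phi$, so at $x_0$ one has $T(x_0)=t\,\nabla\Phi_0(e_1)$ with $t=-\s(x_0)/\Phi_0(e_1)\in[0,1)$ by \eqref{sigmalim22}; since $0\in{\rm int}(W_{x_0}^\Phi)$ and $\nabla\Phi_0(e_1)\in\pa W_{x_0}^\Phi$, convexity gives $T(x_0)\in{\rm int}(W_{x_0}^\Phi)$ automatically, i.e.\ $\Psi(x_0,\cdot)>0$. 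The quantitative lower bound then follows from the convexity inequality $\Phi_0(\nu)\ge\nabla\Phi_0(e_1)\cdot\nu$ via a short case analysis in the sign and size of $\nabla\Phi_0(e_1)\cdot\nu$. Note also that this $T$ still satisfies $-T\cdot e_1=\s$ on $\pa H$ because $\nabla\Phi_0(e_1)\cdot e_1=\Phi_0(e_1)$.
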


\begin{proof} Let us assume, without loss of generality, that \(x_0=0\). We set $\Phi_0(\nu)=\Phi(0,\nu)$ and  $G^+=G\cap H$ for every $G\subset\R^n$. We want to prove the existence of ${\varrho_*}>0$, $\l_*\ge1$, and \(\Psi\in   \X(B_{\varrho_*}^+, \l_*,  \ell_0)\) such that
\begin{equation}
  \label{certo}
  \PSI(E;W^+)\le\PSI(F;W^+)+\Lambda_0\,|E\Delta F|\,,
\end{equation}
whenever $F\subset H$ with $E\Delta F\cc W$ and $W$ is an open set with $W\cc B_{{\varrho_*}}$ and $\diam(W)<2\,r_0$. To this end, let us fix such a competitor $F$, with the requirement that ${\varrho_*}<\dist(0,\pa A)$), and, in correspondence to $F$, let $W_0$ be an open set with smooth boundary such that $E\Delta F\cc W_0\cc B_{{\varrho_*}}$, $\diam(W_0)<2\,r_0$, and
\begin{equation}
  \label{W primo}
  \H^{n-1}(\pa W_0\cap \pa^* E)=\H^{n-1}(\pa W_0\cap \pa^* F)=0\,.
\end{equation}
Since $E$ is a $(\La,r_0)$-minimizer of $(\PHI,\s)$ in $(A,H)$, we certainly have
\begin{equation}
  \label{certo certo}
  \PHI(E;W_0^+)+\int_{W_0\cap\pa H\cap\pa^*E}\s\,d\H^{n-1}\le\PHI(F;W_0^+)
  +\int_{W_0\cap\pa H\cap\pa^*F}\s\,d\H^{n-1}+\Lambda\,|E\Delta F|\,.
\end{equation}
Next we define a Lipschitz vector field $T:B_{\varrho_*}\to\R^n$ by setting, for every $x\in B_{\varrho_*}$,
\begin{eqnarray*}
  T(x)=-\frac{\s(\h x)}{\Phi_0(e_1)}\,\nabla\Phi_0(e_1)\,,\qquad\mbox{if $\s(0)\le 0$}\,,
  \\
  T(x)=\frac{\s(\h x)}{\Phi_0(-e_1)}\,\nabla\Phi_0(-e_1)\,,\qquad\mbox{if $\s(0)>0$}\,,
\end{eqnarray*}
where $\h:\R^n\to\pa H=\{x_1=0\}$ denotes the projection over $\pa H$. (The definition is well-posed since $0\in\pa H$, and thus $\h x\in A\cap\pa H$ whenever $x\in B_{\varrho_*}\subset A$.) Notice that, in both cases, since $\nabla\Phi_0(e)\cdot e=\Phi_0(e)$ for every $e\in{\bf S}^{n-1}$, one has
\begin{equation}
  \label{che bello T}
  -T(x)\cdot e_1=\s(x)\,,\qquad\forall x\in B_{\varrho_*}\cap\pa H\,.
\end{equation}
Since $E\subset H$, by \eqref{subset of H}, by \eqref{che bello T} and by applying the divergence theorem to $T$ over $E\cap W_0$,
\[
\int_{E\cap W_0}\Div T=\int_{W^+_0\cap\pa^*E}T\cdot\nu_E\,d\H^{n-1}+\int_{W_0\cap \pa H\cap\pa^*E}\s\,d\H^{n-1}+\int_{E^{(1)}\cap\pa W_0}T\cdot\nu_{W_0}\,d\H^{n-1}\,,
\]
and an analogous relation holds true with $F$ in place of $E$. By plugging these relations into \eqref{certo certo}, and taking also into account that $E^{(1)}\cap\pa W_0=F^{(1)}\cap\pa W_0$ since $E\Delta F\cc W_0$, one finds
\begin{eqnarray}
  \label{certo certo 2}
  &&\PHI(E;W_0^+)-\int_{W_0\cap\pa H\cap\pa^*E}T\cdot\nu_E\,d\H^{n-1}
  \\\nonumber
  &\le&
  \PHI(F;W_0^+)
  -\int_{W_0\cap\pa H\cap\pa^*F}T\cdot\nu_F\,d\H^{n-1}+\Big(\Lambda+\sup_{B_{\varrho_*}}|\Div T|\Big)\,|E\Delta F|\,.
\end{eqnarray}
Thus, if we set
\[
\Psi(x,\nu)=\Phi(x,\nu)-T(x)\cdot\nu\,,\qquad (x,\nu)\in B_{\varrho_*}\times\R^n\,,
\]
then $E$ is a $(\La_0,r_0)$-minimizer of $\PSI$ in $(B_{\varrho_*},H)$, with $\La_0=\La+n\,L\,\l^2$ (as $|\nabla T|\le L\,\l^2$) provided we can check that $\Psi\in\X(B_{\varrho_*}^+,\l_*,\ell_0)$ for suitable values of $\l_*$ and $\ell_0$. A quick inspection of Definition \ref{def:integrand} shows that indeed \eqref{Phi nabla 1}, \eqref{Phi x ell}, \eqref{elliptic} and the upper bound in \eqref{Phi 1} hold true with suitable values of $\l_*$ and $\ell_0$ depending on $\l$, $\ell$ and $L$ only. (In checking this, it is useful notice that $|\s|\le\l$ on $A\cap\pa H$ by \eqref{sigmalim22}.) One has to be more careful in the verification of the lower bound in \eqref{Phi 1}, and indeed this is the place where the values of $\l_*$ and ${\varrho_*}$ has to be chosen in dependence of the positivity of
\[
1+\frac{\s(0)}{\Phi_0(e_1)}\,,\qquad\mbox{if $\s(0)\le 0$}\,,
\]
or in dependence of the positivity of
\[
1-\frac{\s(0)}{\Phi_0(-e_1)}\,,\qquad\mbox{if $\s(0)>0$}\,.
\]
(Of course, both positivity properties descend from \eqref{sigmalim22}.) Precisely, let us first consider the case when $\s(0)\le 0$, and notice that by \eqref{Phi 1} and \eqref{Phi nabla 1} (applied to $\Phi$) one has
\begin{equation}
  \label{indiana}
  \Psi(x,\nu)\ge\Phi_0(\nu)+\frac{\s(0)}{\Phi_0(e_1)}\,\nabla\Phi_0(e_1)\cdot\nu-(\ell+L\l^2)\,{\varrho_*}\,,
\end{equation}
for every $x\in B_{\varrho_*}$ and $\nu\in {\bf S}^{n-1}$. Let us now introduce a parameter $\tau_0>0$ and let us consider the following two cases:
\begin{itemize}
\item[(a)] $\nabla\Phi_0(\nu)\cdot e_1\le\tau_0$,
\item[(b)] $\nabla\Phi_0(\nu)\cdot e_1\ge\tau_0$.
\end{itemize}

\smallskip
\noindent
{\em Case (a)}
 We notice that, by \eqref{indiana}, \eqref{Phi 1} ( applied to $\Phi$), and $|\s(0)|\le\l$,  then
\begin{equation}
  \label{indiana2}
  \Psi(x,\nu)\ge\frac1\l-\frac{|\s(0)|}{\Phi_0(e_1)}\,\tau_0-(\ell+L\l^2)\,{\varrho_*}\ge\frac1\l-\l^2\,\tau_0-(\ell+L\l^2)\,{\varrho_*}\ge\frac1{2\l}\,,
\end{equation}
provided $\tau_0$ and ${\varrho_*}$ are small enough with respect to $\l$, $L$ and $\ell$.

\smallskip
\noindent
{\em Case (b)}
By  convexity and   one-homogeneity $\Phi_0(\nu)\ge\nabla\Phi_0(e_1)\cdot\nu$, hence \eqref{indiana} and the positivity of $1+(\s(0)/\Phi_0(e_1))$ implies  that
\begin{eqnarray}
  \nonumber
  \Psi(x,\nu)&\ge&\Big(1+\frac{\s(0)}{\Phi_0(e_1)}\Big)\,\nabla\Phi_0(e_1)\cdot\nu-(L\l^2+\ell)\,{\varrho_*}
  \ge \Big(1+\frac{\s(0)}{\Phi_0(e_1)}\Big)\,\tau_0-(L\l^2+\ell)\,{\varrho_*}
  \\\label{indiana3}
  &\ge&\Big(1+\frac{\s(0)}{\Phi_0(e_1)}\Big)\,\frac{\tau_0}2\,,
\end{eqnarray}
provided ${\varrho_*}$ is small enough depending on the size of $1+(\s(0)/\Phi_0(e_1))$, $\l$, $L$, $\ell$ and on the value of $\tau_0$ chosen to ensure the validity of \eqref{indiana2}. By combining \eqref{indiana2} and \eqref{indiana3}, we find that $\Psi$ satisfies the lower bound in $\eqref{Phi 1}$ for some value of $\l_*$ depending on $\l$, $L$, $\ell$, and the size of $1+(\s(0)/\Phi_0(e_1))$. In the case that $\s(0)>0$ one can check the validity for $\Psi$ of the lower bound in \eqref{Phi 1} by an entirely analogous argument. This proves that $\Psi\in\X(B_{\varrho_*}^+,\l_*,\ell_0)$, while the validity of \eqref{young forever} is immediate from the definition of $\Psi$.
\end{proof}


\begin{proof}[Proof of Theorem \ref{thm main}]
 According to Lemma \ref{lemma da 1 a 2}, we can cover \(A\cap \pa H\) with countably many balls \(\{B_h\}_{h\in \N}\) with the property that, for every $h\in\N$, $B_h\subset A$, \(E\) is a \((\L_0,r_0)\)-minimizer of \(\PHI_h\) in \((B_h,H)\) for some \(\Phi_h\in\X(B_h\cap H,\l_h,\ell_0)\) such that, if $x\in B_h\cap\pa H$ and $\nu\in S^{n-1}$, then
\begin{equation}
  \label{toro1}
  \nabla\Phi_h(x,\nu)\cdot \nu_H=0\qquad\mbox{if and only if}\qquad \nabla\Phi(x,\nu)\cdot\nu_H=\s(x)\,.
\end{equation}
Setting \(M=\cl (H\cap \pa E)\), by Lemma \ref{lem:normalization}, for every $h\in\N$,
\[
\begin{array}
  {l}
 \textrm{$E\cap B_h$ is an open set}\,,
  \\
  \textrm{$\pa E\cap\pa H$ is of locally finite perimeter in $B_h\cap\pa H$}\,,
  \\
  \textrm{$B_h\cap\pa_{\pa H}(\pa E\cap\pa H)=B_h\cap M\cap\pa H$}\,.
 \end{array}
\]
Let us  define \(A'=\cup_{h} B_h\) so that
\[
A'\cap \pa H=A\cap \pa H\,,\qquad  M\cap \pa H\cap A=M\cap \pa H\cap A'\qquad \textrm{and}\qquad  \pa E\cap\pa H\cap A=\pa E\cap\pa H\cap A'\,.
\]
Since \(B_h\) covers \(A'\) we see by the previous properties that \(E\cap A'\) is (equivalent to) an open set,   $\pa E\cap\pa H$ is of locally finite perimeter in $A'\cap\pa H=A\cap\pa H$, and $\pa_{\pa H}(\pa E\cap\pa H)=M\cap\pa H$.  Moreover
\[
\Sigma_{A'}(E;\pa H)\cap B_h=\Sigma_{A}(E;\pa H)\cap B_h=\Sigma_{B_h}(E;\pa H),
\]
so that by Theorem \ref{thm epsilon} and Theorem \ref{thm singular set},  we find that $\H^{n-2}(\S_A(E;\pa H))=0$, as well as that
\[
\begin{array}{l}
  \textrm{$ M$ is a $C^{1,1/2}$-manifold with boundary in a neighborhood of $x$}
  \\
  \textrm{with $\nabla\Phi_h(x,\nu_E(x))\cdot\nu_H=0$\,,}
\end{array}
\]
for every $x\in \pa_{\pa H}(\pa E\cap\pa H)\setminus\S_A(E;\pa H)$. This complete the proof of the theorem.
\end{proof}

\begin{proof}[Proof of Theorem \ref{thm capillari}]  The existence of a minimizer \(E\) of \eqref{variational problem} follows by applying the  direct methods of the calculus of variation, see for instance   \cite[Section 19.1]{maggiBOOK} for the case \(\Phi(x,\nu)=|\nu|\). By a``volume-fixing variation'' argument, \cite[Example 21.3]{maggiBOOK}, we see that $E$ satisfies the volume-constraint-free minimality property
\begin{equation}
\label{min}
\PHI(E;\Om)+\int_{\pa^* E\cap \pa \Omega} \sigma\, d\H^{n-1}
\le \PHI(F;\Om)+\int_{\pa^* F\cap \pa \Omega} \sigma\, d\H^{n-1}+\L\,|E \Delta F|\,,
\end{equation}
whenever \(F\subset \Omega\) and  \(\diam(E\Delta F)\le r_0\), where $r_0$ and $\La$ are constants depending on $E$, $\Omega$, and \(\|g\|_{L^\infty(\Omega)}\). Let us now fix \(x_0\in \pa \Omega\): by assumption, there exist $r>0$, an open neighborhood $A$ of the origin and a $C^{1,1}$-diffeomorphism $f$ between $B_{x_0,r}\cap\Omega$ and $A\cap H$, and between $B_{x_0,r}\cap\pa\Om$ and $A\cap\pa H$, where $H=\{x_1>0\}$. If we set \(\Lambda^f=\Lambda\,\|\det \nabla f\|_{L^\infty(B(x_0,r))}\), and, for $x\in A\cap H$ and $\nu\in S^{n-1}$,
\begin{eqnarray*}
\Phi^f(x,\nu)&=&\Phi(f^{-1}(x),\cof(\nabla f^{-1}(x))\,\nu)\,,
\\
\sigma^f (x)&=&\sigma(f^{-1}(x))\,\Big|\cof(\nabla f^{-1}(x))\,e_1\Big|\,,
\end{eqnarray*}
then we can find $r_*>0$, $\l_*\ge 1$, and $\ell_*>0$ such that, by \eqref{min}, \eqref{change of variables cofattore}, and by arguing as in Lemma \ref{lemma PHIL},
\[
\PHI^f(f(E);H)+\int_{\pa^* f(E)\cap \pa H} \sigma^f\, d\H^{n-1}
\le \PHI^f(G;H)+\int_{\pa^* G\cap \pa H} \sigma^f\, d\H^{n-1}+\L^f |f(E) \Delta G|\,,
\]
whenever \(G\subset H\),  \(\diam(f(E)\Delta G)\le 2\,r_*\) and $f(E)\Delta G\cc A$, with $\Phi^f\in\X(A\cap H,\l_*,\ell_*)$. In particular, $f(E)$ is a $(\L^f,r_*)$-minimizer of $(\Phi^f,\s^f)$ in $(A,H)$, while
\[
\nu_\Omega(f^{-1}(x))=\frac{\cof(\nabla f^{-1}(x))\,(-e_1)}{|\cof (\nabla f^{-1}(x))e_1|}\,,\qquad\forall x\in A\cap\pa H\,,
\]
and \eqref{sigma constraint stretto} imply that
\[
-\Phi^f(x,e_1)<\sigma^f (x)<\Phi^f(x,-e_1)\,,\qquad \forall\,x\in A\cap \pa H\,.
\]
Hence, we can apply Theorem \ref{thm main} to discuss the boundary regularity of \(f(E)\) in \(A\), and conclude the proof of the theorem by a covering argument and by a change of variables.
\end{proof}

\begin{proof}[Proof of Corollary \ref{corollario capillari}]
  {\it Step one}: We start showing that $\S\cap\pa\Om=\emptyset$ if $n=3$. We argue by contradiction, and assume the existence of \(x_0\in\Sigma\cap \pa  \Omega \). Since $\Om$ has boundary of class $C^{1,1}$, we can find $r>0$, an open neighborhood $A$ of the origin, and a \(C^{1,1}\) diffeomorphism \(f\) between $B_{x,r}\cap\Om$ and $A\cap H$, and between $B_{x,r}\cap\pa\Om$ and $A\cap\pa H$ such that \(f(x_0)=0\) and \(\nabla f (x_0)=\Id\). In particular, in the notation used in the proof of Theorem \ref{thm capillari}, we have
  \[
  \Phi^f(0,\nu)=|\nu|\,,\qquad \sigma^f(0)=\sigma(x_0)\,.
  \]
  By arguing as in Lemma \ref{lem:blowup} we thus see that every blow-up $E_2$ of $E_1=f(E)$ at $0$ satisfies the minimality inequality
  \[
  P(E_2;H)+\sigma(x_0)\,P(E_2;\pa H)\le P(F;H)+\sigma(x_0)\,P(F;\pa H)\,,
  \]
  whenever $F\subset H$ and $E_2\Delta F\cc \R^n$. Given $r>0$, if we plug into this inequality the cone-like comparison set $F_r$ defined by
  \[
  F_r=(E_2\setminus B_r)\cup\Big\{t\,x\in B_r:0\le t\le 1\,,x\in  H\cap E_2^{(1)}\cap\pa B_r\Big\}\,,
  \]
  then, by arguing for example as in \cite[Theorem 28.4]{maggiBOOK}, we find that the function
  \[
  \alpha(r)=\frac{P(E_2;H\cap B_r)+\sigma(x_0)P(E_2;\pa H\cap B_r)}{r^{2}}\,,\qquad r>0\,,
  \]
  is increasing on $(0,\infty)$, with \(\alpha(r)={\rm const}\) if and only if \(E_2\) is a cone: in particular, every blow-up \(E_3\) of \(E_2\) at the origin is a cone. (Alternatively, we could have directly shown $E_2$ to be a cone by using almost-monotonicity formulas.) By interior regularity theory, \(\pa E_3\cap H\) is a smooth surface in $\R^3$ with zero mean curvature. Since this surface is also a cone, and \(\pa E_3\cap \pa B_1\cap H\) must be a finite union of non-intersecting geodesics, we conclude that \(\pa E_3 \cap H\) is a finite union of planes meeting along a common line \(\g\subset \pa H\) with $0\in\g$. Since $0\in\S(E_3;\pa H)$ and $E_3$ is a cone, it must be $\g\subset\S(E_3;\pa H)$, and thus
  \[
  \H^1(\S(E_3;\pa H))=+\infty\,.
  \]
  However $\H^1(\S(E_3;\pa H))=0$ by Theorem \ref{thm main}, and we have thus reached a contradiction.

  \medskip

  \noindent {\it Step two}: By combining step one with the classical dimension reduction argument by Federer (see, \cite[Appendix A]{SimonLN} or  \cite[Sections 28.4-28.5]{maggiBOOK}), one shows that $\S(E;\pa\Om)$ is discrete if $n=4$, and that $\H^s(\S(E;\pa\Om))=0$ for every $s>n-4$ if $n\ge5$.
\end{proof}

%


%
\bibliographystyle{alpha}
\bibliography{references}

\end{document}